\title[Comparison of types]{Comparing Bushnell-Kutzko and S\'echerre's constructions of types for $\mathrm{GL}_{N}$ and its inner forms with Yu's construction}
\author{Arnaud Mayeux, Yuki Yamamoto}
\newtheorem{defn}{Definition}[section]
\newtheorem{rem}[defn]{Remark}
\newtheorem{thm}[defn]{Theorem}
\newtheorem{prop}[defn]{Proposition}
\newtheorem{lem}[defn]{Lemma}
\newtheorem{cor}[defn]{Corollary}
\newenvironment{prf}{Proof. }{\hfill$\Box$}
\DeclareMathOperator{\Hom}{Hom}
\DeclareMathOperator{\GL}{GL}
\DeclareMathOperator{\id}{id}
\DeclareMathOperator{\End}{End}
\DeclareMathOperator{\Aut}{Aut}
\DeclareMathOperator{\M}{M}
\DeclareMathOperator{\Res}{Res}
\DeclareMathOperator{\Ind}{Ind}
\DeclareMathOperator{\cInd}{c-Ind}
\DeclareMathOperator{\ord}{ord}
\DeclareMathOperator{\Lie}{Lie}
\DeclareMathOperator{\Tr}{Tr}
\DeclareMathOperator{\Trd}{Trd}
\DeclareMathOperator{\Nrm}{N}
\DeclareMathOperator{\Nrd}{Nrd}
\DeclareMathOperator{\Gal}{Gal}
\DeclareMathOperator{\Cent}{Cent}
\DeclareMathOperator{\Stab}{Stab}
\DeclareMathOperator{\Latt}{Latt}
\DeclareMathOperator{\sr}{sr}
\newcommand{\Afra}{\mathfrak{A}}
\newcommand{\Bfra}{\mathfrak{B}}
\newcommand{\Bscr}{\mathscr{B}}
\newcommand{\gfra}{\mathfrak{g}}
\newcommand{\Kfra}{\mathfrak{K}}
\newcommand{\Lcal}{\mathcal{L}}
\newcommand{\ofra}{\mathfrak{o}}
\newcommand{\Pfra}{\mathfrak{P}}
\newcommand{\pfra}{\mathfrak{p}}
\newcommand{\Qfra}{\mathfrak{Q}}
\newcommand{\sfra}{\mathfrak{s}}
\newcommand{\Phibf}{\mathbf{\Phi}}
\newcommand{\dr}{\mathrm{d}}
\newcommand{\rbf}{\mathbf{r}}
\newcommand{\U}{\mathbf{U}}
\newcommand{\N}{\mathbb{N}}
\newcommand{\Z}{\mathbb{Z}}
\newcommand{\R}{\mathbb{R}}
\newcommand{\C}{\mathbb{C}}
\newcommand{\nr}{\mathrm{nr}}
\begin{document}
\maketitle

\begin{center}

\textit{Dedicated to Colin J. Bushnell. }

\end{center}

\bigskip

\begin{abstract}
Let $F$ be a non-archimedean local field with odd residual characteristic, $A$ be a central simple $F$-algebra, and $G$ be the multiplicative group of $A$.  
To construct types for complex supercuspidal representations of $G$, simple types by S\'echerre and Yu's construction are already known.  
In this paper, we compare these constructions.  
In particular, we show essentially tame supercuspidal representations of $G$ defined by Bushnell--Henniart are nothing but tame supercuspidal representations defined by Yu.  
\end{abstract}

\tableofcontents

\section{Introduction}
Let $F$ be a non-archimedean local field such that the residual characteristic $p$ is odd, and let $G$ be the group of $F$-points of a connected reductive group defined over $F$.  
The aim of type theory is to classify, up to some natural equivalence, the smooth irreducible complex representations of $G$ in terms of representations of compact open subgroups.
For complex supercuspidal representations of $G$, some constructions of types are known.  

For example, Bushnell--Kutzko \cite{BK1} constructed types, called simple types,  for any irreducible supercuspidal representations when $G=\GL_{N}(F)$. 
S\'echerre \cite{S1}, \cite{S2}, \cite{S3}, and S\'echerre--Stevens \cite{SS} extended the construction of simple types to any irreducible supercuspidal representations of any inner form $G$ of $\GL_{N}(F)$.  

For an arbitrary reductive group $G$, Yu's construction \cite{Yu} (cf. also \cite{Ad} for a similar pioneering method) provides some supercuspidal representations. In his paper \cite[p580]{Yu}, Yu wrote "\textit{In particular, our method should yield all supercuspidal
representations when $p$ is large enough relative to the type of $G.$}", Yu also wrote "\textit{it is possible that our method
yields all supercuspidal representations that deserve to be called tame}". Yu's expectations are now theorems by works of Kim \cite{Ki} and Fintzen \cite{Fi}. More precisely, Yu's construction yields all supercuspidal representations if $p$ does not divide the order of the Weyl group of $G$ by \cite{Fi}, a condition that guarantees that all tori of $G$ split over a tamely ramified field extension of $F$.

It is a natural question whether there exists some relationship between these constructions of types. A natural motivation being to unify or generalize these constructions by taking advantage of each theory. In his paper \cite[p581]{Yu}, Yu wrote "\textit{However, the real difficulty in the wild case is that considerably
different} (authors note: than his) \textit{ constructions should be involved as revealed in the $GL_n$ case by the work of Bushnell, Corwin, and Kutzko.}"  The goal of the present article is to compare carefully Bushnell-Kutzko and Sécherre's constructions to Yu's one.   

From now on, let $A$ be a finite dimensional central simple $F$-algebra, and let $V$ be a simple left $A$-module.  
Then $\End_{A}(V)$ is a central division $F$-algebra.  
Let $D$ be the opposite algebra of $\End_{A}(V)$.  
Then $V$ is also a right $D$-module and we have $A \cong \End_{D}(V)$.  
Let $G$ be the multiplicative group of $A$.  
Then we have $G \cong \GL_{m}(D)$, which is the group of $F$-points of an inner form of $\GL_{N}$.    

We introduce our main theorem.  
In Yu's construction, from a tuple $\Psi=(x, (G^{i})_{i=0}^{d}, (\rbf_{i})_{i=0}^{d}, (\Phibf_{d})_{i=0}^{d}, \rho)$, which is called a Yu datum of $G$, one constructs some open subgroups ${}^{\circ}K^{d}(\Psi), \, K^{d}(\Psi)$ in $G$ and an irreducible representation $\rho_{d}(\Psi)$ of $K^{d}(\Psi)$, which are precisely defined in \S \ref{Yu's_types}.  

\begin{thm}[Theorem \ref{Main1}]
\label{main1}
Let $(J, \lambda)$ be a simple type for an essentially tame supercuspidal representation $\pi$, and let $(\tilde{J}, \Lambda)$ be an extension of $(J, \lambda)$ such that $\pi \cong \cInd_{\tilde{J}}^{G} \Lambda$.  
Then there exists a Yu datum $\Psi=(x, (G^{i})_{i=0}^{d}, (\rbf_{i})_{i=0}^{d}, (\Phibf_{d})_{i=0}^{d}, \rho)$ satisfying the following conditions:  
\begin{enumerate}
\item $J={}^{\circ}K^{d}(\Psi)$, 
\item $\tilde{J} \subset K^{d}(\Psi)$, and
\item $\rho_{d}(\Psi) \cong \cInd_{\tilde{J}}^{K^{d}(\Psi)} \Lambda$.  
\end{enumerate}
\end{thm}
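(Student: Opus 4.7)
The plan is to reverse-engineer a Yu datum $\Psi$ from the simple type $(J,\lambda)$ by unpacking its underlying simple stratum, and then using the essentially tame hypothesis to factor the attached simple character into pieces that match exactly the input required by Yu's construction.

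First, fix a simple stratum $[\mathfrak{A},n,0,\beta]$ underlying $(J,\lambda)$, write $E=F[\beta]$, and recall the standard decomposition $\lambda=\kappa\otimes\sigma$ of S\'echerre, where $\theta$ is the simple character attached to the stratum, $\eta$ its Heisenberg lift on $J^{1}$, $\kappa$ a beta-extension of $\eta$ to $J$, and $\sigma$ the inflation of a cuspidal representation of the finite reductive quotient $J/J^{1}$. The essentially tame hypothesis allows us (following Bushnell--Henniart) to take $E/F$ tamely ramified, and to apply a Howe-style factorization of the character of $E^{\times}$ encoding the inertial class of $\theta$ to produce a tower
\[
F=E_{d}\subsetneq E_{d-1}\subsetneq\cdots\subsetneq E_{0}=E
\]
of tamely ramified subfields together with characters $\phi_{i}$ of $E_{i}^{\times}$ whose levels $\mathbf{r}_{i}$ are prescribed by the jumps of $\theta$, each $\phi_{i}$ generic relative to $E_{i+1}/F$.

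From this data one builds the Yu datum. Set $G^{i}=\mathrm{Cent}_{G}(E_{i})^{\times}$, so $G^{0}\subsetneq\cdots\subsetneq G^{d}=G$ is a tame twisted Levi sequence; take for $x$ the point in the enlarged Bruhat--Tits building $\mathcal{B}(G^{0},F)$ attached to the hereditary $\mathfrak{o}_{E}$-order $\mathfrak{A}\cap\mathrm{Cent}_{A}(E)$, which via the tame embeddings $\mathcal{B}(G^{0},F)\hookrightarrow\cdots\hookrightarrow\mathcal{B}(G,F)$ lives simultaneously in every $\mathcal{B}(G^{i},F)$; let $\mathbf{\Phi}_{i}$ be the generic character of $G^{i}$ obtained from $\phi_{i}$ by composition with the reduced norm of the central simple algebra underlying $G^{i}$; and let $\rho$ be inflated from $\sigma$ to the parahoric of $G^{0}$ fixing $x$, noting that the reductive quotient of this parahoric is precisely $J/J^{1}$, so the inflation makes sense on the nose.

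Verifying (1) and (2) is then a bookkeeping exercise: by induction on $i$ one shows that ${}^{\circ}K^{i}(\Psi)$ and $K^{i}(\Psi)$ coincide with the Bushnell--Kutzko--S\'echerre subgroups built from the intersections $\mathfrak{A}\cap\mathrm{Cent}_{A}(E_{i})$, using that both sides are described by the same concave function on the root datum of $G^{i}$ relative to $E_{i}$. Condition (3) is the core of the theorem and the main obstacle: the Heisenberg--Weil pieces $\hat{\phi}_{i}$ that Yu stacks on $\rho$ must assemble into exactly the beta-extension $\kappa$ twisted by $\theta$. Two subtleties arise. First, the normalizations of the $\hat{\phi}_{i}$ fixed by Yu must be reconciled with the normalizations of the beta-extension intrinsic to Bushnell--Henniart's essentially tame theory, a reconciliation involving tame-symbol and Gauss-sum factors. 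Second, the iterated Heisenberg--Weil extension constructed by Yu over the tower of groups must be identified with the single beta-extension $\kappa$ via the uniqueness properties of beta-extensions, after which a small Mackey-style argument absorbs the difference between $\tilde J$ and the enlarging factor $G^{0}_{[x]}$ appearing in $K^{d}(\Psi)$, yielding the desired isomorphism $\rho_{d}(\Psi)\cong\cInd_{\tilde J}^{K^{d}(\Psi)}\Lambda$.
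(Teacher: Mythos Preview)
Your overall architecture is right --- build the Levi tower from the defining sequence of the stratum, transport the point $x$ along the building embeddings, and manufacture the $\Phibf_i$ from a factorisation of the simple character --- and this is essentially what the paper does in \S\ref{SStoYu}. But your handling of part (3) contains a genuine gap.

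You declare $\rho$ to be the inflation of $\sigma$ at the outset, and then propose to identify Yu's tensor product $\kappa_0\otimes\cdots\otimes\kappa_d$ with the $\beta$-extension $\kappa$ by invoking ``uniqueness properties of beta-extensions'' together with some Gauss-sum bookkeeping. This does not work: $\beta$-extensions are \emph{not} unique (any two differ by a character of $J/J^1$, cf.\ Lemma~\ref{exist_ext_of_b-ext}(2)), so there is no uniqueness statement available to force the identification. The paper avoids computing any normalisation constants. Instead, it proves only that $\kappa_0\otimes\cdots\otimes\kappa_d$ is \emph{some} extension of $\eta_\theta$ to $\hat J=K^d$ (Proposition~\ref{betaext}), by showing it contains $\theta$ upon restriction to $K^d_+=H^1$ (via $\hat\Phibf_i=\theta^i$, Proposition~\ref{phiistheta1}) and then matching dimensions through the index identity $(J^1:H^1)=\prod_i(J^i:J^i_+)$. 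The resulting discrepancy character between $\kappa_0\otimes\cdots\otimes\kappa_d$ and any chosen extension $\hat\kappa$ of $\kappa$ is then \emph{absorbed into the definition of $\rho$}: this is the content of Proposition~\ref{findingdepth0}, where $\rho$ is built as $\cInd_{J'}^{\Kfra(\Bfra)}(\tilde\sigma\otimes\chi_1\chi_2)$ with $\chi_1,\chi_2$ recording exactly those discrepancies. In particular $\rho$ is determined only at the end, not fixed a priori from $\sigma$, and it need not agree with the naive inflation of $\sigma$.

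A secondary point: the tower $E_0\supsetneq\cdots\supsetneq E_d$ in the paper comes from a carefully chosen defining sequence of the stratum (Proposition~\ref{appfortame}, guaranteeing each $c_i=\beta_i-\beta_{i+1}$ is minimal over $E_{i+1}$), and the factorisation $\theta=\prod\theta^i$ is of the simple character on $H^1(\beta,\Afra)$ (Proposition~\ref{factchar}), not of an admissible character of $E^\times$ in Howe's sense. These are related but the logic runs in the opposite order to what you sketch: the $c_i$ are found first (from the stratum), the $\phi_i$ afterwards (from $\theta$), and genericity of $\Phibf_i$ is then deduced from minimality of $c_i$ via Proposition~\ref{genelem}.
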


\begin{thm}[Theorem \ref{Main2}]
\label{main2}
Conversely, let $\Psi=\left( x, (G^{i}), (\rbf_{i}), (\Phibf_{i}), \rho \right)$ be a Yu datum of $G$.  
Then there exists a tame simple type $(J, \lambda)$ and a maximal extension $(\tilde{J}, \Lambda)$ of $(J, \lambda)$ such that
\begin{enumerate}
\item ${}^{\circ}K^{d}(\Psi) = J$, 
\item $K^{d}(\Psi) \supset \tilde{J}$, and
\item $\rho_{d}(\Psi) \cong \cInd_{\tilde{J}}^{K^{d}(\Psi)} \Lambda$.  
\end{enumerate}
\end{thm}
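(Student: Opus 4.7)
The plan is to invert the construction of Theorem~\ref{main1}: from the Yu datum $\Psi$ I will manufacture a simple stratum, a simple character, and a cuspidal component, assemble them into a simple type $(J,\lambda)$ together with a maximal extension $(\tilde{J}, \Lambda)$, and then verify (1)--(3) by matching the underlying filtrations.

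\textbf{Construction of the simple type.} Since each $G^i$ is a tame twisted Levi subgroup of $G=\GL_{m}(D)$, the structure theory of inner forms yields a tower of tamely ramified field extensions $F \subset E_{d-1} \subset \cdots \subset E_{0} = E$ with $G^i \cong \GL_{m_i}(D_i)$ for suitable central division algebras $D_i$. I would then choose a semisimple element $\beta \in A$ with $F[\beta] = E$ whose successive Bushnell--Kutzko approximations $\beta_i$ cut out this tower. Using that $x \in \Bscr(G^{0}, F) \subset \Bscr(G, F)$, I would pick a hereditary $\ofra_F$-order $\Afra \subset A$ normalised by $E^{\times}$ whose associated barycentric point is $x$. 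Together $\Afra$ and $\beta$ produce a simple stratum $[\Afra, n, 0, \beta]$, with $n$ read off from the depths $(\rbf_i)$. The Yu characters $(\Phibf_i)$ then combine, by restriction to the Moy--Prasad pieces corresponding to the filtration jumps of $J^{1}(\beta,\Afra)$, into a simple character $\theta \in \mathcal{C}(\Afra, 0, \beta)$. Taking a $\beta$-extension $\kappa$ of $\theta$ and inflating $\rho$ to a cuspidal representation $\sigma$ of $J(\beta,\Afra)/J^{1}(\beta,\Afra)$, I set $J = J(\beta,\Afra)$, $\lambda = \kappa \otimes \sigma$, $\tilde{J} = E^{\times} J$, and let $\Lambda$ be the natural extension of $\lambda$ to $\tilde{J}$.

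\textbf{Verification.} Conditions (1) and (2) then follow from matching, at the point $x$, the Moy--Prasad filtration used by Yu with the hereditary filtration $U^{k}(\Afra)$ used by S\'echerre; this identification is automatic once $(x, \Afra)$ and $\beta$ are set up as above, and $\tilde{J}$ lies in $K^{d}(\Psi)$ because $\beta$ normalises $\Afra$ and fixes $x$. For condition (3), both $\rho_{d}(\Psi)$ and $\cInd_{\tilde{J}}^{K^{d}(\Psi)} \Lambda$ are irreducible representations of $K^{d}(\Psi)$, so it suffices to show they restrict isomorphically to $J$; this reduces to identifying $\kappa$ with the restriction of the Heisenberg--Weil component of Yu's construction, which is essentially a statement about a shared finite Heisenberg $p$-group.

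\textbf{Main obstacle.} The key difficulty is carrying out the filtration dictionary uniformly: matching the depths $r_i$ with the critical exponents $k_{0}(\beta_i, \Afra)$ of the approximations, and showing that each Yu character $\Phibf_i$, pulled back to the appropriate subgroup of $J^{1}$, agrees with the corresponding factor in a factorisation of $\theta$ along the twisted Levi tower. Both dictionaries are familiar in the split case, but need to be recast here in S\'echerre's language so as to interact cleanly with the hereditary order $\Afra$ over $D$. Once this compatibility is established, one can either conclude directly, or close the loop by applying Theorem~\ref{main1} to the constructed simple type $(J, \lambda)$ and verifying that the recovered Yu datum is $G$-conjugate to the given $\Psi$.
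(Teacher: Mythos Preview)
Your overall strategy is the right one and matches the paper's, but there is a genuine gap in the order of construction. You propose to \emph{choose} a semisimple $\beta$ with $F[\beta]=E$ whose approximations $\beta_i$ cut out the tower $(E_i)$, and only afterwards combine the $\Phibf_i$ into a simple character $\theta\in\mathscr{C}(\beta,0,\Afra)$. This does not work as stated: membership in $\mathscr{C}(\beta,0,\Afra)$ is governed by the specific elements $c_i=\beta_i-\beta_{i+1}$ of the defining sequence (through the conditions $\theta|_{H^{t_i+1}}=\psi_{c_i}$), so an arbitrarily chosen $\beta$ gives you no control over whether $\prod\hat{\Phibf}_i$ lands in that set. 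The paper proceeds in the opposite order: from each $\Phibf_i$ one first extracts an element $c_i\in E_i=\Lie(Z(G^i))$ realising $\Phibf_i$ on $G^i(F)_{x,\rbf_i/2+:\rbf_i+}$, then \emph{defines} $\beta=\sum_i c_i$. The nontrivial point---and the real content behind your ``main obstacle''---is that such a $c_i$ can be chosen in the centre $E_i$ rather than merely in $\gfra^i(F)$; this requires a separate argument (Proposition~\ref{propforgench}) exploiting that $\Phibf_i$ is a character of all of $G^i(F)$, not just of a Moy--Prasad quotient. Once $\beta$ is built this way, genericity of $\Phibf_i$ forces $c_i$ to be minimal over $E_{i+1}$, and one can then check directly that $([\Afra,n,r_i,\beta_i])$ is a defining sequence and that $\prod\hat\Phibf_i\in\mathscr{C}(\beta,0,\Afra)$.

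Two smaller points. First, in the inner-form case $\rho$ need not itself be the inflation of a cuspidal representation of $G^0(F)_{x,0:0+}$; one must first write $\rho\cong\Ind_{\tilde J^0}^{G^0(F)_{[x]}}\tilde\sigma^0$ for a depth-zero maximal extension $(\tilde J^0,\tilde\sigma^0)$, and it is $\tilde J^0$ (not $E^\times$) that determines $\tilde J$. Second, your verification of (3) is insufficient: two irreducible $K^d$-representations with isomorphic restriction to $J$ can differ by a character of $K^d/J^1$, so one cannot conclude from agreement on $J$ alone. In the paper this is handled by noting that $\kappa_0\otimes\cdots\otimes\kappa_d$ and a chosen extension $\hat\kappa$ of a $\beta$-extension differ by such a character $\chi$, and then absorbing $\chi$ into the cuspidal component $\sigma$.
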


By these theorems, we obtain the following corollary.  

\begin{cor}[Corollary \ref{tame_and_esstame}]
For any inner form $G$ of $\GL_{N}(F)$, the set of essentially tame supercuspidal representations of $G$ is equal to the set of tame supercuspidal representations of $G$ defined by Yu \cite{Yu}.  
\end{cor}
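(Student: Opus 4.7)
The plan is to deduce both inclusions of the claimed set equality directly from Theorems \ref{main1} and \ref{main2}, using nothing more than transitivity of compact induction. I would treat the two directions separately.

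For the inclusion of essentially tame supercuspidal representations of $G$ into Yu's tame supercuspidal representations, I would start with an essentially tame supercuspidal $\pi$. By the Bushnell--Kutzko and Sécherre theory, $\pi$ is of the form $\cInd_{\tilde{J}}^{G}\Lambda$ for some maximal extension $(\tilde{J}, \Lambda)$ of a simple type $(J, \lambda)$. Applying Theorem \ref{main1} produces a Yu datum $\Psi$ with $\tilde{J} \subset K^{d}(\Psi)$ and $\rho_{d}(\Psi) \cong \cInd_{\tilde{J}}^{K^{d}(\Psi)} \Lambda$. Transitivity of compact induction then yields
\[
\cInd_{K^{d}(\Psi)}^{G} \rho_{d}(\Psi) \cong \cInd_{K^{d}(\Psi)}^{G} \cInd_{\tilde{J}}^{K^{d}(\Psi)} \Lambda \cong \cInd_{\tilde{J}}^{G} \Lambda \cong \pi,
\]
which exhibits $\pi$ as the tame supercuspidal representation attached to $\Psi$ in Yu's sense.

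The reverse inclusion is entirely symmetric. Given a tame Yu supercuspidal $\pi \cong \cInd_{K^{d}(\Psi)}^{G} \rho_{d}(\Psi)$, I would invoke Theorem \ref{main2} to obtain a tame simple type $(J, \lambda)$ together with a maximal extension $(\tilde{J}, \Lambda)$ satisfying $\rho_{d}(\Psi) \cong \cInd_{\tilde{J}}^{K^{d}(\Psi)} \Lambda$. Transitivity once more gives $\pi \cong \cInd_{\tilde{J}}^{G} \Lambda$, realising $\pi$ as the essentially tame supercuspidal attached to $(J, \lambda)$ in the sense of Bushnell--Henniart.

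The main obstacle is not in the corollary itself, which reduces to the formal computation above, but rather in the compatibility bundled into Theorems \ref{main1} and \ref{main2}: namely, that the Bushnell--Henniart notion of \emph{essentially tame} supercuspidal matches exactly the class of Yu data whose underlying twisted Levi sequence arises from a tamely ramified field extension. Verifying this matching of adjectives on the two sides is the real content; once it is granted, the equality of the two sets of supercuspidal representations is immediate.
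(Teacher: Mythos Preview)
Your proposal is correct and follows essentially the same approach as the paper: both deduce the corollary from Theorems \ref{main1} and \ref{main2} via transitivity of compact induction, with the paper phrasing the argument slightly more tersely in terms of $\pi$ containing $\rho_d(\Psi)$ (equivalently, $\pi \cong \cInd_{K^d}^G \rho_d(\Psi)$ by irreducibility). The only cosmetic point to tighten is to make explicit that the simple type $(J,\lambda)$ you start from in the first direction is \emph{tame} (this is guaranteed by Remark \ref{esstame}, since essentially tame supercuspidals are exactly those containing tame simple types), and symmetrically that Theorem \ref{main2} outputs a \emph{tame} simple type, so that the resulting $\cInd_{\tilde{J}}^G \Lambda$ is essentially tame; your final paragraph already acknowledges this matching of adjectives as the substantive input.
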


In particular, for $G=\GL_{N}(F)$, the statements of the above theorems are as follows: 

\begin{thm}
\label{main_for_split_case}
Let $G=\GL_{N}(F)$.  
Then $\tilde{J} = K^{d}(\Psi)$ in the statement of (2) in Theorem \ref{main1} and Theorem \ref{main2}, and $\rho_{d}(\Psi) \cong \Lambda$ in the statement of (3) in these theorems.  
\end{thm}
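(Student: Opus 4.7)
The plan is to invoke Theorems \ref{main1} and \ref{main2} and reduce the statement to the single equality $\tilde{J} = K^d(\Psi)$ in the split setting. Once that is in hand the assertion $\rho_d(\Psi) \cong \cInd_{\tilde{J}}^{K^d(\Psi)} \Lambda \cong \Lambda$ is automatic, since compact induction from a group to itself is the identity functor on smooth representations.

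To establish $\tilde{J} = K^d(\Psi)$, the key point is that the centralizer of the simple stratum element $\beta$ in $A = \M_N(F)$ collapses to a field in the split case. In Bushnell--Kutzko's theory for $\GL_N(F)$, a maximal simple type giving a supercuspidal arises from a simple stratum with $E = F[\beta]$ of degree $N$ over $F$, and the maximal extension satisfies $\tilde{J} = E^\times J$, so that $\tilde{J}/J \cong E^\times/\ofra_E^\times \cong \Z$ is generated by a uniformizer of $E$. On Yu's side, the correspondences built in the proofs of Theorems \ref{main1} and \ref{main2} identify the initial twisted Levi $G^0$ with $\Cent_G(\beta) = B^\times$, where $B = \Cent_A(E)$, and the quotient $K^d(\Psi)/{}^{\circ}K^d(\Psi)$ is intrinsic to $G^0$, being the ratio of the stabilizer of $x$ in $G^0$ by its connected parahoric.

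The core of the argument is then the identification $B = E$ in the split case. Since $V$ is a simple $A$-module of dimension $N$ over $F$ and $E$ embeds in $A$ with $[E:F] = N$, the $E$-module $V$ is one-dimensional, whence $\Cent_A(E) = E$. Consequently $G^0 = E^\times$, and the quotient $K^d(\Psi)/{}^{\circ}K^d(\Psi) \cong E^\times/\ofra_E^\times \cong \Z$ matches exactly the noncompact extension performed by Bushnell--Kutzko. Since both $\tilde{J}$ and $K^d(\Psi)$ contain $J = {}^{\circ}K^d(\Psi)$ with quotient $\Z$ generated modulo $J$ by a uniformizer of $E$, they must coincide.

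The main obstacle I anticipate is verifying that the noncompact generator of $\tilde{J}/J$ in Bushnell--Kutzko's recipe and the one used to pass from ${}^{\circ}K^d(\Psi)$ to $K^d(\Psi)$ in Yu's construction really live in the same coset of $J$, rather than merely generating isomorphic quotients. This boils down to tracking the chosen base point $x$ in the (reduced) building of $G^0 = E^\times$ and the identifications made in the proofs of Theorems \ref{main1} and \ref{main2}; once the parahoric filtration on $E^\times$ inherited from Yu's side is matched with the filtration $1 + \pfra_E^k$ coming from the simple stratum, the coincidence of generators reduces to the translation action on a one-dimensional apartment.
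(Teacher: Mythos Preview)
Your reduction to the equality $\tilde{J}=K^{d}(\Psi)$ and the observation that $\cInd$ from a group to itself is trivial are fine, and this is also how the paper proceeds. The problem lies in the step you regard as the core of the argument.

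You assert that for $G=\GL_{N}(F)$ a maximal simple type attached to a supercuspidal comes from a simple stratum with $[E:F]=N$, so that $B=\Cent_{A}(E)=E$ and $G^{0}=E^{\times}$. This is false in general. A maximal simple type only requires that $\Bfra=\Afra\cap B$ be a \emph{maximal} order in $B\cong \M_{m_{E}}(E)$ with $m_{E}=N/[E:F]$; the cuspidal representation $\sigma$ of $\GL_{m_{E}}(k_{E})$ supplies the remaining depth-zero content. Already the depth-zero supercuspidals of $\GL_{N}(F)$ (where $\beta\in\ofra_{F}$, $E=F$, $B=A$) show that $m_{E}$ can equal $N$ rather than $1$. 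So your identification $B=E$, and with it the conclusion that $K^{d}(\Psi)/{}^{\circ}K^{d}(\Psi)\cong E^{\times}/\ofra_{E}^{\times}$ is generated by a uniformizer of a field $E$ of degree $N$, collapses.

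What actually makes the split case work is Remark \ref{rem_for_Jhat}(3): when $D=F$ the normalizer $\Kfra(\Bfra)$ of the maximal order $\Bfra\cong\M_{m_{E}}(\ofra_{E})$ inside $B^{\times}\cong\GL_{m_{E}}(E)$ is generated by $\U(\Bfra)$ and the central subgroup $E^{\times}$. Both of these lie in $I_{G}(\lambda)=\tilde{J}(\lambda)$, hence $\Kfra(\Bfra)\subset\tilde{J}(\lambda)$ and therefore $\hat{J}(\beta,\Afra)=\Kfra(\Bfra)J(\beta,\Afra)\subset\tilde{J}(\lambda)$. Combined with the general inclusion $\tilde{J}(\lambda)\subset\hat{J}(\beta,\Afra)$ from Remark \ref{rem_for_Jhat}(2) and the identification $\hat{J}(\beta,\Afra)=K^{d}(\Psi)$ established in Theorems \ref{Main1} and \ref{Main2}, one gets $\tilde{J}=K^{d}(\Psi)$. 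Note that this argument genuinely uses $D=F$: for nonsplit $D$ the group $\Kfra(\Bfra)/\U(\Bfra)E^{\times}$ can be nontrivial, which is precisely why $\tilde{J}\subsetneq\hat{J}$ can occur there.
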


We sketch the outline of this paper and the main steps to prove Theorems \ref{main1} and \ref{main2}.  
First, in \S \ref{Secherre} and \ref{Yu's_types}, we recall constructions of types.  
We explain simple types of $G$ by S\'echerre in \S \ref{Secherre} and Yu's construction of tame supercuspidal representations in \S \ref{Yu's_types}.  
Next, in \S \ref{TameSimple}-\ref{Factorization}, we prepare ingredients to compare these two constructions.  
A class of simple types corresponding to Yu's type is defined in \S \ref{TameSimple}.  
In \S \ref{TTLS}, we determine tame twisted Levi subgroups in $G$.  
For some tame twisted Levi subgroup $G'$ in $G$ and some ``nice" $x $ in the enlarged Bruhat-Tits building $ \Bscr^{E}(G', F)$, we obtain another description of Moy--Prasad filtration on $G'(F)$ attached to $x$, using hereditary orders, in \S \ref{BroussousLemaire}.  
Then we can compare the groups which the types are defined as representations of.  
In \S \ref{Generic}, we discuss generic elements and generic characters.  
We relate them to some defining sequence of some simple stratum.  
In \S \ref{Depth0}, we show some lemmas on simple types of depth zero.  
These lemmas are used to take ``depth-zero" parts of types.  
In \S \ref{Factorization}, we represent a simple character with a tame simple stratum as a product of characters.  
This factorization is needed to construct generic characters.  
Finally, in \S \ref{SStoYu} and \ref{YutoSS}, we prove the main theorem.  
In \S 9, from tame S\'echerre data, which are used to construct tame simple types, we construct Yu data.  
By comparing these types constructed by these data and comfirming some kind of match between the two, we show that tame simple types can be constructed from Yu's types.  
Conversely, we also show that Yu's types are constructed from tame simple types in \S \ref{YutoSS}.  In \S 12, we briefly discuss the wild case.

\begin{rem}
This paper is the result of a project whose first traces appeared publicly in 2017.  
This brings together and extends previous works of the authors that are not intented to be published in journals.  
These works are precisely:\begin{enumerate} \item[-] 
 \textnormal{A. Mayeux}: \textit{Représentations supercuspidales: comparaison des construction de Bushnell-Kutzko et Yu},\textnormal{ arXiv:1706.05920, 2017, unpublished.}
 \item[-] 
\textnormal{A. Mayeux}: \textit{Comparison of Bushnell-Kutzko and Yu's constructions of supercuspidal representations},\textnormal{ arXiv:2001.06259, 2020, unpublished.}
\item[-]  \textnormal{Y. Yamamoto}: {\it 
Comparison of types for inner forms of $GL_N$},\textnormal{ arXiv:2005.02622, 2020, unpublished. }
\item[-]  The parts about the comparison of our PhD thesis defended at Paris in 2019 and Tokyo in 2022. 
\end{enumerate}
Our present paper is mathematically self-contained in the sense that it does not rely on the previously mentioned unpublished works.  
\end{rem}
\bigbreak

\noindent{\bfseries Acknowledgment}\quad Arnaud Mayeux was supported
by ISF grant 1577/23.
This project has received funding from the European Research Council (ERC) under the European Union’s Horizon 2020 research and innovation programme (A.M. grant agreement No 101002592 (2021-2023)).
The first author was previously funded by a Boya postdoctoral fellowship of BICMR and Peking University (2019-2021) and a doctoral fellowship of Université Paris Cité (2015-2019).  
The second author is supported by the FMSP program at Graduate School of Mathematical Sciences, the University of Tokyo.  
He was also supported by JSPS KAKENHI Grant Number JP21J13751.  
We thank Anne-Marie Aubert, Colin J. Bushnell, Naoki Imai, Guy Henniart, Kazuki Tokimoto, Vincent S\'echerre, Jeffrey Adler and Paul Broussous for many help, comments and suggestions.  
The first author thank Jessica Fintzen, Wee Teck Gan and Shuichiro Takeda for the organization of the workshop \textit{New Developments in Representation Theory of p-adic Groups, Oberwolfach, 2019} and the opportunity to talk about supercuspidal representations \cite{FGT}. Finally, we are grateful to the referee for his careful reading and many helpful suggestions and improvements.
\bigbreak

\noindent{\bfseries Notation}\quad

If $X$ is a scheme over a base scheme $S$ and if $T\to S$ is a morphism of schemes, then $X_T $ denotes $X\times _S T$ and is seen as a scheme over $T$. 

Let $G \to S$ be a group scheme acting on a scheme $X/S$. The functor  of fixed points, by definition, sends a scheme $T$ over $S$ with an action of $G$ to $\{x \in \Hom _T ( T , X_T ) | x \text{ is } G_T\text{-equivariant} \}$ where $T$ is endowed with the trivial action of $G_T$. This $S$-functor is denoted  by $X^G$. Note that for any scheme $T$ over $S$, we have $X^G (T) \subset X (T)$. It is known that this functor is representable by a scheme in many cases (cf. e.g. \cite[Exp. 12 Prop. 9.2]{SGA3}). 

In this paper, we consider smooth representations over $\C$.  
We fix a non-archimedean local field $F$ such that the residual characteristic $p$ is odd.    
For a finite-dimensional central division algebra $D$ over $F$, let $\ofra_{D}$ be the ring of integers, $\pfra_{D}$ be the maximal ideal of $\ofra_{D}$, and let $k_{D}$ be the residual field of $D$.  
We fix a smooth, additive character $\psi:F \to \C^{\times}$ of conductor $\pfra_{F}$.  
For a finite field extension $E/F$, let $v_{E}$ be the unique surjective valuation $E \to \Z \cup \{ \infty \}$.  
Moreover, for any element $\beta$ in some algebraic extension field of $F$, we put $\ord(\beta) = e(F[\beta]/F)^{-1}v_{F[\beta]}(\beta)$.  

If $K$ is a field and $G$ is a $K$-group scheme, then $\underline{\Lie}(G)$ denotes the Lie algebra functor and we put $\Lie(G)=\underline{\Lie}(G)(K)$.  
If a $K$-group scheme is denoted by a capital letter $G$, the Lie algebra functor of $G$ is denoted by the same small Gothic letter $\gfra$.  
We also denote by $\Lie^{*}(G)$ or $\gfra^{*}(K)$ the dual of $\Lie(G)=\gfra(K)$.  
For connected reductive group $G$ over $F$, we denote by $\Bscr^{E}(G, F)$ (resp. $\Bscr^{R}(G, F)$) the enlarged Bruhat--Tits building (resp. the reduced Bruhat--Tis building) of $G$ over $F$ defined in \cite{BT}, \cite{BT2}.  
If $x \in \Bscr^{E}(G, F)$, we denote by $[x]$ the image of $x$ via the canonical surjection $\Bscr^{E}(G, F) \to \Bscr^{R}(G, F)$.  
The group $G(F)$ acts on $\Bscr^{E}(G, F)$ and $\Bscr^{R}(G, F)$.  
For $x \in \Bscr^{E}(G, F)$, let $G(F)_{x}$ (reps. $G(F)_{[x]}$) denote the stabilizer of $x \in \Bscr^{E}(G, F)$ (resp. $[x] \in \Bscr^{R}(G, F)$).  
We denote by $\tilde{\R}$ the totally ordered commutative monoid $\R \cup \{ r+ \mid r \in \R \}$.  
When $G$ splits over some tamely ramified extension of $F$, for $x \in \Bscr^{E}(G, F)$ let $\{ G(F)_{x, r} \}_{r \in \tilde{\R}_{\geq 0}}$, $\{ \gfra(F)_{x,r} \}_{r \in \tilde{\R}}$ and $\{ \gfra^{*}(F)_{x,r} \}_{r \in \tilde{\R}}$ be the Moy--Prasad filtration \cite{MP1}, \cite{MP2} on $G(F)$, $\gfra(F)$ and $\gfra^{*}(F)$, respectively.  
Here, we have $\gfra^{*}(F)_{x,r} = \{ X^{*} \in \gfra^{*}(F) \mid X^{*}(\gfra(F)_{x, (-r)+}) \subset \pfra_{F} \}$ for $r \in \R$.  
If $G$ is a torus, Moy--Prasad filtrations are independent of $x$, and then we omit $x$.  

Let $G$ be a group, $H$ be a subgroup in $G$ and $\lambda$ be a representation of $H$.  
Then we put ${}^{g}H=gHg^{-1}$ for $g \in G$, and we define a ${}^{g}H$-representation ${}^{g} \lambda$ as ${}^{g}\lambda(h) = \lambda(g^{-1}hg)$ for $h \in {}^{g}H$.  
Moreover, we also put
\[
	I_{G}( \lambda ) = \{ g \in G \mid \Hom_{H \cap {}^{g}H}(\lambda, {}^{g}\lambda) \neq 0 \}.  
\]

\section{Simple types by S\'echerre}
\label{Secherre}

We recall the theory of simple types from \cite{S1}, \cite{S2}, \cite{S3}, \cite{SS}.  
In this section, we can omit the assumption that the residual characteristic of $F$ is odd.  

\subsection{Lattices, hereditary orders}

Let $D$ be a finite-dimensional central division $F$-algebra.  
Let $V$ be a right $D$-module with $\dim_{F}V < \infty$.  
We put $A=\End_{D}(V)$, and then $A$ is a central simple $F$-algebra.  
Moreover, there exists $m \in \Z_{>0}$ such that $A \cong \M_{m}(D)$.  
Let $G$ be the multiplicative group of $A$, and then $G$ is isomorphic to $\GL_{m}(D)$.  
We also put $d = (\dim _{F} D)^{1/2}$ and $N=md$.  

An $\ofra_{D}$-submodule $\Lcal$ in $V$ is called a lattice if and only if $\Lcal$ is a compact open submodule.  

\begin{defn}[{{\cite[D\'efinition 1.1]{S1}}}]
For $i \in \Z$, let $\Lcal_{i}$ be a lattice in $V$.  
We say that $\Lcal = (\Lcal_{i})_{i \in \Z}$ is an $\ofra_{D}$-sequence if
\begin{enumerate}
\item $\Lcal_{i} \supset \Lcal_{j}$ for any $i<j$, and
\item there exists $e \in \Z_{>0}$ that $\Lcal_{i+e} = \Lcal_{i}\pfra_{D}$ for any $i$.  
\end{enumerate}
The number $e=e(\Lcal)$ is called the $\ofra_{D}$-period of $\Lcal$.  
An $\ofra_{D}$-sequence $\Lcal$ is called an $\ofra_{D}$-chain if $\Lcal_{i} \supsetneq \Lcal_{i+1}$ for every $i$.  
An $\ofra_{D}$-chain $\Lcal$ is called uniform if $[\Lcal_{i} : \Lcal_{i+1}]$ is constant for any $i$.  
\end{defn}

An $\ofra_{F}$-subalgebra $\Afra$ in $A$ is called a hereditary $\ofra_{F}$-order if every left and right ideal in $\Afra$ is $\Afra$-projective.  

We explain the relationship between $\ofra_{D}$-sequences in $V$ and hereditary $\ofra_{F}$-orders in $A$ from \cite[1.2]{S1}.  
Let $\Lcal = (\Lcal_{i})$ be an $\ofra_{D}$-sequence in $V$.  
For $i \in \Z$, we put
\[
	\Pfra_{i}(\Lcal) = \{ a \in A \mid a \Lcal_{j} \subset \Lcal_{i+j}, \, j \in \Z \}.  
\]
Then $\Afra = \Pfra_{0}(\Lcal)$ is a hereditary $\ofra_{F}$-order with the radical $\Pfra(\Afra) = \Pfra_{1}(\Lcal)$.  
For every hereditary $\ofra_{F}$-order $\Afra$ in $V$, there exists an $\ofra_{D}$-chain $\Lcal$ in $V$ such that $\Afra = \Afra(\Lcal)$.  
If $\Lcal$ is a uniform $\ofra_{D}$-chain, $\Afra = \Afra(\Lcal)$ is called principal.  

For any $\ofra_{D}$-chain $\Lcal = (\Lcal_{i})$, let $\Kfra(\Lcal)$ be the set of $g \in G$ such that there exists $n \in \Z$  satisfying $g \Lcal_{i} = \Lcal_{i+n}$ for any $i$.  
For the hereditary $\ofra_{F}$-order $\Afra=\Afra(\Lcal)$, let $\Kfra(\Afra)$ be the set of $g \in G$ such that $g\Afra g^{-1}=\Afra$.  
Then we have $\Kfra(\Afra) = \Kfra(\Lcal)$ and $\Kfra(\Afra)$ is compact modulo center.  

For $g \in \Kfra(\Afra)$, there exists a unique element $n \in \Z$ such that $g\Afra = \Pfra(\Afra)^{n}$.  
The map $g \mapsto n$ induces a group homomorphism $v_{\Afra} : \Kfra(\Afra) \to \Z$.  
Let $\U(\Afra)$ be the kernel of $v_{\Afra}$.  
Then we have $\U(\Afra) = \Afra^{\times}$ and $\U(\Afra)$ is the unique maximal compact open subgroup in $\Kfra(\Afra)$.  
We put $\U^{0}(\Afra) = \U(\Afra)$ and $\U^{n}(\Afra) = 1 + \Pfra(\Afra)^{n}$ for $n \in \Z_{>0}$.  
We also put $e(\Afra|\ofra_{F}) = v_{\Afra}(\varpi_{F})$, and then we have $e(\Afra|\ofra_{F})=de(\Lcal)$ for an $\ofra_{D}$-chain $\Lcal$ in $V$ such that $\Afra = \Afra(\Lcal)$.  

Let $E$ be an extension field of $F$ in $A$.  
Since $A$ is a central simple $F$-algebra, the centralizer $B=\Cent_{A}(E)$ of $E$ in $A$ is a central simple $E$-algebra.  
On the other hand, $V$ is equipped with an $E$-vector space structure via $E \subset A$.  
Since the actions of $E$ and $D$ in $V$ are compatible, $V$ is also equipped with a right $D \otimes_{F} E$-module structure, and then we have $B = \Cent_{A}(E) = \End_{D \otimes_{F} E}(V)$.  

Let $\Afra$ be a hereditary $\ofra_{F}$-order in $A$.  
The order $\Afra$ is called $E$-pure if we have $E^{\times} \subset \Kfra(\Afra)$.  

\begin{prop}[{{\cite[Theorem 1.3]{Br}}}]
For an $E$-pure hereditary $\ofra_{F}$-order $\Afra$ in $A$, the subring $\Bfra = \Afra \cap B$ in $B$ is a hereditary $\ofra_{E}$-order in $B$ with the radical $\Qfra = \Pfra(\Afra) \cap B$.  
\end{prop}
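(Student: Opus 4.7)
The plan is to realize $\Bfra$ explicitly as the hereditary $\ofra_E$-order attached to a lattice chain in $V$ regarded as a module over an appropriate central division $E$-algebra. Fix an $\ofra_D$-chain $\Lcal=(\Lcal_i)_{i\in\Z}$ with $\Afra=\Afra(\Lcal)$; once this chain is shown to be stable under the natural $E$-action on $V$, the proposition will follow by transporting the definitions through a Morita equivalence.

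First, I would check that $E$-purity forces $\ofra_E\cdot\Lcal_i\subset\Lcal_i$ for every $i$. The homomorphism $v_{\Afra}:\Kfra(\Afra)\to\Z$ restricts to $E^{\times}\subset\Kfra(\Afra)$; since $\ofra_E^{\times}$ is a compact subgroup of $E^{\times}$ and $\Z$ is discrete, this restriction vanishes on $\ofra_E^{\times}$, giving $\ofra_E^{\times}\subset\U(\Afra)$ and hence $\ofra_E^{\times}\Lcal_i=\Lcal_i$. Combined with the observation that $v_{\Afra}(\varpi_E)\geq 0$, which follows from the fact that a sufficiently high power of $\varpi_E$ lies in the open subring $\Afra$, this shows $\ofra_E\subset\Afra$, and hence $\ofra_E\cdot\Lcal_i\subset\Lcal_i$.

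Since the $D$- and $E$-actions on $V$ commute, $V$ becomes a $(D\otimes_F E)$-module and each $\Lcal_i$ is an $\ofra_D\otimes_{\ofra_F}\ofra_E$-module. Writing $D\otimes_F E\cong\M_r(D')$ for a central division $E$-algebra $D'$, the isomorphism $B\cong\End_{D'}(V')$ equips $V$ with a right $D'$-module structure $V'$, and Morita-transport sends each $\Lcal_i$ to an $\ofra_{D'}$-lattice in $V'$. After discarding repetitions, this yields an $\ofra_{D'}$-chain $\Lcal'$ in $V'$, and unwinding the definitions shows $\Bfra=\Afra\cap B=\Afra(\Lcal')$. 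The radical identity $\Qfra=\Pfra(\Afra)\cap B=\Pfra(\Bfra)$ then follows because the shifting condition $b\Lcal_i\subset\Lcal_{i+1}$ on $b\in B$ corresponds under Morita transport precisely to $b\in\Pfra_1(\Lcal')$.

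The main technical difficulty lies in the Morita bookkeeping: one must verify that the repetitions in $(\Lcal_i)$ seen from the $\ofra_{D'}$-perspective assemble into a genuine strictly decreasing $\ofra_{D'}$-chain, and that the period computation in $B$ is compatible with $e(\Afra|\ofra_F)=de(\Lcal)$ on the $A$-side. Once this is settled, the remaining identifications amount to routine unwinding of the definitions of $\Afra(\Lcal')$ and $\Pfra_1(\Lcal')$ and matching them against $\Afra\cap B$ and $\Pfra(\Afra)\cap B$ respectively.
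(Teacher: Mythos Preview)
The paper does not give its own proof of this proposition; it is stated with a citation to \cite[Theorem 1.3]{Br} and used as a black box. Your sketch is therefore not competing with anything in the paper, and the approach you outline---showing that $E$-purity forces each $\Lcal_i$ to be an $\ofra_E$-lattice, then passing through the Morita equivalence $D\otimes_F E\cong\M_r(D')$ to realize $\Bfra$ as the order attached to an $\ofra_{D'}$-chain---is precisely the strategy of Broussous's original proof. Your identification of the integral Morita bookkeeping (that $\ofra_D\otimes_{\ofra_F}\ofra_E$-lattices in $V$ correspond to $\ofra_{D'}$-lattices in the Morita-companion module, and that the strict-decrease and period conditions survive the transport) as the nontrivial step is accurate; this is exactly what \cite{Br} works out in detail.
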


For any finite extension field $E$ of $F$, we put $A(E)=\End_{F}(E)$, and then $A(E)$ is a central simple $F$-algebra.  
The field $E$ is canonically embedded in $A(E)$ as a maximal subfield.  
By \cite[1.2]{BK1}, there exists a unique $E$-pure hereditary $\ofra_{F}$-order $\Afra(E)=\{ x \in A(E) \mid x(\pfra_{E}^{i}) \subset \pfra_{E}^{i}, \, i \in \Z \}$ in $A(E)$, which is associated with the $\ofra_{F}$-chain $(\pfra_{E}^{i})_{i \in \Z}$.  
Then we have $v_{\Afra(E)}(\beta) = v_{E}(\beta)$ for $\beta \in E^{\times}$.  

For $\beta \in \bar{F}$, we put $n_{F}(\beta) = -v_{F[\beta]}(\beta) = -v_{\Afra(F[\beta])}(\beta)$ as in \cite[2.3.3]{S1}.  

Let $\Afra$ be a hereditary $\ofra_{F}$-order in $A$ with the radical $\Pfra$.  
For non-negative integer $i, j$ with $\lfloor j/2 \rfloor \leq i \leq j$, the map $1+x \mapsto x$ induces the group isomorphism
\[
	\U^{i+1}(\Afra)/\U^{j+1}(\Afra) \cong \Pfra^{i+1}/\Pfra^{j+1}.  
\]
If $i$ and $j$ are as above and $c \in \Pfra^{-j}$, we can define a character $\psi_{c}$ of $\U^{i+1}(\Afra)$ as
\[
	\psi_{c}(1+x) = \psi \circ \Trd_{A/F}(cx)
\]
for $1+x \in \U^{i+1}(\Afra)$.  
We have $\psi_{c} = \psi_{c'}$ if and only if $c-c' \in \Pfra^{-i}$.  

\subsection{Strata, defining sequences of simple strata}

\begin{defn}[{{\cite[\S 2.1, Remarque 4.1]{S3}}}]
\begin{enumerate}
\item A 4-tuple $[\Afra, n, r, \beta]$ is called a stratum in $A$ if $\Afra$ is a hereditary $\ofra_{F}$-order in $A$, $n$ and $r$ are non-negative integer with $n \geq r$, and $\beta \in \Pfra(\Afra)^{-n}$.  
\item A stratum $[\Afra, n, r, \beta]$ is called pure if the followings hold:  
	\begin{enumerate}
	\item $E=F[\beta]$ is a field.  
	\item $\Afra$ is $E$-pure.  
	\item $n>r$.  
	\item $v_{\Afra}(\beta) = -n$.  
	\end{enumerate}
\item A stratum $[\Afra, n, r, \beta]$ is called simple if one of the followings holds:  
	\begin{enumerate}
	\item $n=r=0$ and $\beta \in \ofra_{F}$.  
	\item $[\Afra, n, r, \beta]$ is pure, and $r < -k_{0}(\beta, \Afra)$, where $k_{0}(\beta, \Afra) \in \Z \cup \{-\infty\}$ is defined as in \cite[\S 2.1]{S3} such that $k_{0}(\beta, \Afra)=-\infty$ if and only if $\beta \in F$, and $v_{\Afra}(\beta) \leq k_{0}(\beta, \Afra)$ for $\beta \notin F$.  
	\end{enumerate}
\end{enumerate}
\end{defn}

\begin{rem}
In \cite[\S 2.1]{S3}, simple strata are assumed be pure.  
By adding strata satisfying (3)(a) to simple strata, we can regard simple types of depth zero as coming from simple strata.  
\end{rem}

\begin{defn}
Strata $[\Afra, n, r, \beta]$ and $[\Afra, n, r, \beta']$ in $A$ are called equivalent if $\beta-\beta' \in \Pfra(\Afra)^{-r}$.  
\end{defn}

\begin{thm}[{{\cite[Th\'eor\`eme 2.2]{S3}}}]
\label{existapp}
Let $[\Afra, n, r, \beta]$ be a pure stratum.  
Then there exists $\gamma \in A$ such that $[\Afra, n, r, \gamma]$ is simple and equivalent to $[\Afra, n, r, \beta]$.  
\end{thm}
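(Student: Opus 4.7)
The plan is to proceed by induction on the quantity $r + k_0(\beta, \Afra) \in \Z$, which measures how far the pure stratum is from being simple. In the base case $r + k_0(\beta, \Afra) < 0$, i.e., $r < -k_0(\beta, \Afra)$, the stratum $[\Afra, n, r, \beta]$ is already simple and we may take $\gamma = \beta$. Note that since $\beta \notin F$ we have $k_0(\beta, \Afra) \geq v_\Afra(\beta) = -n$, so $-k_0(\beta, \Afra) \leq n$ and the inductive parameter lies in a bounded range.

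For the inductive step, suppose $-k_0(\beta, \Afra) \leq r < n$. The goal is to produce $\beta' \in A$ with $\beta' - \beta \in \Pfra(\Afra)^{-r}$ such that $[\Afra, n, v_\Afra(\beta'), \beta']$ is again pure and satisfies either $\beta' \in F$ (which makes the stratum at level $r$ automatically simple, as $k_0 = -\infty$) or $-k_0(\beta', \Afra) > -k_0(\beta, \Afra)$. Iterating this step, the strictly increasing sequence $-k_0(\beta_i, \Afra)$ must eventually exceed $r$, at which point we obtain the desired simple approximation $\gamma$ equivalent to $\beta$.

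The construction of $\beta'$ relies on the canonical splitting of $A$ attached to $\beta$: writing $E = F[\beta]$ and $B = \Cent_A(E)$, a tame corestriction $s_\beta : A \to B$ provides a decomposition $A = B \oplus (1 - s_\beta)(A)$ compatible with the filtration by powers of $\Pfra(\Afra)$. The critical exponent $k_0(\beta, \Afra)$ detects the lowest level at which the ``non-commutative'' complement $(1 - s_\beta)(A)$ carries information not controlled by the $F[\beta]$-adic approximation. One isolates an explicit representative $y$ of this obstruction in $\Pfra(\Afra)^{-r}$ (built from a suitable element of level $k_0(\beta, \Afra)$) and sets $\beta' = \beta - y$; by construction, the residual class of $\beta'$ at the bad level is annihilated, so $-k_0(\beta', \Afra)$ strictly improves.

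The main obstacle is verifying that $\beta'$ still defines a \emph{pure} stratum: that $F[\beta']$ is a field, that $\Afra$ is $F[\beta']$-pure, and that $v_\Afra(\beta') = -n$. Since $\beta' \equiv \beta \pmod{\Pfra(\Afra)^{-r}}$ with $-r > -n$, the principal part of $\beta'$ at level $-n$ coincides with that of $\beta$, which immediately gives $v_\Afra(\beta') = -n$ and, via a Hensel-type argument on the minimal polynomial, ensures that $F[\beta']$ remains a field with the same residue-level behavior as $F[\beta]$; the $F[\beta']$-purity of $\Afra$ then follows from the stability of $\Kfra(\Afra)$ under small perturbations. The delicate point is the precise bookkeeping on the minimal polynomial, which is the content of the non-commutative valuation theory developed in \cite{S3} (extending Bushnell--Kutzko's original argument from \cite{BK1} to the inner-form setting).
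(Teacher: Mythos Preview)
The paper does not give its own proof of this statement: it is quoted verbatim as \cite[Th\'eor\`eme~2.2]{S3} and used as a black box. So there is no ``paper's proof'' to compare against, only the argument in S\'echerre's original paper (which in turn adapts \cite[Theorem~2.4.1]{BK1} to the inner-form setting).

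That said, your sketch has a genuine gap at the inductive step. The standard argument in \cite{BK1} and \cite{S3} does \emph{not} proceed by subtracting an explicit obstruction element $y$ from $\beta$ to strictly improve $-k_0$. Instead, one shows directly that among all pure strata $[\Afra, n, r, \gamma]$ equivalent to the given one, a choice of $\gamma$ minimizing first $e(F[\gamma]/F)$ and then $f(F[\gamma]/F)$ is automatically simple. The point is that if $r \geq -k_0(\beta,\Afra)$ then the very definition of $k_0$ (via the adjoint map $a_\beta: x\mapsto \beta x - x\beta$ and the associated lattice $\mathfrak{N}_k$) guarantees the existence of an equivalent pure stratum with \emph{strictly smaller} field $F[\gamma]\subsetneq F[\beta]$; simplicity of the degree-minimal choice then follows from a separate argument comparing $k_0$ across equivalent strata.

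Your proposed mechanism---``the residual class of $\beta'$ at the bad level is annihilated, so $-k_0(\beta',\Afra)$ strictly improves''---is not substantiated. The invariant $k_0(\beta,\Afra)$ depends on the entire field $F[\beta]$ and the behaviour of $a_\beta$ on the filtration, not on a single coset that can be killed by a correction term; modifying $\beta$ by an element of $\Pfra^{-r}$ while keeping $F[\beta']=F[\beta]$ does not in general change $k_0$ at all (indeed $k_0$ is constant on $\beta + \Pfra^{1-k_0(\beta,\Afra)}$ when the field is unchanged). The improvement comes precisely from passing to a smaller field, which your sketch does not arrange. The Hensel-type remarks about purity and $v_\Afra(\beta')=-n$ are fine, but they are not where the difficulty lies.
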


For $\beta \in \bar{F}$, we put $k_{F}(\beta) = k_{0}(\beta, \Afra(F[\beta]))$ as in \cite[2.3.3]{S1}.  

\begin{prop}[{{\cite[Proposition 2.25]{S1}}}]
\label{relofk0}
Suppose $E=F[\beta]$ can be embedded in $A$.  
We fix an embedding $E \hookrightarrow A$.  
Let $\Afra$ be an $E$-pure hereditary $\ofra_{F}$-order in $A$.  
Then we have $k_{0}(\beta, \Afra) = e(\Afra|\ofra_{F})e(E/F)^{-1}k_{F}(\beta)$.  
\end{prop}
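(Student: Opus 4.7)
The plan is to unwind the definition of $k_{0}(\beta,\Afra)$ and compare it with the reference case $\Afra=\Afra(E)\subset A(E)=\End_{F}(E)$, for which $k_{0}(\beta,\Afra(E))=k_{F}(\beta)$ holds by definition. The main task is to show that, passing from $(\Afra(E),A(E))$ to $(\Afra,A)$, the invariant $k_{0}$ is rescaled by exactly the factor $q:=e(\Afra|\ofra_{F})/e(E/F)$.

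First I would recall from \cite[\S 2.1]{S3} that, outside the trivial case $\beta\in F$, $k_{0}(\beta,\Afra)$ is the largest integer $k>v_{\Afra}(\beta)$ for which the non-commutant
\[
	\nfra_{k}(\beta,\Afra):=\{\,x\in\Afra\mid \beta x-x\beta\in\Pfra(\Afra)^{k}\,\}
\]
is not contained in $\Bfra+\Pfra(\Afra)$, where $B=\Cent_{A}(E)$ and $\Bfra=\Afra\cap B$. The whole proof therefore reduces to tracking how this threshold on $k$ rescales when one changes the ambient order. The key preliminary identity is that for any $E$-pure hereditary $\ofra_{F}$-order $\Afra$, the restriction $v_{\Afra}|_{E^{\times}}$ equals $q\cdot v_{E}$; equivalently $\Pfra(\Afra)^{qj}\cap E=\pfra_{E}^{\,j}$ for all $j\in\Z$. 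In particular $v_{\Afra}(\beta)=q\cdot v_{E}(\beta)=q\cdot v_{\Afra(E)}(\beta)$, so both sides of the claimed equality already have the same sign and the same "degenerate" behaviour.

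The heart of the argument is then to show, for each integer $k$ with $k>v_{\Afra}(\beta)$, the equivalence
\[
	\nfra_{k}(\beta,\Afra)\subseteq \Bfra+\Pfra(\Afra)\quad\Longleftrightarrow\quad \nfra_{\lceil k/q\rceil}(\beta,\Afra(E))\subseteq \ofra_{E}+\Pfra(\Afra(E)).
\]
I would establish this by fixing a trace-orthogonal $B$-$B$-bimodule decomposition $A=B\oplus B^{\perp}$ coming from $\Trd_{A/F}$, observing that $\mathrm{ad}(\beta):=[\beta,\,\cdot\,]$ vanishes on $B$ and is injective on $B^{\perp}$, and then using Broussous' lemma \cite[Theorem 1.3]{Br} to write each $\Pfra(\Afra)^{j}\cap B^{\perp}$ as a $\Bfra$-bimodule whose graded pieces are parametrised exactly as the analogous pieces for $\Afra(E)$, the sole difference being a relabelling of the filtration index by $q$. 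This relabelling, combined with the scaling identity of the previous paragraph applied to the shift induced by multiplication by $\beta$, delivers the stated correspondence of thresholds and hence the formula $k_{0}(\beta,\Afra)=q\cdot k_{F}(\beta)$.

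The main obstacle is the off-diagonal bimodule step: one must check that $\mathrm{ad}(\beta)$ really does act on the graded pieces of $\Pfra(\Afra)^{\bullet}\cap B^{\perp}$ in the "same" way as on the corresponding pieces for $\Afra(E)$, up to rescaling of the grading by $q$. A cleaner alternative that avoids this bimodule bookkeeping is to proceed by induction on the non-minimality of $\beta$ over $F$: when $\beta$ is minimal both $k_{0}(\beta,\Afra)$ and $k_{F}(\beta)$ collapse to $v_{\Afra}(\beta)$ and $v_{E}(\beta)$ respectively, so the identity is immediate from $v_{\Afra}(\beta)=q\cdot v_{E}(\beta)$; the inductive step uses Theorem \ref{existapp} to replace $\beta$ by an equivalent $\gamma$ with $[F[\gamma]:F]$ strictly smaller, together with the invariance of both sides under such a replacement.
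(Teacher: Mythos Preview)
The paper does not prove this proposition: it is quoted verbatim from \cite[Proposition 2.25]{S1} and used as a black box (for instance in the proof of Lemma~\ref{compofk0}). So there is no ``paper's own proof'' to compare against.

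That said, your sketch is broadly in the right direction but has a gap in the alternative inductive route. Your main plan---comparing $\nfra_{k}(\beta,\Afra)$ with $\nfra_{\lceil k/q\rceil}(\beta,\Afra(E))$ via a $B$-$B$-bimodule decomposition of $A$---is essentially how the split analogue \cite[1.4.13]{BK1} is proved, using the $(W,E)$-decomposition of \cite[1.2]{BK1}; S\'echerre's argument in \cite{S1} adapts this to the non-split case. The obstacle you flag (that $\mathrm{ad}(\beta)$ acts ``the same way'' on the off-diagonal graded pieces up to reindexing by $q$) is exactly what those decomposition lemmas are for, so this is the step where you would invoke the structural results of \cite[\S1.5, \S2]{S1} rather than redo them by hand.

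Your proposed shortcut by induction on the length of a defining sequence is circular as written. In the inductive step you want to replace $\beta$ by an approximation $\gamma$ and say ``both sides are invariant under such a replacement''; but the left side becomes $k_{0}(\gamma,\Afra)$, the right side becomes $q\cdot k_{F}(\gamma)$, and these are \emph{different} numbers from $k_{0}(\beta,\Afra)$ and $q\cdot k_{F}(\beta)$ (indeed $-k_{0}(\gamma,\Afra)>-k_{0}(\beta,\Afra)$ by construction of a defining sequence). What you would actually need is that $k_{0}(\beta,\Afra)$ equals $-r$ where $r$ is the level at which $\beta$ first fails to be simple, and that the \emph{same} $r$ (up to the factor $q$) governs $k_{F}(\beta)$; establishing this compatibility of levels between $\Afra$ and $\Afra(E)$ is again the content of the proposition, so the induction does not close without the bimodule comparison you were trying to avoid.
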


The following lemma is used later.  

\begin{lem}
\label{compofk0}
Let $E/F$ be a field extension in $A$, and let $\Afra$ be an $E$-pure hereditary $\ofra_{F}$-order in $A$.  
Then, we have $k_{0}(\gamma, \Afra) = e(\Afra|\ofra_{F})e(E/F)^{-1}k_{0}(\gamma, \Afra(E))$ for any $\gamma \in E$.  
\end{lem}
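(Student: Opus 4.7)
The strategy is to reduce the lemma, which concerns an arbitrary $\gamma\in E$ (so $F[\gamma]$ may be strictly smaller than $E$), to the case covered by Proposition \ref{relofk0}, where the element generates the field. The trick is to apply that proposition with the generating subfield $E'=F[\gamma]$ in two different ambient algebras, namely $A$ and $A(E)$, and then cancel.

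First dispose of the trivial case: if $\gamma\in F$, then by the very definition of $k_{0}$ recalled in the excerpt, both $k_{0}(\gamma,\Afra)$ and $k_{0}(\gamma,\Afra(E))$ equal $-\infty$, and the claimed identity reads $-\infty = -\infty$. So from now on assume $\gamma\notin F$, and set $E'=F[\gamma]$, a subfield of $E$.

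The key observation is that the $E$-pure order $\Afra$ in $A$ is automatically $E'$-pure, because $E'^{\times}\subset E^{\times}\subset\Kfra(\Afra)$; likewise $\Afra(E)\subset A(E)=\End_{F}(E)$ is $E$-pure by construction, and therefore $E'$-pure via the inclusion $E'\subset E\subset A(E)$. Since $\gamma$ generates $E'$ over $F$, Proposition \ref{relofk0} applies to each situation and yields
\[
 k_{0}(\gamma,\Afra)=e(\Afra|\ofra_{F})\,e(E'/F)^{-1}\,k_{F}(\gamma),
\]
\[
 k_{0}(\gamma,\Afra(E))=e(\Afra(E)|\ofra_{F})\,e(E'/F)^{-1}\,k_{F}(\gamma).
\]
Now I would compute $e(\Afra(E)|\ofra_{F})$ from the identity $v_{\Afra(E)}(\beta)=v_{E}(\beta)$ for $\beta\in E^{\times}$ recorded earlier in the excerpt: taking $\beta=\varpi_{F}$ gives $e(\Afra(E)|\ofra_{F})=v_{\Afra(E)}(\varpi_{F})=v_{E}(\varpi_{F})=e(E/F)$.

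Substituting this into the second formula and solving for $k_{F}(\gamma)$, then plugging into the first, produces exactly
\[
 k_{0}(\gamma,\Afra)=e(\Afra|\ofra_{F})\,e(E/F)^{-1}\,k_{0}(\gamma,\Afra(E)),
\]
which is the desired equality. There is no serious obstacle here; the only point that requires minor care is the verification that both orders really are $F[\gamma]$-pure so that Proposition \ref{relofk0} is applicable, and handling separately the degenerate case $\gamma\in F$ where $k_{0}$ is defined to be $-\infty$.
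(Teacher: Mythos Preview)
Your proof is correct and follows essentially the same route as the paper: apply Proposition~\ref{relofk0} once in $A$ and once in $A(E)$, use $e(\Afra(E)|\ofra_{F})=e(E/F)$, and cancel. Your version is slightly more careful in that you explicitly verify the $F[\gamma]$-purity needed to invoke Proposition~\ref{relofk0} and separately dispose of the degenerate case $\gamma\in F$, both of which the paper leaves implicit.
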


\begin{prf}
First, by Proposition \ref{relofk0} we have 
\[
	k_{0}(\gamma, \Afra(E)) = e(\Afra(E)|\ofra_{F})e(F[\gamma]/F)^{-1}k_{F}(\gamma).  
\]
On the other hand, we also have $e(\Afra(E)|\ofra_{F})=e(E/F)$ by definition of $\Afra(E)$.  
Then we obtain
\begin{eqnarray*}
	e(\Afra|\ofra_{F})e(E/F)^{-1}k_{0}(\gamma, \Afra(E)) & = & e(\Afra|\ofra_{F})e(E/F)^{-1}e(E/F)e(F[\gamma]/F)^{-1}k_{F}(\gamma) \\
	& = & e(\Afra|\ofra_{F})e(F[\gamma]/F)^{-1}k_{F}(\gamma) \\
	& = & k_{0}(\gamma, \Afra), 
\end{eqnarray*}
where the last equality also follows from Proposition \ref{relofk0}.  
\end{prf}

\begin{defn}
An element $\beta \in \bar{F}$ is called minimal if $\beta \in F$ or $k_{F}(\beta) = -v_{F}(\beta)$.  
\end{defn}

\begin{defn}
Let $[\Afra, n, r, \beta]$ be a simple stratum in $A$.  
A sequence $\left( [\Afra, n, r_i, \beta_i] \right)_{i=0}^{s}$ is called a defining sequence of $[\Afra, n, r, \beta]$ if
\begin{enumerate}
\item $\beta_0=\beta, r_0=r$, 
\item $r_{i+1}=-k_{0}(\beta_{i}, \Afra)$ for $i=0, 1, \ldots, s-1$, 
\item $[\Afra, n, r_{i+1}, \beta_{i+1}]$ is simple and equivalent to $[\Afra, n, r_{i+1}, \beta_{i}]$ for $i=0, 1, \ldots, s-1$, 
\item $\beta_{s}$ is minimal over $F$.  
\end{enumerate}
\end{defn}

By Theorem \ref{existapp}, for any simple stratum $[\Afra, n, r, \beta]$ there exists a defining sequence of $[\Afra, n, r, \beta]$, as in the case $A$ is split over $F$.  

\subsection{Simple characters}

Let $[\Afra, n, 0, \beta]$ be a simple stratum in $A$.  
Then we can define compact open subgroups $J(\beta, \Afra)$ and $H(\beta, \Afra)$ in $\U(\Afra)$ as in \cite[\S3]{S1}.  
The subgroup $H(\beta, \Afra)$ in $\U(\Afra)$ is also contained in $J(\beta, \Afra)$.  
For $i \in \Z_{\geq 0}$, we put $J^{i}(\beta, \Afra) = J(\beta, \Afra) \cap \U^{i}(\Afra)$ and $H^{i}(\beta, \Afra) = H(\beta, \Afra) \cap \U^{i}(\Afra)$.  

\begin{lem}[{{\cite[\S3.3]{S1}}}]
Let $[\Afra, n, 0, \beta]$ be a simple stratum in $A$.  
If $\beta$ is not minimal, let $([\Afra, n, r_{i}, \beta_{i}])_{i=0}^{s}$ be a defining sequence of $[\Afra, n, 0, \beta]$.  
\begin{enumerate}
\item $J^{i}(\beta, \Afra)$ is normalized by $\Kfra(\Afra) \cap B^{\times}$ for any $i \in \Z_{\geq 0}$.  
\item We have $J(\beta, \Afra) =\U(\Bfra) J^{1}(\beta, \Afra)$, where $\Bfra = \Afra \cap B$.  
\item If $\beta$ is minimal, we have 
\[
	J^{1}(\beta, \Afra) = \U^{1}(\Bfra)\U^{\lfloor (n+1)/2 \rfloor}(\Afra), \, H^{1}(\beta, \Afra) = \U^{1}(\Bfra) \U^{\lfloor n/2 \rfloor +1}(\Afra).
\]  
\item If $\beta$ is not minimal, we have 
\[
	J^{t}(\beta, \Afra) = J^{t}(\beta_{1}, \Afra), \, H^{t'+1}(\beta, \Afra) = H^{t'+1}(\beta_{1}, \Afra), 
\]
where $t = \lfloor (-k_{0}(\beta, \Afra)+1)/2 \rfloor$ and $t' = \lfloor -k_{0}(\beta, \Afra) /2 \rfloor$.  
Moreover, we also have 
\[
	J^{1}(\beta, \Afra) = \U^{1}(\Bfra) J^{t}(\beta_{1}, \Afra), \, H^{1}(\beta, \Afra) = \U^{1}(\Bfra) H^{t'+1}(\beta_{1}, \Afra).  
\]
\end{enumerate}
\end{lem}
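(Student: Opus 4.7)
The plan is to establish all four assertions simultaneously by induction on the length $s$ of a defining sequence $\bigl([\Afra, n, r_i, \beta_i]\bigr)_{i=0}^{s}$ of $[\Afra, n, 0, \beta]$, imitating the recursive construction of $J(\beta,\Afra)$ and $H(\beta,\Afra)$ carried out in \cite[\S3]{S1}. For the base case ($\beta$ minimal, $s=0$), I would adopt (3) as the \emph{definition} of $J^{1}(\beta,\Afra)$ and $H^{1}(\beta,\Afra)$, together with $J(\beta,\Afra) := \U(\Bfra)\, J^{1}(\beta,\Afra)$, so that (2) and (3) hold tautologically. Assertion (1) is then immediate: any $g \in \Kfra(\Afra)\cap B^{\times}$ normalizes each $\U^{j}(\Afra)$ because $g \in \Kfra(\Afra)$, and normalizes $\Bfra = \Afra \cap B$ because conjugation by $g$ preserves both $\Afra$ and $B$, whence it normalizes $\U(\Bfra)$ and $\U^{1}(\Bfra)$.

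For the inductive step, I exploit that $\beta_{1}$ has a strictly shorter defining sequence and that its centralizer algebra $B_{1} = \Cent_{A}(F[\beta_{1}])$ contains $B$. The formulas of (4) are taken as the definitions of $J^{1}(\beta,\Afra)$ and $H^{1}(\beta,\Afra)$ in terms of the inductively-constructed groups for $\beta_{1}$, after which the higher filtration pieces are defined by intersection with $\U^{i}(\Afra)$. The identities $J^{t}(\beta,\Afra) = J^{t}(\beta_{1},\Afra)$ and $H^{t'+1}(\beta,\Afra) = H^{t'+1}(\beta_{1},\Afra)$ will be verified on graded pieces through the isomorphism $1+x \mapsto x$ between $\U^{i+1}(\Afra)/\U^{j+1}(\Afra)$ and $\Pfra^{i+1}/\Pfra^{j+1}$ recalled earlier in the text, using that once $t$ is large enough the extra $\U^{1}(\Bfra)$-factor is absorbed into the $\beta_{1}$-piece. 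Assertion (1) for $\beta$ then reduces to (1) for $\beta_{1}$: by induction, $J^{t}(\beta_{1},\Afra)$ and $H^{t'+1}(\beta_{1},\Afra)$ are normalized by $\Kfra(\Afra) \cap B_{1}^{\times} \supset \Kfra(\Afra) \cap B^{\times}$, and the outer $\U^{1}(\Bfra)$-factor is handled exactly as in the base case. Assertion (2) holds by construction of $J = J^{0}$.

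The hard part will be verifying that the output of this procedure is independent of the choice of defining sequence: if $[\Afra, n, r_{1}, \beta_{1}']$ is another simple stratum equivalent to $[\Afra, n, r_{1}, \beta]$, one needs $J^{t}(\beta_{1},\Afra) = J^{t}(\beta_{1}',\Afra)$ and similarly for $H$. I would attack this by analyzing carefully how the inductively-defined subgroups depend on $\beta_{1}$ modulo $\Pfra(\Afra)^{-r_{1}}$, combined with the intertwining formulas for simple strata of \cite[\S2]{S1} (resting on Broussous's centralizer theorem quoted above), which together show that only the class of $\beta_{1}$ modulo $\Pfra(\Afra)^{-r_{1}}$ enters the construction. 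Once this independence is in place the inductive scheme closes and all four assertions follow, yielding precisely the statement recorded in \cite[\S3.3]{S1}.
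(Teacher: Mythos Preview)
There is nothing in the paper to compare your proposal against: this lemma is not proved here but is quoted verbatim from S\'echerre's \cite[\S3.3]{S1}, where the groups $J(\beta,\Afra)$ and $H(\beta,\Afra)$ are \emph{defined} by the recursive procedure you describe. The present paper simply records the output of that construction as a black box and uses the resulting formulas (notably in Lemma~\ref{presenofHJ}).

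Your sketch is a faithful outline of S\'echerre's original approach: the base case (3) and inductive step (4) are indeed the definitions, (2) is built in, and (1) follows by the normalization argument you give. The independence from the choice of defining sequence is the genuinely nontrivial point, and in \cite{S1} it is handled exactly as you suggest, via the approximation and intertwining results of \cite[\S2]{S1}. So your plan is correct, but you are reconstructing the content of the cited reference rather than supplying something the present paper omitted.
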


\begin{defn}[{{\cite[Proposition 3.47]{S1}}}]
\label{defofsimpch}
Let $[\Afra, n, 0, \beta]$ be a simple stratum.  
We put $q= -k_{0}(\beta, \Afra)$.  
Let $0 \leq t < q$ and we put $t'= \max \{ t, \lfloor q/2 \rfloor \}$.  
If $\beta$ is not minimal over $F$, we fix a defining sequence $\left( [\Afra, n, r_i, \beta_i] \right)_{i=0}^{s}$ of $[\Afra, n, 0, \beta]$.  
The set of simple characters $\mathscr{C}(\beta, t, \Afra)$ consists of characters $\theta$ of $H^{t+1}(\beta, \Afra)$ satisfying the following conditions:  
\begin{enumerate}
\item $\Kfra(\Afra) \cap B^{\times}$ normalizes $\theta$.  
\item $\theta | _{H^{t+1}(\beta, \Afra) \cap \U(\Bfra)}$ factors through $\Nrd_{B/E}$.  
\item If $\beta$ is minimal over $F$, we have $\theta |_{H^{t+1}(\beta, \Afra) \cap \U^{\lfloor n/2 \rfloor +1}(\Afra)} = \psi_{\beta}$.  
\item If $\beta$ is not minimal over $F$, there exists $\theta' \in \mathscr{C}(\beta_{1}, t', \Afra)$ such that $\theta |_{H^{t'+1}(\beta, \Afra)} = \psi_{\beta-\beta_{1}} \theta'$.  
\end{enumerate}
\end{defn}

\begin{rem}
\label{existsimpch}
This definition is well-defined and independent of the choice of a defining sequence by \cite[D\'efinition 3.45, Proposition 3.47]{S1}.  
Moreover, for any simple stratum $[\Afra, n, 0, \beta]$ the set $\mathscr{C}(\beta, 0, \Afra)$ is nonempty by \cite[Corollaire 3.35, D\'efinition 3.45]{S1}.  
\end{rem}

We recall the properties of $\mathscr{C}(\beta, 0, \Afra)$ from \cite{S2}.  
For $\theta \in \mathscr{C}(\beta, 0, \Afra)$, there exists an irreducible $J^{1}(\beta, \Afra)$-representation $\eta_{\theta}$ containing $\theta$, unique up to isomorphism.  
We call $\eta_{\theta}$ the Heisenberg representation of $\theta$.  
We have $\dim \eta_{\theta} = \left( J^{1}(\beta, \Afra) : H^{1}(\beta, \Afra) \right) ^{1/2}$.  
Moreover, there exists an extension $\kappa$ of $\eta_{\theta}$ to $J(\beta, \Afra)$ such that $I_{G}(\kappa)=J^{1}B^{\times}J^{1}$.  
We call $\kappa$ a $\beta$-extension of $\eta_{\theta}$.  
If $\kappa$ is a $\beta$-extension of $\eta_{\theta}$, then any $\beta$-extension of $\eta_{\theta}$ is the form $\kappa \otimes (\chi \circ \Nrd_{B/E})$, where $\chi$ is trivial on $1+\pfra_{E}$ and $\chi \circ \Nrd_{B/E}$ is regarded a character of $J(\beta, \Afra)$ via the isomorphism $J(\beta, \Afra)/J^{1}(\beta, \Afra) \cong \U(\Bfra)/\U^{1}(\Bfra)$.  

\subsection{Maximal simple types}

We state the definition of maximal simple types.  
Recall that for a simple stratum $[\Afra, n, 0, \beta]$ we put $E=F[\beta]$, $B=\Cent_{A}(E)$ and $\Bfra = \Afra \cap B$.  
Since $B$ is a central simple $E$-algebra, there exist $m_{E} \in \Z$ and a division $E$-algebra $D_{E}$ such that $B \cong \M_{m_{E}}(D_{E})$.  

\begin{defn}[{{\cite[\S2.5, \S4.1]{S3}}}]
\label{defofsimpletype}
A pair $(J, \lambda)$ consisting a compact open subgroup $J$ in $G$ and an irreducible $J$-representation $\lambda$ is called a maximal simple type if there exists a simple stratum $[\Afra, n, 0, \beta]$ and irreducible $J$-representations $\kappa$ and $\sigma$ satisfying the following assertions:  
\begin{enumerate}
\item $\Bfra$ is a maximal hereditary $\ofra_{E}$-order in $A$, that is, $\Bfra \cong \M_{m_{E}}(\ofra_{D_{E}})$.  
\item $J = J(\beta, \Afra)$.  
\item $\kappa$ is a $\beta$-extension of $\eta_{\theta}$ for some $\theta \in \mathscr{C}(\beta, 0, \Afra)$.  
\item $\sigma$ is trivial on $J^{1}(\beta, \Afra)$, and when we regard $\sigma$ as a $\GL_{m_{E}}(k_{D_{E}})$-representation via the isomorphism
\[
	J(\beta, \Afra)/J^{1}(\beta, \Afra) \cong \U(\Bfra)/\U^{1}(\Bfra) \cong \GL_{m_{E}}(k_{D_{E}}), 
\]
$\sigma$ is a cuspidal representation of $\GL_{m_{E}}(k_{D_{E}})$.  
\item $\lambda \cong \kappa \otimes \sigma$.  
\end{enumerate}
\end{defn}

\begin{rem}
Let $(J, \lambda)$ be a maximal simple type associated with a simple stratum $[\Afra, 0, 0, \beta]$.  
Then we have $E=F[\beta]=F$, $B = \Cent_{A}(F) = A$ and $\Bfra = \Afra \cap A = \Afra$.  
Since $(J, \lambda)$ is a maximal simple type, $\Afra$ is a maximal hereditary $\ofra_{F}$-order in $A$.  
Moreover, we have $J(\beta, \Afra)=\U(\Afra)$ and $H^{1}(\beta, \Afra) = \U^{1}(\Afra)$.  
Let $\kappa$ and $\sigma$ be as in Definition \ref{defofsimpletype}.  
Since we have $\mathscr{C}(\beta, 0, \Afra) = \{ 1 \}$, there exists a character $\chi$ of $F^{\times}$ trivial on $1+\pfra_{F}$ such that  $\kappa = \chi \circ \Nrd_{A/F}$.  
Then $\kappa \otimes \sigma$ is trivial on $\U^{1}(\Afra)$ and cuspidal as a $\GL_{m}(k_{D})$-representation.  
Therefore $(J, \lambda) = (\U(\Afra), \kappa \otimes \sigma)$ is nothing but the maximal simple type of level 0, defined in \cite[\S2.5]{S3}.  
\end{rem}

\begin{thm}[{{\cite[Theorem 5.5(ii)]{GSZ} and \cite[Th\'eor\`eme 5.21]{S3}}}]
Let $\pi$ be an irreducible representation of $G$.  
Then $\pi$ is supercuspidal if and only if there exists a maximal simple type $(J, \lambda)$ such that $\lambda \subset \pi|_{J}$.  
\end{thm}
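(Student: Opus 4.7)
The plan is to prove the two implications separately; the ``if'' direction follows from intertwining properties of maximal simple types already compiled in the paper, while the ``only if'' direction is the deep exhaustion statement that comprises the bulk of Bushnell--Kutzko's and S\'echerre--Stevens' work.

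For sufficiency, suppose $(J, \lambda) = (J(\beta, \Afra), \kappa \otimes \sigma)$ is a maximal simple type with $\lambda \subset \pi|_{J}$. First I would exhibit an extension $\Lambda$ of $\lambda$ to $\tilde{J} = J \cdot \Kfra(\Bfra)_{\lambda}$, where $\Kfra(\Bfra)_{\lambda}$ is the $\Kfra(\Bfra)$-stabilizer of $\lambda$ under conjugation; since $\Bfra$ is maximal, $\Kfra(\Bfra)/\U(\Bfra) \cong \Z$ is free, so this extension exists with no obstruction. Combining the formula $I_{G}(\kappa) = J^{1} B^{\times} J^{1}$ with the fact that cuspidality of $\sigma$ on $\U(\Bfra)/\U^{1}(\Bfra) \cong \GL_{m_{E}}(k_{D_{E}})$ forces its intertwining in $B^{\times}$ to collapse onto the single relevant double coset, I would deduce $I_{G}(\Lambda) = \tilde{J}$. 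A standard Mackey argument then shows that $\cInd_{\tilde{J}}^{G} \Lambda$ is irreducible, and being compactly induced from a subgroup compact modulo center, it is supercuspidal. Finally, $\pi|_{J} \supset \lambda$ yields, through Frobenius reciprocity and uniqueness-up-to-twist of the extension, $\pi \cong \cInd_{\tilde{J}}^{G} \Lambda'$ for some extension $\Lambda'$, so $\pi$ is supercuspidal.

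For the exhaustion direction, let $\pi$ be supercuspidal. The strategy is an inductive construction on depth. First, using Moy--Prasad filtration analysis on $\pi$, one locates a nondegenerate (equivalently, non-split fundamental) stratum $[\Afra, n, n-1, b]$ whose associated character on $\U^{n-1}(\Afra)/\U^{n}(\Afra)$ occurs in $\pi$. The approximation Theorem \ref{existapp} promotes this to a simple stratum $[\Afra, n, n-1, \beta]$ with $\psi_{\beta}$ occurring in $\pi|_{\U^{\lfloor n/2 \rfloor +1}(\Afra)}$. An inductive descent through a defining sequence $\bigl( [\Afra, n, r_{i}, \beta_{i}] \bigr)_{i}$ then produces a simple character $\theta \in \mathscr{C}(\beta, 0, \Afra)$ occurring in $\pi|_{H^{1}(\beta, \Afra)}$; its unique Heisenberg extension $\eta_{\theta}$ occurs in $\pi|_{J^{1}(\beta, \Afra)}$, and after twisting one finds a $\beta$-extension $\kappa$ on $J(\beta, \Afra)$ whose isotypic component in $\pi|_{J}$ factors as $\kappa \otimes \sigma$ for some irreducible representation $\sigma$ of $J(\beta, \Afra)/J^{1}(\beta, \Afra) \cong \U(\Bfra)/\U^{1}(\Bfra)$.

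The main obstacle is to verify the two maximality conditions in Definition \ref{defofsimpletype}: that $\Bfra$ is maximal as an $\ofra_{E}$-order in $B$, and that $\sigma$ is cuspidal as a representation of the finite reductive group $\GL_{m_{E}}(k_{D_{E}})$. If $\Bfra$ were not maximal, $\Kfra(\Bfra)/\U(\Bfra)$ would contain nontrivial Weyl-type elements producing additional intertwiners of $\kappa \otimes \sigma$; unwinding these through the Hecke algebra would realize $\pi$ as a subquotient of a representation parabolically induced from a proper Levi corresponding to a partial flag of $\ofra_{D_{E}}$-lattices, contradicting supercuspidality. Likewise, a non-cuspidal $\sigma$ would, by parahoric restriction from $\U(\Bfra)$ to the parahoric inverse image of a proper parabolic of $\GL_{m_{E}}(k_{D_{E}})$, exhibit $\pi$ inside a proper parabolically induced representation. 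For inner forms, the combinatorial and building-theoretic bookkeeping that makes this parahoric-to-Levi passage rigorous is precisely what the cited theorem of Grabitz--Silberger--Zink supplies, and this is the most delicate point of the argument.
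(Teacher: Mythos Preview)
The paper does not prove this theorem at all: it is stated with a citation to \cite[Theorem 5.5(ii)]{GSZ} and \cite[Th\'eor\`eme 5.21]{S3} and used as a black box. There is therefore no ``paper's own proof'' to compare your proposal against.

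That said, your outline is a fair high-level summary of the strategy actually carried out in those references. A few remarks on accuracy. For the ``if'' direction, your sketch is essentially the argument of \cite[Th\'eor\`eme 5.21]{S3}: extend $\lambda$ to $\tilde{J}(\lambda)$, compute $I_G(\Lambda)=\tilde{J}(\lambda)$ from the intertwining of $\kappa$ together with cuspidality of $\sigma$, and conclude irreducibility and supercuspidality of the compact induction. For the ``only if'' direction, the exhaustion argument is genuinely deep and your paragraph compresses hundreds of pages; in particular, the passage from a non-split fundamental stratum to a simple character occurring in $\pi$ is not a single application of Theorem~\ref{existapp} but an elaborate refinement procedure (in the inner-form case this is the content of \cite{S1}, \cite{S2}, \cite{S3}, \cite{SS}), and the argument that failure of maximality of $\Bfra$ or cuspidality of $\sigma$ contradicts supercuspidality goes through an explicit Hecke-algebra isomorphism rather than a direct Weyl-element count. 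Your identification of \cite{GSZ} as supplying the level-zero input for inner forms is correct.

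In short: there is no gap to flag in the sense of the assignment, because the paper offers no proof; your sketch is a reasonable roadmap of the cited literature but should not be mistaken for a self-contained argument.
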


We recall the construction of irreducible supercuspidal representations of $G$ from maximal simple types.  
Let $(J, \lambda)$ be a maximal simple type associated with a simple stratum $[\Afra, n, 0, \beta]$.  
Let $\kappa$ and $\sigma$ be as in Definition \ref{defofsimpletype}.  
Since $\Bfra$ is maximal, we have $\Kfra(\Bfra) = \Kfra(\Afra) \cap B^{\times}$ by \cite[Lemme 1.6]{S2}, and then $\Kfra(\Bfra)$ normalizes $J(\beta, \Afra)$.  

We fix $g \in \Kfra(\Bfra)$ with $v_{\Bfra}(g) = 1$.  
Since $g$ normalizes $J(\beta, \Afra)$, we can consider the twist ${}^{g} \sigma$ of $\sigma$ by $g$.  
Let $l_{0}$ be the smallest positive integer such that ${}^{g^{l_{0}}}\sigma \cong \sigma$.  
Then $\tilde{J}(\lambda) = I_{G}(\lambda)$ is the subgroup in $G$ generated by $J$ and $g^{l_{0}}$.  

\begin{thm}[{{\cite[Th\'eor\`eme 5.2]{S3}, \cite[Corollary 5.22]{SS}}}]
\begin{enumerate}
\item For any maximal simple type $(J, \lambda)$, there exists an extension $\Lambda$ of $\lambda$ to $\tilde{J}(\lambda)$.  
\item Let $(\tilde{J}(\lambda), \Lambda)$ be as above.  
Then $\cInd_{\tilde{J}(\lambda)}^{G} \Lambda$ is irreducible and supercuspidal.  
\item For any irreducible supercuspidal representation $\pi$ of $G$, there exists an extension $(\tilde{J}(\lambda), \Lambda)$ of a maximal simple type $(J, \lambda)$ such that $\pi = \cInd_{\tilde{J}(\lambda)}^{G} \Lambda$.  
\end{enumerate}
\end{thm}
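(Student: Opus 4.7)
The statement has three parts, and I would treat them separately, using the structural information about $\tilde{J}(\lambda)$ that has already been assembled: $\tilde{J}(\lambda)$ is generated by $J$ together with $g^{l_{0}}$, where $g \in \Kfra(\Bfra)$ has $v_{\Bfra}(g)=1$ and $l_{0}$ is the order of the $\langle g \rangle$-orbit of $\sigma$. The overall strategy is the usual Bushnell--Kutzko scheme: extend $\lambda$ cyclically to its normalizer-in-intertwining, verify the intertwining is unchanged, deduce irreducibility and supercuspidality from a Mackey / matrix-coefficient argument, and finally invoke an exhaustion theorem.

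For (1), I would argue that the quotient $\tilde{J}(\lambda)/J$ is infinite cyclic, generated by the image of $g^{l_{0}}$. Since ${}^{g^{l_{0}}}\lambda \cong \lambda$ by the definition of $l_{0}$ (using that $g$ normalizes $J$, that $\kappa$ is stable under conjugation by $\Kfra(\Bfra)$ up to twist by an unramified character -- absorbed into the $\beta$-extension -- and that ${}^{g^{l_{0}}}\sigma\cong\sigma$), Schur's lemma provides an operator $T$ on the space of $\lambda$, unique up to scalar, implementing this isomorphism. Setting $\Lambda(g^{l_{0}})=T$ and $\Lambda|_{J}=\lambda$ and extending multiplicatively, the cyclicity of $\tilde{J}(\lambda)/J$ makes this well-defined: the relations to be checked all reduce to scalar relations satisfied by $T$ and can be normalized away by rescaling $T$ (and by absorbing into a central character the scalar by which $Z(G)\cap \tilde{J}(\lambda)$ acts). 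Thus an extension $\Lambda$ exists.

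For (2), the central identity to prove is $I_{G}(\Lambda)=\tilde{J}(\lambda)$. The inclusion $I_{G}(\Lambda)\subset I_{G}(\lambda)=\tilde{J}(\lambda)$ is automatic because $\Lambda$ restricts to $\lambda$; the reverse holds because $\Lambda$ itself is defined on $\tilde{J}(\lambda)$. Combined with the fact that $\tilde{J}(\lambda)$ is open and that $\Lambda$ is irreducible, the standard Mackey criterion for compact induction (see \cite{BK1}, \S 4) then yields irreducibility of $\cInd_{\tilde{J}(\lambda)}^{G}\Lambda$. Supercuspidality I would deduce from Casselman's criterion via matrix coefficients: because $\Bfra$ is maximal, $\tilde{J}(\lambda)$ is open and compact modulo the center $Z(G)=F^{\times}$ (it sits inside $\Kfra(\Bfra)J$ whose image in $G/F^{\times}$ is compact), so matrix coefficients of $\cInd_{\tilde{J}(\lambda)}^{G}\Lambda$ are compactly supported modulo center, forcing the representation to be supercuspidal.

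Part (3), the exhaustion, is the real obstacle and where I expect to have to invoke the main theorem of \cite{S3}, \cite{SS}: every irreducible supercuspidal representation $\pi$ of $G=\GL_{m}(D)$ contains some maximal simple type $(J,\lambda)$. Granting this, the recipe is standard. By Frobenius reciprocity, $\Hom_{J}(\lambda,\pi)\neq 0$, which by the intertwining identity $I_{G}(\lambda)=\tilde{J}(\lambda)$ and irreducibility of $\pi$ forces $\Hom_{\tilde{J}(\lambda)}(\Lambda,\pi)\neq 0$ for some (in fact unique up to twist by an unramified character of $E^{\times}/1+\pfra_{E}$ composed with the reduced norm) extension $\Lambda$ of $\lambda$. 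This gives a non-zero map $\cInd_{\tilde{J}(\lambda)}^{G}\Lambda\to\pi$; since source and target are both irreducible by (2), the map is an isomorphism. The hardest input is thus the exhaustion theorem itself, whose proof in \cite{S3}, \cite{SS} proceeds by attaching to $\pi$ a simple stratum through a detailed analysis of non-zero Jacquet restrictions of $\pi$ to subgroups $\U^{n+1}(\Afra)$ and the classification of strata containing the characters appearing there -- a genuinely non-trivial argument that I would not attempt to reproduce here.
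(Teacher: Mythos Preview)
The paper does not prove this theorem; it is stated as a citation of results from S\'echerre \cite[Th\'eor\`eme 5.2]{S3} and S\'echerre--Stevens \cite[Corollary 5.22]{SS}, with no argument given. So there is no ``paper's own proof'' to compare your proposal against.

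That said, your sketch is a reasonable outline of how these results are established in the cited references, and the structure you describe (cyclic extension using $\tilde{J}(\lambda)/J\cong\Z$, Mackey irreducibility via $I_G(\Lambda)=\tilde{J}(\lambda)$, supercuspidality from compactness modulo center, and exhaustion as a black box) matches the standard Bushnell--Kutzko scheme that S\'echerre and S\'echerre--Stevens follow. One small point: in part~(1), since $\tilde{J}(\lambda)/J$ is genuinely free on one generator (no power of $g^{l_0}$ lies in $J$, as $v_{\Bfra}(g^{l_0 n})=l_0 n\neq 0$), there are in fact no scalar relations to normalize away; the extension is automatic once $T$ is chosen. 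In part~(3), your passage from $\Hom_J(\lambda,\pi)\neq 0$ to the existence of an extension $\Lambda$ with $\Hom_{\tilde{J}(\lambda)}(\Lambda,\pi)\neq 0$ is correct but deserves one more sentence: since $\tilde{J}(\lambda)$ normalizes $J$ and stabilizes the isomorphism class of $\lambda$, the $\lambda$-isotypic subspace of $\pi$ is $\tilde{J}(\lambda)$-stable and decomposes as a sum of extensions of $\lambda$, so at least one such $\Lambda$ occurs.
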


\subsection{Concrete presentation of open subgroups}

Above we defined open subgroups $H^{1}(\beta, \Afra), J(\beta, \Afra)$ and $\tilde{J}(\lambda)$.  
In this subsection, we define another subgroup $\hat{J}(\beta, \Afra)$ and obtain the concrete presentation of some groups, which is used later.  

\begin{defn}
\label{defofJhat}
Let $[\Afra, n, 0, \beta]$ be a simple stratum with $\Bfra$ is maximal.  
Then we put $\hat{J}(\beta, \Afra) = \Kfra(\Bfra)J(\beta, \Afra)$.  
\end{defn}

\begin{rem}
\label{rem_for_Jhat}
\begin{enumerate}
\item Since $\Kfra(\Bfra)$ normalizes $J(\beta, \Afra)$, the set $\hat{J}(\beta, \Afra)$ is also a subgroup in $G$.  
We have $\Kfra(\Bfra) \cap J(\beta, \Afra) = \U(\Bfra)$, and then 
\[
	\hat{J}(\beta, \Afra)/J(\beta, \Afra) \cong \Kfra(\Bfra)/\U(\Bfra) \cong \Z.  
\]
\item Let $(J, \lambda)$ be a maximal simple type associated with $[\Afra, n, 0, \beta]$.  
Then we have $\tilde{J}(\lambda) \subset \hat{J}(\beta, \Afra)$.  
The group $\hat{J}(\beta, \Afra)$ only depends on $[\Afra, n, 0, \beta]$, while $\tilde{J}(\lambda)$ also depends on $\lambda$ in general.  
\item In the condition in (2), furthermore suppose $G=\GL_{N}(F)$.  
In this case, the group $\Kfra(\Bfra)$ is generated by $\U(\Bfra)$ and $E^{\times}$ which are contained in $I_{G}(\lambda)=\tilde{J}(\lambda)$.  
Then we have $\Kfra(\Bfra) \subset \tilde{J}(\lambda)$ and $\hat{J}(\lambda) = \Kfra(\Bfra) J(\beta, \Afra) = \tilde{J}(\lambda) \subset \hat{J}(\beta, \Afra)$, which implies that $\tilde{J}(\lambda) = \hat{J}(\beta, \Afra)$ is independent from the choice of $\lambda$ for $G=\GL_{N}(F)$ case.  
\end{enumerate}
\end{rem}

We describe $H^{1}(\beta, \Afra), J(\beta, \Afra)$ and $\hat{J}(\beta, \Afra)$ concretely, using a defining sequence $([\Afra, n, r_{i}, \beta_{i}])_{i=0}^{s}$ of $[\Afra, n, 0, \beta]$.  
We put $B_{\beta_{i}} = \Cent_{A}(F[\beta_{i}])$ for $i=0, \ldots, s$.  

\begin{lem}
\label{presenofHJ}
Let $[\Afra, n, 0, \beta]$ be a maximal simple stratum of $A$ and $\left( [\Afra, n, r_i, \beta_i] \right) _{i=0}^{s}$ be a defining sequence of $[\Afra, n, 0, \beta]$.  
Then we have following concrete presentations of groups:  
\begin{enumerate}
\item $H^{1}(\beta, \Afra)=\left( B_{\beta_{0}}^{\times} \cap \U^{ \lfloor \frac{r_0}{2} \rfloor +1}(\Afra) \right) \cdots \left( B_{\beta_{s}} ^{\times} \cap \U^{ \lfloor \frac{r_{s}}{2} \rfloor +1}(\Afra) \right) \U^{\lfloor \frac{n}{2} \rfloor +1}(\Afra)$.  
\item $J(\beta, \Afra)=\U(\Bfra) \left( B_{\beta_{1}}^{\times} \cap \U^{ \lfloor \frac{r_1+1}{2} \rfloor}(\Afra) \right) \cdots \left( B_{\beta_{s}} ^{\times} \cap \U^{ \lfloor \frac{r_{s}+1}{2} \rfloor}(\Afra) \right) \U^{ \lfloor \frac{n+1}{2} \rfloor}(\Afra)$.  
\item $\hat{J}(\beta, \Afra)=\Kfra(\Bfra) \left( B_{\beta_{1}}^{\times} \cap \U^{ \lfloor \frac{r_1+1}{2} \rfloor}(\Afra) \right) \cdots \left( B_{\beta_{s}} ^{\times} \cap \U^{ \lfloor \frac{r_{s}+1}{2} \rfloor}(\Afra) \right) \U^{ \lfloor \frac{n+1}{2} \rfloor}(\Afra)$.  
\end{enumerate}
\end{lem}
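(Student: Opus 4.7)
The plan is to prove the lemma by induction on the length $s$ of the defining sequence, using the recursive description of $H^{m}(\beta, \Afra)$ and $J^{m}(\beta, \Afra)$ provided by the lemma of \cite[\S 3.3]{S1} quoted just above. Assertion (3) will follow at once from (2): since $\hat{J}(\beta, \Afra) = \Kfra(\Bfra) J(\beta, \Afra)$ and $\Kfra(\Bfra) \supset \U(\Bfra)$, one multiplies (2) on the left by $\Kfra(\Bfra)$ and uses $\Kfra(\Bfra) \U(\Bfra) = \Kfra(\Bfra)$ to absorb the leading factor. So only (1) and (2) require work.

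For the base case $s = 0$, the element $\beta = \beta_0$ is minimal, $r_0 = 0$ and $B_{\beta_0} = B$, so the first factor in each product equals $B_{\beta_0}^\times \cap \U^1(\Afra) = \U^1(\Bfra)$. Assertions (1) and (2) then reduce directly to part (3) of the recursive lemma, combined with the identities $J(\beta, \Afra) = \U(\Bfra) J^1(\beta, \Afra)$ and $\U(\Bfra) \U^1(\Bfra) = \U(\Bfra)$.

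For the inductive step $s \geq 1$, $\beta$ is not minimal and $([\Afra, n, r_i, \beta_i])_{i=1}^s$ is a defining sequence of length $s-1$ for $[\Afra, n, r_1, \beta_1]$. Part (4) of the cited lemma rewrites $H^1(\beta, \Afra) = \U^1(\Bfra)\, H^{\lfloor r_1/2 \rfloor + 1}(\beta_1, \Afra)$ and $J^1(\beta, \Afra) = \U^1(\Bfra)\, J^{\lfloor (r_1+1)/2 \rfloor}(\beta_1, \Afra)$. To feed these into the induction one needs a slightly strengthened hypothesis that describes $H^{m+1}(\beta, \Afra)$ and $J^{m+1}(\beta, \Afra)$ at arbitrary admissible levels $m$ by the same product formula, with the first exponent $\lfloor r_0/2 \rfloor + 1$ (respectively $\lfloor (r_0+1)/2 \rfloor$) replaced by $m+1$. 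This strengthened statement is proved by the very same induction on $s$, handling the regime $m \geq \lfloor r_1/2 \rfloor$ via the equalities $H^{m+1}(\beta, \Afra) = H^{m+1}(\beta_1, \Afra)$ and $J^{m+1}(\beta, \Afra) = J^{m+1}(\beta_1, \Afra)$ coming from part (4), and the regime $m < \lfloor r_1/2 \rfloor$ by appending the factor $B_{\beta_0}^\times \cap \U^{m+1}(\Afra)$ in front and applying the hypothesis to $\beta_1$ at level $\lfloor r_1/2 \rfloor$. Specialising to $m = 0$ then recovers (1) and, after multiplying by $\U(\Bfra)$, also (2).

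The main obstacle will be the bookkeeping with floor functions across the two regimes $m < \lfloor r_1/2 \rfloor$ and $m \geq \lfloor r_1/2 \rfloor$ in the strengthened hypothesis, and verifying that the written product is actually stable under the group law. The latter is \emph{a posteriori} guaranteed by the identification with the known subgroups $H^{m+1}(\beta, \Afra)$ and $J^{m+1}(\beta, \Afra)$, but this apparently circular argument must be unwound carefully so that at each inductive stage the product on the right-hand side is genuinely a subgroup before it is equated with the one on the left.
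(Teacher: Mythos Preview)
Your overall scheme (induction on $s$, base case from part (3) of the recursive lemma, inductive step from part (4), then deduce (2) from the $J^{1}$ formula and (3) from (2)) matches the paper's. The difference is that you strengthen the induction hypothesis to cover $H^{m+1}(\beta,\Afra)$ and $J^{m+1}(\beta,\Afra)$ at all admissible levels $m$, whereas the paper keeps the hypothesis at level $m=0$ only.

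The paper's trick is simpler and avoids both obstacles you flag. Apply the induction hypothesis to $\beta_{1}$ at level $0$ to get
\[
H^{1}(\beta_{1},\Afra)=\U^{1}(\Bfra_{\beta_{1}})\bigl(B_{\beta_{2}}^{\times}\cap\U^{\lfloor r_{2}/2\rfloor+1}(\Afra)\bigr)\cdots\bigl(B_{\beta_{s}}^{\times}\cap\U^{\lfloor r_{s}/2\rfloor+1}(\Afra)\bigr)\,\U^{\lfloor n/2\rfloor+1}(\Afra).
\]
Now use the monotonicity $r_{1}<r_{2}<\cdots<r_{s}<n$: every factor past the first already lies in $\U^{\lfloor r_{1}/2\rfloor+1}(\Afra)$. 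Since $H^{\lfloor r_{1}/2\rfloor+1}(\beta_{1},\Afra)=H^{1}(\beta_{1},\Afra)\cap\U^{\lfloor r_{1}/2\rfloor+1}(\Afra)$, intersecting the displayed product with $\U^{\lfloor r_{1}/2\rfloor+1}(\Afra)$ only replaces the leading $\U^{1}(\Bfra_{\beta_{1}})$ by $B_{\beta_{1}}^{\times}\cap\U^{\lfloor r_{1}/2\rfloor+1}(\Afra)$ (if $g=ux$ with $x$ in the tail and $g\in\U^{\lfloor r_{1}/2\rfloor+1}(\Afra)$, then $u=gx^{-1}\in\U^{\lfloor r_{1}/2\rfloor+1}(\Afra)$ too). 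Multiplying on the left by $\U^{1}(\Bfra)=B_{\beta_{0}}^{\times}\cap\U^{1}(\Afra)$ then gives (1). The same argument with $\lfloor(r_{i}+1)/2\rfloor$ in place of $\lfloor r_{i}/2\rfloor+1$ yields $J^{1}(\beta,\Afra)$, hence (2) and (3).

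This route has no ``two regimes'' case split, no floor-function bookkeeping beyond the single chain of inequalities, and no separate verification that the product is a group: at each stage the product is \emph{obtained} as a known subgroup ($H^{1}(\beta_{1},\Afra)$, then its intersection with $\U^{\lfloor r_{1}/2\rfloor+1}(\Afra)$, then $H^{1}(\beta,\Afra)$), so closure is automatic. Your strengthened-hypothesis approach would also go through, but it is working harder than necessary.
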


\begin{prf}
We show (1) by induction on the length $s$ of a defining sequence.  
When $s=0$, that is, $\beta$ is minimal over $F$, then $H^{1}(\beta, \Afra) = \U^{1}(\Bfra) \U^{\lfloor n/2 \rfloor+1}(\Afra)$.  
Since we have $\U^{1}(\Bfra)=1+(B \cap \Pfra)=B \cap (1+\Pfra) = B \cap \U^{1}(\Afra)$ and $r_0=0$, the equality in (1) for minimal $\beta$ holds.  
Suppose $s > 0$, that is, $\beta$ is not minimal over $F$.  
Then $H^{1}(\beta, \Afra) = \U^{1}(\Bfra) H^{\lfloor r_1/2 \rfloor +1}(\beta_1, \Afra)$.  
By induction hypothesis, we have 
\[
	H^{1}(\beta_1, \Afra)=\U^1(\Bfra_{\beta_1}) \left( B_{\beta_{2}}^{\times} \cap \U^{ \lfloor \frac{r_2}{2} \rfloor +1}(\Afra) \right) \cdots \left( B_{\beta_{s}} ^{\times} \cap \U^{ \lfloor \frac{r_{s}}{2} \rfloor +1}(\Afra) \right) \U^{\lfloor \frac{n}{2} \rfloor +1}(\Afra).  
\]
Since $r_1 < r_2 < \ldots < r_s < n$, we have $\lfloor r_1/2 \rfloor + 1 \leq \lfloor r_2 /2 \rfloor + 1 \leq \ldots \leq \lfloor r_s /2 \rfloor + 1 \leq \lfloor n/2 \rfloor + 1$ and
\[
	 B_{\beta_{2}}^{\times} \cap \U^{ \lfloor \frac{r_2}{2} \rfloor +1}(\Afra) , \cdots , B_{\beta_{s}} ^{\times} \cap \U^{ \lfloor \frac{r_{s}}{2} \rfloor +1}(\Afra) , \U^{\lfloor \frac{n}{2} \rfloor +1}(\Afra) \subset \U^{\lfloor \frac{ r_{1}}{2} \rfloor +1} (\Afra).   
\]
Therefore we obtain
\begin{eqnarray*}
	H^{\lfloor \frac{r_{1}}{2} \rfloor +1}(\beta_1, \Afra) & = & \left( \U^1(\Bfra_{\beta_1}) \cap \U^{\lfloor \frac{r_{1}}{2} \rfloor +1} (\Afra) \right) \left( B_{\beta_{2}}^{\times} \cap \U^{ \lfloor \frac{r_2}{2} \rfloor +1}(\Afra) \right) \cdots \\
	 & & \hspace{80pt} \cdots \left( B_{\beta_{s}} ^{\times} \cap \U^{ \lfloor \frac{r_{s}}{2} \rfloor +1}(\Afra) \right) \U^{\lfloor \frac{n}{2} \rfloor +1}(\Afra) \\
	 & = & \left( B_{\beta_{1}}^{\times} \cap \U^{ \lfloor \frac{r_1}{2} \rfloor +1}(\Afra) \right) \cdots \left( B_{\beta_{s}} ^{\times} \cap \U^{ \lfloor \frac{r_{s}}{2} \rfloor +1}(\Afra) \right) \U^{\lfloor \frac{n}{2} \rfloor +1}(\Afra), 
\end{eqnarray*}
and the equality in (1) for non-minimal $\beta$ also holds.  

Similarly, we can show that 
\[
	J^{1}(\beta, \Afra)=\U^{1}(\Bfra) \left( B_{\beta_{1}}^{\times} \cap \U^{ \lfloor \frac{r_1+1}{2} \rfloor}(\Afra) \right) \cdots \left( B_{\beta_{s}} ^{\times} \cap \U^{ \lfloor \frac{r_{s}+1}{2} \rfloor}(\Afra) \right) \U^{ \lfloor \frac{n+1}{2} \rfloor}(\Afra).  
\]
Then (2) and (3) are deduced from the fact $J(\beta, \Afra)=\U(\Bfra) J^{1}(\beta, \Afra)$ and $\hat{J}(\beta, \Afra) = \Kfra(\Bfra)J(\beta, \Afra)$.  
\end{prf}

\section{Yu's construction of types for tame supercuspidal representations}
\label{Yu's_types}

In this section, we recall how to construct Yu's types from \cite{Yu}.  
Let $G$ be a connected reductive group over $F$.  

\subsection{Admissible sequences}

\begin{defn}
Let $(G^{i})=(G^{0}, \ldots, G^{d})$ be a sequence of subgroup schemes in $G$ over $F$.  
We call $(G^{i})$ is a tame twisted Levi sequence if $G^{0} \subset G^{1} \subset \cdots \subset G^{d}=G$ and there exists a tamely ramified extension $E$ of $F$ such that $G^{i} \times _{F} E$ is a split Levi subgroup in $G \times _{F} E$ for $i=0, \ldots, d$.  
\end{defn}

Let $\vec{G}=(G^{0}, \ldots, G^{d})$ be a tame twisted Levi sequence in $G$.  
Then there exist a maximal torus $T$ in $G^{0}$ over $F$ and a tamely ramified, finite Galois extension $E$ over $F$ such that $T \times_{F} E$ is split.  
For $i=0, \ldots, d$, we put $\Phi_{i}=\Phi(G^{i}, T; E) \cup \{0\}$.  
For $\alpha \in \Phi_{d} \setminus \{ 0\} = \Phi(G, T; E)$, we denote by $G_{\alpha}$ the root subgroup in $G_{E}$ defined by $\alpha$.  
Let $G_{\alpha}=T$ if $\alpha = 0$.  
Let $\gfra_{\alpha}$ be the Lie algebra of $G_{\alpha}$, which is a Lie subalgebra in $\gfra_{E}$, and let $\gfra_{\alpha}^{*}$ be its dual.  

Let $\vec{\rbf}=(\rbf_{0}, \rbf_{1}, \ldots, \rbf_{d}) \in \tilde{\R}^{d+1}$.  
Then we can define a map $f_{\vec{\rbf}} : \Phi_{d} \to \tilde{\R}$ by $f_{\vec{\rbf}}(\alpha) = \rbf_{i}$ if $i = \min \{ j \mid \alpha \in \Phi_{j} \}$.  

A sequence $\vec{\rbf}=(\rbf_{0}, \ldots, \rbf_{d}) \in \tilde{\R}^{d+1}$ is called an admissible sequence if and only if there exists $\nu \in \{ 0, \ldots, d \}$ such that 
\[
	0 \leq \rbf_{0} = \ldots = \rbf_{\nu}, \, \frac{1}{2}\rbf_{\nu} \leq \rbf_{\nu+1} \leq \ldots \leq \rbf_{d}.  
\]

Let $x$ be in the apartment $A(G, T, E) \subset \Bscr^{E}(G, E)$.  
Then we can determine the filtrations $\{ G_{\alpha}(E)_{x,r} \}_{r \in \tilde{\R}_{\geq 0}}$ on $G_{\alpha}(E)$, $\{ \gfra_{\alpha}(E)_{x,r} \}_{r \in \tilde{\R}}$ on $\gfra_{\alpha}(E)$, and $\{ \gfra_{\alpha}^{*}(E)_{x,r} \}_{r \in \tilde{\R}}$ on $\gfra_{\alpha}^{*}(E)$.  

We denote by $\vec{G}(E)_{x, \vec{\rbf}}$ the subgroup in $G(E)$ generated by $G_{\alpha}(E)_{x, f_{\vec{\rbf}}(\alpha)}$ ($\alpha \in \Phi_{d}$).  

By taking $x \in A(G, T, E) \cap \Bscr^{E}(G, E)$, we can determine a valuation on the root datum of $(G, T, E)$ in the sense of \cite{BT}.  
By restricting this valuation,  we can also define a valuation on the root datum of $(G^{i}, T, E)$.  
Then we can determine $x_{i} \in \Bscr^{E}(G^{i}, E)$ by the valuation, uniquely up to $X^{*}(G^{i}) \otimes \R$.  
When we take $x_{i}$ in such a way, we can determine an affine, $G^{i}(E)$-equivalent embedding  $j_{i} : \Bscr^{E}(G^{i}, E) \to \Bscr^{E}(G, E)$ such that $j_{i}(x_{i})=x$. This embedding depends on the choice of $x$.  
We identify $x_{i}$ with $x$ via $j_{i}$.  

Now $E|F$ is a tamely ramified Galois extension. To consider subgroups in $G(F)$, we also assume $x \in \Bscr^{E}(G, E)^{\Gal(E/F)}$, that is, $x \in \Bscr^{E}(G, F)$.  
Then we can determine the Moy--Prasad filtration \cite{MP1}, \cite{MP2} on $G^{i}(F), \gfra^{i}(F)$ and $(\gfra^{i})^{*}(F)$ by $x$.  
We put $\vec{G}(F)_{x,\vec{\rbf}} = \vec{G}(E)_{x, \vec{\rbf}} \cap G(F)$.  

\begin{prop}[{{\cite[2.10]{Yu}}}]
The group $\vec{G}(F)_{x, \vec{\rbf}}$ is independent of the choice of $T$.  
If $\vec{\rbf}$ is increasing with $\rbf_{0} > 0$, then we have 
\[
	\vec{G}(F)_{x, \vec{\rbf}} = G^{0}(F)_{x, \rbf_{0}} G^{1}(F)_{x, \rbf_{1}} \cdots G^{d}(F)_{x, \rbf_{d}}.  
\]
\end{prop}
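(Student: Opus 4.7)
The plan is to separate the two claims and handle them in turn, working first over the splitting extension $E$ and then descending to $F$.

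For independence of $T$, I would show that any two maximal $F$-tori $T, T'$ of $G^{0}$ splitting over tamely ramified extensions and sharing $x$ in their apartments are conjugate by an element of the parahoric $G^{0}(F)_{x, 0}$. Working over a tame Galois extension $E$ containing both splitting fields, conjugation by such an element permutes the root subgroups while preserving the Moy--Prasad filtration at $x$, and because $G^{i}$ is $F$-defined the subset $\Phi_{i} \subset \Phi_{d}$ is stable under this permutation. Hence the generating set of $\vec{G}(E)_{x, \vec{\rbf}}$ is unchanged up to this conjugation, so intersecting with $G(F)$ yields the same $F$-subgroup regardless of the choice of $T$.

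For the product formula, the inclusion $\vec{G}(F)_{x, \vec{\rbf}} \supset G^{0}(F)_{x, \rbf_{0}} \cdots G^{d}(F)_{x, \rbf_{d}}$ is immediate: for $\alpha \in \Phi_{j}$ with $j \leq i$ one has $G_{\alpha}(E)_{x, \rbf_{i}} \subset G_{\alpha}(E)_{x, \rbf_{j}} = G_{\alpha}(E)_{x, f_{\vec{\rbf}}(\alpha)}$, which is a generator of $\vec{G}(E)_{x, \vec{\rbf}}$, so every $G^{i}(F)_{x, \rbf_{i}}$ sits inside $\vec{G}(F)_{x, \vec{\rbf}}$. For the reverse inclusion I would work over $E$ and show by successive approximation that any word in the generators of $\vec{G}(E)_{x, \vec{\rbf}}$ can be rearranged as a product $g_{0} g_{1} \cdots g_{d}$ with $g_{i} \in G^{i}(E)_{x, \rbf_{i}}$. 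The essential input is the Moy--Prasad commutator bound
\[
[G^{i}(E)_{x, r}, G^{j}(E)_{x, s}] \subset G^{\max(i,j)}(E)_{x, r+s},
\]
which is available because the hypothesis $\rbf_{0} > 0$ makes every group in sight pro-unipotent; applying it repeatedly lets one push stray terms from lower-depth factors into deeper filtration layers, and the resulting iteration converges in the compact group $\vec{G}(E)_{x, \vec{\rbf}}$.

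The main obstacle I expect is the descent from $E$ to $F$: having obtained a decomposition $g = g_{0} g_{1} \cdots g_{d}$ over $E$ for an element $g \in G(F)$, one must arrange that each $g_{i}$ actually lies in $G^{i}(F)_{x, \rbf_{i}}$. The natural approach is a Galois averaging argument, combined with the vanishing of $H^{1}(\Gal(E/F), \cdot)$ on successive Moy--Prasad graded pieces $\gfra^{i}(E)_{x, \rbf_{i}}/\gfra^{i}(E)_{x, \rbf_{i}+}$, which holds because $E/F$ is tame. One corrects the decomposition modulo deeper and deeper layers, producing at each stage an $F$-rational replacement of $g_{i}$, and passes to the limit. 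The bookkeeping of these successive corrections --- making sure the correction at depth $r$ does not destroy the $F$-rationality achieved at shallower depths --- is the delicate point, but tameness of $E/F$ is precisely what allows the procedure to close up.
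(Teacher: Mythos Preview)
The paper does not prove this proposition at all: it is quoted verbatim from Yu \cite[2.10]{Yu} and used as a black box. There is therefore no ``paper's own proof'' to compare against. Your sketch is broadly in the spirit of Yu's original argument (conjugacy of tori to establish independence, commutator estimates and successive approximation for the product formula, tame descent to pass from $E$ to $F$), so if anything you have outlined more than the present paper contains.
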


\subsection{Generic elements, generic characters}
Let $r$ and $ r'  $ be two elements in $ \tilde{\R}_{>0}$ with $  r \leq  r' \leq 2r$.  
We put $G(F)_{x,r:r'}=G(F)_{x,r}/G(F)_{x,r'}$ and $\gfra(F)_{x,r:r'}=\gfra(F)_{x,r}/\gfra(F)_{x,r'}$.  
Then we have a group isomorphism $G(F)_{x,r:r'} \cong \gfra(F)_{x,r:r'}$, cf.\cite[Corollary 2.4]{Yu}. 

\begin{rem}
The above isomorphism is often called 'Moy-Prasad isomorphism'. Let us mention that in \cite[Theorem 4.3]{MRR} and \cite{Ma23d}, Moy-Prasad-like isomorphisms, called 'congruent isomorphisms', are proved for group schemes using dilatations of schemes. In \cite[Theorem
13.5.1]{KP}, the authors prove Moy-Prasad isomorphisms using congruent isomorphisms.  We refer to \cite{DMdS} for a survey on the theory of algebraic dilatations, including references to pioneering works such as \cite{Yu2}.
\end{rem}

Let $S$ be a subgroup of $G(F)$ between $G(F)_{x,r/2+}$ and $G(F)_{x,r+}$, and let $\sfra$ be the sublattice of $\Lie(G)$ between $\gfra(F)_{x,r/2+}$ and $\gfra(F)_{x,r+}$ such that $\sfra/\gfra(F)_{x,r+} \cong S/G(F)_{x,r+}$.  

\begin{defn}
A character $\Phi$ of $S/G(F)_{x,r+}$, with respect to $\psi$, is realized by $X^{*} \in \Lie^{*}(G)_{x,-r}$ if $\Phi$ is equal to 
\[
	\xymatrix{
		S/G(F)_{x,r+} \cong \sfra/\gfra(F)_{x,r+} \ar[r]^-{X^{*}} & F \ar[r]^-{\psi} & \C^{\times} \\
	}.  
\]
\end{defn}

Let $G'$ be a tame twisted Levi subgroup in $G$.   
The Lie algebra $\underline{\Lie}(G')$ and its dual $\underline{\Lie^{*}}(G')$ are equipped with canonical adjoint actions of the group scheme $G'$.  
Then the functor of fixed point $(\underline{\Lie^{*}}(G'))^{G'}$ is representable by a scheme (cf. Notation). We now consider $(\underline{\Lie^{*}}(G'))^{G'}(F)$ as a subset of $\underline{\Lie^{*}}(G')(F)=\Lie ^* (G')$.
  
To define $G$-generic characters of depth $r$ of $G'$, we define $G$-generic elements of depth $r$ in $(\underline{\Lie^{*}}(G'))^{G'}(F)$. 
For this, following \cite[§8]{Yu}, as corrected in \cite[Rem.4.1.3]{FKS} and \cite[Def. 2.1]{Fide21}, we consider the conditions \textbf{GE0}, \textbf{GE1} and \textbf{GE2}.  

We start with \textbf{GE0}.

\begin{defn} 
Let $X^{*} \in (\underline{\Lie^{*}}(G'))^{G'}(F)$.  
We say $X^{*}$
 satisfies \textbf{GE0} with depth $r$ if for some (equivalently, every by \cite[Lemma 2.3]{Fide21}) point $x \in \Bscr^{E} (G', F)$ we have $X^* \in \Lie ^* (G' ) _{x, -r}$.
\end{defn}

Let $E$ be a finite, tamely ramified extension of $F$ and $T$ be an $F$-torus in $G'$ such that $T \times_{F} E$ is maximal and split. 
Let $\alpha \in \Phi(G, T; \bar{F})$.   
Then the derivation $\dr \check{\alpha}$ is an $\bar{F}$-linear map from $\underline{\Lie} (\mathbb{G}_{m})(\bar{F}) \cong \bar{F}$ to $\Lie(T \times_{F} \bar{F})$.  
We obtain $H_{\alpha}=\dr \check{\alpha}(1)$ as an element in $\Lie(T \times_{F} \bar{F})$.  

Here, we recall the condition \textbf{GE1}.  
Let $X^{*} \in (\underline{\Lie^{*}}(G'))^{G'}(F)$.  
Then we can regard $X^{*} \in \Lie^{*}(G')$ as above.  
We put $X_{\bar{F}}^{*}=X^{*} \otimes_{F} 1 \in \Lie^{*}(G') \otimes_{F} \bar{F} = \Lie^{*}(G' \times_{F} \bar{F})$.  
Since $T \subset G'$, we have $H_{\alpha} \in \Lie(G' \times_{F}\bar{F}) = \Lie(G') \otimes_{F} \bar{F}$.  
Therefore we obtain $X_{\bar{F}}^{*} (H_{\alpha}) \in \bar{F}$.  

\begin{defn}
Let $X^{*} \in (\underline{\Lie^{*}}(G'))^{G'}(F)$.  
We say $X^{*}$ satisfies \textbf{GE1} with depth $r$ if $\ord \left( X_{\bar{F}}^{*} (H_{\alpha}) \right) = -r$ for all $\alpha \in \Phi(G, T; \bar{F}) \setminus \Phi(G', T; \bar{F})$.  
\end{defn}

We also have to consider the condition \textbf{GE2} defined in \cite[\S 8]{Yu}.  
However, in our case if \textbf{GE1} holds, then \textbf{GE2} automatically holds. We use the notion of torsion prime of a root datum as defined in \cite[\S 7]{Yu}

\begin{prop}[{{\cite[Lemma 8.1]{Yu}}}]
If the residual characteristic of $F$ is not a torsion prime for the root datum of $G$, then \textbf{GE1} implies \textbf{GE2}. 
\end{prop}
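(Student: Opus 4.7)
The plan is to unpack the condition \textbf{GE2} and show that, under the torsion prime hypothesis, the valuation statement in \textbf{GE1} can be upgraded to the algebraic non-degeneracy statement \textbf{GE2}. Recall that \textbf{GE2} is essentially the requirement that the image $\bar{X}^{*}$ of $X^{*}$ in the quotient $\Lie^{*}(G')_{x,-r}/\Lie^{*}(G')_{x,(-r)+}$, viewed as a functional on the reductive quotient $\mathsf{G}_{x}^{0}$ at $x$ attached to $G$ via Moy--Prasad, has the property that the centralizer in $\mathsf{G}_{x}^{0}$ of $\bar{X}^{*}$ coincides with the reductive quotient $(\mathsf{G}')_{x}^{0}$ attached to $G'$. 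So the whole game is to reduce from characteristic zero to the residue field and verify a centralizer statement there.

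First, I would trivialize over the tame extension $E$ so that $T$ becomes split and all the coroots $H_{\alpha}$ make sense integrally; by Galois descent it suffices to work over $E$. Since $X^{*}$ lies in $\Lie^{*}(Z(G')^{\circ})$, it is automatically invariant under $G'$ and hence the centralizer in $\mathsf{G}_{x}^{0}$ obviously contains $(\mathsf{G}')_{x}^{0}$. The real content is the opposite inclusion, and for this one analyzes the action on root subspaces: for each $\alpha \in \Phi(G,T;\bar{F}) \setminus \Phi(G',T;\bar{F})$, \textbf{GE1} tells us precisely that $X_{\bar{F}}^{*}(H_{\alpha})$ has exact valuation $-r$, so after reducing mod the maximal ideal, the coefficient of $\bar{X}^{*}$ against the reduced coroot direction is a nonzero element of the residue field. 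This would immediately rule out the corresponding root subgroup from the centralizer of $\bar{X}^{*}$, provided that the reduced coroot $\bar{H}_{\alpha}$ is itself nonzero in the Lie algebra of the residual torus.

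This last proviso is exactly where the torsion prime hypothesis enters: a prime $p$ is a torsion prime for the root datum of $G$ precisely when some coroot can become zero (or some integrality pathology arises) after reduction mod $p$; excluding those primes guarantees that the reduction map on cocharacter lattices is injective modulo $p$, so $\bar{H}_{\alpha}\neq 0$ and the pairing $\bar{X}^{*}(\bar{H}_{\alpha})\in k_{F}^{\times}$ really does detect the root subgroup. Combining this with the standard description of the centralizer of a semisimple functional in a split reductive group over a field of non-torsion characteristic (where the centralizer is itself reductive and its root system is the set of $\alpha$ with $\bar{X}^{*}(\bar{H}_{\alpha})=0$) pins down the centralizer of $\bar{X}^{*}$ as the reductive quotient of $G'$, which is exactly \textbf{GE2}.

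The main obstacle I expect is the last step: matching the residue-field centralizer computation with the Moy--Prasad reductive quotient of $G'$ at $x$. This requires compatibility between Yu's concrete embedding $j_{i}:\Bscr^{E}(G^{i},E)\to\Bscr^{E}(G,E)$ and the identification of $\Lie^{*}(Z(G')^{\circ})$ as a $G$-stable subspace of $\Lie^{*}(G)$, together with the identification of residual root systems; under the non-torsion-prime assumption these identifications are well-behaved by the general theory of Bruhat--Tits, but verifying them carefully is the delicate part, and for the formal write-up I would defer to the proof given in \cite[Lemma 8.1]{Yu}.
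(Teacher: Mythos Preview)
The paper does not give its own proof of this proposition; it simply records the statement with a citation to \cite[Lemma 8.1]{Yu}. Your final sentence, deferring to Yu for the formal write-up, is therefore exactly what the paper does, and in that sense your proposal is consistent with the paper.

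That said, your sketch misidentifies what \textbf{GE2} actually is. In Yu's formulation, \textbf{GE2} is \emph{not} a statement about centralizers in the Moy--Prasad reductive quotient $\mathsf{G}_{x}^{0}$; it is the requirement that the stabilizer of (the residue of) $X^{*}$ in the Weyl group $W=W(G,T)$ coincides with the Weyl group $W'=W(G',T)$. This is a purely root-theoretic condition, independent of the point $x$, which is precisely why Yu's Lemma 8.1 reduces to a clean argument in the coroot lattice modulo $p$. Your core intuition is correct---\textbf{GE1} forces the residues $\overline{X^{*}(H_{\alpha})}$ to be nonzero in the residue field for $\alpha\notin\Phi(G',T)$, and the torsion-prime hypothesis guarantees that $\Z\Phi^{\vee}/\Z(\Phi')^{\vee}$ has no $p$-torsion so that these coroots remain independent after reduction---but the conclusion one draws from this is about Weyl-group stabilizers, not about centralizers in $\mathsf{G}_{x}^{0}$. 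The ``main obstacle'' you anticipate (compatibility with the building embeddings $j_{i}$ and the reductive quotient at $x$) is therefore a red herring: no such compatibility is needed, and redirecting your argument toward the Weyl-group statement would both simplify it and bring it in line with Yu's actual proof.
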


\begin{prop}[{{\cite[Corollary 1.13]{Ro}}}]
If a root datum is type A, then the set of torsion primes for the datum is empty.  
\end{prop}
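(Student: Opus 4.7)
The plan is to unwind the definition of torsion prime and then carry out a direct lattice-theoretic check for type~$A$. Recall that in the sense used in \cite{Ro} (and consistent with the conventions of Steinberg and Borel), a prime $p$ is a torsion prime for a root datum $(X, \Phi, X^\vee, \Phi^\vee)$ if there exists a closed subsystem $\Psi \subset \Phi$ such that the quotient $\Z\Phi^\vee / \Z\Psi^\vee$ has a nonzero element of order $p$. Since this condition depends only on $\Phi$ together with its closed subsystems, the claim reduces to a combinatorial computation on the standard model of $\Phi = A_{n-1}$, and in particular is independent of the isogeny class of the underlying datum.

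First I would realize $A_{n-1}$ inside $\R^n$ with roots $e_i - e_j$ for $i \neq j$, so that the root lattice (which coincides with the coroot lattice under the usual self-duality of type~$A$) is
\[
L := \Bigl\{ (a_1, \ldots, a_n) \in \Z^n : \sum_{i} a_i = 0 \Bigr\}.
\]
Next, I would classify the closed subsystems: for each partition $\pi = \{B_1, \ldots, B_s\}$ of $\{1, \ldots, n\}$, set $\Psi_\pi = \{ e_i - e_j : i, j \text{ lie in the same block of } \pi \}$; a short argument using the closedness axiom together with the fact that $i \sim j \iff e_i - e_j \in \Psi$ is an equivalence relation (transitivity comes from closedness) shows that every closed subsystem of $A_{n-1}$ is of this form. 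The heart of the proof is then the explicit computation
\[
L \longrightarrow \Bigl\{ (c_1, \ldots, c_s) \in \Z^s : \sum_{k} c_k = 0 \Bigr\}, \qquad (a_i) \longmapsto \Bigl( \sum_{i \in B_k} a_i \Bigr)_{k=1}^{s},
\]
which is surjective with kernel exactly $\Z\Psi_\pi$, so $L / \Z\Psi_\pi \cong \Z^{s-1}$ is free. Hence no prime divides any torsion subgroup of $L/\Z\Psi_\pi$, and the set of torsion primes is empty.

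The only obstacle I foresee is bookkeeping: one must ensure that the version of ``torsion prime'' used in \cite{Ro} agrees with the lattice-theoretic formulation above, and that the argument is unaffected by the possible gap between $\Z\Phi^\vee$ and the full cocharacter lattice $X^\vee$ of the datum. Since the definition only involves quotients of $\Z\Phi^\vee$ by sublattices $\Z\Psi^\vee$ with $\Psi \subset \Phi$ closed, the computation above settles all cases simultaneously and the proof is complete once this matching is verified.
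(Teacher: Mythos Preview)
The paper does not actually prove this proposition: it is stated as a citation of \cite[Corollary~1.13]{Ro} and used as a black box, with no argument given. Your proposal therefore goes well beyond what the paper does, supplying a self-contained lattice-theoretic proof where the paper simply appeals to Steinberg.

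Your argument is correct for the definition you state. The classification of closed symmetric subsystems of $A_{n-1}$ via partitions is standard and your verification (transitivity from closedness, symmetry from $-\Psi=\Psi$) is sound; the computation of $\Z\Phi^\vee/\Z\Psi_\pi^\vee \cong \Z^{s-1}$ via the block-sum map is clean and complete. The one genuine caveat is the one you yourself flag: Steinberg's notion of torsion prime in \cite{Ro} is phrased in terms of torsion in centralizers of semisimple elements (equivalently, in terms of $\pi_1$ of closed connected subgroups), and one has to know that this unwinds to the quotient-lattice criterion you use. That equivalence is itself a result of \cite{Ro}, so strictly speaking your proof still leans on Steinberg for the definitional bridge; once that is granted, your direct computation replaces the case analysis in \cite{Ro} by a one-line argument specific to type~$A$.
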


From these propositions, we obtain the following corollary.  

\begin{cor}
\label{GE1toGE2}
If the root datum of $G$ is type A, then \textbf{GE1} implies \textbf{GE2}.    
\end{cor}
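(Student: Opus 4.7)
The proof is essentially a direct chaining of the two propositions cited immediately before the statement, so the plan is short. First I would observe that by the Proposition attributed to \cite[Corollary 1.13]{Ro}, if the root datum of $G$ is of type A, then the set of torsion primes for this root datum is empty. In particular, the residual characteristic $p$ of $F$ (whatever it is) cannot lie in an empty set, so $p$ is automatically not a torsion prime for the root datum of $G$.

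Next I would invoke \cite[Lemma 8.1]{Yu} (stated above as the preceding Proposition): under the hypothesis that the residual characteristic is not a torsion prime, condition \textbf{GE1} implies \textbf{GE2}. Combining these two facts yields the corollary immediately.

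There is no real obstacle here; the statement is purely a logical combination of two cited results. The only thing to be careful about is making sure the notion of \emph{torsion prime} used in \cite{Ro} matches the one used in the hypothesis of \cite[Lemma 8.1]{Yu}; both refer to torsion primes of the (absolute) root datum in the standard sense of Steinberg/Springer, so this matches and no further argument is needed.
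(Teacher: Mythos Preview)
Your proposal is correct and matches the paper's approach exactly: the paper simply states that the corollary follows from the two preceding propositions (\cite[Lemma 8.1]{Yu} and \cite[Corollary 1.13]{Ro}), and your chaining of these two results is precisely that argument.
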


\begin{defn}
Let $X^{*} \in  (\underline{\Lie^{*}}(G'))^{G'}(F) \subset \Lie ^* (G')$.
The linear form $X^{*}$ is called $G$-generic of depth $r$ if and only if conditions \textbf{GE0}, \textbf{GE1} and \textbf{GE2} hold.  
\end{defn}

Eventually, we can define generic characters.  

\begin{defn}
Let $r \in \R_{>0}$. A character $\Phi$ of $G'(F)$ is called $G$-generic of  depth $r$ relative to $x$  if $\Phi |_{G'(F)_{x,r+}}$ is trivial, $\Phi |_{G'(F)_{x,r}}$ is non-trivial, and there exists a $G$-generic element of depth $r$ $X^{*} \in (\underline{\Lie^{*}}(G'))^{G'}(F)$  such that $\Phi$ is realized by $X^{*}$ when $\Phi$ is regarded as a character of $G'(F)_{x,r:r+}$.  
\end{defn}

\subsection{Yu data}

Let $d \in \Z_{\geq 0}$.  

A 5-tuple $\Psi = \left( x, (G^{i})_{i=0}^{d}, (\rbf_{i})_{i=0}^{d}, (\Phibf_{i})_{i=0}^{d}, \rho \right)$ is called a Yu datum if $\Psi$ satisfies the following conditions:  

\begin{itemize}
\item The sequence $(G^{i})_{i=0}^{d}$ is a tame twisted Levi sequence such that $Z(G^{i})/Z(G)$ is anisotropic for $i=0, \ldots, d$ and 
\[
	G^{0} \subsetneq G^{1} \subsetneq \cdots \subsetneq G^{d} = G.  
\]
\item We have $x \in \Bscr^{E}(G^{0}, F) \cap A(G, T, E)$, where $T$ is a maximal $F$-torus in $G$ which splits over some tamely ramified extension $E$ of $F$.  
\item For $i=0, \ldots, d$, the number $\rbf_{i} \in \R$ such that 
\[
	0=\rbf_{-1} < \rbf_{0} < \ldots < \rbf_{d-1} \leq \rbf_{d}.  
\]
\item For $i=0, \ldots, d-1$, the character $\Phibf_{i}$ of $G^{i}(F)$ is $G^{i+1}$-generic relative to $x$ of depth $\rbf_{i}$.  
If $\rbf_{d-1} \neq \rbf_{d}$, the character $\Phibf_{d}$ of $G^{d}(F)$ is of depth $\rbf_{d}$.  
If $\rbf_{d-1} = \rbf_{d}$, the character $\Phibf_{d}$ of $G^{d}(F)$ is trivial.  
\item The irreducible representation $\rho$ of $G^{0}(F)_{[x]}$ is trivial on $G^{0}(F)_{x,0+}$ but nontrivial on $G^{0}(F)_{x}$, and $\cInd_{G^{0}(F)_{[x]}}^{G^{0}(F)} \rho$ is irreducible and supercuspidal.  
\end{itemize}

\subsection{Yu's construction}
\label{Yuconst}
In this subsection, we construct Yu's type by using some data from a Yu datum.  
Let $\Psi = \left( x, (G^{i})_{i=0}^{d}, (\rbf_{i})_{i=0}^{d}, (\Phibf_{i})_{i=0}^{d}, \rho \right)$ be a Yu datum.  

First, Yu constructed subgroups in $G$, which some representations are defined over.  
\begin{defn}
\label{defofKi}
For $i=0, \ldots, d$, let $\mathbf{s}_{i} = \rbf_{i}/2$. Put $\mathbf{s}_{-1}=0$.
\begin{enumerate}
\item 
$\begin{array}{rcl}
	K_{+}^{i} & = & G^{0}(F)_{x, 0+} G^{1}(F)_{x,\mathbf{s}_{0}+} \cdots G^{i}(F)_{x, \mathbf{s}_{i-1}+} \\
	& = & (G^{0}, \ldots, G^{i})(F)_{x,(0+, \mathbf{s}_{0}+, \ldots, \mathbf{s}_{i-1}+)}.  
\end{array}$
\item 
$\begin{array}{rcl}
	{}^{\circ}K^{i} & = & G^{0}(F)_{x, 0} G^{1}(F)_{x, \mathbf{s}_0} \cdots G^{i}(F)_{x, \mathbf{s}_{i-1}} \\
	& = & G^{0}(F)_{x, 0} (G^{1}, \ldots, G^{i})(F)_{x, (\mathbf{s}_{0}, \ldots, \mathbf{s}_{i-1})}.  
\end{array}$
\item $K^{i}=G^{0}(F)_{[x]}G^{1}(F)_{x, \mathbf{s}_0} \cdots G^{i}(F)_{x, \mathbf{s}_{i-1}}=G^{0}(F)_{[x]} {}^{\circ}K^{i}$.  Recall that $G^{0}(F)_{[x]}$ denotes the stabiliser of $[x]$ in $G^0(F)$. 
\end{enumerate}
\end{defn}

\begin{prop}
For any $i=0, \ldots, d$, the groups $K_{+}^{i}$ and ${}^{\circ} K^{i}$ are compact, and $K^{i}$ is compact modulo center.  
\end{prop}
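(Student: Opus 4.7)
The plan is to express both $K_{+}^{i}$ and ${}^{\circ}K^{i}$ as continuous images of compact product spaces under multiplication in $G(F)$, and to bound $K^{i}$ modulo $Z(G)$ by invoking the standard boundedness of point stabilizers in the reduced building.

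First I would verify that every factor appearing in the definitions is compact. For any $j \in \{0, 1, \ldots, d\}$ and any $r > 0$, the subgroup $G^{j}(F)_{x, r}$ is pro-$p$, hence compact; in particular each $G^{j}(F)_{x, \mathbf{s}_{j-1}}$ and $G^{j}(F)_{x, \mathbf{s}_{j-1}+}$ is compact. The parahoric $G^{0}(F)_{x, 0}$ is an extension of a finite reductive group (over the residue field of $F$) by the pro-$p$ group $G^{0}(F)_{x, 0+}$, so it is also compact. By the preceding proposition quoted from \cite[2.10]{Yu}, applied to the tame twisted Levi subsequence $(G^{1}, \ldots, G^{i})$ with the increasing admissible sequence $(\mathbf{s}_{0}, \ldots, \mathbf{s}_{i-1})$ (which has $\mathbf{s}_{0} > 0$), the iterated product $G^{1}(F)_{x, \mathbf{s}_{0}} \cdots G^{i}(F)_{x, \mathbf{s}_{i-1}}$ is a subgroup of $G(F)$; since $G^{0}(F)_{x, 0} \subset G^{0}(F)_{x}$ normalizes each $G^{j}(F)_{x, \mathbf{s}_{j-1}}$ (preservation of Moy--Prasad filtrations by stabilizers of $x$), the set ${}^{\circ}K^{i}$ is a subgroup, and it is the continuous image of the compact product $G^{0}(F)_{x, 0} \times G^{1}(F)_{x, \mathbf{s}_{0}} \times \cdots \times G^{i}(F)_{x, \mathbf{s}_{i-1}}$ under multiplication, hence compact. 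Applying the same proposition to the admissible sequence $(0+, \mathbf{s}_{0}+, \ldots, \mathbf{s}_{i-1}+)$ yields the analogous conclusion for $K_{+}^{i}$.

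For the claim on $K^{i}$, the key fact is that $G^{0}(F)_{[x]}$ is compact modulo $Z(G)(F)$. The standard theory of Bruhat--Tits buildings gives that the stabilizer in $G^{0}(F)$ of any point in the reduced building $\Bscr^{R}(G^{0}, F)$ is compact modulo $Z(G^{0})(F)$. Since the Yu datum carries the hypothesis that $Z(G^{0})/Z(G)$ is anisotropic, the quotient $Z(G^{0})(F)/Z(G)(F)$ is compact, so $G^{0}(F)_{[x]}/Z(G)(F)$ is compact. Multiplying by the already compact ${}^{\circ}K^{i}$ then shows that $K^{i}$ is compact modulo $Z(G)$.

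The only delicate point in this plan is the correct identification of the iterated-product formula for $\vec{G}(F)_{x, \vec{\rbf}}$ with the subgroup structure needed here; the proposition at the end of \S\ref{Yu's_types} handles this, modulo the technicality that the sequence relevant to ${}^{\circ}K^{i}$ formally begins with $\mathbf{s}_{-1} = 0$, which is why we isolate $G^{0}(F)_{x, 0}$ and apply the iterated-product proposition only to the remaining factors. The rest is routine compactness of finite products and the anisotropy hypothesis.
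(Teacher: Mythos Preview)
The paper states this proposition without proof, treating it as a standard consequence of Yu's construction. Your argument is correct and supplies exactly the details one would expect: compactness of each Moy--Prasad filtration subgroup, the product formula from \cite[2.10]{Yu} to ensure the iterated product is a group, continuity of multiplication to pass compactness to the image, and the anisotropy hypothesis on $Z(G^{0})/Z(G)$ to upgrade ``compact modulo $Z(G^{0})$'' to ``compact modulo $Z(G)$'' for $K^{i}$. Your handling of the technicality that the sequence for ${}^{\circ}K^{i}$ begins with $0$ rather than a strictly positive entry is also appropriate.
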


Yu also defined subgroups in $G(F)$, which ``fill the gap" between subgroups defined as above.  

\begin{defn}
For $i=1, \ldots, d$, 
\begin{enumerate}
\item $J^{i}=(G^{i-1}, G^{i})(F)_{x, (\rbf_{i-1}, \mathbf{s}_{i-1})}$, 
\item $J_{+}^{i}=(G^{i-1}, G^{i})(F)_{x, (\rbf_{i-1}, \mathbf{s}_{i-1}+)}$.  
\end{enumerate}
\end{defn}
Note that in general $J^i$ is different from $G^i(F)_{x,\mathbf{s}_{i-1}}$.
Then, we have $K^{i}J^{i+1}=K^{i+1}$ and $K_{+}^{i}J_{+}^{i+1}=K_{+}^{i+1}$ for $i=0, \ldots, d-1$.  

Next, Yu defined characters $\hat{\Phibf}_{i}$ of $K_{+}^{d}$.  
The Lie algebra $\gfra(F)$ of $G(F)$ is equipped with a canonical $G(F)$-action.  
In particular, $Z(G^{i})^{\circ}(F)$ acts on $\gfra(F)$ by restricting the $G(F)$-action.  
Then $Z(G^{i})^{\circ}(F)$-fixed part of $\gfra(F)$ is equal to the Lie algebra $\gfra^{i}(F)$ of $G^{i}(F)$.  
Moreover, we have a decomposition $\gfra(F) = \gfra^{i}(F) \oplus \mathfrak{n}^{i}(F)$ as a $Z(G^{i})^{\circ}(F)$-representation.  
This decomposition is well-behaved on the Moy--Prasad filtration:  we have $\gfra(F)_{x,s} = \gfra^{i}(F)_{x,s} \oplus \mathfrak{n}^{i}(F)_{x,s}$ for any $s \in \tilde{\R}$, where $\mathfrak{n}^{i}(F)_{x,s} \subset \mathfrak{n}^{i}(F)$.  
Let $\pi_{i}:\gfra(F) = \gfra^{i}(F) \oplus \mathfrak{n}^{i}(F) \to \gfra^{i}(F)$ be the projection.  
Then $\pi_{i}$ induces $\gfra(F)_{x,\mathbf{s}_{i}+:\rbf_{i}+} \to \gfra^{i}(F)_{x,\mathbf{s}_{i}+:\rbf_{i}+}$, and we obtain a group homomorphism 
\[
	\xymatrix{
		\tilde{\pi}_{i}:G(F)_{x,\mathbf{s}_{i}+} \ar[r] & G(F)_{x,\mathbf{s}_{i}+:\rbf_{i}+} \ar[r]^-{\pi_{i}} & G^{i}(F)_{x,\mathbf{s}_{i}+:\rbf_{i}+}.  
	}
\]
Here, Yu defined a character $\hat{\Phibf}_{i}$ of $K_{+}^{d}$ as
\begin{eqnarray*}
	\hat{\Phibf}_{i} |_{K_{+}^{d} \cap G^{i}(F)} & = & \Phibf_{i}, \\
	\hat{\Phibf}_{i} |_{K_{+}^{d} \cap G(F)_{x, \mathbf{s}_{i}+}} & = & \Phibf_{i} \circ \tilde{\pi}_{i}, 
\end{eqnarray*}
where $K^{d}_{+}$ is generated by $K_{+}^{d} \cap G^{i}(F)$ and $K_{+}^{d} \cap G(F)_{x, \mathbf{s}_{i}+}$ as we have
\begin{eqnarray*}
	K_{+}^{d} \cap G^{i}(F) & = & G^{0}(F)_{x, 0+} G^{1}(F)_{x,\mathbf{s}_{0}+} \cdots G^{i}(F)_{x, \mathbf{s}_{i-1}+} = K_{+}^{i}, \\
	K_{+}^{d} \cap G(F)_{x, \mathbf{s}_{i}+} & = & G^{i+1}(F)_{x, \mathbf{s}_{i}+} \cdots G^{d}(F)_{x, \mathbf{s}_{d-1}+}.  
\end{eqnarray*}

Using $\hat{\Phibf}_{i}$, Yu constructed a representation $\rho_{j}$ of $K_{j}$ for $j=0, \ldots, d$.  

\begin{lem}[{{\cite[\S 4]{Yu}}}]
\label{Phitilde}
Let $0 \leq i \leq d-1$.  
There is an irreducible representation $\tilde{\Phibf}_{i}$ of $K^{i} \ltimes J^{i+1}$ such that 
\begin{enumerate}
\item $\tilde{\Phibf}_{i} | _{1 \ltimes J_{+}^{i+1}}$ is $\hat{\Phibf}_{i} | _{J_{+}^{i+1}}$-isotypic, and
\item $\tilde{\Phibf}_{i} | _{K_{+}^{i} \ltimes 1}$ is $\mathbf{1}$-isotypic.  
\end{enumerate}
\end{lem}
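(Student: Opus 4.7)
The plan is to realize $\tilde{\Phibf}_{i}$ as a Heisenberg--Weil representation attached to the character $\hat{\Phibf}_{i}$ and the finite-group quotient $V := J^{i+1}/J^{i+1}_{+}$. The key geometric input is that $\hat{\Phibf}_{i}$, together with the commutator of $J^{i+1}$, equips $V$ with a non-degenerate alternating $\C^{\times}$-valued form which the conjugation action of $K^{i}$ preserves.

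First I would set up the symplectic structure. The Moy--Prasad isomorphism gives $J^{i+1}/J^{i+1}_{+} \cong \gfra(F)_{x,\mathbf{s}_{i}:\mathbf{s}_{i}+} \cap \mathfrak{n}^{i}(F)$ (plus a trivial piece coming from $G^{i}(F)_{x,\rbf_{i}}/G^{i}(F)_{x,\rbf_{i}}$), so $V$ is a finite-dimensional $\mathbb{F}_{p}$-vector space. Standard commutator estimates for Moy--Prasad groups show $[J^{i+1}, J^{i+1}] \subset J^{i+1}_{+}$, so the commutator descends to a map $V \times V \to J^{i+1}_{+}/[J^{i+1}, J^{i+1}_{+}]$; composition with $\hat{\Phibf}_{i}$ yields an alternating pairing $\langle\,,\,\rangle\colon V \times V \to \C^{\times}$. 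Non-degeneracy is the crux. Transporting the pairing to the Lie algebra via Moy--Prasad and decomposing by root spaces for a maximal $F$-torus $T \subset G^{0}$ compatible with $x$, the pairing between the $\alpha$- and $(-\alpha)$-components for $\alpha \in \Phi(G^{i+1}, T; \bar{F}) \setminus \Phi(G^{i}, T; \bar{F})$ becomes $\psi(c_{\alpha} \cdot X^{*}_{\bar{F}}(H_{\alpha}))$ for a unit $c_{\alpha}$ and $X^{*}$ a $G^{i+1}$-generic element realizing $\Phibf_{i}$. Condition \textbf{GE1} then says $\ord(X^{*}_{\bar{F}}(H_{\alpha})) = -\rbf_{i}$, which forces the pairing to be non-trivial on each root component and hence non-degenerate.

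Next I would apply the Heisenberg--Weil construction. Setting $N := \ker(\hat{\Phibf}_{i}|_{J^{i+1}_{+}})$, the quotient $H := J^{i+1}/N$ is a Heisenberg $p$-group with center $J^{i+1}_{+}/N$ and abelianization $V$, and by Stone--von Neumann there is a unique irreducible representation $\eta_{i}$ of $H$ with the given central character, which I pull back to $J^{i+1}$. Since $K^{i}$ normalizes both $J^{i+1}$ and $J^{i+1}_{+}$ and fixes $\hat{\Phibf}_{i}|_{J^{i+1}_{+}}$, its conjugation action on $V$ preserves $\langle\,,\,\rangle$ and produces a homomorphism $K^{i} \to \mathrm{Sp}(V)$. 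The associated (honest, since $p$ is odd) Weil representation of $\mathrm{Sp}(V) \ltimes H$ then pulls back along $K^{i} \ltimes J^{i+1} \to \mathrm{Sp}(V) \ltimes H$ to the desired $\tilde{\Phibf}_{i}$; it is irreducible because its restriction to $J^{i+1}$ is already the irreducible Heisenberg representation $\eta_{i}$. Property (1) is immediate from the central character of $\eta_{i}$. For property (2) I would check that $K^{i}_{+}$ maps to the identity in $\mathrm{Sp}(V)$, which amounts to $[K^{i}_{+}, J^{i+1}] \subset J^{i+1}_{+}$; this follows from the Moy--Prasad commutator bounds combined with the strict positivity of the depths appearing in the definition of $K^{i}_{+}$.

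The main obstacle is the non-degeneracy of the symplectic form, which is where \textbf{GE1} must be brought to bear, through the explicit formula for $H_{\alpha}$ and the identification of the group commutator with the Lie bracket modulo higher-order terms. A secondary delicate point is lifting $K^{i} \to \mathrm{Sp}(V)$ to a genuine (not merely projective) action on $\eta_{i}$; in the classical framework this is handled by verifying that the metaplectic cocycle trivializes, which is automatic in our setting because the residual characteristic is odd.
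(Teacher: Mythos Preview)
Your proposal is correct and follows precisely the approach the paper invokes. The paper does not supply its own proof of this lemma; it simply cites \cite[\S 4]{Yu}, and later (in the proof of the dimension proposition) recalls that $\tilde{\Phibf}_{i}$ is obtained as the pullback of the Weil representation of $\mathrm{Sp}(J^{i+1}/J^{i+1}_{+}) \ltimes (J^{i+1}/N_{i})$, where $N_{i} = \ker(\hat{\Phibf}_{i})$. This is exactly the Heisenberg--Weil construction you outline, with non-degeneracy of the commutator pairing coming from \textbf{GE1} and the triviality on $K^{i}_{+} \ltimes 1$ from the commutator bound $[K^{i}_{+}, J^{i+1}] \subset J^{i+1}_{+}$.
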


\begin{lem}[{{\cite[\S 4]{Yu}}}]
Let $0 \leq i \leq d-1$.  
Let $\inf(\Phibf_{i})$ be the inflation of $\Phibf_{i} |_{K^{i}}$ to $K^{i} \ltimes J^{i+1}$, and let $\tilde{\Phibf}_{i}$ be as in Lemma \ref{Phitilde}.  
Then the $K^{i} \ltimes J^{i+1}$-representation $\inf(\Phibf_{i}) \otimes \tilde{\Phibf}_{i}$ factors through $K^{i} \ltimes J^{i+1} \to K^{i}J^{i+1}=K^{i+1}$.  
\end{lem}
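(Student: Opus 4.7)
The natural multiplication map gives a surjection $K^i \ltimes J^{i+1} \twoheadrightarrow K^i J^{i+1} = K^{i+1}$ whose kernel is the ``antidiagonal'' $\{(z, z^{-1}) : z \in K^i \cap J^{i+1}\}$. So it suffices to show that $\inf(\Phibf_i) \otimes \tilde{\Phibf}_i$ acts trivially on every such element. My plan is to identify the intersection $K^i \cap J^{i+1}$ explicitly, verify the two inclusions needed to apply Lemma \ref{Phitilde}, and then compute the action directly.

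First I would establish $K^i \cap J^{i+1} = G^i(F)_{x, \rbf_i}$. The inclusion $\supset$ is immediate: $G^i(F)_{x, \rbf_i}$ sits inside $K^i$ because $K^i \supset G^i(F)_{x, \mathbf{s}_{i-1}}$ and $\mathbf{s}_{i-1} = \rbf_{i-1}/2 \leq \rbf_i$ by admissibility, and it sits inside $J^{i+1} = (G^i, G^{i+1})(F)_{x, (\rbf_i, \mathbf{s}_i)}$ by taking only the $G^i$-component at depth $\rbf_i$. The opposite inclusion uses the Iwahori-type decomposition for the concave filtration defining $J^{i+1}$: an element in $J^{i+1} \cap G^i(F)$ must have trivial contribution along the root subgroups $G_\alpha$ with $\alpha \in \Phi_{i+1} \setminus \Phi_i$, leaving only the $G^i(F)_{x,\rbf_i}$-factor.

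Next, for a fixed $z \in G^i(F)_{x, \rbf_i}$, I would verify $z \in K^i_+$ and $z \in J^{i+1}_+$. The first holds because the strict inequality $\mathbf{s}_{i-1} < \rbf_i$ (from admissibility together with $\rbf_{i-1} > 0$) gives $G^i(F)_{x, \rbf_i} \subset G^i(F)_{x, \mathbf{s}_{i-1}+} \subset K^i_+$; the second is immediate since $J^{i+1}_+ = (G^i, G^{i+1})(F)_{x, (\rbf_i, \mathbf{s}_i+)}$ still keeps the $G^i$-coordinate at level $\rbf_i$. Using the semidirect-product identity $(z, 1)(1, z^{-1}) = (z, z^{-1})$, I would then compute
\[
	(\inf(\Phibf_i) \otimes \tilde{\Phibf}_i)(z, z^{-1}) = \Phibf_i(z) \cdot \tilde{\Phibf}_i(z, 1) \cdot \tilde{\Phibf}_i(1, z^{-1}).
\]
Lemma \ref{Phitilde}(2) gives $\tilde{\Phibf}_i(z, 1) = \id$ since $z \in K^i_+$, and Lemma \ref{Phitilde}(1) gives $\tilde{\Phibf}_i(1, z^{-1}) = \hat{\Phibf}_i(z^{-1}) \cdot \id$ since $z^{-1} \in J^{i+1}_+$. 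Because $z \in G^i(F) \cap K^d_+$, the defining equality $\hat{\Phibf}_i|_{K^d_+ \cap G^i(F)} = \Phibf_i$ yields $\hat{\Phibf}_i(z^{-1}) = \Phibf_i(z)^{-1}$. Multiplying the three factors gives $\id$, so the representation is trivial on the kernel and hence factors through $K^{i+1}$.

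The main obstacle I anticipate is the first step: producing the intersection $K^i \cap J^{i+1}$ cleanly, since the filtration exponents $(\rbf_i, \mathbf{s}_i)$ defining $J^{i+1}$ are not increasing and so the product decomposition in the proposition following the definition of admissible sequences does not apply directly. One must appeal to the concavity of $f_{\vec{\rbf}}$ (or equivalently an Iwahori decomposition along a parabolic with Levi $G^i$), which is standard in Yu's setup. Once this identification is secured, the remainder of the argument is a bookkeeping verification using the compatibility properties of $\tilde{\Phibf}_i$ and $\hat{\Phibf}_i$ stated above.
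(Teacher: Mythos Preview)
The paper does not prove this lemma; it simply records it as a citation from \cite[\S4]{Yu}. Your argument is correct and is essentially the standard one: identify the kernel of the multiplication map $K^i \ltimes J^{i+1} \to K^{i+1}$ as the antidiagonal copy of $K^i \cap J^{i+1} = G^i(F)_{x,\rbf_i}$, then use the two isotypy conditions from Lemma~\ref{Phitilde} together with $\hat{\Phibf}_i|_{G^i(F)\cap K^d_+} = \Phibf_i$ to see that $\inf(\Phibf_i)\otimes\tilde{\Phibf}_i$ kills every $(z,z^{-1})$.

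Two small remarks. First, the identification $K^i \cap J^{i+1} = G^i(F)_{x,\rbf_i}$ is recorded in the paper a few lines later (the lemma attributed to \cite[3.4]{HM}), so you may simply cite that rather than arguing via the Iwahori decomposition; your concern about the non-increasing sequence $(\rbf_i,\mathbf{s}_i)$ is thus already handled by that reference. Second, your parenthetical justification ``together with $\rbf_{i-1}>0$'' is not quite right at $i=0$, where $\rbf_{-1}=0$; but the needed strict inequality $\mathbf{s}_{-1}=0<\rbf_0$ holds directly from the Yu-datum condition $\rbf_0>0$, so the argument goes through unchanged.
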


\begin{defn}
We denote by $\Phibf'_{i}$ the $K^{i+1}$-representation $\inf(\Phibf_{i}) \otimes \tilde{\Phibf}_{i}$.  
\end{defn}

To obtain $\rho_j$ constructed by Yu, we use a little different way from Yu, by Hakim--Murnaghan.  

\begin{lem}[{{\cite[3.4]{HM}}}]
\begin{enumerate}
\item For $i=1, \ldots, d-1$, we have $K^{i} \cap J^{i+1} = G^{i}(F)_{x, \rbf_{i}} \subset J^{i}$.  
\item For $i=0, \ldots, d-1$, let $\mu$ be a $K^{i}$-representation which is trivial on $K^{i} \cap J^{i+1}$.  
Then we can obtain the inflation $\inf_{K^{i}}^{K^{i+1}} \mu$ of $\mu$ to $K^{i+1}$ via $K^{i+1}/J^{i+1} \cong K^{i}/(K^{i} \cap J^{i+1})$.  
The representation $\inf_{K^{i}}^{K^{i+1}} \mu$ is trivial on $J^{i+1}$, and also trivial on $K^{i+1} \cap J^{i+2}$ if $i<d-1$.  
\item If $i, \mu$ is as in (ii) and $i \leq j \leq d$, then we can also obtain the inflation $\inf_{K^{i}}^{K^{j}} \mu$ of $\mu$ to $K^{j}$ as $\inf_{K^{i}}^{K^{j}} \mu = \inf_{K^{j-1}}^{K^{j}} \circ \cdots \circ \inf_{K^{i}}^{K^{i+1}} \mu$.  
\end{enumerate}
\end{lem}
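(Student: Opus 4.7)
The plan is to work through each part in order, leveraging the identity $K^{i+1} = K^i J^{i+1}$ stated just after the definition of $J^{i+1}$, together with the explicit description of $J^{i+1}$ as the group $(G^i, G^{i+1})(F)_{x, (\rbf_i, \mathbf{s}_i)}$ attached to an admissible sequence.

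For (1), the inclusion $K^i \subset G^i(F)$ reduces the computation to $K^i \cap J^{i+1} = K^i \cap (J^{i+1} \cap G^i(F))$. Unpacking the root-theoretic description of $J^{i+1}$ (built from $G_\alpha(F)_{x, f_{\vec{\rbf}}(\alpha)}$ for roots $\alpha$ of $G^{i+1}$, where $f_{\vec{\rbf}}$ assigns depth $\rbf_i$ to roots of $G^i$ and $\mathbf{s}_i$ to the remaining roots of $G^{i+1}$), the $G^i$-part of $J^{i+1}$ is exactly $G^i(F)_{x, \rbf_i}$. Since $\rbf_i > \rbf_{i-1} > \mathbf{s}_{i-1}$, we have $G^i(F)_{x, \rbf_i} \subset G^i(F)_{x, \mathbf{s}_{i-1}} \subset K^i$, giving the desired equality $K^i \cap J^{i+1} = G^i(F)_{x, \rbf_i}$. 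The same inequality $\rbf_i > \mathbf{s}_{i-1}$ forces $G^i(F)_{x, \rbf_i} \subset G^i(F)_{x, \mathbf{s}_{i-1}}$, and the latter is the $G^i$-component of $J^i = (G^{i-1}, G^i)(F)_{x, (\rbf_{i-1}, \mathbf{s}_{i-1})}$, so $K^i \cap J^{i+1} \subset J^i$.

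For (2), I first want normality of $J^{i+1}$ in $K^{i+1}$. Since $K^{i+1} = K^i J^{i+1}$, it suffices to check that $K^i$ normalizes $J^{i+1}$; this follows from the standard fact that Moy--Prasad subgroups at $x$ are preserved under conjugation by any element fixing $x$ (the parahoric factors $G^j(F)_{x, \mathbf{s}_{j-1}}$ for $j \geq 1$) together with the fact that $G^0(F)_{[x]}$ normalizes every Moy--Prasad subgroup of $G^{i+1}(F)$ attached to $x$. The second isomorphism theorem then yields $K^{i+1}/J^{i+1} \cong K^i/(K^i \cap J^{i+1})$, and any $\mu$ trivial on $K^i \cap J^{i+1}$ defines a representation $\inf_{K^i}^{K^{i+1}} \mu$ of $K^{i+1}$ that is, by construction, trivial on $J^{i+1}$. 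If $i<d-1$, applying (1) at level $i+1$ gives $K^{i+1} \cap J^{i+2} = G^{i+1}(F)_{x,\rbf_{i+1}}$, and the depth inequality $\rbf_{i+1} > \rbf_i > \mathbf{s}_i$ shows $G^{i+1}(F)_{x, \rbf_{i+1}} \subset J^{i+1}$, so the inflation is also trivial on $K^{i+1} \cap J^{i+2}$.

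Part (3) is then an immediate induction on $j$: the second sentence of (2) supplies precisely the hypothesis needed to apply (2) at the next level, and iterating $j-i$ times produces the stated factorization. The most delicate step is the normalization verification at the start of (2); once that is in hand, the remaining content of the lemma is essentially formal, following from the depth inequalities $\rbf_{i-1} < \rbf_i$ and $\mathbf{s}_i < \rbf_i$ built into the definition of a Yu datum.
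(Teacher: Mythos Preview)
The paper does not prove this lemma; it is quoted with attribution to Hakim--Murnaghan \cite[3.4]{HM}, so there is no in-paper argument to compare against. Your sketch is essentially the standard direct proof and is correct in outline.

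One imprecision in part (1): you write that $G^{i}(F)_{x,\mathbf{s}_{i-1}}$ is ``the $G^{i}$-component of $J^{i}$'' and use this to deduce $G^{i}(F)_{x,\rbf_{i}}\subset J^{i}$. But $J^{i}=(G^{i-1},G^{i})(F)_{x,(\rbf_{i-1},\mathbf{s}_{i-1})}$ has its $G^{i-1}$-piece at depth $\rbf_{i-1}>\mathbf{s}_{i-1}$, so in fact $G^{i}(F)_{x,\mathbf{s}_{i-1}}\supset J^{i}$, not the reverse. The desired inclusion $G^{i}(F)_{x,\rbf_{i}}\subset J^{i}$ is still true, however, because $\rbf_{i}>\rbf_{i-1}\geq \mathbf{s}_{i-1}$ gives $G^{i}(F)_{x,\rbf_{i}}\subset (G^{i-1},G^{i})(F)_{x,(\rbf_{i-1},\mathbf{s}_{i-1})}$ directly at the level of root subgroups. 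With that correction, your argument goes through.
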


\begin{defn}[{{\cite[3.4]{HM}}}] \label{infinf}
For $0 \leq i < j \leq d$, we put $\kappa_{i}^{j} = \inf_{K^{i+1}}^{K^{j}} \Phibf'_{i}$.  
For $0 \leq j \leq d$, we put $\kappa_{-1}^{j} = \inf_{K^{0}}^{K^{j}} \rho$ and $\kappa_{j}^{j} = \Phibf_{j}|_{K^{j}}$.  
And also, for $-1 \leq i \leq d$ we put $\kappa_{i}=\kappa_{i}^{d}$.  
\end{defn}

\begin{prop}
Let $0 \leq j \leq d$.  
The representation $\rho_{j}$ constructed by Yu is isomorphic to
\[
	\kappa_{-1}^{j} \otimes \kappa_{0}^{j} \otimes \cdots \otimes \kappa_{j}^{j}.  
\]
In particular, 
\[
	\rho_{d} \cong \kappa_{-1} \otimes \kappa_{0} \otimes \cdots \otimes \kappa_{d}.  
\]
\end{prop}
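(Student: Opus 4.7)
The plan is to proceed by induction on $j$, following the approach of Hakim--Murnaghan \cite[\S 3.4]{HM}, where this tensor decomposition is presented as an explicit reformulation of Yu's inductive recipe.

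For the base case $j = 0$, I would compute directly from the definitions: $\kappa_{-1}^{0} = \inf_{K^{0}}^{K^{0}} \rho = \rho$ and $\kappa_{0}^{0} = \Phibf_{0}|_{K^{0}}$, so the right-hand side equals $\rho \otimes (\Phibf_{0}|_{K^{0}})$, which coincides with Yu's $\rho_{0}$ on $K^{0}$.

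For the inductive step, I would assume the decomposition for $\rho_{j-1}$ and invoke Yu's recipe, which builds $\rho_{j}$ from $\rho_{j-1}$ in four stages: first inflating $\rho_{j-1}$ from $K^{j-1}$ to the semidirect product $K^{j-1} \ltimes J^{j}$ along the projection, then tensoring with $\tilde{\Phibf}_{j-1}$, descending from $K^{j-1} \ltimes J^{j}$ to $K^{j} = K^{j-1} J^{j}$, and finally tensoring with $\Phibf_{j}|_{K^{j}}$. Since inflation commutes with tensor products, the inductive decomposition $\rho_{j-1} \cong \kappa_{-1}^{j-1} \otimes \kappa_{0}^{j-1} \otimes \cdots \otimes \kappa_{j-1}^{j-1}$ inflates factor by factor. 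For $-1 \leq i \leq j-2$, the inflation of $\kappa_{i}^{j-1}$ to $K^{j}$ equals $\kappa_{i}^{j}$ by the compositionality of inflations. The factor $\kappa_{j-1}^{j-1} = \Phibf_{j-1}|_{K^{j-1}}$ combines with the extra tensor factor $\tilde{\Phibf}_{j-1}$ to assemble into $\Phibf'_{j-1} = \inf(\Phibf_{j-1}) \otimes \tilde{\Phibf}_{j-1}$ after descent, whose inflation to $K^{j}$ is $\kappa_{j-1}^{j}$ by definition. The final twist by $\Phibf_{j}|_{K^{j}}$ contributes the last factor $\kappa_{j}^{j}$.

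The main obstacle is verifying that each previously built $\kappa_{i}^{j-1}$ (for $-1 \leq i \leq j-2$) is trivial on $K^{j-1} \cap J^{j} = G^{j-1}(F)_{x, \rbf_{j-1}}$, so that the descent from $K^{j-1} \ltimes J^{j}$ to $K^{j}$ can be carried out factor by factor. This triviality is a depth computation: the characters $\Phibf_{i}$ for $i < j-1$ have depth $\rbf_{i} < \rbf_{j-1}$ and hence are trivial on $G^{j-1}(F)_{x, \rbf_{j-1}}$, the representations $\tilde{\Phibf}_{i}$ are constructed to be trivial on the appropriate deep congruence subgroups, and $\rho$ is trivial on $G^{0}(F)_{x, 0+}$. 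Once these triviality checks are in place, the proof reduces to careful bookkeeping of the semidirect product structure and its descent, and the argument proceeds as in \cite[\S 3.4]{HM}.
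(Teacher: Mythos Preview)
Your proposal is correct and follows essentially the same inductive approach as the paper, citing the same Hakim--Murnaghan framework. The one organizational difference is that the paper carries along Yu's auxiliary representation $\rho_j'$ in the induction, proving simultaneously that $\rho_j' \cong \kappa_{-1}^j \otimes \cdots \otimes \kappa_{j-1}^j$ and $\rho_j \cong \kappa_{-1}^j \otimes \cdots \otimes \kappa_j^j$; since Yu's recipe is stated as $\rho_j' = \inf_{K^{j-1}}^{K^j}(\rho_{j-1}') \otimes \Phibf_{j-1}'$ with $\Phibf_{j-1}' = \kappa_{j-1}^j$ by definition, this avoids your recombination step where $\kappa_{j-1}^{j-1} = \Phibf_{j-1}|_{K^{j-1}}$ must be merged with $\tilde{\Phibf}_{j-1}$ to rebuild $\Phibf_{j-1}'$. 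Your version is equivalent but slightly less streamlined; the triviality checks you flag as the ``main obstacle'' are exactly the content of the Hakim--Murnaghan lemma already quoted in the paper just before the definition of $\kappa_i^j$, so no further work is needed there.
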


\begin{proof} \begin{sloppypar}
The representation $\rho _j $ is constructed in \cite{Yu} on page 592. Yu inductively constructs two representations: $\rho _j$ and ${\rho _j} '$.

 Let us show by induction on $j$ that ${{\rho _j }'= \kappa ^j _{-1} \otimes \kappa ^j _0 \otimes   \cdots \otimes \kappa ^j _{j-1}}$ and
 ${\rho _j = \kappa ^j _{-1} \otimes \kappa ^j _0  \otimes \cdots \otimes \kappa ^j _{j}}$  
 If $j=0$, then by definition the representation $\rho _0 '$ constructed by Yu is $\rho$ and $\rho _0$ is $\rho _0 ' \otimes (\boldsymbol{\Phi}_0 | _{K^0})$. We have $\kappa _{-1}^0 = \rho $ and $\kappa _0 ^0 = {\boldsymbol{\Phi}_0}| _{K^0} $. So the case $j=0$ is complete.
 Assume that ${\rho _{j-1}' = \kappa ^{j-1} _{-1} \otimes \kappa ^{j-1} _0  \cdots \otimes \kappa ^{j-1} _{j-2}}$ and ${\rho _{j-1} = \kappa ^{j-1} _{-1} \otimes \kappa ^{j-1} _0 \otimes \cdots \otimes \kappa ^{j-1} _{j-1}}$. 
 Then by definition $\rho _j '$ is equal to ${\inf _{K^{j-1}}^{K^j} (\rho _{j-1} ' ) \otimes \boldsymbol{\Phi} _{j-1} '} $. By definition $\boldsymbol{\Phi} _{j-1} '$ is equal to $\kappa ^j _{j-1}$. 
 Moreover \begin{align*} {\inf}  _{K^{j-1}}^{K^j} (\rho _{j-1} ' )& ={\inf}  _{K^{j-1}}^{K^j} (\kappa ^{j-1} _{-1} \otimes \kappa ^{j-1} _0 \otimes \cdots \otimes \kappa ^{j-1} _{j-2})\\&= { \inf}_{K^{j-1}}^{K^j}( \kappa ^{j-1} _{-1} )\otimes {\inf }_{K^{j-1}}^{K^j} (\kappa ^{j-1} _0) \otimes \cdots \otimes {\inf} _{K^{j-1}}^{K^j} (\kappa ^{j-1} _{j-2})\\
 &=\kappa ^j _{-1} \otimes \kappa ^j _0 \otimes  \cdots \otimes \kappa ^j _{j-2}
 \end{align*}

 Consequently ${{\rho _j }'= \kappa ^j _{-1} \otimes \kappa ^j _0 \otimes \cdots \otimes \kappa _i ^j \otimes \cdots \otimes \kappa ^j _{j-1}}$. Finally, by Yu's definition, $\rho _j $ is equal to $\rho _j ' \otimes \boldsymbol{\Phi} _j| _{K^j} $, and thus $\rho _j =\kappa ^j _{-1} \otimes \kappa ^j _0\otimes \cdots \otimes \kappa ^j _{j}$, as required. \end{sloppypar}
\end{proof}

Therefore, we obtain $\rho_{j}$ constructed by Yu. Spice recently found a problem in the proof of the following statement. This was recently discussed and corrected by Fintzen \cite{Fi2}, so that we are allowed to do not worry about this problem.

\begin{thm}[{{\cite[15.1]{Yu}}}]
The compactly induced representation $\cInd_{K^{j}}^{G^{j}(F)} \rho_{j}$ of $G^{j}(F)$ is irreducible and supercuspidal.  
\end{thm}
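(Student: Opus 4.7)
The plan is to deduce both irreducibility and supercuspidality at once from Mackey's irreducibility criterion: since $K^{j}$ is open in $G^{j}(F)$ and compact modulo the center of $G^{j}$, the compact induction $\cInd_{K^{j}}^{G^{j}(F)} \rho_{j}$ is irreducible (and then automatically supercuspidal, its matrix coefficients being compactly supported modulo the center) if and only if $I_{G^{j}(F)}(\rho_{j}) = K^{j}$. The theorem therefore reduces entirely to this one intertwining computation.

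I would argue by induction on $j$, exploiting the decomposition
\[
	\rho_{j} \cong \kappa_{-1}^{j} \otimes \kappa_{0}^{j} \otimes \cdots \otimes \kappa_{j}^{j}
\]
established just above. For the base case $j = 0$, the factor $\kappa_{0}^{0} = \Phibf_{0}|_{K^{0}}$ is the restriction of a character of the whole group $G^{0}(F)$ and so contributes nothing to intertwining; the Yu-datum hypothesis that $\cInd_{G^{0}(F)_{[x]}}^{G^{0}(F)} \rho$ is irreducible supercuspidal, together with Mackey's criterion in the reverse direction, yields $I_{G^{0}(F)}(\rho) = G^{0}(F)_{[x]} = K^{0}$.

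For the inductive step, the factors $\kappa_{-1}^{j}, \ldots, \kappa_{j-2}^{j}$ are inflations of the corresponding factors of $\rho_{j-1}$ via the isomorphism $K^{j}/J^{j} \cong K^{j-1}/(K^{j-1} \cap J^{j})$, so their contribution to the intertwining is pinned down by the inductive hypothesis modulo $J^{j} \subset K^{j}$. The genuinely new ingredient is the analysis of $\kappa_{j-1}^{j} \otimes \kappa_{j}^{j}$, which after peeling off the global character $\Phibf_{j}|_{K^{j}}$ is essentially the Heisenberg--Weil extension $\tilde{\Phibf}_{j-1}$ of the $G^{j}$-generic character $\hat{\Phibf}_{j-1}$ of $J_{+}^{j}$. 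The standard approach is via the Heisenberg commutator identity: the character $\hat{\Phibf}_{j-1}$ induces a symplectic form on the quotient $J^{j}/J_{+}^{j}$, and the genericity condition \textbf{GE1} on $\Phibf_{j-1}$ is precisely what guarantees that this form is non-degenerate and that only elements which preserve the associated Heisenberg datum can intertwine $\tilde{\Phibf}_{j-1}$ with itself. Combining this with an explicit description of $G^{j}(F)$-conjugation on $J^{j}$ afforded by the tame twisted Levi structure of $(G^{i})$ would then force $I_{G^{j}(F)}(\rho_{j}) \subset K^{j}$, the reverse inclusion being immediate from the construction of $\rho_{j}$.

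The main obstacle is precisely this last intertwining computation for $\tilde{\Phibf}_{j-1}$: one must keep track of the quadratic signs coming from the Weil representation and show that no element of $G^{j}(F) \setminus K^{j}$ can intertwine it. This is exactly the point at which Spice identified a genuine gap in Yu's original proof. Fintzen \cite{Fi2} supplies the correct intertwining computation, essentially by rectifying the construction of $\tilde{\Phibf}_{i}$ by appropriate sign characters, and I would invoke her result as a black box to conclude.
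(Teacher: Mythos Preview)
The paper does not prove this theorem at all: it is quoted as \cite[15.1]{Yu}, with the remark immediately preceding it that Spice found a gap in Yu's argument and that Fintzen \cite{Fi2} has since repaired it, ``so that we are allowed to do not worry about this problem.'' In other words the paper treats the statement as a black box imported from the literature.

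Your sketch is a faithful outline of Yu's original strategy (Mackey irreducibility criterion, induction on $j$, reduction to the intertwining of the Heisenberg--Weil piece $\tilde{\Phibf}_{j-1}$, with genericity via \textbf{GE1} forcing non-degeneracy), and you correctly flag the precise place where the gap lies and where Fintzen's correction enters. One small inaccuracy: Fintzen's fix in \cite{Fi2} does not proceed by ``rectifying the construction of $\tilde{\Phibf}_i$ by appropriate sign characters''---that is the Fintzen--Kaletha--Spice twist, which is a separate development. Rather, \cite{Fi2} gives a direct and corrected intertwining argument for Yu's original $\rho_j$, so you should invoke it simply as supplying the missing intertwining computation, not as modifying the representation.
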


For later use, we recall the following proposition on the dimension of representation space of $\kappa_{i}$.  

\begin{prop} \label{dimkap}
Let $0 \leq i < j \leq d$.  
Then the dimension of $\kappa_{i}^{j}$ is equal to the dimension of $\Phibf'_{i}$, which is also equal to $(J^{i+1}:J_{+}^{i+1})^{1/2}$.  
\end{prop}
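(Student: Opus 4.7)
My plan is to split the dimension computation into three elementary steps, culminating in an application of the Stone--von Neumann theorem. First, iterating the definition gives $\kappa_{i}^{j} = \inf_{K^{j-1}}^{K^{j}} \circ \cdots \circ \inf_{K^{i+1}}^{K^{i+2}} \Phibf'_{i}$; each inflation is the pullback of a representation along a surjective group homomorphism (from $K^{\ell+1}/J^{\ell+1} \cong K^{\ell}/(K^{\ell}\cap J^{\ell+1})$), so the underlying representation space is unchanged at every step. Hence $\dim \kappa_{i}^{j} = \dim \Phibf'_{i}$. Second, by its definition $\Phibf'_{i} = \inf(\Phibf_{i}) \otimes \tilde{\Phibf}_{i}$ on $K^{i+1}$, and $\inf(\Phibf_{i})$ is the inflation of the character $\Phibf_{i}|_{K^{i}}$, hence one-dimensional. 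Therefore $\dim \Phibf'_{i} = \dim \tilde{\Phibf}_{i}$, and it remains to compute this last dimension.

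For the third step, I would restrict $\tilde{\Phibf}_{i}$ to the normal subgroup $1 \ltimes J^{i+1}$ of $K^{i} \ltimes J^{i+1}$. Lemma~\ref{Phitilde} forces this restriction to be an irreducible representation of $J^{i+1}$ whose further restriction to $J^{i+1}_{+}$ is a multiple of the character $\hat{\Phibf}_{i}|_{J^{i+1}_{+}}$. Computing commutators $[g,h]$ of lifts in $J^{i+1}$, together with the Moy--Prasad identification of the elementary abelian quotient $J^{i+1}/J^{i+1}_{+}$ with a direct sum of root spaces of $\gfra^{i+1}/\gfra^{i}$ at depth $\mathbf{s}_{i}$, endows this quotient with an alternating bilinear pairing $(\bar g, \bar h) \mapsto \hat{\Phibf}_{i}([g,h])$. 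Once non-degeneracy of this pairing is established, $\tilde{\Phibf}_{i}|_{J^{i+1}}$ is realised as the unique irreducible Heisenberg representation of $J^{i+1}/\ker(\hat{\Phibf}_{i}|_{J^{i+1}_{+}})$ with the prescribed central character, and Stone--von Neumann gives dimension $(J^{i+1}:J^{i+1}_{+})^{1/2}$, as required.

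The one real obstacle is non-degeneracy of this alternating form. I would verify it by pairing elements supported on opposite root subspaces for $\alpha, -\alpha \in \Phi(G^{i+1}, T) \setminus \Phi(G^{i}, T)$; the commutator is then proportional to the coroot element $H_{\alpha} \in \gfra^{i}$, and evaluating $\hat{\Phibf}_{i}$ there reduces via the generic realisation $\Phibf_{i} \leftrightarrow X^{*}$ to $\psi(X^{*}(H_{\alpha}))$. The $G^{i+1}$-genericity condition \textbf{GE1} asserts exactly $\ord(X^{*}(H_{\alpha})) = -\rbf_{i}$ for every such $\alpha$, which is precisely the non-vanishing input required; see \cite[\S 11]{Yu} for the analogous computation carried out in detail.
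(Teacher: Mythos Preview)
Your proposal is correct and takes essentially the same approach as the paper: both reduce to $\dim\tilde{\Phibf}_i$ by noting that inflation and tensoring with the one-dimensional $\inf(\Phibf_i)$ preserve dimension, then invoke Yu's Heisenberg--Weil construction in \cite[\S11]{Yu} to obtain $(J^{i+1}:J_+^{i+1})^{1/2}$. One small caveat: Lemma~\ref{Phitilde} alone does not force the restriction of $\tilde{\Phibf}_i$ to $1\ltimes J^{i+1}$ to be irreducible---that irreducibility is part of Yu's explicit construction (as the pullback of the Weil representation), which you rightly cite at the end.
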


\begin{proof}
 By definition $\kappa _i ^j$ is an inflation of $\boldsymbol{\Phi}_i$, consequently theses representations have equal dimensions. The representation $\boldsymbol{\Phi}_i '$ is the unique representation of $K^{i+1}$ whose inflation to $K^i \ltimes J^{i+1}$ is  $\tilde{\boldsymbol{\Phi}}_i$. Thus, the dimension of $\boldsymbol{\Phi}_i '$ is equal to $\tilde{\boldsymbol{\Phi}}_i$. The representation $\tilde{\boldsymbol{\Phi}}_i$ is constructed in \cite[11.5]{Yu} and is the pull back of the Weil representation of ${Sp(J^{i+1}/J^{i+1}_+) \ltimes (J^{i+1}/N_i)}$ where $N_i= \ker (\hat{\boldsymbol{\Phi}}_i)$. Thus, the dimension of $\tilde{\boldsymbol{\Phi}}_i$ is $[J^{i+1} : J^{i+1}_+]^{\frac{1}{2}}$.
\end{proof}

\section{Tame simple strata}
\label{TameSimple}
In this section, we consider the class of simple strata corresponding to some Yu datum.  We fix a uniformiser $\varpi _F$ of $F$.

\begin{defn}
\begin{enumerate}
\item A pure stratum $[\Afra, n, r, \beta]$ is called tame if $E=F[\beta]$ is a tamely ramified extension of $F$.  
\item A simple type $(J, \lambda)$ associated with a simple stratum $[\Afra, n, 0, \beta]$ is called tame if $[\Afra, n, 0, \beta]$ is tame.  
\end{enumerate}
\end{defn}

\begin{rem}
\label{esstame}
\begin{enumerate}
\item By \cite[(2.6.2)(4)(b), 2.7 Proposition]{BH}, the above definition is independent of the choice of simple strata.  
\item Essentially tame supercuspidal representations, defined in \cite[2.8]{BH}, are $G$-representations containing some tame simple types.  
\end{enumerate}
\end{rem}

As explained in \S \ref{Secherre}, any simple strata has a defining sequence.  
Actually, if a simple stratum $[\Afra, n, 0, \beta]$ is tame, then we can show the existence of a ``nice" defining sequence of $[\Afra, n, 0, \beta]$.  
To discuss such a defining sequence, we state several related propositions.

\begin{lem} \label{valval} Let $\Afra$ be an hereditary $\ofra _F$-order in $A\cong \M_{N}(F)$, and let $E$ be a field in $A$ such that $E^{\times } \subset \mathfrak{K} (\Afra )$. Let $\beta$ be an element in $E$, then 

\begin{equation}
v _{\Afra} (\beta) e ( E | F ) = e (\Afra | \ofra _F ) v _{E} (\beta) . \label{eqval}
\end{equation}
\end{lem}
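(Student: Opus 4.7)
The plan is to exploit the uniqueness of $\U(\Afra)$ as the maximal compact open subgroup of $\Kfra(\Afra)$ to show that $v_{\Afra}|_{E^{\times}}$ is proportional to $v_{E}$, and then pin down the constant of proportionality by testing at the uniformizer $\varpi_{F}$.

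First, I would observe that the hypothesis $E^{\times} \subset \Kfra(\Afra)$ lets us restrict the group homomorphism $v_{\Afra} : \Kfra(\Afra) \to \Z$ to $E^{\times}$, yielding a group homomorphism $v_{\Afra}|_{E^{\times}} : E^{\times} \to \Z$. The key step is to show that this restriction kills $\ofra_{E}^{\times}$. Since $\ofra_{E}^{\times}$ is compact in $E$ and the inclusion $E \hookrightarrow A$ is a topological embedding of $F$-algebras, $\ofra_{E}^{\times}$ is a compact subgroup of $\Kfra(\Afra)$. By the paper's recalled fact that $\U(\Afra)$ is the \emph{unique} maximal compact open subgroup of $\Kfra(\Afra)$, every compact subgroup of $\Kfra(\Afra)$ sits inside $\U(\Afra) = \ker(v_{\Afra})$, so $v_{\Afra}(\ofra_{E}^{\times}) = 0$.

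Next, fix a uniformizer $\varpi_{E}$ of $E$. Every $\beta \in E^{\times}$ can be written $\beta = u\,\varpi_{E}^{v_{E}(\beta)}$ with $u \in \ofra_{E}^{\times}$, so the previous step gives $v_{\Afra}(\beta) = v_{E}(\beta)\, v_{\Afra}(\varpi_{E})$. It remains to compute $v_{\Afra}(\varpi_{E})$. Writing $\varpi_{F} = w\, \varpi_{E}^{e(E|F)}$ for some $w \in \ofra_{E}^{\times}$ and applying $v_{\Afra}$, we get
\[
e(\Afra|\ofra_{F}) \;=\; v_{\Afra}(\varpi_{F}) \;=\; e(E|F)\, v_{\Afra}(\varpi_{E}),
\]
where the first equality is the definition of $e(\Afra|\ofra_{F})$ and $v_{\Afra}(w)=0$ by the previous paragraph. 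Hence $v_{\Afra}(\varpi_{E}) = e(\Afra|\ofra_{F})/e(E|F)$, and combining this with $v_{\Afra}(\beta) = v_{E}(\beta)\, v_{\Afra}(\varpi_{E})$ yields the claimed identity after clearing denominators.

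I do not foresee a serious obstacle here; the only point that requires care is justifying that $\ofra_{E}^{\times}$ actually lies in $\U(\Afra)$, but this is immediate from the uniqueness of the maximal compact open subgroup of $\Kfra(\Afra)$ combined with the compactness of $\ofra_{E}^{\times}$. The argument is insensitive to whether $A$ is split or not, so although the statement is made for $A \cong \M_{N}(F)$, the same reasoning would apply to the inner-form setting used elsewhere in the paper.
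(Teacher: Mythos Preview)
Your argument is correct and follows essentially the same route as the paper's proof: both reduce to computing $v_{\Afra}(\varpi_{E})$ by comparing $\varpi_{F}$ with $\varpi_{E}^{e(E|F)}$ and using $v_{\Afra}(\varpi_{F})=e(\Afra|\ofra_{F})$, after first noting that $\ofra_{E}^{\times}$ lies in $\U(\Afra)$. The only cosmetic difference is that the paper phrases everything in terms of fractional ideals $g\Afra=\Pfra^{v_{\Afra}(g)}$ (via \cite[1.1.3]{BK1}) rather than working directly with the homomorphism $v_{\Afra}$; your explicit justification of $\ofra_{E}^{\times}\subset\U(\Afra)$ via compactness and the uniqueness of the maximal compact subgroup makes explicit a step the paper leaves implicit.
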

\begin{prf}Let $\varpi_E$ denote a uniformiser  element in $E$. Since $E^{\times} \subset \mathfrak{K}(\Afra)$, the elements $\varpi_E , \varpi _F $ and $\beta$ are in $\mathfrak{K}(\Afra)$. Thus, Equality \cite[1.1.3]{BK1} is valid for these elements and we  use it in the following equalities.
 On the one hand \begin{equation}\beta ^{e(E| F)}\Afra = \varpi _E ^{v _ E (\beta ) e(E|F)} \Afra= \varpi _F ^{v _E (\beta)} \Afra . \label{val1}\end{equation}
On the other hand \begin{equation}\beta ^{e(E| F)}\Afra = \Pfra ^{v _{\Afra} (\beta) e (E | F ) } .\label{val2}\end{equation}
Moreover by definition of $e(\Afra | \mathfrak{o}_F)$ (see \cite[1.1.2]{BK1}), we have \begin{equation}\varpi _F ^{v _E (\beta)} \Afra = \Pfra ^{e (\Afra  | \ofra _F )v _E (\beta )}. \label{val3}\end{equation}
Equalities \ref{val1} , \ref{val2} and \ref{val3} show that \begin{equation}\Pfra ^{v _{\Afra} (\beta) e (E | F )} =\Pfra ^{e (\Afra  | \ofra _F )v _E (\beta )}.\end{equation}
Consequently, $v _{\Afra} (\beta) e (E | F )= e (\Afra  | \ofra _F )v _E (\beta ) $ and equality \ref{eqval} holds as required.

\end{prf}

 The following is analogous to \cite[2.2.3]{BK1}, the main differences are that the tameness condition is assumed and a maximality condition is removed.

\begin{prop}
\label{Mforcons}
Assume $A \cong \M_{N}(F)$ for some $N$.  
Let $[\Afra, n, r, \beta]$ be a tame simple stratum with $r>0$.  
Let $[\Bfra_{\beta}, r, r-1, b]$ be a simple stratum, where $\Bfra_{\beta}=\Afra \cap \Cent_{A}(\beta)$.    
Then we have $F[\beta + b]=F[\beta, b]$ and $[\Afra, n, r, \beta +b]$ is a pure stratum with  $k_0 (\beta +b , \Afra  ) = \left\{ \begin{array}{ll}
        -r=k_0(b,\mathfrak{B}_E)$ if $b \not \in E \\
        k_0(\beta , \mathfrak{A})$ if $b\in E.
    \end{array}  
\right.$ 
\end{prop}

Before proving the proposition let us set $E=F[\beta]$. Also set $B_E = \End_E(V)=\Cent_A (\beta)$. Let $\Pfra$ be the Jacobson radical of $\Afra$. Set $\Bfra _E = \Bfra _{\beta} = \Afra \cap B_E $ and $\Qfra _E = \Pfra \cap \Bfra _E $. Thus, $\Bfra _E$ is an $\mathfrak{o}_E$-hereditary order in $B_E$ and $\Qfra _E$ is the Jacobson radical of $\Bfra _E$. We now prove the proposition.

\begin{prf} Let $s:A \to B_E$ be the tame corestriction that is the identity when restricted to $B_E$, we recall that such maps exist by \cite[Remark (1.3.8) (ii)]{BK1}. 

 Let $\mathcal{L}=\{L_i\} _{i\in \Z} $ be an $\mathfrak{o} _F$-lattice chain such that
  \begin{center}${\Afra = \{x \in A | x(L_i) \subset L_i , i \in \Z \}.}$ \end{center}
 By definition \cite[2.2.1]{BK1}, \begin{center}${\mathfrak{K} (\Afra) = \{ x \in G | x(L_i) \in  \mathcal{L}, i \in \Z \}} $\end{center} and \begin{center} ${ \mathfrak{K} (\Bfra _E)=\{x \in G_E \mid x(L_i) \in \mathcal{L}, i \in \Z \}.} $ \end{center} Thus, \begin{equation}\label{kk}\mathfrak{K}(\Bfra _E) \subset \mathfrak{K} (\Afra).\end{equation}The
 stratum $[\Bfra _{E} , r , r-1 , b ]$ is simple, thus the definition of a simple stratum shows that  \begin{equation} \label{ek}E[b] ^{\times } \subset \mathfrak{K}(\mathfrak{B}_E).\end{equation} Put $E_1= E[b]= F[\beta , b]$.
Equations \ref{kk} and \ref{ek} imply that $E_1 ^{\times} \subset \mathfrak{K} (\Afra)$. This allows us to use the machinery of \cite[1.2]{BK1} for $\Afra$ and $E_1$.

  Set $B_{E_1}= \End_{E_1} (V)$ and $\Bfra _{E_1} = \Afra \cap \End _{E_1} (V)$. Proposition \cite[1.2.4]{BK1} implies that $\mathfrak{B}_{E_1}$ is an $\mathfrak{o}_{E_1}$-hereditary order in $B_{E_1}$. Let $A(E_1)$ be the algebra $\End_F(E_1)$ and let $\Afra (E_1)$ be the $\mathfrak{o} _F$-hereditary order in $ A(E_1)$ defined by $\mathfrak{A}(E_1) = \{x \in \End _F (E_1) \mid x (\mathfrak{p} _{E_1} ^i) \subset \mathfrak{p} _{E_1} ^i , i \in \Z \}. $ Let $W$ be the $F$-span of an $\mathfrak{o}_{E_1}$-splitting basis of the $\mathfrak{o} _{E_1}$-lattice chain $\mathcal{L}$. Proposition \cite[1.2.8]{BK1} shows that the $(W,E_1)$-decomposition of $A$ restricts to an isomorphism $\Afra \simeq \Afra (E_1) \otimes _{\mathfrak{o} _{E_1} } \mathfrak{B} $ of $(\Afra (E_1) , \Bfra _{E_1})$-bimodules. Similarly, we have  a decomposition ${\Bfra _E \simeq \Bfra _E (E_1) \otimes _{\mathfrak{o} _{E_1}} \Bfra _{_{E_1}}}$.
    Set $B_E (E_1)= \End_E (E_1)$ and $\Bfra _E (E_1) = B_E (E_1) \cap \Afra (E_1)$. Also set $n(E_1)= \dfrac{n}{e(\Bfra _{E_1} | \ofra _{E_1})}$ and $r(E_1)= \dfrac{r}{e(\Bfra _{E_1} | \ofra _{E_1})}$. Let us prove the following equalities:

   \begin{equation} \label{111} v _{\Afra (E_1)} (\beta) =-n (E_1)     , \end{equation}

   \begin{equation}
    v _{\Bfra _E (E_1)}(b) =-r(E_1).  \label{eqv2}
    \end{equation}
    Let us prove that the equation \ref{111} holds. By definition of $E_1$,  the element $\beta$ is inside $E_1$ and thus $v _{\Afra (E_1)} (\beta) = v _{E_1} (\beta). $ Thus, Lemma \ref{valval} shows that \begin{equation} \label{v3}v _{\Afra} (\beta) e(E_1 | F) = e( \Afra | \ofra _F) v _{\Afra (E_1)} (\beta). \end{equation} Proposition \cite[1.2.4]{BK1} gives us the equality  \begin{equation}\label{44} e(\Bfra _{E_1} | \ofra _{E_1} ) =\dfrac{e(\Afra | \ofra _F)}{e(E_1 | F) }.\end{equation}
Because $[\Afra , n , r , \beta ]$ is a simple stratum, $n$ is equal to $- v _{\Afra} (\beta)$, consequently using equations \ref{v3} and \ref{44}, the following sequence of equality holds.

\begin{align*}
v _{\Afra (E_1)} (\beta) = \frac{ v _{\Afra} (\beta) e ( E_1 | F )}{e(\Afra | \mathfrak{o} _F )}= \frac{v _{\Afra} (\beta)}{e(\Bfra _{E_1} | \mathfrak{o}_{E_1})}=  \frac{-n}{e(\Bfra _{E_1} | \mathfrak{o}_{E_1})}=-n(E_1)
\end{align*}
This concludes the proof of the equality \ref{111} and the equality \ref{eqv2} is easily proven in the same way.
Proposition \cite[1.4.13]{BK1}  gives

\begin{center}
$\left\{ \begin{array}{ll}
        k_0(\beta,\Afra (E_1))=\dfrac{k_0 ( \beta , \Afra )}{e(\Bfra _{E_1} | \ofra _{E_1})} \\
        k_0(b , \Bfra _E (E_1) )= \dfrac{k_0 (b , \Bfra _E) }{e (\Bfra _{E_1} | \ofra _{E_1})}
    \end{array}  
\right.$
.
\end{center}
Consequently $[\Afra (E_1) , n (E_1) , r(E_1) , \beta ]$ and $[\Bfra _E (E_1) , r(E_1) , r(E_1) -1 , b ]$ are simple strata and satisfy the hypothesis of the proposition \cite[2.2.3]{BK1}. Consequently $[\Afra (E_1) , n , r-1, \beta + b ]$ is simple and the field $F[\beta + b]$ is equal to the field $F[\beta , b ]$.  Moreover, \cite[2.2.3]{BK1} implies that \begin{center}

$k_0 (\beta +b , \Afra (E_1) ) = \left\{ \begin{array}{ll}
        -r(E_1)=k_0(b,\mathfrak{B}_E(E_1))$ if $b \not \in E \\
        k_0(\beta , \mathfrak{A}(E_1))$ if $b\in E.
    \end{array}  
\right.$

\end{center}
The valuation $v_{\Afra (E_1)} (\beta +b)$ is equal to $-n(E_1)$ and the same argument as before shows that $v _{\Afra} (\beta +b ) =-n $. The proposition \cite[1.4.13]{BK1} shows that $k_0 (\beta +b , \Afra) = k_0 (\beta +b , \Afra (E_1) ) e(\Bfra _{E_1} | \ofra _{E_1} ) $.
Thus

\begin{center}
$k_0 (\beta +b , \Afra  ) = \left\{ \begin{array}{ll}
        -r=k_0(b,\mathfrak{B}_E)$ if $b \not \in E \\
        k_0(\beta , \mathfrak{A})$ if $b\in E.
    \end{array}  
\right.$
\end{center}
This completes the proof.

\end{prf}

Compare Proposition \ref{Mforcons} with \cite[Proposition 4.2]{BK94}.

\begin{prop}[{{\cite[3.1 Corollary]{BH}}}]
\label{BHforapp}
Let $E$ be a finite, tamely ramified extension of $F$ and let $\beta \in E$ such that $E=F[\beta]$.  
Let $[\Afra(E), n, r, \beta]$ be a pure stratum in $A(E)$ with $r=-k_{F}(\beta) < n$.  
Then there exists $\gamma \in E$ such that $[\Afra(E), n, r, \gamma]$ is simple and equivalent to $[\Afra(E), n, r, \beta]$.  
Moreover, if $\iota : E \hookrightarrow A$ is an $F$-algebra inclusion and $[\Afra, n', r', \iota(\beta)]$ is a pure stratum of $A$ with $r'=-k_{0}(\iota(\beta), \Afra)$, then $[\Afra, n', r', \iota(\gamma)]$ is simple and equivalent to $[\Afra, n', r', \beta]$.  
\end{prop}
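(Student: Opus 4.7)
The plan is to split the statement into two parts: first, establish the existence of $\gamma \in E$ such that $[\Afra(E), n, r, \gamma]$ is simple and equivalent to $[\Afra(E), n, r, \beta]$; second, transfer this conclusion to an arbitrary $F$-embedding $\iota : E \hookrightarrow A$ via the comparison lemmas already at hand. For the first part, Theorem \ref{existapp} applied to the pure stratum $[\Afra(E), n, r, \beta]$ in $A(E)$ yields a simple approximation $\gamma_{0} \in A(E)$, but a priori $\gamma_{0}$ need not lie in $E$. Descending $\gamma_{0}$ into $E$ is where the tameness of $E/F$ enters crucially: since $E$ is a maximal subfield of $A(E) = \End_{F}(E)$ and $E/F$ is tame, one can construct an $E$-linear projector $s : A(E) \to E$ compatible with the filtration by powers of $\Pfra(\Afra(E))$ and restricting to the identity on $E$. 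An inductive analysis along a defining sequence of $[\Afra(E), n, r, \beta]$ then shows that $\gamma := s(\gamma_{0})$ lies in $E$, still satisfies $\gamma - \beta \in \Pfra(\Afra(E))^{-r}$, and has $k_{0}(\gamma, \Afra(E)) < -r$, so that $[\Afra(E), n, r, \gamma]$ is simple.

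For the second part, fix this $\gamma \in E$ and the embedding $\iota$. Since $r = -k_{0}(\beta, \Afra(E))$ and, by hypothesis, $r' = -k_{0}(\iota(\beta), \Afra)$, Lemma \ref{compofk0} forces $r' = (e(\Afra|\ofra_{F})/e(E/F))\,r$. Applying the same lemma to $\gamma$ gives
\[ -k_{0}(\iota(\gamma), \Afra) = \frac{e(\Afra|\ofra_{F})}{e(E/F)}\bigl(-k_{0}(\gamma, \Afra(E))\bigr) > \frac{e(\Afra|\ofra_{F})}{e(E/F)}\,r = r', \]
which is the critical-exponent condition for simplicity. For purity of $[\Afra, n', r', \iota(\gamma)]$, the subfield $F[\iota(\gamma)]$ sits inside $\iota(E)$, so $\Afra$ is automatically $F[\iota(\gamma)]$-pure; and Lemma \ref{valval} applied to $\gamma - \beta \in E$ yields $v_{\Afra}(\iota(\gamma - \beta)) \geq -r' > -n'$, hence $v_{\Afra}(\iota(\gamma)) = v_{\Afra}(\iota(\beta)) = -n'$. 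The same valuation estimate shows $\iota(\gamma) - \iota(\beta) \in \Pfra(\Afra)^{-r'}$, which is precisely the required equivalence.

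The main obstacle is the descent step in the first part: once the simple approximation $\gamma_{0} \in A(E)$ has been replaced by an element of $E$, the transfer along $\iota$ is a routine translation between the critical exponents $k_{0}(\cdot, \Afra)$ and $k_{0}(\cdot, \Afra(E))$, and between the valuations $v_{\Afra}$ and $v_{E}$, using Lemmas \ref{valval} and \ref{compofk0}. It is at this descent step that the tameness of $E/F$ is indispensable; without tameness neither the projector $s$ nor the preservation of the simple-approximation property of $\gamma_{0}$ under $s$ is available, and the statement is known to fail.
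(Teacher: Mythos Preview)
The paper does not prove this proposition; it is quoted from \cite[3.1 Corollary]{BH} without argument, so there is no in-paper proof to compare against.

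On your sketch itself: the second part (transfer along $\iota$) is sound. Once $\gamma\in E$ is known to work in $A(E)$, Lemma \ref{compofk0} carries the bound on $k_0$ over, and the valuation identity underlying Lemma \ref{valval} (which in fact holds for any $E$-pure order, not only in the split case to which that lemma is stated) handles both the equivalence and the condition $v_{\Afra}(\iota(\gamma))=-n'$. The first part, however, has a real gap. You take $\gamma_0\in A(E)$ from Theorem \ref{existapp} and set $\gamma=s(\gamma_0)$ for a tame corestriction $s:A(E)\to E$. Filtration-compatibility of $s$ does give $\gamma-\beta\in\Pfra(\Afra(E))^{-r}$, and purity of $[\Afra(E),n,r,\gamma]$ follows since $\gamma\in E$. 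But you give no reason why $k_0(\gamma,\Afra(E))<-r$: tame corestrictions control congruences, not critical exponents, and ``an inductive analysis along a defining sequence'' is a placeholder rather than an argument. That inequality is precisely the substantive content of the Bushnell--Henniart result. Their route is not to descend an abstract $\gamma_0$ from $A(E)$ but to exploit the Galois-theoretic description of $k_F(\beta)$ available in the tame case, which allows $\gamma$ to be built directly inside $E$; the paper's later treatment of standard representatives (Propositions \ref{propforsr} and \ref{srformin}) belongs to the same circle of ideas.
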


\begin{prop}
\label{Mforapp}
Assume $A \cong \M_{N}(F)$ for some $N$.  
Let $[\Afra, n, r, \beta]$ be a tame, pure stratum of $A$ with $r=-k_{0} (\beta, \Afra)$.  
Let $\gamma \in E=F[\beta]$ such that $[\Afra, n, r, \gamma]$ is simple and equivalent to $[\Afra, n, r, \beta]$.  
Then $[\Bfra_{\gamma}, r, r-1, \beta-\gamma]$ is simple.  
\end{prop}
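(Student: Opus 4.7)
The plan is to check the three defining conditions of a simple stratum for $[\Bfra_\gamma, r, r-1, \beta-\gamma]$: that it is a stratum, that it is pure, and that $r - 1 < -k_0(\beta - \gamma, \Bfra_\gamma)$. The first two are essentially formal; the last is the delicate part.

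Setting up the stratum is immediate. Since $\gamma \in E = F[\beta]$ and $E$ is commutative, $E \subset B_\gamma = \Cent_A(\gamma)$, so $\beta - \gamma \in E \cap B_\gamma$. Purity of $[\Afra, n, r, \beta]$ makes $\Afra$ an $E$-pure order, and \cite[Theorem 1.3]{Br} then gives that $\Bfra_\gamma = \Afra \cap B_\gamma$ is an $E$-pure hereditary $\ofra_{F[\gamma]}$-order with radical $\Pfra(\Bfra_\gamma) = \Pfra(\Afra) \cap B_\gamma$. The equivalence $\beta - \gamma \in \Pfra(\Afra)^{-r}$ then gives $\beta - \gamma \in \Pfra(\Bfra_\gamma)^{-r}$, so $[\Bfra_\gamma, r, r-1, \beta-\gamma]$ is a well-formed stratum.

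For purity, the heart of the matter is to pin down $v_{\Bfra_\gamma}(\beta-\gamma) = -r$. Applying Lemma \ref{valval} to $(\Afra, E)$ over $F$ and then to $(\Bfra_\gamma, E)$ over $F[\gamma]$, together with the standard ramification identity $e(\Bfra_\gamma | \ofra_{F[\gamma]}) \cdot e(F[\gamma]|F) = e(\Afra|\ofra_F)$ from \cite[1.2.4]{BK1}, gives $v_{\Bfra_\gamma}(x) = v_\Afra(x)$ for every $x \in E$. So it suffices to show $v_\Afra(\beta - \gamma) = -r$; the inequality $\geq -r$ is immediate from the equivalence. For the reverse, I would invoke the standard invariance of $k_0$ under perturbations in $\Pfra(\Afra)^{k_0 + 1}$ (see e.g.\ \cite[1.4.15]{BK1}): since $[\Afra, n, r, \gamma]$ is simple we have $k_0(\gamma, \Afra) \leq -r - 1$, so if $v_\Afra(\beta - \gamma) \geq -r + 1$ then $\beta - \gamma \in \Pfra(\Afra)^{-r+1} \subset \Pfra(\Afra)^{k_0(\gamma, \Afra) + 1}$, forcing $k_0(\beta, \Afra) = k_0(\gamma, \Afra) \leq -r - 1$, which contradicts $k_0(\beta, \Afra) = -r$. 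The remaining purity conditions are inherited from $E$: $F[\beta - \gamma] \subset E$ is a field, and $\Bfra_\gamma$ is $F[\beta-\gamma]$-pure.

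The main obstacle is the simplicity inequality $r - 1 < -k_0(\beta - \gamma, \Bfra_\gamma)$. If $\beta - \gamma \in F[\gamma]$ then $k_0 = -\infty$ and there is nothing to prove, so I assume $\beta - \gamma \notin F[\gamma]$; purity then forces $k_0(\beta - \gamma, \Bfra_\gamma) \geq v_{\Bfra_\gamma}(\beta-\gamma) = -r$. Suppose for contradiction that $k_0(\beta - \gamma, \Bfra_\gamma) = -s$ with $s \leq r - 1$; I would apply Theorem \ref{existapp} inside the central simple $F[\gamma]$-algebra $B_\gamma$ to find $c \in B_\gamma$ with $[\Bfra_\gamma, r, s, c]$ simple and equivalent to $[\Bfra_\gamma, r, s, \beta - \gamma]$. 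Then $k_0(c, \Bfra_\gamma) \leq -s - 1$ and $(\beta - \gamma) - c \in \Pfra(\Bfra_\gamma)^{-s} \subset \Pfra(\Bfra_\gamma)^{k_0(c, \Bfra_\gamma) + 1}$; a second application of the invariance of $k_0$---now inside $\Bfra_\gamma$---would give $k_0(\beta - \gamma, \Bfra_\gamma) = k_0(c, \Bfra_\gamma) \leq -s - 1$, contradicting our assumption. Hence $k_0(\beta - \gamma, \Bfra_\gamma) = -r$, and $[\Bfra_\gamma, r, r-1, \beta - \gamma]$ is simple.
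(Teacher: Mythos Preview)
Your final paragraph contains a genuine gap. The ``invariance of $k_0$'' you invoke---that $(\beta-\gamma) - c \in \Pfra(\Bfra_\gamma)^{k_0(c,\Bfra_\gamma)+1}$ forces $k_0(\beta-\gamma, \Bfra_\gamma) = k_0(c, \Bfra_\gamma)$---is not valid here. If it were, your argument would prove that \emph{every} pure stratum $[\Bfra, r, r-1, x]$ with $v_\Bfra(x) = -r$ is simple: set $s=-k_0(x,\Bfra)$, apply Theorem~\ref{existapp} at level $s$ to obtain a simple approximation $c$, and then ``invariance'' yields $k_0(x) = k_0(c) < -s$, contradicting $k_0(x)=-s$; but any non-minimal $x$ is a counterexample. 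The comparison of $k_0$ for equivalent strata (as in \cite[2.4.1]{BK1}) requires both strata to be simple, and under your contradiction hypothesis $k_0(\beta-\gamma,\Bfra_\gamma) = -s$ the stratum $[\Bfra_\gamma, r, s, \beta-\gamma]$ is precisely \emph{not} simple at level $s$. Your earlier use of the same principle in the purity step is in better shape, since there both $[\Afra, n, r-1, \beta]$ and $[\Afra, n, r-1, \gamma]$ are genuinely simple and \cite[2.4.1]{BK1} does apply.

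A structural warning sign is that your simplicity argument never uses the tameness hypothesis, which is essential to the paper's proof. The paper reduces (as in Proposition~\ref{Mforcons}) to the case where $F[\beta]$ is a maximal subfield of $A$, then exploits tameness through Proposition~\ref{BHforapp} to choose a simple approximation $\alpha$ of $\beta-\gamma$ lying \emph{inside} $F[\gamma][\beta-\gamma]$. Proposition~\ref{Mforcons} then shows $[\Afra, n, r-1, \gamma+\alpha]$ is simple with $F[\gamma+\alpha] = F[\gamma,\alpha]$; comparing degrees with the simple stratum $[\Afra, n, r-1, \beta]$ via \cite[2.4.1]{BK1} forces $F[\gamma,\alpha] = F[\beta]$, so $F[\gamma][\alpha]$ is a \emph{maximal} subfield of $B_\gamma$. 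That maximality is exactly the input to \cite[2.2.2]{BK1}, which upgrades ``equivalent to a simple stratum with maximal field'' to ``simple'' and finishes the argument.
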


\begin{prf}
Using an argument similar to the one in the proof of Proposition \ref{Mforcons}, it
 is enough to prove the proposition in the case where $F[\beta]$ is a maximal subfield of the algebra $A=\End_F(V)$. 
So let $[\Afra , n , r , \beta ]$ be a tame pure stratum such that $F[\beta]$ is a maximal subfield of $A$ and $k_0 ( \beta , \Afra) = -r$. Let $\gamma$ be in $F[\beta]$ such that $[\Afra , n , r , \gamma ] $ is simple. The stratum $[\Bfra _{\gamma} , r , r-1 , \beta - \gamma ] $ is pure in the algebra $\End_{F[\gamma]} (V)$  because it is equivalent to a simple one by \cite[2.4.1]{BK1}. Moreover ${[\Bfra _{\gamma} , r , r-1 , \beta - \gamma ] }$
 is tame pure, so Proposition \ref{BHforapp} shows that there exists a simple stratum $[\Bfra _{\gamma} , r , r-1 , \alpha  ] $
  equivalent to $[\Bfra _{\gamma} , r , r-1 , \beta - \gamma ] $, such that $F[\gamma][\alpha] \subset F[\gamma][\beta - \gamma ].$ By Proposition \ref{Mforcons},
  $[\Afra , n , r-1 , \gamma + \alpha ]$ is simple and $F[\gamma + \alpha ]$ is equal to the field $F[\gamma , \alpha ]$. Set $\mathfrak{Q} _{\gamma} = \mathrm{rad} ( \Bfra _{\gamma} ) = \Bfra _{\gamma} \cap \mathfrak{P}$. The equivalence $[\Bfra _{\gamma} , r ,r-1 , \alpha ] \sim [\Bfra _{\gamma} , r , r-1 , \beta - \gamma ]$ shows that $ \alpha \equiv \beta - \gamma$ $(\mod~ \mathfrak{Q} _{\gamma} ^{-(r-1)})$. This implies $\gamma + \alpha \equiv \beta$ $ (\mod~ \Pfra ^{-(r-1)}). $ We deduce that $[\Afra , n , r -1 , \gamma + \alpha ] $ and $[\Afra , n , r-1 , \beta ]$ are two simple strata equivalent. Indeed, the first is simple by construction and the second is simple by hypothesis because $k_0 (\beta , \Afra )=-r$. The definitions shows that $F[\gamma + \alpha ] \subset F[\beta]$, and \cite[2.4.1]{BK1} shows that $[F[\gamma + \alpha ]:F] =[F[\beta]:F]$. Thus, $F[\gamma + \alpha ] = F[\beta]$. The trivial inclusions $F[\gamma + \alpha] \subset F[\gamma , \alpha] \subset F[\beta]$ then shows that $ F[\gamma + \alpha] = F [\gamma , \alpha ] = F [\beta ]$.
  We have thus obtained that the following three assertions hold:

 - The stratum $[\Bfra _{\gamma} , r , r-1 , \alpha ]$ is a simple stratum in $\End_{F[\gamma]}(V)$.

 - The field $F[\gamma][\alpha]$ is a maximal subfield of the $F[\gamma]$-algebra $\End_{F[\gamma]}(V)$.

 -$[\Bfra _{\gamma} , r , r-1 , \alpha ] \sim[\Bfra _{\gamma} , r ,r-1, \beta - \gamma ]$\\ 
Consequently, by \cite[2.2.2]{BK1}, $[\Bfra _{\gamma} , r ,r-1 , \beta - \gamma ]$ is simple,   as required.
\end{prf}

By these propositions, we obtain the following proposition needed in our case.  

\begin{prop}
\label{appfortame}
Let $[\Afra, n, r, \beta]$ be a tame pure stratum of $A$ with $r=-k_{0}(\beta, \Afra)$.  
Then there exists an element $\gamma$ in $F[\beta]$ satisfying the following conditions:  
\begin{enumerate}
\item The stratum $[\Afra, n, r, \gamma]$ is simple and equivalent to $[\Afra, n, r, \beta]$.   
\item $\beta-\gamma$ is minimal over $F[\gamma]$.  
\item The equality $v_{\Afra}(\beta-\gamma)=k_{0}(\beta, \Afra)$ holds.  
\end{enumerate}
\end{prop}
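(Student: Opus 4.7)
The plan is to combine the tame approximation of Proposition \ref{BHforapp} with the simplicity of the residual stratum given by Proposition \ref{Mforapp}. Set $E = F[\beta]$, which is tamely ramified over $F$. First I would produce $\gamma$ \emph{inside} $E$: viewing $\beta$ through the canonical embedding $E \hookrightarrow A(E) = \End_F(E)$, the stratum $[\Afra(E), -v_E(\beta), -k_F(\beta), \beta]$ is pure, so Proposition \ref{BHforapp} yields some $\gamma \in E$ for which $[\Afra(E), -v_E(\beta), -k_F(\beta), \gamma]$ is simple and equivalent to $[\Afra(E), -v_E(\beta), -k_F(\beta), \beta]$. The ``moreover'' clause of Proposition \ref{BHforapp}, applied to the given inclusion $E \hookrightarrow A$, transfers this to $A$: $[\Afra, n, r, \gamma]$ is simple and equivalent to $[\Afra, n, r, \beta]$, which is condition (1).

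Next, with $\gamma \in E$ in hand, I would apply Proposition \ref{Mforapp} to conclude that the stratum $[\Bfra_\gamma, r, r-1, \beta - \gamma]$ is simple in the centralizer algebra $B_{F[\gamma]} = \Cent_A(F[\gamma])$, where $\Bfra_\gamma = \Afra \cap B_{F[\gamma]}$. Since Proposition \ref{Mforapp} is stated only for $A \cong \M_N(F)$, an extra reduction to the split-matrix case is needed when $A$ is a general inner form; this is done exactly as in the proof of Proposition \ref{Mforcons}, using the $(W, F[\beta])$-decomposition of \cite[1.2.8]{BK1} together with the transport of $k_0$-invariants from \cite[1.4.13]{BK1}.

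Finally I unpack this simplicity to deduce (2) and (3). Purity of $[\Bfra_\gamma, r, r-1, \beta - \gamma]$ gives $v_{\Bfra_\gamma}(\beta - \gamma) = -r$; combined with the identity $\Pfra^k \cap B_{F[\gamma]} = \Qfra_\gamma^k$ (part of \cite[1.2.4]{BK1}), this yields $v_\Afra(\beta - \gamma) = v_{\Bfra_\gamma}(\beta - \gamma) = -r = k_0(\beta, \Afra)$, which is condition (3). The simplicity condition forces $k_0(\beta - \gamma, \Bfra_\gamma) \leq -r$, and since always $v_{\Bfra_\gamma} \leq k_0$ we obtain the equality $k_0(\beta - \gamma, \Bfra_\gamma) = v_{\Bfra_\gamma}(\beta - \gamma)$; translating this identity through Proposition \ref{relofk0} and Lemma \ref{valval} applied with $F[\gamma]$ in place of $F$ is equivalent to $k_{F[\gamma]}(\beta - \gamma) = -v_{F[\gamma]}(\beta - \gamma)$, i.e., the minimality of $\beta - \gamma$ over $F[\gamma]$, which is condition (2). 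The main technical hurdle is the reduction used in the second step to extend Proposition \ref{Mforapp} beyond the split-matrix case; the rest of the argument consists of straightforward unpacking of definitions.
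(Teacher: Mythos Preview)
Your overall scaffold matches the paper's: obtain $\gamma\in E=F[\beta]$ via Proposition~\ref{BHforapp}, then feed the pair $(\beta,\gamma)$ into Proposition~\ref{Mforapp} to get simplicity of the residual stratum, and read off (2) and (3). The difference is \emph{where} you invoke Proposition~\ref{Mforapp}. You apply it inside the ambient algebra $A$, which forces you to extend that proposition beyond the split hypothesis; the paper instead applies it inside $A(E)=\End_F(E)\cong \M_{[E:F]}(F)$, which is automatically split, so no extension is needed. Minimality of $\beta-\gamma$ over $F[\gamma]$ is a purely field-theoretic condition (it is $k_{F[\gamma]}(\beta-\gamma)=-v_{F[\gamma][\beta-\gamma]}(\beta-\gamma)$), so it can be checked in $\Cent_{A(E)}(\gamma)=\End_{F[\gamma]}(E)$ just as well as in $\Cent_A(F[\gamma])$; and (3) then follows from the numerical identities $v_{\Afra}(\beta-\gamma)=\tfrac{e(\Afra|\ofra_F)}{e(E/F)}v_E(\beta-\gamma)$ and Lemma~\ref{compofk0}, transporting $v_E(\beta-\gamma)=k_0(\beta,\Afra(E))$ (read off from the simple stratum in $A(E)$) to $v_{\Afra}(\beta-\gamma)=k_0(\beta,\Afra)$.

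Your proposed reduction to the split case is where the argument wobbles. You write that it is ``done exactly as in the proof of Proposition~\ref{Mforcons}, using the $(W,F[\beta])$-decomposition of \cite[1.2.8]{BK1}'', but that decomposition (and the whole argument of Proposition~\ref{Mforcons}) already assumes $A\cong\M_N(F)$: \cite[1.2.8]{BK1} produces an isomorphism $\Afra\cong\Afra(E_1)\otimes_{\ofra_{E_1}}\Bfra$ only for $A=\End_F(V)$, and there is no analogue of this tensor splitting for a genuine inner form $A=\M_m(D)$ with $D$ non-split. So the step ``extend Proposition~\ref{Mforapp} to general $A$'' is not justified by the reference you give, and in fact the paper never proves such an extension --- it simply sidesteps the issue by working in $A(E)$. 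If you keep your route, you would need the S\'echerre--Broussous machinery for strata in non-split algebras rather than \cite[1.2.8]{BK1}; but the paper's choice of ambient algebra makes all of that unnecessary.
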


\begin{prf}
By Proposition \ref{BHforapp}, there exists $\gamma$ satisfying (1).  
We show that $\gamma$ also satisfy (2) and (3).  
We apply Proposition \ref{Mforapp} to the case $A=A(E)$.  
Then the stratum $[\Bfra', -k_{0} \left( \beta, \Afra(E) \right), -k_{0} \left( \beta, \Afra(E) \right) -1, \beta-\gamma]$ is simple, where $\Bfra'=\Cent_{A(E)}(\gamma) \cap \Afra(E)$.  
Since this stratum is simple, $\beta-\gamma$ is minimal over $F[\gamma]$ and (2) is satisfied.  
To obtain (3), we calculate $v_{\Afra}(\beta-\gamma)$ and $k_{0}(\beta, \Afra)$.   
First, we have
\[
	v_{E}(\beta-\gamma) = v_{\ofra_{E}}(\beta-\gamma) = v_{\Bfra'}(\beta-\gamma) = -\left( -k_{0} \left( \beta, \Afra(E) \right) \right) = k_{0} \left( \beta, \Afra(E) \right) .  
\]
Then we obtain 
\[
	v_{\Afra}(\beta-\gamma) = \frac{e(\Afra|\ofra_{F})}{e(E/F)}v_{E}(\beta-\gamma) = \frac{e(\Afra|\ofra_{F})}{e(E/F)} k_{0} \left( \beta, \Afra(E) \right) = k_{0}(\beta, \Afra)
\]
and (3) is also satisfied.  
\end{prf}

\section{Tame twisted Levi subgroups of $G$}
\label{TTLS}

First, we show some subgroups in $G$ are tame twisted Levi subgroups.  

Let $E/F$ be a field extension, and let $W$ be a right $D \otimes_{F} E$-module such that $\dim_{E}(W) < \infty$.  
Then we can define an $E$-scheme $\underline{\Aut}_{D \otimes_{F} E}(W)$ as
\[
	\underline{\Aut}_{D \otimes_{F} E}(W)(C)= \Aut_{D \otimes_{F} C}(W \otimes_{E} C)
\]
for an $E$-algebra $C$.  

Here, let $V$ be a right $D$-module and let $E/F$ be a field extension in $\End_{D}(V)$.  
Then $V$ can be equipped with the canonical right $D \otimes _{F} E$-module structure.  

Let $E'/E/F$ be a field extension in $\End_{D}(V)$ such that $E'$ is a tamely ramified extension of $F$.  
We put $G=\underline{\Aut}_{D}(V)$, $H=\Res_{E/F} \underline{\Aut}_{D \otimes _{F} E}(V)$ and $H'= \Res_{E'/F} \underline{\Aut}_{D \otimes_{F} E'}(V)$, where the functor $\Res$ is the Weil restriction.  
Then $H'$ is a closed subscheme in $H$ and $H$ is a closed subscheme in $G$.  

To show that $(H', H, G)$ is a tame twisted Levi sequence, we fix a maximal torus in $G$.  
We take a maximal subfield $L$ in $\End_{D}(V)$ such that $L$ is a tamely ramified extension of $E'$.  
We put $T=\Res_{L/F} \underline{\Aut}_{D \otimes _{F} L}(V)$.  

For a finite field extension $E_{0}/F$, we put $X_{E_{0}}=\Hom_{F}(E_{0}, \bar{F})$.  
Let $\tilde{L}$ be the Galois closure of $L/F$ in $\bar{F}$ and let $C$ be an $\tilde{L}$-algebra.  
Then we have an $E$-algebra isomorphism
\begin{eqnarray*}
	E \otimes_{F} C & \cong & \prod_{\sigma \in X_{E}} C_{\sigma} \\
	l \otimes a & \mapsto & \left( \sigma(l)a \right)_{\sigma}, 
\end{eqnarray*}
where $C_{\sigma}=C$ is equipped with an $E$-algebra structure by the inclusion $l \mapsto \sigma(l)$ from $E$ to $C$.  
The canonical inclusion $E \otimes_{F} C \hookrightarrow E' \otimes_{F} C$ induced from $E \subset E'$ is the map $(l_{\sigma})_{\sigma \in X_{E}} \mapsto (l_{\sigma'|_{E}})_{\sigma' \in X_{E'}}$.  
We also have $V \otimes_{F} C \cong V \otimes_{E} E \otimes_{F} C \cong V \otimes_{E} (\prod_{\sigma \in X_{E}}C_{\sigma}) \cong \bigoplus_{\sigma \in X_{E}} V \otimes_{E} C_{\sigma}$.  
We put $V_{\sigma}:= V \otimes_{E} C_{\sigma}$ for any $\sigma \in X_{E}$.  

We need the following lemma

\begin{lem}
\label{lem_for_prop_for_Levidecom}
Let $t_{0}$ be a positive integer, and let $R_{t}$ be a (non-commutative) ring for $t=1, \ldots, t_{0}$.  
We put $R:=\prod_{t=1}^{t_{0}} R_{t}$, and we regard $R_{t}$ as an $R$-submodule in $R \cong \bigoplus_{t=1}^{t_{0}} R_{t}$.  
Let $V$ be a right $R$-module.  
We put $V_{t}:=V \cdot 1_{t}$, where $1_{t} \in R_{t} \subset R$ is the identity element in $R_{t}$.  
Then we have $V = \bigoplus_{t=1}^{t_{0}} V_{t}$ and an isomorphism 
\begin{eqnarray*}
	\Aut_{R}(V) & \cong & \prod_{t=1}^{t_{0}} \Aut_{R_{t}}(V_{t}) \\
	f & \mapsto & (f|_{V_{t}})_{t}.  
\end{eqnarray*}
The inverse of this isomorphism is the map $(f_{t})_{t} \mapsto \bigoplus_{t=1}^{t_{0}} f_{t}$.  
\end{lem}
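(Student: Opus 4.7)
The plan is to exploit the fact that the identity elements $1_{t}$ form a complete system of orthogonal idempotents in $R$, and then observe that any $R$-linear map must commute with right multiplication by these idempotents.

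First, I would record the idempotent calculus: under the identification $R \cong \bigoplus_{t} R_{t}$, the identity of $R$ decomposes as $1_{R} = 1_{1} + \cdots + 1_{t_{0}}$ and one has $1_{s} 1_{t} = \delta_{st} 1_{t}$. Applying this to any $v \in V$ via $v = v \cdot 1_{R} = \sum_{t} v \cdot 1_{t}$ gives $V = V_{1} + \cdots + V_{t_{0}}$; directness follows because if $\sum_{t} v_{t} = 0$ with $v_{t} \in V_{t}$, then right multiplication by $1_{s}$ yields $v_{s} = v_{s} \cdot 1_{s} = 0$ (using that $v_{t} \cdot 1_{s} = v_{t} \cdot 1_{t} \cdot 1_{s} = 0$ for $t \neq s$). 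Note also that $V_{t}$ is naturally an $R_{t}$-module, and the $R_{t}$-action agrees with the restriction of the $R$-action via the embedding $R_{t} \hookrightarrow R$.

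Next, for the group isomorphism, I would show the forward map is well-defined: if $f \in \Aut_{R}(V)$ and $v \in V_{t}$, write $v = v \cdot 1_{t}$, so $f(v) = f(v \cdot 1_{t}) = f(v) \cdot 1_{t} \in V_{t}$. Hence $f$ preserves each $V_{t}$, and $f|_{V_{t}}$ is automatically $R_{t}$-linear since $R_{t} \subset R$. The restriction is bijective on $V_{t}$ because $f^{-1}$ also preserves $V_{t}$ by the same argument, giving a two-sided inverse. For the reverse map, given $(f_{t})_{t} \in \prod_{t} \Aut_{R_{t}}(V_{t})$, define $f = \bigoplus_{t} f_{t}$ using the direct sum decomposition; to check $R$-linearity it suffices to check it on a summand $V_{t}$ and for $r_{s} \in R_{s} \subset R$, where $v_{t} \cdot r_{s} = \delta_{ts}(v_{t} \cdot r_{t}) \in V_{t}$, and then $R_{t}$-linearity of $f_{t}$ takes care of the case $s = t$ while both sides vanish for $s \neq t$.

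Finally, I would verify that the two maps are mutually inverse (immediate from the definitions) and respect composition, so they restrict to a group isomorphism on the automorphism groups. There is no real obstacle here; the content of the lemma is entirely encoded in the orthogonal idempotent decomposition $1_{R} = \sum_{t} 1_{t}$, and the proof is a routine unwinding of the definitions once this observation is made.
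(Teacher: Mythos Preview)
Your proposal is correct and complete. The paper actually states this lemma without proof, treating it as a standard fact; your argument via the complete system of orthogonal idempotents $1_{R} = \sum_{t} 1_{t}$ is exactly the routine verification one would expect, and nothing more is needed.
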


\begin{prop}
\label{prop_for_Levidecom}
Let $V$, $L/E'/E/F$, $H$ and $H'$ be as above.  
Moreover, let $C$ be an extension field of $\tilde{L}$.  
\begin{enumerate}
\item For $\sigma \in X_{E}$, we have $V_{\sigma} = \bigoplus_{\sigma ' \in X_{E'}, \sigma'|_{E}=\sigma} V_{\sigma'}$.  
\item We have a $C$-group scheme isomorphism $H \times_{F} C \cong \prod _{\sigma \in X_{E}} \underline{\Aut}_{D \otimes_{F} C_{\sigma} } ( V_{\sigma} )$.  
\item We have a commutative diagram of $C$-group schemes:  
\[
	\xymatrix{
		H' \times_{F} C \ar[r]^-{\cong} \ar[d] & \prod_{\sigma' \in X_{E'}} \underline{\Aut}_{D \otimes_{F} C_{\sigma'} } \left( V_{\sigma'}  \right) \ar[d] \\
		H \times_{F} C \ar[r]^-{\cong} & \prod_{\sigma \in X_{E}} \underline{\Aut}_{D \otimes_{F} C_{\sigma} } \left( V_{\sigma}  \right),  \\
	}
\]
\end{enumerate}
\end{prop}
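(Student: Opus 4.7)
The plan is to reduce everything to the ring-level decomposition $E \otimes_{F} C \cong \prod_{\sigma \in X_{E}} C_{\sigma}$ already displayed in the excerpt (which is valid because $L/F$ is tamely ramified, hence $E/F$ and $E'/F$ are separable, and $C \supset \tilde{L}$ splits both of them), the analogous decomposition for $E'$, and Lemma \ref{lem_for_prop_for_Levidecom} applied to the product ring $D \otimes_{F} E \otimes_{F} B$ for $B$ a $C$-algebra.

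First I would prove (1). Under the isomorphism $E \otimes_{F} C \cong \prod_{\sigma} C_{\sigma}$, the primitive idempotent $e_{\sigma} \in \prod_{\sigma} C_{\sigma}$ projects $V \otimes_{F} C = V \otimes_{E} (E \otimes_{F} C)$ onto $V_{\sigma} = V \otimes_{E} C_{\sigma}$. The formula $(l_{\sigma})_{\sigma} \mapsto (l_{\sigma'|_{E}})_{\sigma'}$ for the inclusion $E \otimes_{F} C \hookrightarrow E' \otimes_{F} C$ recalled in the excerpt sends $e_{\sigma}$ to $\sum_{\sigma'|_{E} = \sigma} e_{\sigma'}$. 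Acting on $V \otimes_{F} C$ by this identity immediately gives $V_{\sigma} = \bigoplus_{\sigma'|_{E} = \sigma} V_{\sigma'}$.

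For (2), by the functor-of-points definition of Weil restriction, for any $F$-algebra $B$ we have $H(B) = \Aut_{D \otimes_{F} E \otimes_{F} B}(V \otimes_{F} B)$. When $B$ is a $C$-algebra, tensoring the decomposition of $E \otimes_{F} C$ with $B$ over $C$ yields $E \otimes_{F} B \cong \prod_{\sigma} B_{\sigma}$ with $B_{\sigma} := C_{\sigma} \otimes_{C} B$, whence $D \otimes_{F} E \otimes_{F} B \cong \prod_{\sigma} D \otimes_{F} B_{\sigma}$ and $V \otimes_{F} B \cong \bigoplus_{\sigma} V_{\sigma} \otimes_{C} B$. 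Applying Lemma \ref{lem_for_prop_for_Levidecom} to this product ring and module gives $H(B) \cong \prod_{\sigma} \Aut_{D \otimes_{F} B_{\sigma}}(V_{\sigma} \otimes_{C} B)$, that is, $(H \times_{F} C)(B) \cong \prod_{\sigma} \underline{\Aut}_{D \otimes_{F} C_{\sigma}}(V_{\sigma})(B)$. Since all identifications are natural in $B$, this upgrades to an isomorphism of $C$-schemes. The same argument, applied to $E'$ in place of $E$, gives the top horizontal isomorphism in (3).

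Finally, for (3), I would unwind the inclusion $H' \hookrightarrow H$ on $B$-points. An element $(f_{\sigma'})_{\sigma'}$ on the upper-right corresponds, under the top isomorphism, to the automorphism $f \in \Aut_{D \otimes_{F} E' \otimes_{F} B}(V \otimes_{F} B)$ whose restriction to $V_{\sigma'} \otimes_{C} B$ is $f_{\sigma'}$. Viewing $f$ instead as an element of $H(B)$, that is, restricting scalars from $E'$ to $E$, and using the refinement $V_{\sigma} \otimes_{C} B = \bigoplus_{\sigma'|_{E} = \sigma} V_{\sigma'} \otimes_{C} B$ from (1), one sees that the restriction of $f$ to $V_{\sigma} \otimes_{C} B$ is the block-diagonal map $\bigoplus_{\sigma'|_{E}=\sigma} f_{\sigma'}$. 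Thus the right-hand vertical map is the block-diagonal embedding induced by (1), and both routes around the square agree. The main obstacle is not conceptual depth but careful bookkeeping of tensor products and scalar actions, in particular verifying that the formula $(l_{\sigma}) \mapsto (l_{\sigma'|_{E}})_{\sigma'}$ for the idempotent inclusion is compatible with the Weil-restriction description of $H' \hookrightarrow H$; once that compatibility is secured, the three statements follow from Lemma \ref{lem_for_prop_for_Levidecom} and idempotent manipulations.
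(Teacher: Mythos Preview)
Your proposal is correct and follows essentially the same approach as the paper: both arguments prove (1) by tracking the idempotents $1_{\sigma}$ under the inclusion $E\otimes_F C\hookrightarrow E'\otimes_F C$, prove (2) by computing $H\times_F C$ on points of a $C$-algebra and invoking Lemma~\ref{lem_for_prop_for_Levidecom}, and obtain (3) from (2) applied to both $E$ and $E'$ together with the canonical inclusion $H'\subset H$. Your explicit unwinding of the right vertical map in (3) as a block-diagonal embedding is exactly what the paper records separately in Remark~\ref{rem_for_Levidecom}.
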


\begin{prf}
Since we can regard $V$ as a right $D \otimes_{F} E$-module, the group $V \otimes_{F} C$ is equipped with a canonical right $D \otimes_{F} E \otimes _{F} C$-module structure.  
Moreover, by the canonical isomorphism $V \otimes_{F} C \cong V \otimes_{E} E \otimes_{F} C$ we can also equip $V \otimes_{E} (E \otimes_{F} C)$ with a right $D \otimes_{F} (E \otimes_{F} C)$-module structure.  
This action on $V \otimes_{E} E \otimes_{F} C$ is as follows:  
For $v \in V$, $z \in D$ and $b_{1}, b_{2} \in E \otimes_{F} C$, we have $(v \otimes_{E} b_{1}) \cdot (z \otimes_{F} b_{2}) = (vz) \otimes_{E} b_{1} b_{2}$.  
Let $\sigma \in X_{E}$.  
Let $1_{\sigma} \in C_{\sigma} \subset \bigoplus_{\tau \in X_{E}} C_{\tau}$ be the identity element in $C_{\sigma}$, and we regard  $1_{\sigma}$ as an element in $E \otimes_{F} C$ by the $E \otimes_{F} C$-module isomorphism $E \otimes_{F} C \cong \prod_{\tau \in X_{E}} C_{\tau} \cong \bigoplus_{\tau \in X_{E}} C_{\tau}$.  
Similarly, for $\sigma' \in X_{E'}$ we define $1_{\sigma'} \in C_{\sigma'} \subset E' \otimes_{F} C$.  

We have $(V \otimes_{F} C) \cdot (1 \otimes_{F} 1_{\sigma}) = (V \otimes_{E} (E \otimes_{F} C)) \cdot (1 \otimes_{F} 1_{\sigma}) = (V \cdot 1) \otimes_{E} ((\prod_{\tau \in X_{E}} C_{\tau}) \cdot 1_{\sigma}) = V \otimes_{E} C_{\sigma} = V_{\sigma}$.  
By the same argument, we also obtain $V_{\sigma'} = (V \otimes_{F} C) \cdot (1 \otimes_{F} 1_{\sigma'})$ for $\sigma' \in X_{E'}$.  
Since $1_{\sigma} = \sum_{\sigma' \in X_{E}, \sigma'|_{E} = \sigma} 1_{\sigma'}$, we obtain
\begin{eqnarray*}
	V_{\sigma} & = & (V \otimes_{F} C) \cdot (1 \otimes_{F} 1_{\sigma}) = \left(V \otimes_{E'} ( E' \otimes_{F} C) \right) \cdot \left(1 \otimes_{F}\left( \sum_{\sigma' \in X_{E}, \sigma'|_{E} = \sigma} 1_{\sigma'} \right) \right) \\
	& = & V \otimes_{E'} \left( \bigoplus_{\sigma' \in X_{E}, \sigma'|_{E} = \sigma} C_{\sigma'} \right)  = \bigoplus_{\sigma' \in X_{E}, \sigma'|_{E} = \sigma} (V \otimes_{E'} C_{\sigma'}) = \bigoplus_{\sigma' \in X_{E}, \sigma'|_{E} = \sigma} V_{\sigma'}, 
\end{eqnarray*}
which is the equality in (1).  

To show (2), let $R$ be a $C$-algebra.  
Then we have 
\begin{eqnarray*}
	H \times_{F} C (R) & = & \Res_{E/F} \underline{\Aut}_{D \otimes_{F} E} (V) (R)  = \underline{\Aut}_{D \otimes_{F} E} (V) (E \otimes_{F} R) \\
	& = & \Aut_{D \otimes_{F} E \otimes_{E} E \otimes_{F} R}(V \otimes_{E} E \otimes_{F} R) \\
	& \cong & \Aut_{D \otimes_{F} (E \otimes_{F} C) \otimes_{C} R} (V \otimes_{E} (E \otimes_{F} C ) \otimes_{C} R).  
\end{eqnarray*}

Here, we have a ring isomorphism $D \otimes_{F} (E \otimes_{F} C) \otimes_{C} R \cong \prod_{\sigma \in X_{E}} D \otimes_{F} C_{\sigma} \otimes_{C} R$, and the identity element in $D \otimes_{F} C_{\sigma} \otimes_{C} R$ is $1 \otimes_{F} 1_{\sigma} \otimes_{C} 1$.  
Moreover, $(V \otimes_{E} (E \otimes_{F} C) \otimes_{C} R) \cdot (1 \otimes_{F} 1_{\sigma} \otimes_{C} 1) = V \otimes_{E} C_{\sigma} \otimes_{C} R = V_{\sigma} \otimes R$.  
Then, by Lemma \ref{lem_for_prop_for_Levidecom}, we obtain $\Aut_{D \otimes_{F} (E \otimes_{F} C) \otimes_{C} R} (V \otimes_{E} (E \otimes_{F} C ) \otimes_{C} R) \cong \prod_{\sigma \in X_{E}} \Aut_{D \otimes_{F} C_{\sigma} \otimes_{C} R} (V_{\sigma} \otimes_{C} R) =\left( \prod_{\sigma \in X_{E}} \underline{\Aut}_{D \otimes_{F} C_{\sigma}}(V_{\sigma}) \right)(R)$, which completes the proof of (2).  

(3) is the result from (2) and the fact that there exists a canonical inclusion $H' \subset H$.  
\end{prf}

\begin{rem}
\label{rem_for_Levidecom}
We describe the right vertical morphism in (3).  
First, the isomorphism
\begin{eqnarray*}
H \times_{F} C(R) & = & \Aut_{D \otimes _{F} (\prod_{\sigma \in X_{E}} C_{\sigma}) \otimes_{C} R} \left( \bigoplus_{\sigma \in X_{E}} (V_{\sigma} \otimes_{C} R) \right) \\
 & \cong & \prod_{\sigma \in X_{E}} \Aut_{D \otimes_{F} C_{\sigma} \otimes_{C} R} (V_{\sigma} \otimes_{C} R) = \left( \prod_{\sigma \in X_{E}} \underline{\Aut}_{D \otimes_{F} C_{\sigma}}(V_{\sigma}) \right) (R)
\end{eqnarray*}
is given as $f \mapsto (f|_{V_{\sigma} \otimes_{C} R})_{\sigma}$ by Lemma \ref{lem_for_prop_for_Levidecom}.  
Moreover, the monomorphism $H' \times_{F} C \to H \times_{F} C$ induced by $H \subset H'$ is as follows:  
For a $C$-algebra $R$, $H' \times_{F} C (R) = H'(R) = \Aut_{D \otimes_{F} E' \otimes_{F} R}(V \otimes_{F} R) \subset \Aut_{D \otimes_{F} E \otimes_{F} R}(V \otimes_{F} R) = H(R) = H' \times_{F} C(R)$.  
Therefore, by Proposition \ref{prop_for_Levidecom} (1), the monomorphism 
\[
	\prod_{\sigma' \in X_{E'}} \underline{\Aut}_{D \otimes_{F} C_{\sigma'} } \left( V_{\sigma'}  \right) \hookrightarrow \prod_{\sigma \in X_{E}} \underline{\Aut}_{D \otimes_{F} C_{\sigma} } \left( V_{\sigma} \right)
\]
is given as $(f_{\sigma'})_{\sigma' \in X_{E'}} \mapsto (\prod_{\sigma' \in X_{E}, \sigma'|_{E}=\sigma} f_{\sigma'})_{\sigma \in X_{E}}$.  
\end{rem}

We put $I_{1} = \{ 1, \ldots, [E:F] \}$, $I_{2} = \{ 1, \ldots, [E':E] \}$ and $I_{3} = \{ 1, \ldots, [L:E'] \}$.  
Let $\sigma_1, \sigma_2, \ldots, \sigma_{[E:F]}$ be distinct elements in $X_{E}=\Hom_{F}(E, \bar{F})$.  
For $i \in I_{1}$, let $\sigma_{i,1}, \sigma_{i,2}, \ldots, \sigma_{i,[E':E]}$ be distinct elements in $\Hom_{F}(E', \bar{F})$ whose restrictions to $E$ are equal to $\sigma_{i}$.  
For $(i,j) \in I_{1} \times I_{2}$, let $\sigma_{i,j,1}, \sigma_{i,j,2}, \ldots, \sigma_{i,j,[L:E']}$ be distinct elements in $\Hom_{F}(L, \bar{F})$ whose restrictions to $E'$ are equal to $\sigma_{i,j}$.  
Then we have 
\begin{eqnarray*}
X_{E'} = \Hom_{F}(E', \bar{F}) &=& \left\{ \sigma_{i,j} | (i,j) \in I_{1} \times I_{2} \right\} \\
X_{L} = \Hom_{F}(L, \bar{F}) &=& \left\{ \sigma_{i,j,k} | (i,j,k) \in I_{1} \times I_{2} \times I_{3} \right\}
\end{eqnarray*}
as $L/F$ is separable.  
For $(i,j,k) \in I_{1} \times I_{2} \times I_{3}$ and an $\tilde{L}$-algebra $C$, we put $C_{i,j,k}:=C_{\sigma_{i,j,k}}$ and $V_{i,j,k}:=C_{\sigma_{i,j,k}}$.  
We can similarly define an $E$-algebra $C_{i}$, an $E'$-algebra $C_{i,j}$, a $D \otimes_{F} C_{i}$-module $V_{i}$ and a $D \otimes_{F} C_{i,j}$-module $V_{i,j}$.  

\begin{prop}
\label{Levidecom}
\begin{enumerate}
\item Let $C$ be an extension field of $\tilde{L}$.  
Then we have a commutative diagram of $C$-schemes:  

\[
	\xymatrix{
		T \times_{F} C \ar[r]^-{\cong} \ar[d] & \prod_{i,j,k} \underline{\Aut}_{D \otimes_{F} C_{i,j,k} } (V_{i,j,k}) \ar[d] \\
		H' \times_{F} C \ar[r]^-{\cong} \ar[d] & \prod_{i,j} \underline{\Aut}_{D \otimes_{F} C_{i,j} } \left( \bigoplus_{k} V_{i,j,k}  \right) \ar[d] \\
		H \times_{F} C \ar[r]^-{\cong} \ar[d] & \prod_{i} \underline{\Aut}_{D \otimes_{F} C_{i} } \left( \bigoplus_{j,k} V_{i,j,k}  \right) \ar[d] \\
		G \times_{F} C \ar[r]^-{\cong} &  \underline{\Aut}_{D \otimes_{F} C } \left( \bigoplus_{i,j,k} V_{i,j,k}  \right).   \\
	}
\]

\item We have a commutative diagram of $C$-vector spaces:
\[
	\xymatrix{
		\Lie(T \times_{F} C) \ar[r]^-{\cong} \ar[d] & \prod_{i,j,k} \End_{D \otimes_{F} C_{i,j,k} } (V_{i,j,k}) \ar[d] \\
		\Lie(H' \times_{F} C) \ar[r]^-{\cong} \ar[d] & \prod_{i,j} \End_{D \otimes_{F} C_{i,j} } \left( \bigoplus_{k} V_{i,j,k}  \right) \ar[d] \\
		\Lie(H \times_{F} C) \ar[r]^-{\cong} \ar[d] & \prod_{i} \End_{D \otimes_{F} C_{i} } \left( \bigoplus_{j,k} V_{i,j,k}  \right) \ar[d] \\
		\Lie(G \times_{F} C) \ar[r]^-{\cong} & \End_{D \otimes_{F} C } \left( \bigoplus_{i,j,k} V_{i,j,k}  \right),  \\
	}
\]
where the vertical morphisms are all monomorphisms.  

\item Let $c \in L$, and let $m_{c} \in \Lie(T) = \End_{D \otimes L}(V)$ be the map $v \mapsto cv$ for $v \in V$.  
We put $m_{c, C}=m_{c} \otimes_{F} 1 \in \Lie(T) \otimes _{F} C = \Lie(T \times_{F} C)$.  
When we regard $m_{c,C}$ as an element in $ \End_{D \otimes_{F} C } \left( \bigoplus_{i,j,k} V_{i,j,k}  \right)$ via the morphisms in (2), for $v_{i,j,k} \in V_{i,j,k}$ we have $m_{c,C}(v_{i,j,k})=v_{i,j,k} \cdot \sigma_{i,j,k}(c)$.  
\end{enumerate}
\end{prop}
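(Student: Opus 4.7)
The plan is to obtain (1) by iterating Proposition \ref{prop_for_Levidecom}. A first application, to the tower $F \subset E \subset E'$ inside $\End_D(V)$, directly yields the bottom three rows of the diagram --- namely the isomorphisms and compatibility arrows for $G$, $H$, and $H'$ --- since that proposition is precisely the statement for a single pair of nested extensions with the upper one tamely ramified. To obtain the top row for $T$, I would apply the same argument one more time, now to the pair $E' \subset L$ inside the $E'$-algebra $\End_{D \otimes_F E'}(V)$: since $L$ is a tame maximal subfield, $T$ is the Weil restriction from $L$ of the associated split torus, and after base change along $E' \otimes_F C \cong \prod_{(i,j)} C_{i,j}$ the proof of Proposition \ref{prop_for_Levidecom} applies factor by factor to decompose each $\underline{\Aut}_{D \otimes_F C_{i,j}}\bigl(\bigoplus_k V_{i,j,k}\bigr)$ as $\prod_k \underline{\Aut}_{D \otimes_F C_{i,j,k}}(V_{i,j,k})$. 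Pasting this square onto the previous one produces the diagram in (1); the explicit formulas for the vertical morphisms recorded in Remark \ref{rem_for_Levidecom} make commutativity at each level essentially formal.

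For (2), I would apply the Lie algebra functor $\underline{\Lie}(-)(C)$ to the diagram of (1). Since $\Lie\bigl(\underline{\Aut}_R(W)\bigr) = \End_R(W)$ for a finite projective module $W$ over a (possibly non-commutative) ring $R$, and $\underline{\Lie}$ commutes with products and base change, the horizontal isomorphisms descend to the asserted decompositions of endomorphism algebras. The vertical morphisms in (1) are closed immersions --- they arise from Weil restrictions of the inclusions $D \otimes_F E \hookrightarrow D \otimes_F E' \hookrightarrow D \otimes_F L$ --- so they induce injections on Lie algebras, yielding the final assertion about monomorphisms.

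For (3), I would unwind the identification $L \otimes_F C \cong \prod_{(i,j,k)} C_{i,j,k}$ sending $c \otimes 1 \mapsto (\sigma_{i,j,k}(c))_{i,j,k}$, where the $L$-algebra structure on $C_{i,j,k}$ is pulled back along $\sigma_{i,j,k}$. Under the matching decomposition $V \otimes_F C \cong \bigoplus_{i,j,k} V \otimes_L C_{i,j,k}$, the endomorphism $m_{c,C} = m_c \otimes_F 1$ sends $v \otimes_L \alpha \in V_{i,j,k}$ to $(cv) \otimes_L \alpha = v \otimes_L \sigma_{i,j,k}(c)\alpha$, which is scalar multiplication by $\sigma_{i,j,k}(c)$, as claimed.

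The principal technical nuisance is the bookkeeping in part (1): one must confirm that the two natural routes from $T \times_F C$ down to $G \times_F C$ respect the refinement of index sets $(i,j,k) \mapsto (i,j) \mapsto i$, so that the pasted diagram really does commute. This reduces to checking the compatibility of the canonical decompositions $E_0 \otimes_F C \cong \prod_{\sigma} C_\sigma$ under inclusions $E \subset E' \subset L$ of subfields, which follows at once from the explicit description recorded in Remark \ref{rem_for_Levidecom} but deserves to be laid out cleanly before invoking it.
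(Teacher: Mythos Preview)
Your proposal is correct and follows essentially the same approach as the paper: part (1) is deduced by iterating Proposition \ref{prop_for_Levidecom} along the tower $F \subset E \subset E' \subset L$, part (2) by applying the Lie algebra functor to the diagram of (1), and part (3) by the same direct computation with simple tensors $v \otimes_L a \in V_{i,j,k}$. The paper's proof is terser (it just cites Proposition \ref{prop_for_Levidecom} for (1) without spelling out the extra step for the $T$-row), but your more careful bookkeeping is entirely in the same spirit.
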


\begin{prf}
(1) is the result from Proposition \ref{prop_for_Levidecom}.  
By taking the Lie algebra of (1), we obtain (2).  

To show (3), let $v \in V$, $l \in L$ and $b \in C$.  
Then $m_{c,C}(v \otimes_{L} l \otimes_{F} b) = m_{c,C}(lv \otimes_{F} b) = clv \otimes_{F} b = v \otimes_{L} (c \otimes_{F} 1 \cdot l \otimes_{F} b) \in V \otimes_{L} (L \otimes_{F} C)$.  
Here, let $v \in V$ and $a \in C_{i,j,k}$. 
Then we have $v \otimes_{L} a \in V \otimes_{L} C_{i,j,k} = V_{i,j,k} \subset V \otimes_{L} (L \otimes_{F} C)$ and $m_{c,C}(v \otimes_{L} a) = v \otimes_{L}\left( (c \otimes_{F} 1) \cdot a \right)$. 
Since $(c \otimes_{F} 1) \cdot a = \sigma_{i,j,k}(c)a$ by the $L$-algebra structure in $C_{i,j,k}$, we obtain $m_{c,C}(v \otimes_{L} a) = v \otimes_{L} \sigma_{i,j,k}(c)a = (v \otimes_{L} a) \cdot \sigma_{i,j,k}(c)$.  
Since $V_{i,j,k}$ is generated by elements of the form $v \otimes_{L} a$ for some $v \in V$ and $a \in C_{i,j,k}$, we obtain $m_{c,C}(v_{i,j,k})=v_{i,j,k} \cdot \sigma_{i,j,k}(c)$ for $v_{i,j,k} \in V_{i,j,k}$.  
\end{prf}

\begin{cor}
\label{findTTLS}
The sequence $(H', H, G)$ is a tame twisted Levi sequence.  
Moreover, $Z(H')/Z(G)$ is anisotropic.  
\end{cor}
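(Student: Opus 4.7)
The plan is to exhibit a tamely ramified extension $E_{0}/F$ over which the chain $H' \subset H \subset G$ becomes a chain of split Levi subgroups of a split reductive group, and then compute $Z(H')/Z(G)$ directly as a torus.

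First I would choose $E_{0}$ as the compositum of $\tilde{L}$, the Galois closure of $L/F$, with an unramified extension of $F$ of degree $d$ that splits $D$. Both extensions are tame: $\tilde{L}/F$ is tame because $L/E'$ is tame by construction and $E'/F$ is tame by hypothesis, so $L/F$ is tame, and Galois closures of tame extensions of a local field are tame. The unramified splitting field of $D$ of degree $d$ exists by a standard fact about central division algebras over non-archimedean local fields. Hence $E_{0}/F$ is tame.

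Next, over $E_{0}$ the algebra $D \otimes_{F} E_{0}$ is isomorphic to $\M_{d}(E_{0})$. Applying Morita equivalence factor-by-factor in Proposition \ref{Levidecom}(1) with $C = E_{0}$, and using $\dim_{E} V = N d / [E:F]$ together with its analogue for $E'$, yields
\[
	G \times_{F} E_{0} \cong \GL_{N}(E_{0}), \quad H \times_{F} E_{0} \cong \prod_{\sigma \in X_{E}} \GL_{N/[E:F]}(E_{0}),
\]
\[
	H' \times_{F} E_{0} \cong \prod_{\sigma' \in X_{E'}} \GL_{N/[E':F]}(E_{0}),
\]
and by Remark \ref{rem_for_Levidecom} the inclusions among them are the block-diagonal embeddings grouping factors according to the restriction maps $X_{E'} \to X_{E} \to \{\ast\}$. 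These are standard Levi embeddings in $\GL_{N}(E_{0})$ and each $\GL$-factor is split, so $(H', H, G)$ is a tame twisted Levi sequence.

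For the anisotropy of $Z(H')/Z(G)$, the algebra $\End_{D \otimes_{F} E'}(V)$ is central simple over $E'$, so its unit group has center $\mathbb{G}_{m,E'}$. Weil restriction gives $Z(H') \cong \Res_{E'/F} \mathbb{G}_{m}$; similarly $Z(G) \cong \mathbb{G}_{m}$, and the natural inclusion $Z(G) \hookrightarrow Z(H')$ is the diagonal embedding. Thus $Z(H')/Z(G) \cong (\Res_{E'/F} \mathbb{G}_{m})/\mathbb{G}_{m}$. Its cocharacter lattice sits in a short exact sequence $0 \to \mathbb{Z} \to \mathbb{Z}[X_{E'}] \to X_{*}(Z(H')/Z(G)) \to 0$ of $\Gal(\bar{F}/F)$-modules, where $1$ maps to $\sum_{\sigma' \in X_{E'}} \sigma'$. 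Since $\Gal(\bar{F}/F)$ acts transitively on $X_{E'}$, the invariants $\mathbb{Z}[X_{E'}]^{\Gal(\bar{F}/F)}$ are spanned by $\sum_{\sigma'} \sigma'$, so the map on invariants is an isomorphism; combined with $H^{1}(\Gal(\bar{F}/F), \mathbb{Z}) = \Hom_{\mathrm{cts}}(\Gal(\bar{F}/F), \mathbb{Z}) = 0$, the long exact cohomology sequence gives $X_{*}(Z(H')/Z(G))^{\Gal(\bar{F}/F)} = 0$, which means $Z(H')/Z(G)$ is $F$-anisotropic.

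The main obstacle is really the combinatorial bookkeeping in the middle step: reconciling the products in Proposition \ref{Levidecom} with the Morita identifications and verifying that the resulting embeddings are block-diagonal with the correct partitions. The anisotropy argument, once $Z(H')/Z(G)$ is identified as $(\Res_{E'/F} \mathbb{G}_{m})/\mathbb{G}_{m}$, is a routine Galois cohomology computation.
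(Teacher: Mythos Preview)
Your proof is correct and follows the paper's strategy for the Levi-sequence part, using Proposition~\ref{Levidecom} to split everything into products of $\GL$'s over a tame extension. Two small differences are worth noting. First, you adjoin an unramified splitting field of $D$ to $\tilde{L}$ and invoke Morita equivalence; this is redundant, since $L$ is a \emph{maximal} subfield of $A$ and therefore already splits $D$. The paper exploits this directly: $V$ is simple as a $D\otimes_F L$-module, so $\End_{D\otimes_F L}(V)\cong L$, and the factors in Proposition~\ref{Levidecom} become copies of $\underline{\End}_C(C)$ without any further base change or Morita step. Second, for the anisotropy of $Z(H')/Z(G)$ the paper gives the one-line local argument $(Z(H')/Z(G))(F)=E'^{\times}/F^{\times}$ is compact, hence $Z(H')/Z(G)$ is anisotropic; your Galois-cohomology computation on cocharacter lattices is correct and has the virtue of working over any base field, but is more elaborate than necessary here.
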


\begin{prf}
We put $C=\tilde{L}$, which is a finite, tamely ramified Galois extension of $F$.  
Since $L$ is a maximal $F$-subfield in $A$, the right $D \otimes _{F} L$-module $V$ is simple.  
Then for any $(i,j,k) \in I_{1} \times I_{2} \times I_{3}$ and $C$-algebra $\tilde{C}$, we have 
\begin{eqnarray*}
	\underline{\End}_{D \otimes _{F} C_{i,j,k}}(V_{i,j,k})(\tilde{C}) & = & \End_{D \otimes _{F} C_{i,j,k} \otimes_{C} \tilde{C}}(V \otimes_{L} C_{i,j,k} \otimes_{C} \tilde{C}) \\
	& \cong & \End_{D \otimes_{F} L \otimes_{L} \tilde{C}}(V \otimes_{L} \tilde{C}) \\
	& \cong & \End_{D \otimes_{F} L} (V) \otimes_{L} \tilde{C} \\
	& \cong & L \otimes_{L} \tilde{C} \cong \tilde{C} = \underline{\End}_{C}(C) (\tilde{C}).  
\end{eqnarray*}
Therefore we have $\underline{\End}_{D \otimes _{F} C_{i,j,k}}(V_{i,j,k}) \cong \underline{\End}_{C}(C)$ as $C$-schemes.  
We also have 
\begin{eqnarray*}
	\prod_{i,j,k} \underline{\End}_{D \otimes_{F} C_{i,j,k}}(V_{i,j,k}) & \cong & \prod_{i,j,k} \underline{\End}_{C}(C), \\
	\prod_{i,j} \underline{\End}_{D \otimes_{F} C_{i,j}} \left( \bigoplus_{k} V_{i,j,k} \right) & \cong & \prod_{i,j} \underline{\End}_{C}\left( C^{\oplus |I_{3}|} \right), \\
	\prod_{i} \underline{\End}_{D \otimes_{F} C_{i}} \left( \bigoplus_{j,k} V_{i,j,k} \right) & \cong & \prod_{i} \underline{\End}_{C} \left( C^{ \oplus (|I_{2}| \times |I_{3}|) } \right), \\
	\underline{\End}_{D \otimes_{F} C} \left( \bigoplus_{i,j,k} V_{i,j,k} \right) & \cong & \underline{\End}_{C} \left( C^{\oplus (|I_{1}| \times |I_{2}| \times |I_{3}|}) \right).  
\end{eqnarray*}
By taking the multiplicative group, we obtain
\[
\begin{array}{rcccl}
	T \times_{F} C & \cong & \prod_{i,j,k} \underline{\Aut}_{D \otimes_{F} C_{i,j,k}}(V_{i,j,k}) & \cong & \mathbb{G}_{m}{}^{\times (|I_{1}| \times |I_{2}| \times |I_{3}|)}, \\
	H' \times_{F} C & \cong & \prod_{i,j} \underline{\Aut}_{D \otimes_{F} C_{i,j}} \left( \bigoplus_{k} V_{i,j,k} \right) & \cong &  \GL_{|I_{3}|}{}^{\times (|I_{1}| \times |I_{2}|)}, \\
	H \times_{F} C & \cong & \prod_{i} \underline{\Aut}_{D \otimes_{F} C_{i}} \left( \bigoplus_{j,k} V_{i,j,k} \right) & \cong &  \GL_{|I_{2}| \times |I_{3}|}{}^{\times |I_{1}| }, \\
	G \times_{F} C & \cong & \underline{\Aut}_{D \otimes{F} C} \left( \bigoplus_{i,j,k} V_{i,j,k} \right) & \cong & \GL_{|I_{1}| \times |I_{2}| \times |I_{3}|}.  
\end{array}
\]
Therefore $H' \times_{F} C$ and $H \times_{F} C$ are Levi subgroups in $G \times _{F} C$ with a split maximal torus $T \times_{F} C$.  
Since $C$ is a finite, tamely ramified Galois extension of $F$, the sequence $(H', H, G)$ is a tame twisted Levi sequence.  

Moreover, we have $\left( Z(H')/Z(G) \right) (F)=E'^{\times}/F^{\times}$, which is compact.  
Then $Z(H')/Z(G)$ is anisotropic.  
\end{prf}

Let $C=\bar{F}$.  
For distinct elements $(i',j',k'), (i'', j'', k'') \in I_{1} \times I_{2} \times I_{3}$, we define the root $\alpha_{(i',j',k'),(i'',j'',k'')} \in \Phi(G, T; \bar{F})$ as
\[
	\alpha_{(i',j',k'),(i'',j'',k'')} : \prod_{i,j,k} \Aut_{D \otimes_{F} \bar{F}_{i,j,k} } (V_{i,j,k}) \to \bar{F}^{\times} ; (t_{i,j,k})_{i,j,k} \mapsto t_{i',j',k'} t_{i'',j'',k''}^{-1}.  
\]

Therefore we have
\begin{eqnarray*}
	\Phi(H, T; \bar{F}) & = & \left\{ \alpha_{(i',j',k'),(i'',j'',k'')} \in \Phi(G, T; \bar{F}) | i'=i'' \right\} \\
	\Phi(H', T; \bar{F}) & = & \left\{ \alpha_{(i',j',k'),(i'',j'',k'')} \in \Phi(G, T; \bar{F}) | i'=i'', j'=j'' \right\}, 
\end{eqnarray*}
and we obtain
\[
	\Phi(H, T; \bar{F}) \setminus \Phi(H', T; \bar{F}) = \left\{ \alpha_{(i',j',k'),(i'',j'',k'')} \in \Phi(G, T; \bar{F}) | i'=i'', j' \neq j'' \right\}.  
\]

Moreover, the coroot $\check{\alpha}_{(i',j',k'),(i'',j'',k'')}$ with respect to $\alpha_{(i',j',k'),(i'',j'',k'')}$ is as follows:  
\[
	\check{\alpha}_{(i',j',k'),(i'',j'',k'')} : \bar{F}^{\times} \to \prod_{i,j,k} \Aut_{D \otimes_{F} \bar{F}_{i,j,k} } (V_{i,j,k}) \cong \prod_{i,j,k} \bar{F}^{\times} ; t \mapsto (t_{i,j,k})_{i,j,k}, 
\]
where 
\[
	t_{i,j,k} = \begin{cases}
		t & \left( (i,j,k) = (i',j',k') \right), \\
		t^{-1} & \left( (i,j,k) = (i'',j'',k'') \right), \\
		1 & otherwise.  
		\end{cases}
\]

Then we have $\dr \check{\alpha}_{(i',j',k'),(i'',j'',k'')}(u)=(u_{i,j,k})_{i,j,k}$ where
\[
	u_{i,j,k} = \begin{cases}
		u & \left( (i,j,k) = (i',j',k') \right), \\
		-u & \left( (i,j,k) = (i'',j'',k'') \right), \\
		0 & otherwise.  
		\end{cases}
\]

Conversely, we determine the set of tame twisted Levi subgroup $G'$ in $G$ with $Z(G')/Z(G)$ anisotropic.  

\begin{lem}
\label{detofTLG}
Let $G'$ be a tame twisted Levi subgroup of $G=\underline{\Aut}_{D}(V)$.  
Suppose $Z(G')/Z(G)$ is anisotropic.  
Then there exists a finite, tamely ramified extension $E$ of $F$ such that $G' \cong \Res_{E/F} \underline{\Aut}_{D \otimes _{F} E} (V)$.  
\end{lem}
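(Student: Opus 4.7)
Set $T' := Z(G')^{\circ}$, an $F$-torus splitting over a tame extension of $F$ by the tameness of $G'$. The idea is to show that the $F$-subalgebra $E \subset A$ with $E^{\times} = T'(F)$ is a tame field extension and that $G' = \Res_{E/F}\underline{\Aut}_{D \otimes_F E}(V)$.

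First, I would embed $T'$ into a maximal $F$-torus $T$ of $G'$. Over the tame extension $L/F$ splitting $T$, $T \times_F L$ is a maximal torus of $G' \times_F L$; since $G'$ is a tame twisted Levi of $G$, $T \times_F L$ is also a maximal torus of $G \times_F L$, so $T$ is a maximal $F$-torus of $G$. Consequently $L_{0} := \Cent_A(T(F))$ is a maximal commutative étale $F$-subalgebra of $A$ with $T(F) = L_{0}^{\times}$ and $T \cong \Res_{L_{0}/F}\mathbb{G}_{m}$. The connected subtorus $T' \subset T$ then corresponds to an étale $F$-subalgebra $E \subset L_{0}$ with $T'(F) = E^{\times}$ and $T' \cong \Res_{E/F}\mathbb{G}_{m}$; write $E = E_{1} \times \cdots \times E_{r}$ as a product of fields.

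Next, I would use the anisotropy hypothesis to force $r = 1$. The $F$-rational character lattice $X^{*}(T')^{\Gal(\bar{F}/F)}$ is free of rank $r$, generated by the norm characters $N_{E_{i}/F}$, while $Z(G) = \mathbb{G}_{m}$ sits in $T'$ via the scalar inclusion $F \hookrightarrow E$, which is dual to the sum map $\mathbb{Z}^{r} \to \mathbb{Z}$. Hence $X^{*}(T'/Z(G))^{\Gal(\bar{F}/F)}$ is free of rank $r-1$, and the anisotropy of $Z(G')/Z(G) \supset T'/Z(G)$ forces $r = 1$, so $E$ is a field. Tameness of $E/F$ follows because $\Res_{E/F}\mathbb{G}_{m}$ splits over $L$ if and only if the $F$-embeddings of $E$ land in $L$, and $L/F$ is tame.

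Finally, I would identify $G'$ with $\Res_{E/F}\underline{\Aut}_{D \otimes_F E}(V)$. Proposition \ref{prop_for_Levidecom}, applied with $E' = E$, shows $\Cent_{G}(T') = \Res_{E/F}\underline{\Aut}_{D \otimes_F E}(V)$. The inclusion $T' \subset Z(G')$ yields $G' \subset \Cent_{G}(T')$. For the reverse inclusion, I work over a tame Galois extension $\tilde{L}/F$ splitting both $T$ and $D$: there $G \times_F \tilde{L} \cong \GL_{N}$, and both $G' \times_F \tilde{L}$ and $\Cent_{G}(T') \times_F \tilde{L}$ are Levi subgroups sharing the connected center $T' \times_F \tilde{L}$ (by Corollary \ref{findTTLS} applied to the right-hand side). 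Since a Levi subgroup of $\GL_{N}$ is determined by its connected center, the two groups coincide over $\tilde{L}$, hence over $F$ by Galois descent.

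\textbf{Main obstacle.} The technical heart is the last step: promoting $G' \subset \Cent_{G}(T')$ to an equality, which depends on the uniqueness of Levi subgroups of $\GL_{N}$ with prescribed connected center together with a careful Galois descent from $\tilde{L}$ to $F$. The anisotropy-to-field reduction is a short character calculation, and the concrete identification of $\Cent_G(T')$ is already supplied by Proposition \ref{prop_for_Levidecom}.
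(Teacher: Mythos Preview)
Your proof is correct and follows the same three-step skeleton as the paper's: (i) identify $Z(G')^\circ$ with $\Res_{E/F}\mathbb{G}_m$ for some \'etale $F$-algebra $E\subset A$; (ii) use anisotropy of $Z(G')/Z(G)$ to force $E$ to be a field; (iii) recover $G'$ from its connected center via ``Levi $=$ centralizer of connected center''. The implementations differ: the paper carries out (i) by analyzing the $\Gal(F^{\mathrm{tr}}/F)$-action on the primitive idempotents of $Z(G')_{F^{\mathrm{tr}}}$ and descending each orbit to a factor $\Res_{F_i/F}\mathbb{G}_m$, whereas you embed $T'=Z(G')^\circ$ in a maximal torus $T\cong\Res_{L_0/F}\mathbb{G}_m$ and read off $E\subset L_0$. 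For (ii) the paper counts orbits while you compute $\operatorname{rank} X^{*}(T'/Z(G))^{\Gamma}=r-1$; both are short. Your route is a bit more structural and reuses the torus/\'etale-algebra dictionary; the paper's is more hands-on but avoids introducing the auxiliary maximal torus $T$.

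Two small points to tighten. First, the assertion that a subtorus $T'\subset T=\Res_{L_0/F}\mathbb{G}_m$ ``corresponds to an \'etale $F$-subalgebra $E\subset L_0$'' is not true for arbitrary subtori; you should say why it holds here. The reason is that over $\bar F$ the connected center of a Levi of $\GL_N$ is a block-scalar torus, so $X_*(T'_{\bar F})\subset\mathbb{Z}^{X_{L_0}}$ is spanned by the characteristic vectors of a $\Gal(\bar F/F)$-stable partition of $X_{L_0}$; the quotient $\Gamma$-set then yields the subalgebra $E\hookrightarrow L_0$. Second, a tame Galois $\tilde L$ splitting your chosen maximal torus $T$ need not exist (nothing forces $L_0$ to be tame). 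This is harmless: for the final step you only need an extension over which both $G'$ and $\Cent_G(T')$ become Levis of $\GL_N$, and $\bar F$ suffices; the tameness of $E/F$ was already secured earlier from the tameness of $T'=Z(G')^\circ$.
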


\begin{prf}
Let $F^{\mathrm{tr}}$ be the maximal tamely ramified extension of $F$.  
Since $G'$ is a tame twisted Levi subgroup in $G$, $G'_{F^{\mathrm{tr}}}$ is a Levi subgroup in $G_{F^{\mathrm{tr}}} \cong \underline{\Aut}_{D \otimes F^{\mathrm{tr}}}(V \otimes F^{\mathrm{tr}})$.  
There exists a one-to-one relationship between Levi subgroups in $G_{F^{\mathrm{tr}}}$ and direct decompositions of $V \otimes F^{\mathrm{tr}}$ as a right $D \otimes F^{\mathrm{tr}}$-module.  
Let $V \otimes F^{\mathrm{tr}} = \bigoplus_{k=1}^{j} V_{k}$ be the corresponding decomposition with $G'_{F^{\mathrm{tr}}}$.  
Then we have $G'_{F^{\mathrm{tr}}} = \prod_{k=1}^{j} \underline{\Aut}_{D \otimes F^{\mathrm{tr}}}(V_{k})$.  
We remark that the right-hand-side group is the multiplicative group of $\underline{\End}_{D \otimes F^{\mathrm{tr}}}(V_{k})$ with a $Gal(F^{\mathrm{tr}}/F)$-action defined by its $F$-structure.  
Let $Z_{k}$ be the center of $\underline{\End}_{D \otimes F^{\mathrm{tr}}}(V_{k})$, which is $F^{\mathrm{tr}}$-isomorphic to $\underline{\End}_{F^{\mathrm{tr}}}(F^{\mathrm{tr}})$.  
Then $Z(G')_{F^{\mathrm{tr}}}$ is the multiplicative group of $Z=\prod_{k=1}^{j} Z_{k}$, equipped with the same $Gal(F^{\mathrm{tr}}/F)$-action.  
Therefore, we consider the structure of $Z_{k}$.  
Let $\mathbf{1}_{k}$ be (the $F^{\mathrm{tr}}$-rational point corresponding to) the identity element in $Z_{k}$.  
Since the $\Gal(F^{\mathrm{tr}}/F)$-action to $Z$ preserves the $F$-algebra structure, the set $\{ \mathbf{1}_{k} \mid k=1, \ldots, j \}$ is $\Gal(F^{\mathrm{tr}}/F)$-invariant.  
Then by changing the indices if necessary, we may assume there exist integers $0 = n_{0} < n_{1} < \cdots < n_{l} = j$ such that $\Gal(F^{\mathrm{tr}}/F)$ acts the set $\{ \mathbf{1}_{n_{i-1}+1}, \ldots, \mathbf{1}_{n_{i}} \}$ transitively for $l=1, \ldots, i$.  
We put $Y_{i}=\prod_{k=n_{i-1}+1}^{n_{i}} Z_{k}$.  
Since $a \in F^{\mathrm{tr}}, b \in Z$ and $\gamma \in \Gal(F^{\mathrm{tr}}/F)$ we have $\gamma (ab) = \gamma(a) \gamma(b)$ and $\{ \mathbf{1}_{n_{i-1}+1}, \ldots, \mathbf{1}_{n_{i}} \}$ is $\Gal(F^{\mathrm{tr}}/F)$-invariant, $Y_{i}$  is also $\Gal(F^{\mathrm{tr}}/F)$-invariant.  
Then $Y_{i}$ is defined over $F$.  
Let $X_{i}$ be the Galois descent of $Y_{i}$ to $F$.  
Let $\Gal(F^{\mathrm{tr}}/F_{i})$ be the stabilizer of $\mathbf{1}_{n_{i}}$. 
The fields $F_{i}$ is tamely ramified, and finite-dimensional over $F$ since $\Gal(F^{\mathrm{tr}}/F)/\Gal(F^{\mathrm{tr}}/F_{i})$ is $\Gal(F^{\mathrm{tr}}/F)$-isomorphic to the finite set $\{ \mathbf{1}_{n_{i-1}+1}, \ldots, \mathbf{1}_{n_{i}} \}$.  

We show $X_{i}$ is isomorphic to $\Res_{F_{i}/F} \underline{\End}_{F_{i}}(F_{i})$.  
If this follows, then we can show the multiplicative group of $X_{i}$ is $\Res_{F_{i}/F} \mathbb{G}_{m}$ and $Z(G') = \prod_{i=1}^{l} \Res_{F_{i}/F} \mathbb{G}_{m}$.  

Any $F^{\mathrm{tr}}$-rational point of $Y_{i}$ is uniquely represented as the form $\sum_{k'=n_{i-1}+1}^{n_{i}} a_{k'}\mathbf{1}_{k'}$, where $a_{k'} \in F^{\mathrm{tr}}$.  
Suppose $z=\sum_{k'=n_{i-1}+1}^{n_{i}} a_{k'}\mathbf{1}_{k'}$ is stabilized by $\Gal(F^{\mathrm{tr}}/F)$.  
For any $\gamma \in \Gal(F^{\mathrm{tr}}/F_{i})$, we have $z=\gamma(z)=\sum_{k'=n_{i-1}+1}^{n_{i}-1} \gamma(a_{k'})\gamma(\mathbf{1}_{k'}) + \gamma(a_{n_{i}}) \mathbf{1}_{n_{i}}$.  
Then we have $\gamma(a_{n_{i}}) = a_{n_{i}}$, that is, $a_{n_{i}} \in F_{i}$.  
For $n_{i-1}<k'<n_{i}$, we pick $\gamma_{k'} \in \Gal(F^{\mathrm{tr}}/F)$ such that $\gamma_{k'}(\mathbf{1}_{n_{i}})=\mathbf{1}_{k'}$.  
Then we have
\[
z = \gamma_{k'}(z) = \sum_{k''=n_{i-1}+1}^{n_{i}-1} \gamma_{k''}(a_{k''}) \gamma_{k'}(\mathbf{1}_{k'}) + \gamma_{k'}(a_{n_i}) \mathbf{1}_{k'}, 
\]
whence $a_{k'} = \gamma_{k'}(a_{n_i})$.  
Therefore any $F$-rational point of $X_{i}$ is the form
\[
\sum_{k'=n_{i-1}+1}^{n_{i}-1} \gamma_{k'}(a_{n_{i}}) \mathbf{1}_{k'} + a_{n_{i}} \mathbf{1}_{n_{i}}, 
\]
where $a_{n_{i}} \in F_{i}$, and the ring structure of $X_{i}(F)$ is isomorphic to $F_{i}$.  
Since the ring structure of $X_{i}(C)$ is isomorphic to $X_{i}(F) \otimes C$ for any $F$-algebra $C$, we obtain $X_{i} \cong \Res_{F_{i}/F} \underline{\End}_{F_{i}}(F_{i})$.  

We have shown $Z(G') = \prod_{i=1}^{l} \Res_{F_{i}/F} \mathbb{G}_{m}$.  
Since $Z(G')/Z(G)$ is anisotropic and $Z(G)=\mathbb{G}_{m}$, we have $l=1$ and $Z(G') = \Res_{E/F} \mathbb{G}_{m}$, where we put $E=F_{1}$.  

The field $E$ can be regarded as a $F$-subfield in $A$ via $X \subset \underline{\End}_{D}(V)$.  
We put $H=\underline{\Aut}_{D \otimes E}(V)$.  
Then $H$ is a tame twisted Levi subgroup in $G$ and we have $Z(H)=Z(G')$.  
Since there exists a one-to-one relationship between subtori in $G$ defined over $F$ and Levi subgroups in $G$ defined over $F$, we obtain $G'=H$.  
\end{prf}

\section{Embeddings of buildings for Levi sequences of $G$}
\label{BroussousLemaire}

\subsection{Lattice functions in $V$}

First, we recall the lattice functions in $V$ and their properties from \cite{BL}.  

\begin{defn}
The map $\Lcal$ from $\R$ to the set of $\ofra_{D}$-lattices in $V$ is a lattice function in $V$ if
\begin{enumerate}
\item we have $\Lcal(r)\varpi_{D}=\Lcal \left( r+(1/d) \right)$ for some uniformizer $\varpi_{D}$ of $D$ and $r \in \R$, 
\item $\Lcal$ is decreasing, that is, $\Lcal(r) \supset \Lcal(r')$ if $r \leq r'$, and
\item $\Lcal$ is left-continuous, where the set of lattices in $V$ is equipped with the discrete topology.  
\end{enumerate}
\end{defn}

The set of lattice functions in $V$ is denoted by $\Latt^{1}(V)$.  
The groups $G$ and $\R$ act on $\Latt^{1}(V)$ by $(g \cdot \Lcal)(r) = g \cdot \left( \Lcal(r) \right)$ and $(r' \cdot \Lcal) (r)=\Lcal(r+r')$ for $g \in G$, $r, r' \in \R$ and $\Lcal \in \Latt^{1}(V)$.  
These actions are compatible, and then $\Latt(V) := \Latt^{1}(V)/\R$ is equipped with the canonical $G$-action.  
The $G$-sets $\Latt^{1}(V)$ and $\Latt(V)$ are also equipped with an affine structure.  
Then there exists a canonical $G$-equivariant, affine isomorphism $\Bscr^{E}(G,F) \to \Latt^{1}(V)$.  
This isomorphism induces a $G$-equivariant, affine isomorphism $\Bscr^{R}(G,F) \to \Latt(V)$.  

We construct lattice functions from $\ofra_{D}$-sequences.  
Let $c \in \R$ and let $(\Lcal_{i})_{i \in \Z}$ be an $\ofra_{D}$-sequence with period $e$.  
Then 
\[
	\Lcal(r) = \Lcal_{\lceil de(r - c) \rceil}, \, r \in \R
\]
is a lattice function in $V$.  

\begin{prop}
\label{lfconstbylc}
Let $\Lcal$ be a lattice function in $V$.  
The following assertions are equivalent:  
\begin{enumerate}
\item $\Lcal$ is constructed from an $\ofra_{D}$-chain.  
\item There exists $c \in \R$ and $e \in \Z_{>0}$ such that the set of discontinuous points of $\Lcal$ is equal to $c+(de)^{-1}\Z$.  
\end{enumerate}
Moreover, if (1) (and (2)) holds, $e$ is equal to the period of some $\ofra_{D}$-chain which $\Lcal$ is constructed from.  
\end{prop}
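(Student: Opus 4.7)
The plan is to prove both implications by a direct translation between $\ofra_{D}$-chains of period $e$ and lattice functions whose set of discontinuities is an arithmetic progression with step $1/(de)$. The bookkeeping device on both sides is the ceiling function appearing in the recipe $\Lcal(r)=\Lcal_{\lceil de(r-c)\rceil}$.

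For (1)$\Rightarrow$(2), suppose $\Lcal$ is constructed from an $\ofra_{D}$-chain $(\Lcal_i)_{i\in\Z}$ of period $e$ with shift $c$. The map $r\mapsto\lceil de(r-c)\rceil$ is constant on each half-open interval $\bigl(c+(i-1)/(de),\,c+i/(de)\bigr]$ and jumps by $1$ at exactly the points $c+i/(de)$. Because an $\ofra_{D}$-chain is strictly decreasing, each such jump produces an honest discontinuity of $\Lcal$, so the discontinuity locus of $\Lcal$ equals $c+(de)^{-1}\Z$.

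For (2)$\Rightarrow$(1), I would set $\Lcal_i := \Lcal\bigl(c+i/(de)\bigr)$. Left-continuity of $\Lcal$, together with the hypothesis on the discontinuity set, forces $\Lcal(r)=\Lcal_i$ on the whole interval $\bigl(c+(i-1)/(de),\,c+i/(de)\bigr]$, so the formula $\Lcal(r)=\Lcal_{\lceil de(r-c)\rceil}$ holds identically. The strictness $\Lcal_i\supsetneq\Lcal_{i+1}$ is precisely the assertion that $c+i/(de)$ is a genuine discontinuity, and the defining identity $\Lcal(r+1/d)=\Lcal(r)\varpi_D$, applied at $r=c+i/(de)$, translates into $\Lcal_{i+de}=\Lcal_i\varpi_D$. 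Hence $(\Lcal_i)$ is an $\ofra_{D}$-chain of period $e$ from which $\Lcal$ is constructed, and this simultaneously establishes the final sentence of the proposition.

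The proof is essentially bookkeeping; the one delicate point is that one must verify at the endpoints of the intervals that left-continuity matches up with $\lceil\cdot\rceil$ rather than $\lfloor\cdot\rfloor$, and that the shift $c$ entering the construction formula agrees with the shift parametrising the discontinuity set. Everything else is routine.
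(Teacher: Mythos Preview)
Your argument is essentially identical to the paper's proof: both directions proceed by reading off the discontinuity set from the ceiling formula $\Lcal(r)=\Lcal_{\lceil de(r-c)\rceil}$ and, conversely, defining $\Lcal_i=\Lcal(c+i/(de))$ and checking it is a chain of period~$e$ reproducing~$\Lcal$. One small slip: applying $\Lcal(r+1/d)=\Lcal(r)\varpi_D$ at $r=c+i/(de)$ gives $\Lcal_{i+e}=\Lcal_i\varpi_D$, not $\Lcal_{i+de}$ (since $1/d = e/(de)$); your stated conclusion that the period is~$e$ is correct, but the displayed index should be fixed.
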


\begin{prf}
First, suppose $\Lcal$ is constructed from an $\ofra_{D}$-chain.  
Then there exists $c \in \R$ and an $\ofra_{D}$-chain $(\Lcal_{i})_{i \in \Z}$ with period $e$ such that $\Lcal(r) = \Lcal_{\lceil de(r-c) \rceil}$ for $r \in \R$.  
Since $(\Lcal_{i})$ is an $\ofra_{D}$-chain, the set of discontinuous points of $\Lcal$ is equal to $c+(de)^{-1}\Z$, whence (2) holds.  

Conversely, suppose (2) holds.  
For $i \in \Z$, we put $\Lcal_{i}=\Lcal(c+(de)^{-1}i)$.  
Since $\Lcal$ is not right-continuous at $r=c+(de)^{-1}i$, we have 
\[
	\Lcal_{i} = \Lcal(c+(de)^{-1}i) \supsetneq \Lcal(c+(de)^{-1}(i+1)) = \Lcal_{i+1}.  
\]
Moreover, we also have
\[
	\Lcal_{i+e} = \Lcal(c+(de)^{-1}(i+e)) = \Lcal(c+(de)^{-1}i+d^{-1}) = \Lcal(c+(de)^{-1}i)\varpi_{D} = \Lcal_{i}\varpi_{D}.  
\]
Then $(\Lcal_{i})_{i \in \Z}$ is an $\ofra_{D}$-chain with period $e$.  

Let $\Lcal'$ be the lattice function constructed from $c \in \R$ and the $\ofra_{D}$-chain $(\Lcal_{i})$.  
We show $\Lcal = \Lcal'$.  
For $i \in \Z$, we have $\Lcal'(c+(de)^{-1}i) = \Lcal_{i} = \Lcal(c+(de)^{-1}i)$ and $\Lcal = \Lcal'$ on $c+(de)^{-1} \Z$.  
For $r \in \R$, there exists $i \in \Z$ such that $r \in ( c+(de)^{-1}(i-1), c+(de)^{-1}i ]$.  
Since the set of discontinuous points of $\Lcal$ is $c+(de)^{-1}\Z$, then $\Lcal|_{( c+(de)^{-1}(i-1), c+(de)^{-1}i ]}$ is continuous and 
\[
	\Lcal(r) = \Lcal(c+(de)^{-1}i) = \Lcal_{i} = \Lcal_{\lceil de \left( r-c \right) \rceil} = \Lcal'(r).  
\]
Therefore $\Lcal=\Lcal'$ is the lattice function constructed from the $\ofra_{D}$-chain $(\Lcal_{i})$ of period $e$.  
The last assertion follows from the above argument.  
\end{prf}

Conversely, for any lattice function $\Lcal$ there exists an $\ofra_{D}$-chain $(\Lcal_{i})_{i \in \Z}$ such that $\{ \Lcal(r) \mid r \in \R \} = \{ \Lcal_{i} \mid i \in \Z \}$, unique up to translation.  
Since $\Lcal(r+(1/d)) = \Lcal(r) \varpi_{D}$ for $r \in \R$, the period of $(\Lcal_{i})$ is equal to the number of discontinuous points of $\Lcal$ in $[0, 1/d)$.  

\subsection{Comparison of filtrations:  hereditary orders and Moy--Prasad filtration}

Let $x$ be an element in $\Bscr^{E}(G, F)$, corresponding to a lattice function $\Lcal$ via $\Bscr^{E}(G, F) \cong \Latt^{1}(V)$.  
We can define a filtration $(\mathfrak{a}_{x,r})_{r \in \R}$ in $A$ associated with $x$ as 
\[
	\mathfrak{a}_{x,r} = \mathfrak{a}_{\Lcal,r} = \{ a \in A \mid a\Lcal(r') \subset \Lcal(r+r'), \, r' \in \R \}
\]
for $r \in \R$.  
We also put $\mathfrak{a}_{x,r+} = \bigcup _{r<r'} \mathfrak{a}_{x,r'}$.  
Then we can define a hereditary $\ofra_{F}$-order $\Afra = \mathfrak{a}_{x,0}$ associated with $x$.  
The radical of $\Afra$ is equal to $\Pfra = \mathfrak{a}_{x,0+}$.  
We also put $\U_{0}(x) = \Afra^{\times}$, and  $\U_{r}(x) = 1 + \mathfrak{a}_{x,r}$ for $r \in \R_{> 0}$.  

\begin{prop}[{{\cite[Appendix A]{BL}}}]
\label{compoffiltA}
Let $x \in \Bscr^{E}(G,F)$.  
\begin{enumerate}
\item When we identify $A$ with the Lie algebra $\gfra(F)$ of $G$, we have $\mathfrak{a}_{x,r} = \gfra(F)_{x,r}$ for $r \in \R$.  
\item For $r \geq 0$, we have $\U_{r}(x) = G(F)_{x,r}$.  
\end{enumerate}
\end{prop}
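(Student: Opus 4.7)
The plan is to reduce to the split case by tame descent, do the apartment computation in the split case, and then obtain the group statement from the Lie algebra statement via the usual $X \mapsto 1+X$ dictionary.

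More precisely, fix a maximal subfield $L$ of $A$ that is tamely ramified over $F$; such a field exists because any inner form of $\GL_{N}$ over a non-archimedean local field splits over an unramified, hence tame, extension. Then $G \times_{F} L \cong \GL_{N, L}$, and the inclusion $G(F) \hookrightarrow G(L)$ induces a $G(F)$-equivariant affine embedding $\Bscr^{E}(G,F) \hookrightarrow \Bscr^{E}(G,L)$. By the very definition of the Moy--Prasad filtration for groups splitting over a tame extension, $\gfra(F)_{x,r} = \gfra(L)_{x,r} \cap A$. On the lattice side, the $\ofra_{D}$-lattice function $\Lcal$ in $V$ attached to $x$ extends uniquely to an $\ofra_{L}$-lattice function $\Lcal_{L}$ in $V \otimes_{F} L$, and one checks from the definition that $\mathfrak{a}_{x,r} = \mathfrak{a}_{\Lcal_{L}, r} \cap A$. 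Hence it is enough to prove (1) after base change to $L$, i.e.\ in the split case $G = \GL_{N}$.

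In the split case, take the standard diagonal maximal torus $T$ and work in its apartment. A point $x$ is prescribed, up to translation, by a tuple $(x_{1}, \ldots, x_{N})$ of real numbers, and a routine unwinding shows that the $(i,j)$-entry of $\mathfrak{a}_{x,r}$ is the $\ofra_{L}$-submodule of $L$ consisting of elements of valuation at least $r + x_{i} - x_{j}$. This is precisely the description of the Moy--Prasad filtration $\gfra(L)_{x,r}$ on the matrix algebra, root space by root space, using $\gfra_{\alpha}(L)_{x,r} = \{ X \in \gfra_{\alpha}(L) \mid v_{L}(X) + \alpha(x) \geq r \}$ for each root $\alpha = \alpha_{ij}$ of $T$ and the analogous condition on the Lie algebra of $T$. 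Thus (1) holds in the split case, and then in general by the intersection argument above.

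For (2), when $r > 0$ the map $X \mapsto 1+X$ is a bijection $\mathfrak{a}_{x,r} \to 1+\mathfrak{a}_{x,r} = \U_{r}(x)$, and under the identification from (1) it matches the canonical Moy--Prasad bijection $\gfra(F)_{x,r} \cong G(F)_{x,r}$; this gives $\U_{r}(x) = G(F)_{x,r}$. For $r=0$, both groups coincide with $\Afra^{\times}$: this is the definition of $\U_{0}(x)$, and on the other hand it is the standard identification between parahoric subgroups of $\GL_{m}(D)$ and unit groups of hereditary $\ofra_{F}$-orders in $A$ fixing the corresponding lattice chain.

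The main obstacle is not the root-by-root calculation itself but the bookkeeping required to check that the affine embedding $\Bscr^{E}(G,F) \hookrightarrow \Bscr^{E}(G,L)$ is normalized so that the real parameter $r$ means the same thing on both sides (rather than being rescaled by some factor involving $e(L/F)$ or $d$); once the normalization is pinned down, via the factor $e(\Afra|\ofra_{F}) = d\,e(\Lcal)$ recalled in \S\ref{Secherre} and the period convention in the definition of lattice functions, everything else is formal.
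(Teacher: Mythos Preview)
The paper does not give its own proof of this proposition; it simply records it as a citation to \cite[Appendix A]{BL}. So there is nothing in the paper to compare your argument against line by line.

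That said, your strategy is exactly the one Broussous--Lemaire use: pass to a tame (in fact unramified) splitting field, verify the equality of the two filtrations on $\M_{N}(L)$ by an explicit apartment computation matching the $(i,j)$-block against the root filtration for $\alpha_{ij}$, and then descend by intersecting with $A$. Your outline is correct and the identification $G(F)_{x,r}=1+\gfra(F)_{x,r}$ for $r>0$, together with $G(F)_{x,0}=\Afra^{\times}$, is the right way to get (2) from (1) for inner forms of $\GL_{N}$.

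Two places where your write-up is a bit glib compared to what \cite{BL} actually has to do. First, the phrase ``extends uniquely to an $\ofra_{L}$-lattice function $\Lcal_{L}$'' hides a Morita step: $V\otimes_{F}L$ is not the simple $A\otimes_{F}L\cong \M_{N}(L)$-module, so one must pass from $\ofra_{D}$-lattice functions in $V$ to $\ofra_{L}$-lattice functions in the Morita-equivalent $L$-vector space of dimension $N$, and check that the square-lattice filtrations $\mathfrak{a}_{x,r}$ correspond under this. Second, the normalisation issue you flag at the end is the genuine content of \cite[Appendix A]{BL}: the lattice-function convention $\Lcal(r+1/d)=\Lcal(r)\varpi_{D}$ is set up precisely so that the real parameter $r$ agrees with the Moy--Prasad index without rescaling, and one has to trace this through the base-change map on buildings. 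None of this is hard, but it is where the work lies; your sketch correctly locates it.
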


Suppose $\Lcal$ is constructed from an $\ofra_{D}$-chain.  
Then there exist $c \in \R$ and an $\ofra_{D}$-chain $(\Lcal_{i})_{i \in \Z}$ with period $e$ such that $\Lcal(r) = \Lcal_{ \lceil de(r-c) \rceil}$.  
Since $\Lcal_{i+e} = \Lcal_{i}\varpi_{D}$ for $i \in \Z$, we have $\Lcal_{i+de} = \Lcal_{i}\varpi_{F}$, and then $e(\Afra|\ofra_{F}) = de$.  

\begin{prop}
\label{compoffiltG}
Let $x, \Lcal$ be as above, and let $r \in \R$.  
\begin{enumerate}
\item We have $\Pfra^{\lceil r \rceil} = \gfra(F)_{x, r/e(\Afra|\ofra_{F})}$.  
\item Suppose $r \geq 0$.  Then $\U^{\lceil r \rceil}(\Afra) = G(F)_{x,r/e(\Afra|\ofra_{F})}$.  
\item We have $\Kfra(\Afra) = G(F)_{[x]}$.  
\end{enumerate}
\end{prop}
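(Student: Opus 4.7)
The plan is to deduce all three parts by unwinding the definitions through the description $\Lcal(r') = \Lcal_{\lceil de(r'-c) \rceil}$ of the lattice function attached to the chain, and to invoke Proposition~\ref{compoffiltA} at the end to translate from the hereditary-order filtration into the Moy--Prasad filtration. Throughout, I keep in mind that $e(\Afra|\ofra_F) = de$ and that the lattice chain is strictly decreasing in the index.

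For (1), I would first observe that $\Pfra^{\lceil r \rceil} = \Pfra_{\lceil r \rceil}(\Lcal) = \{a \in A \mid a\Lcal_j \subset \Lcal_{j+\lceil r \rceil}\text{ for all }j \in \Z\}$ and, by Proposition~\ref{compoffiltA}(1), $\gfra(F)_{x,r/(de)} = \mathfrak{a}_{x,r/(de)} = \{a \in A \mid a\Lcal(r') \subset \Lcal(r' + r/(de))\text{ for all }r' \in \R\}$. The inclusion $\supset$ follows by specializing to $r' = c + j/(de)$, for which $\Lcal(r') = \Lcal_j$ and $\Lcal(r' + r/(de)) = \Lcal_{j+\lceil r\rceil}$. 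The inclusion $\subset$ reduces to checking, for every $r' \in \R$ and $a \in \Pfra^{\lceil r \rceil}$, that $\Lcal_{\lceil de(r'-c)\rceil + \lceil r \rceil} \subset \Lcal_{\lceil de(r'-c) + r \rceil}$, which amounts to the integer inequality $\lceil de(r'-c)\rceil + \lceil r \rceil \ge \lceil de(r'-c) + r \rceil$. This is immediate since the left-hand side is an integer bounded below by $de(r'-c) + r$.

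For (2), I split on $r = 0$ and $r > 0$. The case $r = 0$ reduces to $\U^{0}(\Afra) = \Afra^{\times} = \U_{0}(x) = G(F)_{x,0}$ by Proposition~\ref{compoffiltA}(2). For $r > 0$, I combine $\U^{\lceil r \rceil}(\Afra) = 1 + \Pfra^{\lceil r \rceil}$ with (1) and again apply Proposition~\ref{compoffiltA}(2) to conclude $1 + \mathfrak{a}_{x,r/(de)} = \U_{r/(de)}(x) = G(F)_{x,r/(de)}$.

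For (3), I would use the identification $\Bscr^{R}(G,F) \cong \Latt(V) = \Latt^{1}(V)/\R$ so that $g \in G(F)_{[x]}$ if and only if $g \cdot \Lcal = t \cdot \Lcal$ for some $t \in \R$, i.e.\ $g\Lcal(r') = \Lcal(r' + t)$ for all $r' \in \R$. Specializing to the discrete set $r' \in c + (de)^{-1}\Z$ and applying the ceiling formula gives $g\Lcal_{i} = \Lcal_{i + n}$ with $n = \lceil de \cdot t \rceil$ independent of $i$, which is precisely the defining condition for $g \in \Kfra(\Afra)$. Conversely, if $g\Lcal_{i} = \Lcal_{i+n}$ for all $i$, then for an arbitrary $r' \in \R$ I write $g\Lcal(r') = g\Lcal_{\lceil de(r'-c)\rceil} = \Lcal_{\lceil de(r'-c)\rceil + n} = \Lcal_{\lceil de(r' + n/(de) - c)\rceil} = \Lcal(r' + n/(de))$, so $g \cdot \Lcal = (n/(de)) \cdot \Lcal$ and $g$ stabilises $[x]$.

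The only potential obstacle is the bookkeeping for the ceiling function when passing between the continuous parameter $r' \in \R$ and the discrete shift $n \in \Z$; conceptually each part is a direct unwinding of definitions once Proposition~\ref{compoffiltA} is available.
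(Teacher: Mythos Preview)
Your proof of parts (1) and (2) is essentially identical to the paper's own argument: the same specialization to $r' = c + j/(de)$ for one inclusion, the same ceiling inequality for the other, and the same appeal to Proposition~\ref{compoffiltA} to pass between the hereditary-order filtration and the Moy--Prasad filtration.

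For part (3) you take a slightly different route. The paper simply invokes \cite[I Lemma 7.3]{BL}, whereas you unwind the identification $\Bscr^{R}(G,F) \cong \Latt^{1}(V)/\R$ directly and translate the condition $g\cdot\Lcal = t\cdot\Lcal$ into the chain condition $g\Lcal_{i} = \Lcal_{i+n}$ using the explicit formula $\Lcal(r') = \Lcal_{\lceil de(r'-c)\rceil}$. Your argument is correct and self-contained; in effect you are reproving the cited lemma in this special case. The paper's approach is shorter but relies on an external reference, while yours makes the paper more self-sufficient at the cost of a few lines.
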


\begin{prf}
We show (1).  
By Proposition \ref{compoffiltA} (1), it suffices to show $\Pfra^{\lceil r \rceil} = \mathfrak{a}_{\Lcal, r/e(\Afra|\ofra_{F})}$.  
We put $n= \lceil r \rceil$.  
Suppose $a \in \mathfrak{a}_{x, r/e(\Afra|\ofra_{F})}$.  
For $n' \in \Z$, we put $r_{n'} = c + e(\Afra|\ofra_{F})^{-1}n'$.  
Then we have $\Lcal(r_{n'}) = \Lcal_{\lceil de(r_{n'} -c) \rceil} = \Lcal_{n'}$, and $\Lcal\left( e(\Afra|\ofra_{F})^{-1}r+r_{n'} \right) = \Lcal_{\lceil de\left( e(\Afra|\ofra_{F})^{-1}r+r_{n'} -c \right) \rceil} = \Lcal_{n'+\lceil n \rceil}$.  
Since $a \in \mathfrak{a}_{\Lcal, r/e(\Afra|\ofra_{F})}$, in particular
\[
	a\Lcal_{n'} = a\Lcal(r_{n'}) \subset \Lcal(e(\Afra)|\ofra_{F})^{-1}r+r_{n'}) = \Lcal_{n+n'}
\]
for $n' \in \Z$.  
Since $\{ a \in A \mid a\Lcal_{n'} \subset \Lcal_{n+n'}, \, n' \in \Z \} = \Pfra^{n}$, we have $a \in \Pfra^{n}$.  

Conversely, suppose $a \in \Pfra^{n}$.  
For $r' \in \R$, we have $\Lcal(r') = \Lcal_{\lceil de(r'-c) \rceil}$ and $\Lcal(e(\Afra|\ofra_{F})^{-1}r+r') = \Lcal _{\lceil r+de(r'-c) \rceil}$.  
Since $\lceil r+de(r'-c) \rceil < r+de(r'-c)+1$ and $\lceil de(r'-c) \rceil \geq de(r'-c)$, we have 
\[
	\lceil r+de(r'-c) \rceil - \lceil de(r'-c) \rceil <r+de(r'-c)+1 - de(r'-c) = r+1.  
\]
Since $\lceil r+de(r'-c) \rceil - \lceil de(r'-c) \rceil \in \Z$, we also have $\lceil r+de(r'-c) \rceil - \lceil de(r'-c) \rceil \leq \lceil r \rceil$.  
When we put $n' = \lceil de(r'-c) \rceil$, we have $n + n' \geq \lceil r+de(r'-c) \rceil$.  
Therefore, 
\[
	a \Lcal(r') = a \Lcal_{ \lceil de(r'-c) \rceil} = a \Lcal_{n'} \subset \Lcal_{n+n'} \subset \Lcal_{\lceil r+de(r'-c) \rceil} = \Lcal(e(\Afra|\ofra_{F})^{-1}r+r') 
\]
for $r' \in \R$, which implies $a \in \mathfrak{a}_{\Lcal, r/e(\Afra|\ofra_{F})}$.  
Thus (1) holds.  

To show (2), it is enough to show $\U^{\lceil r \rceil}(\Afra) = \U_{r/e(\Afra|\ofra_{F})}(x)$ by Proposition \ref{compoffiltA} (2).  
Therefore (2) follows from (1).  

(3) is a corollary of \cite[I Lemma 7.3]{BL}, as $\Lcal$ is constructed from an $\ofra_{D}$-chain.  
\end{prf}

\begin{prop}
\label{compoffiltS}
Let $x \in \Bscr^{E}(G, F)$ correspond with a lattice function constructed from an $\ofra_{D}$-chain, and let $n \in \Z$.  
\begin{enumerate}
\item \begin{enumerate}
	\item $\Pfra^{n} = \gfra(F)_{x, n/e(\Afra|\ofra_{F})}$, 
	\item $\Pfra^{n+1} = \gfra(F)_{x, n/e(\Afra|\ofra_{F})+}$, 
	\item $\Pfra^{\lfloor (n+1)/2 \rfloor} = \gfra(F)_{x, n/2e(\Afra|\ofra_{F})}$, 
	\item $\Pfra^{\lfloor n/2 \rfloor +1} = \gfra(F)_{x, n/2e(\Afra|\ofra_{F})+}$.  
	\end{enumerate}
\item Suppose $n \geq 0$.  
	Then we have 
	\begin{enumerate}
	\item $\U^{n}(\Afra) = G(F)_{x, n/e(\Afra|\ofra_{F})}$, 
	\item $\U^{n+1}(\Afra) = G(F)_{x, n/e(\Afra|\ofra_{F})+}$, 
	\item $\U^{\lfloor (n+1)/2 \rfloor}(\Afra) = G(F)_{x, n/2e(\Afra|\ofra_{F})}$, 
	\item $\U^{\lfloor n/2 \rfloor +1}(\Afra) = G(F)_{x, n/2e(\Afra|\ofra_{F})+}$.  
	\end{enumerate}
\end{enumerate}
\end{prop}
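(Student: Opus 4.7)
The plan is to obtain all eight identities as direct specializations of Proposition \ref{compoffiltG}, using elementary manipulations of the ceiling and floor functions together with the definition of the ``$+$'' filtration steps.

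For part (1)(a), I simply substitute $r=n$ into Proposition \ref{compoffiltG}(1): since $\lceil n\rceil=n$, the identity reads $\Pfra^{n}=\gfra(F)_{x,n/e(\Afra|\ofra_{F})}$. For part (1)(c), I substitute $r=n/2$ and use the standard identity $\lceil n/2\rceil=\lfloor (n+1)/2\rfloor$, giving $\Pfra^{\lfloor (n+1)/2\rfloor}=\gfra(F)_{x,n/(2e(\Afra|\ofra_{F}))}$. These two cases are immediate.

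For the ``$+$'' identities (1)(b) and (1)(d), I rewrite them through the defining union $\gfra(F)_{x,s+}=\bigcup_{s'>s}\gfra(F)_{x,s'}$ and apply Proposition \ref{compoffiltG}(1) to each $s'$. The key observation is that for $s'$ slightly larger than $s=n/e(\Afra|\ofra_{F})$ one has $\lceil e(\Afra|\ofra_{F})s'\rceil=n+1$, and for $s'$ slightly larger than $s=n/(2e(\Afra|\ofra_{F}))$ one has $\lceil e(\Afra|\ofra_{F})s'\rceil=\lfloor n/2\rfloor+1$ (this last equality is checked by splitting on the parity of $n$, noting that $\lceil n/2\rceil=\lfloor n/2\rfloor+1$ when $n$ is even, while $\lceil n/2\rceil=\lfloor n/2\rfloor+1=(n+1)/2$ when $n$ is odd, and in the odd case increasing past a half-integer does not change the ceiling). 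Consequently the union stabilizes at $\Pfra^{n+1}$ and $\Pfra^{\lfloor n/2\rfloor+1}$ respectively, establishing (1)(b) and (1)(d).

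For part (2) I repeat precisely the same argument, but starting from Proposition \ref{compoffiltG}(2) instead of (1); the nonnegativity hypothesis on $n$ is exactly what is required so that all the exponents appearing are nonnegative integers, so that $\U^{m}(\Afra)$ is defined. The union-of-groups versions of (2)(b) and (2)(d) proceed identically, since $G(F)_{x,s+}$ is by definition the union of the $G(F)_{x,s'}$ for $s'>s$, and each of these is identified with the appropriate $\U^{m}(\Afra)$ by the ceiling computation above. I do not anticipate any serious obstacle; the argument is purely bookkeeping on ceilings and floors once Proposition \ref{compoffiltG} is available, and the only point where one has to be slightly careful is checking the ceiling identity $\lceil n/2\rceil=\lfloor n/2\rfloor+1$ separately for the two parities of $n$ in the derivation of (1)(d) and (2)(d).
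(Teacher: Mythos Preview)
Your approach is exactly the one the paper takes: parts (a) and (c) come straight from Proposition~\ref{compoffiltG} via $\lceil n\rceil=n$ and $\lceil n/2\rceil=\lfloor(n+1)/2\rfloor$, while (b) and (d) are obtained by writing the ``$+$'' filtration as a union and computing the ceiling for nearby $s'$, with (d) handled by a parity split; part (2) is then identical with groups in place of Lie algebras.

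One slip to fix: in your parenthetical for (1)(d) you wrote ``$\lceil n/2\rceil=\lfloor n/2\rfloor+1$ when $n$ is even,'' which is false as stated (for even $n$ one has $\lceil n/2\rceil=\lfloor n/2\rfloor$). What you need, and what the paper does, is that for $r$ \emph{strictly greater than} $n/2$ and close to it, $\lceil r\rceil=n/2+1=\lfloor n/2\rfloor+1$ when $n$ is even (since $n/2\in\Z$ and you have just passed it), while for $n$ odd the ceiling stays at $(n+1)/2=\lfloor n/2\rfloor+1$ because $n/2$ is a half-integer. With that correction the argument goes through as you outlined.
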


\begin{prf}
We show (1), and (2) can be shown in the same way as (1).  

(a) follows from Proposition \ref{compoffiltG} (1).  
(c) also follows from Proposition \ref{compoffiltG} (1) and the fact $\lfloor (n+1)/2 \rfloor = \lceil n/2 \rceil$ for $n \in \Z$.  

We show (b).  
For $r \in (n, n+1]$, we have $\lceil r \rceil = n+1$.  
Then we have 
\begin{eqnarray*}
	\gfra(F)_{x,n/e(\Afra|\ofra_{F})+} & = & \bigcup_{n/e(\Afra|\ofra_{F})<r'} \gfra(F)_{x,r'} \\
	& = & \bigcup_{n/e(\Afra|\ofra_{F})<r' \leq (n+1)/e(\Afra|\ofra_{F})} \Pfra^{\lceil r'e(\Afra|\ofra_{F}) \rceil} \\
	& = & \Pfra^{n+1}.  
\end{eqnarray*}

To show (d), we consider two cases.  
First, suppose $n \in 2\Z$.  
Then we have $\Pfra^{\lfloor n/2 \rfloor + 1} = \Pfra^{(n/2)+1} = \gfra(F)_{x, ((n/2)+1)/e(\Afra|\ofra_{F})}$ by (a).  
Since $n/2 \in \Z$, for any $r \in ( n/2, (n/2)+1 ]$ we have $\lceil r \rceil = (n/2)+1$ and $\gfra(F)_{x,r/e(\Afra|\ofra_{F})} = \Pfra^{\lceil r \rceil} = \Pfra^{(n/2)+1}$.  
Therefore 
\begin{eqnarray*}
	\gfra(F)_{x,n/2e(\Afra|\ofra_{F})+} & = & \bigcup_{n/2e(\Afra|\ofra_{F}) < r'}\gfra(F)_{x, r'} \\
	& = & \bigcup_{n/2e(\Afra|\ofra_{F}) < r' \leq ((n/2)+1)/e(\Afra|\ofra_{F})}\Pfra^{\lceil r'e(\Afra|\ofra_{F}) \rceil} \\
	& = & \Pfra^{(n/2)+1} = \Pfra^{\lfloor n/2 \rfloor + 1}.  
\end{eqnarray*}

Next, suppose $n \in \Z \setminus 2\Z$.  
Then we have $\lfloor n/2 \rfloor + 1 = (n+1)/2 = \lceil (n+1)/2 \rceil$ and $\Pfra^{\lfloor n/2 \rfloor + 1} = \gfra(F)_{x,n/2e(\Afra|\ofra_{F})}$ by (b).  
Since $\lceil r \rceil = (n+1)/2$ for $r \in (n/2, (n+1)/2]$, we obtain
\begin{eqnarray*}
	\gfra(F)_{x, n/2e(\Afra|\ofra_{F})+} & = & \bigcup_{n/2e(\Afra|\ofra_{F}) < r'}\gfra(F)_{x, r'} \\
	& = & \bigcup_{n/2e(\Afra|\ofra_{F}) < r' \leq (n+1)/2e(\Afra|\ofra_{F})}\Pfra^{\lceil r'e(\Afra|\ofra_{F}) \rceil} \\
	& = & \Pfra^{(n+1)/2} = \Pfra^{\lceil n/2 \rceil +1}.  
\end{eqnarray*}
\end{prf}

Let $(H', H, G)$ be a tame twisted Levi sequence.  
Then there exists a tower $E'/E/F$ of tamely ramified extensions in $A$ such that $H' = \Res_{E'/F}\underline{\Aut}_{D \otimes_{F} E'}(V)$ and $H = \Res_{E/F}\underline{\Aut}_{D \otimes_{F} E}(V)$.  
We put $B=\Cent_{A}(E)$ and $B'=\Cent_{A}(E')$.  
There exist a division $E$-algebra $D_{E}$ and a right $D_{E}$-module $W$ such that $B \cong \End_{D_{E}}(W)$.  
Similarly, there exist a division $E'$-algebra $D_{E'}$ and a right $D_{E'}$-module $W'$ such that $B' \cong \End_{D_{E'}}(W')$.  
Since $E'/E/F$ is a tower of tamely ramified extensions, we have canonical identifications 
\begin{eqnarray*}
	\Bscr^{E}(H, F) \cong & \Bscr^{E}( \underline{\Aut}_{D \otimes E}(V), E) & \cong \Bscr^{E}(\underline{\Aut}_{D_{E}}(W), E), \\
	\Bscr^{E}(H', F) \cong & \Bscr^{E}( \underline{\Aut}_{D \otimes E'}(V), E') & \cong \Bscr^{E}(\underline{\Aut}_{D_{E'}}(W'), E').  
\end{eqnarray*}

Let $x \in \Bscr^{E}(H', F) \cong \Bscr^{E}(\underline{\Aut}_{D_{E'}}(W'), E')$, and let $\Lcal$ be the corresponding lattice function in $W'$ with $x$.  

\begin{prop}
\label{beingvertex}
The following assertions are equivalent.  
\begin{enumerate}
\item $[x]$ is a vertex in $\Bscr^{R}(H', F)$.  
\item The hereditary $\ofra_{E'}$-order $\Bfra'$ associated with $x$ is maximal.  
\item $\Lcal$ is constructed from an $\ofra_{D_{E'}}$-chain of period 1.  
\end{enumerate}
\end{prop}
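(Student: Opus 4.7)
The plan is to reduce everything to the group $\underline{\Aut}_{D_{E'}}(W')$ over $E'$, which is isomorphic to $\GL_{m'}(D_{E'})$ with $m'=\dim_{D_{E'}}W'$, via the canonical identification $\Bscr^{E}(H',F)\cong\Bscr^{E}(\underline{\Aut}_{D_{E'}}(W'),E')$ and the Broussous--Lemaire description $\Bscr^{E}(\underline{\Aut}_{D_{E'}}(W'),E')\cong\Latt^{1}(W')$ recalled above. Passing to the reduced building amounts to quotienting $\Latt^{1}(W')$ by the translation action of $\R$. I would then prove the three equivalences by proving the chain $(3)\Leftrightarrow(2)\Leftrightarrow(1)$.

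For $(3)\Leftrightarrow(2)$ I would invoke the dictionary between $\ofra_{D_{E'}}$-chains in $W'$ and hereditary $\ofra_{E'}$-orders in $B'=\End_{D_{E'}}(W')$ recalled in \S\ref{Secherre}: the chain $(\Lcal_{i})_{i\in\Z}$ of period $e$ corresponds to $\Bfra'=\{b\in B'\mid b\Lcal_{i}\subset\Lcal_{i}\}$ and one has $e(\Bfra'|\ofra_{E'})=e$. A hereditary $\ofra_{E'}$-order in $B'$ is maximal exactly when it is isomorphic to $\M_{m'}(\ofra_{D_{E'}})$, and this happens iff the associated $\ofra_{D_{E'}}$-chain has period~$1$ (up to translation, one then has $\Bfra'=\End_{\ofra_{D_{E'}}}(\Lcal_{0})$). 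Together with Proposition~\ref{lfconstbylc}, which states that the period $e$ of the underlying chain is well defined from the lattice function $\Lcal$, this yields $(3)\Leftrightarrow(2)$.

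For $(2)\Leftrightarrow(1)$ I would combine Proposition~\ref{compoffiltG}(3), which gives $\Kfra(\Bfra')=H'(F)_{[x]}$ once we work inside $H'(F)=\Aut_{D_{E'}}(W')$, with the standard description of vertices in the reduced building of $\GL_{m'}(D_{E'})$ (see \cite{BL}): the vertices of $\Bscr^{R}(\underline{\Aut}_{D_{E'}}(W'),E')$ are in natural bijection with homothety classes of $\ofra_{D_{E'}}$-lattices $\Lambda\subset W'$, the class $[\Lambda]$ being sent to the stabilizer $\Aut_{\ofra_{D_{E'}}}(\Lambda)\cdot E'^{\times}$, i.e. to $\Kfra(\End_{\ofra_{D_{E'}}}(\Lambda))$. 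Consequently $[x]$ is a vertex iff $\Bfra'$ is of the form $\End_{\ofra_{D_{E'}}}(\Lambda)$ for some lattice $\Lambda$, iff $\Bfra'$ is maximal.

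The main obstacle, such as it is, is making the vertex description for $\GL_{m'}(D_{E'})$ precise through the lattice function formalism, since the apartments of $\Latt^{1}(W')$ are parametrised by decompositions of $W'$ into rank-one free $D_{E'}$-modules and one has to check that within such an apartment the reduced points corresponding to chains of period $1$ are exactly the $0$-dimensional faces. This is a short verification once one identifies the affine structure on $\Latt^{1}(W')/\R$ with the standard apartment of $\GL_{m'}(D_{E'})$, and I would import it from the Broussous--Lemaire appendix rather than redoing it.
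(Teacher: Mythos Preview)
Your plan is close to the paper's proof and is essentially sound, but there is one technical slip worth flagging. For the equivalence $(1)\Leftrightarrow(2)$ you invoke Proposition~\ref{compoffiltG}(3) to get $\Kfra(\Bfra')=H'(F)_{[x]}$. In the paper, that proposition is stated and proved under the standing hypothesis that the lattice function $\Lcal$ is constructed from an $\ofra_{D_{E'}}$-chain, i.e.\ precisely under assertion~(3) of the statement you are trying to prove. So citing it for $(1)\Leftrightarrow(2)$ before knowing~(3) is formally circular.

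The paper sidesteps this by working on the enlarged building rather than the reduced one: it uses that $[x]$ is a vertex if and only if the stabilizer of $x$ in $\Bscr^{E}(H',F)$ is a maximal compact subgroup of $H'(F)$, and this stabilizer is $\Stab_{\Aut_{D_{E'}}(W')}(\Lcal)=\U(\Bfra')$ unconditionally (no chain hypothesis needed, only the identification $\Bscr^{E}(H',F)\cong\Latt^{1}(W')$). Then $\U(\Bfra')$ is maximal compact exactly when $\Bfra'$ is maximal, giving $(1)\Leftrightarrow(2)$ directly. Your $(2)\Leftrightarrow(3)$ argument via the period of the underlying chain and Proposition~\ref{lfconstbylc} is the same as the paper's. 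So the fix is small: either replace the appeal to Proposition~\ref{compoffiltG}(3) by the unconditional identity $\Stab_{H'(F)}(x)=\U(\Bfra')$, or first establish $(2)\Leftrightarrow(3)$ and then use~(3) to legitimise the citation.
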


\begin{prf}
The element $[x]$ is a vertex if and only if the stabilizer $\Stab_{H'(F)}(x)$ of $x$ in $H'(F)$ is a maximal compact subgroup in $H'(F)$.  
Since $\Lcal$ is identified with $x$ via the $H'(F)$-isomorphism $\Latt^{1}(W') \cong \Bscr^{E}(H', F)$, we have $\Stab_{H'(F)}(x) = \Stab_{\Aut_{D_{E'}}(W')}(\Lcal) = \U(\Bfra')$.  
The group $\U(\Bfra')$ is a maximal compact subgroup in $H'(F)$ if and only if $\Bfra'$ is maximal, which implies the equivalence of (1) and (2).  

To show the equivalence of (2) and (3), let $(\Lcal_{i})$ be an $\ofra_{D_{E'}}$-chain in $W'$ such that $\{ \Lcal(r) \mid r \in \R \} = \{ \Lcal_{i} \mid i \in \Z \}$.  
Since $\Bfra' = \{ b' \in B' \mid b' \Lcal(r) \subset \Lcal(r), \, r \in \R \} = \{ b' \in B' \mid b' \Lcal_{i} \subset \Lcal_{i}, \, i \in \Z \}$, the hereditary $\ofra_{E'}$-order $\Bfra'$ is maximal if and only if the period of $(\Lcal_{i})$ is equal to 1.  
Since the period of $(\Lcal_{i})$ is also equal to the number of discontinuous points of $\Lcal$ in $[0, 1/d_{E'})$, where $d_{E'} = (\dim_{E'}D_{E'})^{1/2}$, (2) holds if and only if there exists a unique discontinuous point $c$ in $[0,1/d_{E'})$.  
Here, since $\Lcal(r+(1/d_{E'}))=\Lcal(r)\varpi_{D_{E'}}$, $\Lcal$ is discontinuous at $c \in \R$ if and only if $\Lcal$ is discontinuous at the unique element $c'$ in $(c+d_{E'}^{-1} \Z) \cap [0, 1/d_{E'})$.  
Therefore (2) holds if and only if the discontinuous points of $\Lcal$ is equal to $c+d_{E'}^{-1} \Z$ for some $c \in \R$, which is also equivalent to (3) by Proposition \ref{lfconstbylc}.  
\end{prf}

We fix an $H'(F)$-equivalent, affine embedding $\iota_{H/H'}:\Bscr^{E}(H', F) \hookrightarrow \Bscr^{E}(H, F)$ and an $H(F)$-equivalent, affine embedding $\iota_{G/H}:\Bscr^{E}(H, F) \hookrightarrow \Bscr^{E}(G, F)$.  
We also put $\iota_{G/H'} = \iota_{G/H} \circ \iota_{H/H'}$.  

\begin{prop}
\label{resultofBL}
Let $x \in \Bscr^{E}(H, F)$.  
\begin{enumerate}
\item The canonical identification $\Bscr^{E}(H, F) \cong \Bscr^{E}(\underline{\Aut}_{D_{E}}(W), E)$ and $\iota_{G/H}$ induce 
\[
	j:\Bscr^{R}(\underline{\Aut}_{D_{E}}(W), E) \hookrightarrow \Bscr^{R}(G, F), 
\]
which is equal to $j_{E}^{-1}$ in \cite[II-Theorem 1.1]{BL}.  
\item Let $(\mathfrak{a}_{\iota_{G/H}(x),r})_{r \in \R}$ be the filtration in $A$ associated with $\iota_{G/H}(x)$, and let $(\mathfrak{b}_{x,r})_{r \in \R}$ be the filtration in $B$ associated with $x$.  
Then 
\[
	\mathfrak{b}_{x, r} = B \cap \mathfrak{a}_{\iota_{G/H}(x), r/e(E/F)}.  
\]
\item The hereditary $\ofra_{F}$-order $\mathfrak{a}_{\iota_{G/H}(x), 0}$ is $E$-pure.  
\end{enumerate}
\end{prop}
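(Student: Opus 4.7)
The plan is to handle the three assertions largely independently, with (2) carrying the main content. For (1), note that $\iota_{G/H}$ is by construction $H(F)$-equivariant and affine, and any two such embeddings differ by a translation in $X^{*}(Z(H)) \otimes \R$ modulo $X^{*}(Z(G)) \otimes \R$, which is killed when passing to reduced buildings. Broussous--Lemaire provide in \cite[II-Theorem 1.1]{BL} such an $H(F)$-equivariant affine identification $j_{E}$, and the task is simply to check that our chosen $\iota_{G/H}$ descends to $j_{E}^{-1}$ on reduced buildings. I would verify this by comparing the two maps on a single apartment attached to a maximal $F$-torus of $H$, then extending by equivariance.

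For (2), the strategy is to translate both filtrations into lattice function language. The point $x \in \Bscr^{E}(H, F)$ corresponds via $\Bscr^{E}(H, F) \cong \Bscr^{E}(\underline{\Aut}_{D_{E}}(W), E) \cong \Latt^{1}(W)$ to an $\ofra_{D_{E}}$-lattice function $\Lcal_{W}$ in $W$, while $\iota_{G/H}(x) \in \Bscr^{E}(G, F) \cong \Latt^{1}(V)$ corresponds to an $\ofra_{D}$-lattice function $\Lcal_{V}$ in $V$. The essential input from Broussous--Lemaire, together with assertion (1), is that $\Lcal_{V}$ and $\Lcal_{W}$ are related by reparametrization, with scaling factor $e(E/F)$ encoding the difference between the $F$- and $E$-normalizations of valuations on $A$ and $B$; concretely, one expects $\Lcal_{V}(r) = \Lcal_{W}(r \cdot e(E/F))$ on the image of $W$ inside $V$. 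After this identification, the defining formulas
\[
\mathfrak{b}_{x, r} = \{ b \in B \mid b \Lcal_{W}(s) \subset \Lcal_{W}(s+r), \, s \in \R \}, \quad \mathfrak{a}_{\iota_{G/H}(x), r'} = \{ a \in A \mid a \Lcal_{V}(t) \subset \Lcal_{V}(t+r'), \, t \in \R \}
\]
show that for $b \in B$ the two conditions match precisely when $r' = r / e(E/F)$, using the fact that $b$ acts $\ofra_{D}$-linearly.

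For (3), Proposition \ref{compoffiltG}(3) identifies $\Kfra(\mathfrak{a}_{\iota_{G/H}(x), 0})$ with $G(F)_{[\iota_{G/H}(x)]}$, so it suffices to show $E^{\times} \subset G(F)_{[\iota_{G/H}(x)]}$. Since $E^{\times} \subset Z(H)(F)$, the group $E^{\times}$ acts trivially on $\Bscr^{R}(H, F)$ and in particular fixes $[x]$; by $H(F)$-equivariance of $\iota_{G/H}$, it also fixes $[\iota_{G/H}(x)]$, yielding $E$-purity. The principal obstacle lies in (2), where one must carefully pin down the Broussous--Lemaire identification of lattice functions together with the valuation normalization change between $E$ and $F$; once the parametrization relating $\Lcal_{V}$ and $\Lcal_{W}$ is made explicit, the filtration identity becomes routine.
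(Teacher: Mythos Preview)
Your approach to (1) is essentially the same as the paper's: both rely on the uniqueness of an $H(F)$-equivariant affine embedding $\Bscr^{R}(\underline{\Aut}_{D_{E}}(W), E) \hookrightarrow \Bscr^{R}(G, F)$, so that $j$ and $j_{E}^{-1}$ must coincide. Your extra step of checking on a single apartment is unnecessary once uniqueness is established, but harmless.

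For (2) and (3) the paper takes a much shorter route: it simply observes that, once (1) is known, both statements are contained in \cite[II-Theorem 1.1]{BL}. Your plan to unpack (2) via the lattice-function correspondence and the $e(E/F)$-rescaling is exactly the content of that Broussous--Lemaire result, so you are reproving what the paper cites as a black box. For (3), your argument via $E^{\times} \subset Z(H)(F)$ fixing $[x]$ and hence $[\iota_{G/H}(x)]$ is correct and arguably more transparent than a bare citation, but note that your invocation of Proposition~\ref{compoffiltG}(3) is slightly off: that proposition assumes the lattice function comes from an $\ofra_{D}$-chain, which is not given for a general $x$. What you actually need is only the inclusion $G(F)_{[\iota_{G/H}(x)]} \subset \Kfra(\mathfrak{a}_{\iota_{G/H}(x),0})$, and this holds for arbitrary lattice functions (if $g$ sends $\Lcal$ to a translate of $\Lcal$, then conjugation by $g$ preserves $\mathfrak{a}_{\Lcal,0}$). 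With that adjustment your argument for (3) is complete.
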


\begin{prf}
Since $\Bscr^{E}(H, F) \cong \Bscr^{E}(\underline{\Aut}_{D_{E}}(W), E)$ and $\iota_{G/H}$ are $H(F)$-equivalent and affine, they induce the $H(F)$-equivalent, affine embedding
\[
	j:\Bscr^{R}(\underline{\Aut}_{D_{E}}(W), E) \cong \Bscr^{R}(H', F) \hookrightarrow \Bscr^{R}(G, F).  
\]
However, $H(F)$-equivalent, affine embedding $\Bscr^{R}(\underline{\Aut}_{D_{E}}(W), E) \hookrightarrow \Bscr^{R}(G, F)$ is unique.  
Since $j$ and $j_{E}^{-1}$ are $H(F)$-equivalent and affine, we obtain $j = j_{E}^{-1}$.  
The remainder assertions are results from \cite[II-Theorem 1.1]{BL}.  
\end{prf}

\begin{prop}
\label{beingprincipal}
Let $x \in \Bscr^{E}(H', F)$ such that $[x]$ is a vertex.  
\begin{enumerate}
\item The corresponding lattice function $\Lcal$ in $W$ with $\iota_{H/H'}(x)$ is constructed from a uniform $\ofra_{D_{E}}$-chain.  
In particular, the hereditary $\ofra_{E}$-order $\Bfra$ in $B$ associated with $\Lcal$ is principal.  
\item Let $\Bfra'$ be the hereditary $\ofra_{E'}$-order in $B'$ associated with $x$.  
Then $\Bfra$ is the unique $E'$-pure hereditary $\ofra_{E}$-order in $B$ such that $\Bfra' = B' \cap \Bfra$.  
\end{enumerate}
\end{prop}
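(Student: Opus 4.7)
The plan is to transport the problem to the Broussous--Lemaire setup applied to the tame twisted Levi pair $(H', H)$, now working over the base field $E$ via the canonical identifications $\Bscr^{E}(H, F)\cong\Bscr^{E}(\underline{\Aut}_{D_{E}}(W), E)$ and $\Bscr^{E}(H', F)\cong\Bscr^{E}(\underline{\Aut}_{D_{E'}}(W'), E')$, so that $B$ plays the role of $A$, $B'$ plays the role of $B$, $E'/E$ plays the role of $E/F$, and $W/W'$ replace $V/V$. First, Proposition \ref{beingvertex} applied to $H'$ translates the hypothesis that $[x]$ is a vertex into the statement that the associated hereditary $\ofra_{E'}$-order $\Bfra'$ is maximal, equivalently that the lattice function $\Lcal'$ in $W'$ attached to $x$ is built from an $\ofra_{D_{E'}}$-chain of period $1$. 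Applying Proposition \ref{resultofBL}(2) and (3) in this setting gives immediately that $\Bfra := \mathfrak{b}_{\iota_{H/H'}(x), 0}$ is an $E'$-pure hereditary $\ofra_{E}$-order in $B$ with $\Bfra\cap B' = \Bfra'$.

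The remaining content of (1) is the principality of $\Bfra$, i.e., the uniformity of the associated $\ofra_{D_E}$-chain $(\Lcal_i)$. I would deduce this from the action of $E'^\times$. By $E'$-purity, $\varpi_{E'}\in\Kfra(\Bfra)$, and an elementary valuation computation in the spirit of Lemma \ref{valval} gives $v_{\Bfra}(\varpi_{E'}) = e(\Bfra|\ofra_{E})/e(E'/E)$; combined with the identity $e(\Bfra|\ofra_{E}) = e(\Bfra'|\ofra_{E'})\,e(E'/E)$ coming from the relation $\Bfra' = B'\cap\Bfra$ and the maximality $e(\Bfra'|\ofra_{E'}) = d_{E'}$, we get $v_{\Bfra}(\varpi_{E'}) = d_{E'}$. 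Thus $\varpi_{E'}$ shifts the chain $(\Lcal_i)$ by $d_{E'}$ positions while acting as an $\ofra_{D_E}$-module automorphism of $W$, so the sequence of jumps $[\Lcal_i:\Lcal_{i+1}]$ is periodic with period $d_{E'}$; the $\varpi_{D_E}$-shift makes it periodic with period $e(\Lcal)=e(\Bfra|\ofra_E)/d_E$. A bookkeeping argument comparing these two periods, using that the chain in $W$ recovers (via intersection with $W'$) the uniform length-one chain coming from $\Bfra'$, forces the jumps to be constant, and hence $\Bfra$ is principal.

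Part (2) follows from the characterization of the image of $\iota_{H/H'}$ in the reduced building. An $E'$-pure hereditary $\ofra_E$-order $\tilde{\Bfra}$ in $B$ corresponds to a facet $[\tilde{y}]$ of $\Bscr^{R}(\underline{\Aut}_{D_{E}}(W), E)$ fixed by $E'^\times$; by the Broussous--Lemaire main theorem (cf. Proposition \ref{resultofBL}(1) applied to $(H', H)$ over $E$), every such fixed facet lies in the image of $\iota_{H/H'}:\Bscr^{R}(H', F)\hookrightarrow\Bscr^{R}(H, F)$. The condition $\tilde{\Bfra}\cap B' = \Bfra'$ means, by Proposition \ref{resultofBL}(2), that the preimage facet in $\Bscr^{R}(H', F)$ has associated order $\Bfra'$, which by maximality is the vertex $[x]$; injectivity of $\iota_{H/H'}$ then forces $[\tilde{y}] = [\iota_{H/H'}(x)]$ and therefore $\tilde{\Bfra}=\Bfra$.

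The main obstacle is the principality step in (1): abstract Broussous--Lemaire yields $E'$-purity and the restriction formula $\Bfra\cap B' = \Bfra'$ essentially for free, but extracting uniformity of the chain in $W$ requires keeping careful track of the division-algebra degrees $d_E,d_{E'}$ and the ramification index $e(E'/E)$, and exploiting that the ``descent" to $B'$ produces a single-orbit chain. All other steps are essentially formal consequences of Propositions \ref{beingvertex} and \ref{resultofBL}.
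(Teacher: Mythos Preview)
Your overall strategy --- transport the problem to the Broussous--Lemaire setting for the pair $(H',H)$ over the base field $E$, then use Proposition~\ref{beingvertex} to convert the vertex hypothesis into maximality of $\Bfra'$ --- is exactly what the paper does. The difference lies in how the two remaining facts are discharged.

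For part~(1), the paper does not attempt a direct argument: it simply invokes \cite[II--Proposition~5.4]{BL}, which says precisely that under the Broussous--Lemaire embedding, a lattice function coming from a \emph{uniform} $\ofra_{D_{E'}}$-chain in $W'$ is sent to one coming from a uniform $\ofra_{D_{E}}$-chain in $W$. Since a period-$1$ chain is trivially uniform, this closes (1) immediately. Your alternative route via the shift action of $\varpi_{E'}$ is a reasonable idea, but as written it is incomplete: knowing that the jump sequence $[\Lcal_i:\Lcal_{i+1}]$ is periodic both with period $d_{E'}$ and with period $e(\Lcal)$ only tells you it is periodic with period $\gcd(d_{E'}, e(\Lcal))$, and there is no reason for this gcd to be $1$ in general. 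The extra input you gesture at (``the chain in $W$ recovers via intersection the uniform length-one chain in $W'$'') is exactly the substance of \cite[II--Proposition~5.4]{BL}, so you would essentially be reproving that result. You are right to flag this step as the main obstacle; the clean fix is just to cite it.

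For part~(2), the paper cites \cite[Lemme~1.6]{S2}, which gives directly that when $\Bfra'$ is maximal there is a unique $E'$-pure hereditary $\ofra_{E}$-order in $B$ whose intersection with $B'$ is $\Bfra'$. Your building-theoretic argument is a legitimate alternative and is essentially the geometric content behind that lemma: $E'$-purity of $\tilde{\Bfra}$ forces the corresponding point to lie in the $E'^{\times}$-fixed locus, hence in the image of $\iota_{H/H'}$ by Broussous--Lemaire, and then the intersection condition together with injectivity of $\iota_{H/H'}$ pins down the preimage as $[x]$. This works, though one should be a little careful that ``$E'$-pure order'' corresponds to a point (the barycenter of the associated facet) rather than just a facet, so that injectivity applies cleanly.
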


\begin{prf}
By Proposition \ref{beingvertex}, the corresponding lattice function in $W'$ with $x$ is constructed from an $\ofra_{D_{E'}}$-chain with period 1.  
Since an $\ofra_{D_{E'}}$-chain with period 1 is uniform, (1) follows from Proposition \ref{resultofBL} and \cite[II-Proposition 5.4]{BL}.  
The claim (2) follows from Proposition \ref{resultofBL} and \cite[Lemme 1.6]{S2}.  
\end{prf}

We regard $\Bscr^{E}(H', F)$ as a subset in $\Bscr^{E}(H, F)$ via $\iota_{H/H'}$, and $\Bscr^{E}(H, F)$ as a subset in $\Bscr^{E}(G, F)$ via $\iota_{G/H}$.  

\begin{prop}
\label{compoffiltC}
Let $x \in \Bscr^{E}(H', F)$ such that $[x]$ is a vertex.  
Let $\Afra$ be the hereditary $\ofra_{F}$-order in $A$ associated with $x \in \Bscr^{E}(G, F)$, and let $\Pfra$ be the radical of $\Afra$.  
We put $\mathfrak{h}(F) = \Lie(H) = B$.  
\begin{enumerate}
\item Let $n \in \Z$.  
	\begin{enumerate}
	\item $B \cap \Pfra^{n} = \mathfrak{h}(F)_{x, n/e(\Afra|\ofra_{F})}$, 
	\item $B \cap \Pfra^{n+1} = \mathfrak{h}(F)_{x, n/e(\Afra|\ofra_{F})+}$, 
	\item $B \cap \Pfra^{ \lfloor (n+1)/2 \rfloor } = \mathfrak{h}(F)_{x, n/2e(\Afra|\ofra_{F})}$, 
	\item $B \cap \Pfra^{ \lfloor n/2 \rfloor +1 } = \mathfrak{h}(F)_{x, n/2e(\Afra|\ofra_{F})+}$.  
	\end{enumerate}
\item Let $n \in \Z_{\geq 0}$.  
	\begin{enumerate}
	\item $B^{\times} \cap \U^{n}(\Afra) = H(F)_{x, n/e(\Afra|\ofra_{F})}$, 
	\item $B^{\times} \cap \U^{n+1}(\Afra) = H(F)_{x, n/e(\Afra|\ofra_{F})+}$, 
	\item $B^{\times} \cap \U^{ \lfloor (n+1)/2 \rfloor }(\Afra) = H(F)_{x, n/2e(\Afra|\ofra_{F})}$, 
	\item $B^{\times} \cap \U^{ \lfloor n/2 \rfloor +1}(\Afra) = H(F)_{x, n/2e(\Afra|\ofra_{F})+}$.  
	\end{enumerate}
\end{enumerate}
\end{prop}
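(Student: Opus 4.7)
The plan is to apply Proposition \ref{compoffiltS} to the full algebra $A$ at $\iota_{G/H}(x)$, intersect the resulting identities with $B$, and then translate everything to the Moy--Prasad filtration on $\mathfrak{h}(F)$ via a tameness-based functoriality. First, by Proposition \ref{beingprincipal}, $\Bfra = B \cap \Afra$ is a principal hereditary $\ofra_E$-order in $B$, and by Proposition \ref{resultofBL}(3), $\Afra$ is $E$-pure, hence in particular a hereditary $\ofra_F$-order. Since the filtration $\mathfrak{a}_{\iota_{G/H}(x), r}$ depends only on the image of $\iota_{G/H}(x)$ in $\Latt(V)$, we may choose a representative in its $\R$-orbit whose associated lattice function is the step function constructed from an $\ofra_D$-chain defining $\Afra$. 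Then Proposition \ref{compoffiltS}(1) applied to $A$ yields, for each $n \in \Z$,
\[
	\Pfra^n = \gfra(F)_{\iota_{G/H}(x), n/e(\Afra|\ofra_F)}, \quad \Pfra^{n+1} = \gfra(F)_{\iota_{G/H}(x), n/e(\Afra|\ofra_F)+},
\]
together with the analogous identities for $\Pfra^{\lfloor (n+1)/2 \rfloor}$ and $\Pfra^{\lfloor n/2 \rfloor + 1}$.

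The core step is the functoriality identity $\mathfrak{h}(F)_{x, r} = B \cap \gfra(F)_{\iota_{G/H}(x), r}$. Combining Proposition \ref{resultofBL}(2) with Proposition \ref{compoffiltA}(1) gives $B \cap \gfra(F)_{\iota_{G/H}(x), s} = \mathfrak{b}_{x, s \cdot e(E/F)}$, while the compatibility of Moy--Prasad filtrations with Weil restriction along the tame extension $E/F$ gives $\mathfrak{h}(F)_{x, r} = \mathfrak{b}_{x, r \cdot e(E/F)}$, where the right-hand side, via Proposition \ref{compoffiltA}(1) applied to the $E$-group $\underline{\Aut}_{D \otimes_{F} E}(V)$, is the Moy--Prasad filtration with $E$-normalization. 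The factor $e(E/F)$ cancels, yielding the claimed functoriality. Intersecting each identity from the previous paragraph with $B$ and applying this functoriality then yields (1).

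Part (2) follows from (1) by exponentiation: for $n \geq 1$, we have $\U^n(\Afra) = 1 + \Pfra^n$ and $H(F)_{x, r} = 1 + \mathfrak{h}(F)_{x, r}$ for $r > 0$, so $B^{\times} \cap \U^n(\Afra) = 1 + (B \cap \Pfra^n) = H(F)_{x, n/e(\Afra|\ofra_F)}$, and analogously for (b), (c), (d). The edge case $n = 0$ in (a) and (c) reduces to $B^{\times} \cap \U(\Afra) = \Bfra^{\times} = H(F)_{x, 0}$, by Proposition \ref{compoffiltG}(2) applied to $H$. The main obstacle is verifying the functoriality identity $\mathfrak{h}(F)_{x, r} = B \cap \gfra(F)_{\iota_{G/H}(x), r}$: the factor $e(E/F)$ appearing in Proposition \ref{resultofBL}(2) must exactly cancel the factor coming from Weil restriction, and pinning down the normalization conventions for $\mathfrak{b}_{x, r}$ requires care. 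A secondary technical point is the initial replacement of $\iota_{G/H}(x)$ by a representative with a step-function lattice function, which is needed to legitimately invoke Proposition \ref{compoffiltS}.
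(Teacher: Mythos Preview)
Your overall strategy matches the paper's exactly: invoke Proposition~\ref{beingprincipal} to ensure the lattice function at $x \in \Bscr^{E}(G,F)$ comes from an $\ofra_D$-chain, apply Proposition~\ref{compoffiltS} to identify $\Pfra^{n}$, $\Pfra^{n+1}$, $\Pfra^{\lfloor (n+1)/2 \rfloor}$, $\Pfra^{\lfloor n/2 \rfloor +1}$ with the corresponding Moy--Prasad lattices in $\gfra(F)$, and then intersect with $B$ using the functoriality $\mathfrak{h}(F)_{x,r} = B \cap \gfra(F)_{x,r}$.

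The one substantive difference is in how you justify this functoriality. The paper simply cites \cite[Proposition~1.9.1]{Ad}, which gives $\mathfrak{h}(F) \cap \gfra(F)_{x,r} = \mathfrak{h}(F)_{x,r}$ and $H(F) \cap G(F)_{x,r} = H(F)_{x,r}$ directly for a tame twisted Levi subgroup. You instead reconstruct it by combining Proposition~\ref{resultofBL}(2) (which yields $B \cap \gfra(F)_{x,r} = \mathfrak{b}_{x,\,r e(E/F)}$) with the Weil-restriction rescaling $\mathfrak{h}(F)_{x,r} = \mathfrak{b}_{x,\,r e(E/F)}$. Your route is correct and the $e(E/F)$ factors do cancel as you suspect, but it is more circuitous, and the Weil-restriction compatibility you invoke is not established in the paper itself; Adler's result is the cleaner and intended shortcut here. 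Incidentally, your ``secondary technical point'' about passing to another representative in the $\R$-orbit is unnecessary: Proposition~\ref{beingprincipal}(1), applied with $E = F$, already shows that the image of $x$ in $\Bscr^{E}(G,F)$ corresponds to a lattice function built from an $\ofra_D$-chain, so Proposition~\ref{compoffiltS} applies as is.
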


\begin{prf}
By \cite[Proposition 1.9.1]{Ad}, we have $B \cap \gfra_{x, r} = \mathfrak{h}(F) \cap \gfra_{x, r} = \mathfrak{h}_{x, r}$ for $r \in \tilde{\R}$ and $B^{\times} \cap G(F)_{x, r} = H(F) \cap G(F)_{x, r} = H(F)_{x, r}$ for $r \in \tilde{\R}_{\geq 0}$.  
On the other hand, $x \in \Bscr^{E}(G, F)$ is constructed from an $\ofra_{D}$-chain by Proposition \ref{beingprincipal}.  
Then we can apply Proposition \ref{compoffiltS} and assertions follow.  
\end{prf}

\begin{lem}
\label{lemforfindc}
Let $x \in \Bscr^{E}(H', F)$ such that $[x]$ is a vertex.  
Let $\Bfra$ be the hereditary $\ofra_{E}$-order in $B$ with $x \in \Bscr^{E}(H, F)$, and let $\Qfra$ be the radical of $\Bfra$.  
\begin{enumerate}
\item For $r \in \R$, we have $\Qfra^{\lceil r \rceil} =\mathfrak{h}(F)_{x, r/e(\Bfra|\ofra_{E})e(E/F)}= \mathfrak{b}_{x, r/e(\Bfra|\ofra_{E})}$.  
\item For $r \in \R_{\geq 0}$, we have $\U^{\lceil r \rceil}(\Bfra) = H(F)_{x, r/e(\Bfra|\ofra_{E})e(E/F)}$.  
\item Let $r \in \R_{\geq 0}$.  
If $H(F)_{x, r} \neq H(F)_{x, r+}$, then $n=re(\Bfra|\ofra_{E})e(E/F)$ is an integer, and we have 
	\begin{enumerate}
	\item $H(F)_{x, r} = \U^{n}(\Bfra)$, 
	\item $H(F)_{x, r+} = \U^{n+1}(\Bfra)$, and
	\item $H(F)_{x, r/2+} = \U^{\lfloor n/2 \rfloor +1}(\Bfra)$.  
	\end{enumerate}
\end{enumerate}
\end{lem}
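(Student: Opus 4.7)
The approach is to apply Propositions \ref{compoffiltG} and \ref{compoffiltS} to the central simple $E$-algebra $B$ equipped with its hereditary $\ofra_E$-order $\Bfra$, which is principal by Proposition \ref{beingprincipal}, and then to translate the resulting $E$-indexed filtration on $B$ to the $F$-indexed Moy--Prasad filtration on $\mathfrak{h}(F)=B$. The required translation is provided by Proposition \ref{resultofBL}(2), together with the standard relation $e(\Afra|\ofra_F) = e(\Bfra|\ofra_E)e(E/F)$ from \cite[1.2.4]{BK1} (as already invoked in the proof of Proposition \ref{Mforcons}).

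For (1), applying Proposition \ref{compoffiltG}(1) to the $E$-group $\underline{\Aut}_{D_E}(W)$ with its hereditary $\ofra_E$-order $\Bfra$ gives $\Qfra^{\lceil s \rceil} = \mathfrak{b}_{x, s/e(\Bfra|\ofra_E)}$ for all $s \in \R$. By Proposition \ref{resultofBL}(2) and the ramification relation above,
\[
\mathfrak{b}_{x, s/e(\Bfra|\ofra_E)} = B \cap \mathfrak{a}_{x, s/e(\Bfra|\ofra_E)e(E/F)} = B \cap \mathfrak{a}_{x, s/e(\Afra|\ofra_F)},
\]
and Proposition \ref{compoffiltA}(1) identifies the right-hand side with $B \cap \gfra(F)_{x, s/e(\Afra|\ofra_F)} = \mathfrak{h}(F)_{x, s/e(\Afra|\ofra_F)}$, where the last equality comes from the analogue for $H$ of \cite[Proposition 1.9.1]{Ad} (as invoked in the proof of Proposition \ref{compoffiltC}). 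Relabeling $s=r$ yields (1). Part (2) is proved identically, using Propositions \ref{compoffiltG}(2) and \ref{compoffiltA}(2) in place of their Lie-algebra counterparts.

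For (3), part (2) shows that $H(F)_{x,\cdot}$ is a step function whose jumps can occur only at values $r = n/e(\Bfra|\ofra_E)e(E/F)$ with $n \in \Z_{\geq 0}$. Hence the hypothesis $H(F)_{x,r} \neq H(F)_{x,r+}$ forces $n := r \cdot e(\Bfra|\ofra_E)e(E/F)$ to be a non-negative integer, and (a) and (b) then follow from (2) applied at $r$ itself (with ceiling $n$) and at any $r' \in (r, r+1/e(\Bfra|\ofra_E)e(E/F)]$ (with ceiling $n+1$), respectively. For (c), apply Proposition \ref{compoffiltS}(2)(d) to $\Bfra$ in the central simple $E$-algebra $B$ to obtain $\U^{\lfloor n/2 \rfloor + 1}(\Bfra)$ as an element of the $E$-indexed Moy--Prasad filtration at level $n/2e(\Bfra|\ofra_E)+$, and convert to $F$-depth via the same rescaling argument as in the second paragraph. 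The main obstacle is the careful bookkeeping of depth-rescaling factors between the $E$-group $\underline{\Aut}_{D_E}(W)$ and the $F$-group $H = \Res_{E/F}\underline{\Aut}_{D \otimes_F E}(V)$; this is entirely encapsulated in Proposition \ref{resultofBL}(2) combined with the multiplicative relation $e(\Afra|\ofra_F) = e(\Bfra|\ofra_E)e(E/F)$, so no new technical input is needed beyond what has already been established.
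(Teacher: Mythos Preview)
Your proof is correct and follows essentially the same route as the paper: apply Proposition~\ref{compoffiltG} to the $E$-group $\underline{\Aut}_{D_E}(W)$ (legitimate since $\Bfra$ is principal by Proposition~\ref{beingprincipal}), then pass from the $E$-indexed lattice filtration $\mathfrak{b}_{x,\cdot}$ to the $F$-indexed one via Proposition~\ref{resultofBL}(2), and finally identify with the Moy--Prasad filtration on $\mathfrak{h}(F)$ via \cite[Proposition~1.9.1]{Ad}; for part~(3) both you and the paper reduce to Proposition~\ref{compoffiltS}(2) applied over $E$. The only cosmetic difference is that you route through the identity $e(\Afra|\ofra_F)=e(\Bfra|\ofra_E)e(E/F)$, which is true but unnecessary here since the lemma is stated directly in terms of $e(\Bfra|\ofra_E)e(E/F)$ and Proposition~\ref{resultofBL}(2) already supplies the factor $e(E/F)$.
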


\begin{prf}
We show (1), and (2) follows from (1).  
Let $(\mathfrak{a}_{x,r})$ be the filtration in $A$ with $x$, and let $(\mathfrak{b}_{x,r})$ be the filtration in $B$ with $x$.  
Since $[x] \in \Bscr^{R}(H', F)$ is a vertex, by Proposition \ref{beingprincipal} (1) $x \in \Bscr^{E}(H, F)$ is constructed from an $\ofra_{D_{E}}$-chain.  
Then by Proposition \ref{compoffiltA} and Proposition \ref{compoffiltG} we have $\Qfra^{\lceil r \rceil} = \mathfrak{b}_{x, r/e(\Bfra|\ofra_{E})}$.  
On the other hand, by Proposition \ref{resultofBL} (2), we also have $\mathfrak{b}_{x, r/e(\Bfra|\ofra_{E})}=B \cap \mathfrak{a}_{x, r/e(\Bfra|\ofra_{E})e(E/F)}$.  
Since $\mathfrak{a}_{x,r/e(\Bfra|\ofra_{E})e(E/F)} = \gfra(F)_{x, r/e(\Bfra|\ofra_{E})e(E/F)}$ by Proposition \ref{compoffiltA}, we obtain $\Qfra^{\lceil r \rceil} = \mathfrak{h}(F) \cap \gfra(F)_{x, r/e(\Bfra|\ofra_{E})e(E/F)} = \mathfrak{h}(F)_{x, r/e(\Bfra|\ofra_{E})e(E/F)}$, where the last equality follows from \cite[Proposition 1.9.1]{Ad}.  

To show (3), let $r \in \R_{\geq 0}$ and suppose $H(F)_{x, r} \neq H(F)_{x, r+}$.  
If $re(\Bfra|\ofra_{E})e(E/F) \notin \Z$, then $( re(\Bfra|\ofra_{E})e(E/F) , \lceil re(\Bfra|\ofra_{E})e(E/F) \rceil ]$ is nonempty and
\begin{eqnarray*}
	H(F)_{x, r+} & = & \bigcup_{r < r'} H(F)_{x, r'} \\
	& = & \bigcup_{re(\Bfra|\ofra_{E})e(E/F) < r' \leq \lceil re(\Bfra|\ofra_{E})e(E/F) \rceil} H(F)_{x, r'/e(\Bfra|\ofra_{E})e(E/F)} \\
	& = &  \U^{\lceil re(\Bfra|\ofra_{E})e(E/F) \rceil}(\Bfra) \\
	& = & H(F)_{x,r}, 
\end{eqnarray*}
which is a contradiction.  Therefore we have $n=re(\Bfra|\ofra_{E})e(E/F) \in \Z$.  
We put $G' = \underline{\Aut}_{D \otimes_{F} E}(V)$.  
Then we can regard $x$ as an element in $\Bscr^{E}(G', E)$, and for any $r' \in \R_{\geq 0}$ we have $\U^{\lceil r' \rceil}(\Bfra) = G'(E)_{x, r'/e(\Bfra|\ofra_{E})}$ by Proposition \ref{compoffiltG} (2).   
Therefore we obtain $H(F)_{x,r'} = \U^{\lceil r'e(\Bfra|\ofra_{E})e(E/F) \rceil}(\Bfra) = G'(E)_{x, r'e(E/F)}$ and $H(F)_{x,r'+} = G'(E)_{x, r'e(E/F)+}$ for $r' \in \R$.  
Then by Proposition \ref{compoffiltS} (2)-(a), (b) and (d), 
\begin{eqnarray*}
	\U^{n}(\Bfra) = & G'(E)_{x,re(E/F)} & = H(F)_{x,r}, \\
	\U^{n+1}(\Bfra) = & G'(E)_{x, re(E/F)+} & = H(F)_{x,r+} \\
	\U^{\lfloor n/2 \rfloor +1}(\Bfra) = & G'(E)_{x, re(E/F)/2+} & = H(F)_{x, r/2+}, 
\end{eqnarray*}
which completes the proof of (3).  
\end{prf}

\section{Generic elements and generic characters of $G$}
\label{Generic}

In this section, we discuss generic elements and generic characters, using descriptions of tame twisted Levi subgroups in $G$, given in \S \ref{TTLS}. Moreover, we relate minimal elements to generic characters using standard representatives, a notion used by Howe \cite{Ho} in the $\GL_N$ tame case. 

\subsection{Standard representatives}
 In this section we introduce and discuss standard representatives. Certain results appear  in \cite{Ho}. Since we can not find detailed proofs, we give a complete exposition.
We fix a uniformizer $\varpi_{F}$ of $F$.  
Let $E$ be a finite, tamely ramified extension of $F$.  
Then we can consider the subgroup $C_{E}$ of ``standard representatives" in $E^{\times}$.  
We recall the construction of $C_{E}$.  

\begin{lem} \label{corost} There exists a uniformiser $\varpi _E$ of $E$ and a root of unity $z \in E$, of order prime to $p$, such that $\varpi _ E ^e z = \varpi _F$.

\end{lem}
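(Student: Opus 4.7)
The plan is to use the standard decomposition of the unit group $\mathfrak{o}_E^\times$ together with Hensel's lemma, exploiting that $e = e(E/F)$ is coprime to $p$ because $E/F$ is tamely ramified.

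First I would pick any uniformiser $\varpi_E'$ of $E$. Since $v_E(\varpi_E'{}^e) = e = v_E(\varpi_F)$, there is a unit $u \in \mathfrak{o}_E^\times$ with $\varpi_E'{}^e = u\varpi_F$. Using the canonical splitting $\mathfrak{o}_E^\times \cong \mu \times (1+\mathfrak{p}_E)$, where $\mu \subset \mathfrak{o}_E^\times$ is the group of roots of unity of order prime to $p$ (given by the Teichmüller lift of $k_E^\times$), I would write $u = \zeta \cdot v$ with $\zeta \in \mu$ and $v \in 1+\mathfrak{p}_E$.

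The key step is to absorb $v$ into the uniformiser. Consider the polynomial $f(X) = X^e - v \in \mathfrak{o}_E[X]$. Then $f(1) = 1 - v \in \mathfrak{p}_E$ and $f'(1) = e$, and $e$ is a unit in $\mathfrak{o}_E$ because $\gcd(e,p)=1$ by tameness. Hensel's lemma then produces a unique $w \in 1+\mathfrak{p}_E$ with $w^e = v$. Setting $\varpi_E = \varpi_E'/w$ gives another uniformiser of $E$, and a direct computation shows
\[
	\varpi_E^e = \frac{\varpi_E'{}^e}{w^e} = \frac{u\varpi_F}{v} = \zeta\varpi_F.
\]
Taking $z = \zeta^{-1}$, which is still a root of unity of order prime to $p$, we obtain $\varpi_E^e z = \varpi_F$ as required.

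There is no real obstacle: the only point to check carefully is the applicability of Hensel's lemma, which relies exactly on the tameness hypothesis $\gcd(e,p)=1$ so that $f'(1)=e$ is a unit. The decomposition $\mathfrak{o}_E^\times = \mu \times (1+\mathfrak{p}_E)$ is standard for complete discretely valued fields with perfect residue field.
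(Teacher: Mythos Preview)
Your argument is correct and follows essentially the same route as the paper: pick an arbitrary uniformiser, decompose the resulting unit as a prime-to-$p$ root of unity times a principal unit, extract an $e$-th root of the principal-unit part, and absorb it into the uniformiser. The only cosmetic difference is that the paper justifies the existence of the $e$-th root in $1+\mathfrak{p}_E$ via its $\Z_p$-module structure (citing Neukirch), whereas you invoke Hensel's lemma directly; both are standard and equivalent here.
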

 \begin{prf} Let $\varpi$ be a uniformiser of $E$. Let $q=p^f$ be the number of elements in the residue field of $E$. Let  
  $\mu _{q-1}$  denote the group of $(q-1)$-th roots of unity in $E$. Then \cite[Chapter 2 Proposition 5.7]{Neuk} shows that there exists an isomorphism $f:E^{\times} \simeq \varpi ^{\Z} \times \mu _{q-1} \times G' $ where $G' = 1+ \mathfrak{p} _E$ is a multiplicatively denoted group. Each element of $G'$ has an $e$-th root. Indeed, \cite[Chapter 2 Proposition 5.7]{Neuk} shows that $1+ \mathfrak{p}  _E$ is isomorphic to an additive
  group $\Z / p ^a \Z \times \Z _p ^d $ or to an additive group $\Z _p ^{\mathbf{N}}$. The
   image of $\varpi _F$ by $f$ is $(e,z,g)$, where ${(e,z,g)\in \varpi _E ^{\Z} \times \mu _{q-1} \times G' }$; that is, $\varpi_F = \varpi  ^e z g $. Let $r$ be in $G'$ such that $r^e =g$. Then $r \varpi $ is a uniformiser of $E$ and  $\varpi _F =(r \varpi )^e z$. So $\varpi _E = r \varpi $ has the required property.

 \end{prf} 

\begin{defn}
Let $E/F$ be a finite, tamely ramified extension, and let $\varpi_{E} \in E$ be as above.  
We denote by $C_{E}$ the subgroup in $E^{\times}$ which is generated by $\varpi_{E}$ and roots of unity in $E$ with order prime to $p$.  
\end{defn}

\begin{prop} \label{ceindep}The group $C_E$ is well-defined, i.e. it is independent of the choice of 
$\varpi _E$ used to define it.
\end{prop}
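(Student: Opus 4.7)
The plan is to show that if $\varpi_E$ and $\varpi_E'$ are two uniformizers of $E$ satisfying the conclusion of Lemma \ref{corost}, i.e.\ $\varpi_E^e z = \varpi_F = (\varpi_E')^e z'$ for some roots of unity $z, z'$ of order prime to $p$, then $\varpi_E'$ lies in the subgroup generated by $\varpi_E$ together with the group $\mu$ of roots of unity in $E$ of order prime to $p$. By symmetry this will give $\langle \varpi_E, \mu \rangle = \langle \varpi_E', \mu \rangle$ and hence show that $C_E$ does not depend on the choice made in Lemma \ref{corost}.

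First I would write $\varpi_E' = u \varpi_E$ for some $u \in \ofra_E^\times$, which is possible since both are uniformizers of $E$. Substituting into $(\varpi_E')^e z' = \varpi_F = \varpi_E^e z$ gives $u^e = z(z')^{-1} \in \mu$. Next I decompose $u = \zeta v$ with $\zeta \in \mu$ and $v \in 1 + \pfra_E$, using the canonical splitting $\ofra_E^\times \cong \mu \times (1+\pfra_E)$ coming from Hensel's lemma (or the structure result cited in the proof of Lemma \ref{corost}). Then $v^e = u^e \zeta^{-e} \in \mu \cap (1+\pfra_E) = \{1\}$.

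The key point, and the main (mild) obstacle, is to argue that $v^e = 1$ forces $v=1$. Since $E/F$ is tamely ramified, $e = e(E/F)$ is prime to $p$; on the other hand, the filtration $\{1+\pfra_E^n\}_{n \geq 1}$ of $1+\pfra_E$ has successive quotients isomorphic to the additive groups $\pfra_E^n/\pfra_E^{n+1}$, which are $\mathbb{F}_q$-vector spaces annihilated by $p$. Hence $1+\pfra_E$ is a pro-$p$ group, and the $e$-th power map on it is bijective; in particular its only $e$-torsion element is $1$. Thus $v=1$, so $u = \zeta \in \mu$, which gives $\varpi_E' = \zeta \varpi_E \in \langle \varpi_E, \mu\rangle$.

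Finally, to conclude the proposition I would observe that both generating sets $\{\varpi_E\} \cup \mu$ and $\{\varpi_E'\} \cup \mu$ therefore generate the same subgroup of $E^\times$, so the definition of $C_E$ is insensitive to the choice of $\varpi_E$ made in Lemma \ref{corost}. No further input is required beyond the splitting of $\ofra_E^\times$, which is already used in the proof of Lemma \ref{corost}.
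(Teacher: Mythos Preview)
Your proof is correct. The paper's argument follows the same outline---showing that the ratio $\varpi_E'/\varpi_E$ is a root of unity of order prime to $p$---but reaches this a bit more directly: from $(\varpi_E'/\varpi_E)^e = z(z')^{-1} \in \mu$, raising to the order $o$ of $z(z')^{-1}$ gives $(\varpi_E'/\varpi_E)^{eo} = 1$, and since $eo$ is prime to $p$ (tameness of $E/F$), the ratio lies in $\mu$. Your route via the splitting $\ofra_E^\times \cong \mu \times (1+\pfra_E)$ and the absence of prime-to-$p$ torsion in the pro-$p$ group $1+\pfra_E$ arrives at the same conclusion with slightly more structural input; both arguments are short and equally valid.
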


\begin{prf} Let $\varpi _1$ and $\varpi _2$ be two uniformisers of $E$ and $z_1$, $z_2$ be two roots of unity of order prime to $p$ such
 that ${\varpi ^e z_1 = \varpi _F} $ and $\varpi _2 ^e z_2 = \varpi _F $. Let $C^1$ be the 
group generated by $\varpi _1$ and the root of unity of order prime to $p$. Let $C^2$ be the group generated by $\varpi _2$ and the root of unity
 of order prime to $p$. By symmetry, it is enough show that $C^1 \subset C^2$. It is also enough to show that $\varpi _1 \in C_2$. The equation $\varpi _1 ^e z_1 = \varpi _F $ implies
 that $\varpi _1 ^e \in C_2$, thus there exists a root of unity $z$ of order prime to $p$ such that $\varpi _1 ^e = \varpi _2 ^e z$. We have $(\varpi _1 \varpi _2 ^{-1} )^e = z $. Let $o_z$ be the order of $z$, which is an integer prime
  to $p$. We have  $(\varpi _1 \varpi _2 ^{-1} )^{eo_z} = 1 $.  The integer $e o_z$ is prime to $p$, indeed $e=e(E | F)$ is prime to $p$ because $E/F$ is a tamely ramified extension and $o_z$ is prime to $p$. Consequently $\varpi _1 \varpi _2 ^{-1}$ is a root of unity of order prime to $p$. This implies that $\varpi _1 \in C_2$, as required.

\end{prf}
Then $C_{E}$ depends only the choice of $\varpi_{F}$, which we already fixed.  
We recall properties of $C_{E}$.  

\begin{prop}
\label{propforsr}
Let $E/F$ be a finite, tamely ramified extension.  
\begin{enumerate}
\item Let $c \in E^{\times}$.  
Then there exists a unique $\sr(c) \in C_{E}$, called the standard representative of $c$, such that $\sr(c) \in c(1+\pfra_{E})$.  
\item For any $c \in E^{\times}$, the standard representative $\sr(c)$ is the unique element in $C_{E}$ such that $\ord \left( \sr(c)-c \right) >\ord(c)$.  
\item Let $E'/E$ be also a finite, tamely ramified extension.  
Then we have an inclusion $C_{E} \subset C_{E'}$ as groups.  
\item Let $s \in C_{E}$.  
Let $\sigma_{1}, \sigma_{2} \in \Hom_{F}(E, \bar{F})$ such that $\sigma_{1}(s) \neq \sigma_{2}(s)$.  
Then we have $\ord \left( \sigma_{1}(s) - \sigma_{2}(s) \right) = \ord (s)$.  
\end{enumerate}
\end{prop}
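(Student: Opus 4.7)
The plan is to reduce everything to the canonical decomposition $E^\times = \varpi_E^{\Z} \times \mu(E) \times (1+\pfra_E)$, where $\mu(E)$ denotes the group of roots of unity of order prime to $p$ in $E$. The key observation is that $C_E$ as defined is precisely $\varpi_E^{\Z} \cdot \mu(E)$, hence a set-theoretic complement of $1+\pfra_E$ in $E^\times$. From this, (1) is essentially immediate: existence is given by projection along the decomposition, and uniqueness follows from $C_E \cap (1+\pfra_E) = \{1\}$, since any element $\varpi_E^n \zeta$ lying in $1+\pfra_E$ must have $n=0$ (as it has zero valuation) and then $\zeta \in \mu(E) \cap (1+\pfra_E) = \{1\}$, because reduction modulo $\pfra_E$ is injective on $\mu(E)$.

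For (2), unwinding the definition $\sr(c) \in c(1+\pfra_E)$ directly gives $\ord(\sr(c)-c) > \ord(c)$. Conversely, if $s \in C_E$ satisfies $\ord(s-c) > \ord(c)$, then the ultrametric inequality forces $\ord(s) = \ord(c)$, hence $s/c - 1 \in \pfra_E$, so $s \in c(1+\pfra_E)$, and uniqueness in (1) identifies $s$ with $\sr(c)$. For (3), I would use Proposition \ref{ceindep} to freely choose uniformizers as in Lemma \ref{corost}, obtaining $\varpi_E, \varpi_{E'}$ with $\varpi_E^{e(E/F)} z = \varpi_{E'}^{e(E'/F)} z' = \varpi_F$ for some $z \in \mu(E)$, $z' \in \mu(E')$. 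Since $\mu(E) \subset \mu(E')$, only $\varpi_E \in C_{E'}$ needs verification. Writing $\varpi_E = \varpi_{E'}^{e(E'/E)} u$ with $u \in \ofra_{E'}^\times$, the two relations above combine to yield $u^{e(E/F)} = z'/z \in \mu(E')$. Because $E/F$ is tame, $e(E/F)$ is prime to $p$, so $u$ itself is a prime-to-$p$ root of unity in $E'$, completing the inclusion.

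For (4), the main step is to show $\eta := \sigma_1(s)/\sigma_2(s) \in \mu(\bar{F})$. Applying $\sigma_1, \sigma_2$ to the relation $\varpi_E^{e(E/F)} z = \varpi_F$ and using $\varpi_F \in F$ gives $(\sigma_1(\varpi_E)/\sigma_2(\varpi_E))^{e(E/F)} = \sigma_2(z)/\sigma_1(z) \in \mu(\bar{F})$, which, by tameness of $e(E/F)$, forces $\sigma_1(\varpi_E)/\sigma_2(\varpi_E) \in \mu(\bar{F})$; the analogous statement for any $\zeta \in \mu(E)$ is immediate since $\sigma_i$ carries roots of unity to roots of unity. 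Writing $s = \varpi_E^n \zeta$, the element $\eta$ is a product of such ratios, hence in $\mu(\bar{F})$. Then $\sigma_1(s)-\sigma_2(s) = \sigma_2(s)(\eta-1)$ with $\eta \ne 1$, and the image of $\eta$ in the residue field of $\bar{F}$ is a nontrivial prime-to-$p$ root of unity; thus $\ord(\eta-1)=0$, yielding $\ord(\sigma_1(s)-\sigma_2(s)) = \ord(\sigma_2(s)) = \ord(s)$. The only delicate point throughout is the careful use of the prime-to-$p$ condition on orders of roots of unity, which is precisely where the tame ramification hypothesis on $E/F$ is invoked.
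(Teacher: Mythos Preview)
Your proof is correct. Parts (1)--(3) follow essentially the same line as the paper: the decomposition $E^\times \cong C_E \times (1+\pfra_E)$ for (1), the unwinding of $\sr(c)\in c(1+\pfra_E)$ for (2), and for (3) the paper applies Lemma~\ref{corost} to the extension $E'/E$ to obtain $\varpi_{E'}^{e(E'/E)}w=\varpi_E$ directly, whereas you take independently chosen uniformizers $\varpi_E,\varpi_{E'}$ tied to $\varpi_F$ and then deduce that the connecting unit $u$ lies in $\mu(E')$. These are minor variations on the same idea.

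Part (4) is where your argument genuinely diverges from the paper's. The paper first treats the Galois case by showing that $C_E$ is stable under $\Gal(E/F)$, so that $\sigma_1(s),\sigma_2(s)\in C_E$; then it invokes the characterization in (2): if $\ord(\sigma_1(s)-\sigma_2(s))>\ord(\sigma_1(s))$ one would get $\sigma_2(s)=\sr(\sigma_1(s))=\sigma_1(s)$, a contradiction. The non-Galois case is reduced to the Galois one by passing to the Galois closure $\tilde E$ and using (3) to view $s\in C_{\tilde E}$. Your argument instead computes the ratio $\eta=\sigma_1(s)/\sigma_2(s)$ directly, shows it is a prime-to-$p$ root of unity in $\bar F$ (using $\varpi_E^{e(E/F)}z=\varpi_F\in F$ and tameness of $e(E/F)$), and concludes $\ord(\eta-1)=0$ from the injectivity of reduction on $\mu(\bar F)$. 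Your route avoids the Galois-closure reduction and the appeal to (2), at the cost of a small explicit computation; the paper's route is more structural, leaning on Galois stability of $C_E$ and the uniqueness statement already proved. Both are clean, and your version has the advantage of handling the general separable case in one stroke.
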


\begin{prf}
 We have $E^{\times} \simeq C_E \times ( 1+ \mathfrak{p} _E )$ and (1) is a consequence.
The element $sr(c)$ is the unique element in $C_E$ such that $c=sr(c) \times (1+ y)$, with $y \in \mathfrak{p}_E$. Thus, $sr(c)$ is the unique element in $C_E$ such that $c- sr(c) \in sr(c) \mathfrak{p} _E$. Thus, (2) holds, remarking that $sr(c)$ and $c$ have the same valuation.
Recall that the group $C_E$ and $C_{E'}$ are independent of the choices of uniformisers used to define them. Let $\varpi _E$ be a uniformiser of $E$ and $z$ a root of unity of order prime to $p$ in $E$ such that $\varpi _E ^{e(E|F)} z = \varpi _F$. Because $E'/E$ is tamely ramified, there exists a uniformiser $\varpi _{E'} \in E'$ and a root of unity $w$ of order prime to $p$ in $E'$ such that $\varpi _{E'} ^{e(E' | E )} w = \varpi _E$. Elevating to the power $e(E| F)$, we have $\varpi _{E'} ^{e(E' | E ) e(E | F)} w ^{e(E| F)}= \varpi _E ^{e(E| F)}$. We thus get $\varpi _{E'} ^{e(E| F)} w ^{e(E| F)} z  = \varpi _F$. The element $w ^{e(E|F)} z $ is a root of unity of order prime to $p$.
 Consequently, $C_{E'}$ is the group generated by $\varpi _{E'}$ and the roots of unity of order prime to $p$ in $E'$. The equation $\varpi _{E'} ^{e(E' | E )} w = \varpi _E$ shows that $\varpi  _E$ is inside $C_{E'}$. Trivially, the roots of unity of order prime to $p$ in $E$ are inside the roots of unity of order prime to $p$ in $E'$. Consequently $C_E$ is inside $C_{E'}$, as required for (3).

We now prove the claim (4) when the extension $E/F$ is Galois.  Let $\sigma \in Gal(E/F)$, and let $\varpi _E$ be an element such that $\varpi _E ^e z = \varpi _F$ for $z $ a root of unity in $E$ of order prime to $p$. Let $o_z$ be the order of $z$. It is enough to show that $z$ and $\varpi _E$ are mapped in $C_E$ by $\sigma$. The equality $(\sigma (z))^{o_z}=1 $ shows that $\sigma (z) $ is a root of unity of order prime to $p$ and thus inside $C_E$. The equality $\sigma (\varpi _E )^e \sigma (z) = \varpi _F$ together with Proposition \ref{ceindep} show that we can use $\sigma (\varpi _E )$ to define $C_E$, and thus $\sigma  (\varpi _E )$ is inside $C_E$.
      The element $\sigma _1(s)$ is in $C_E$, so $sr(\sigma _1 (s))=\sigma _1 (s)$.
      Consequently 
      $v_E (\sigma _ 1(s) - \sigma _2(s))= v _E (\sigma_1(s))$,
       indeed assume ${v _E (\sigma _1 (s) -\sigma _2 (s) )\not = v_E (\sigma _1 (s) )}$, then $v_E (\sigma _1 (s) -\sigma _2 (s) )> \sigma _1 (s) $, and so $\sigma _2 (s) = sr(\sigma _1 (s)) = \sigma _1 (s)$ by the previous criterion (2), this is a contradiction. This prove (4) for Galois extensions.
For general $E$, since $E/F$ is tamely ramified, then $E/F$ is separable and we can take the Galois closure $\tilde{E}$ of $E$ in $\bar{F}$.  
Then $\tilde{E}/F$ is a finite Galois extension.  
Let $\tilde{\sigma}_{1}, \tilde{\sigma}_{2} \in \Hom_{F}(\tilde{E}, \bar{F})$ be a extension of $\sigma_{1}, \sigma_{2}$, respectively.  
By (3), $s \in C_{E} \subset C_{\tilde{E}}$.  
We also have $\tilde{\sigma}_{1}(s) = \sigma_{1}(s) \neq \sigma_{2}(s) = \tilde{\sigma}_{2}(s)$.  
Therefore by applying (4) for $\tilde{E}/F$ we have
\[
	\ord (s) = \ord \left( \tilde{\sigma}_{1}(s) -\tilde{\sigma}_{2}(s) \right) = \ord \left( \sigma_{1}(s) - \sigma_{2}(s) \right), 
\]
which is what we wanted.  
\end{prf}

By using standard representatives, we can judge whether some element in $E$ is minimal or not.  

\begin{prop}
\label{srformin}
Let $E/F$ be a finite, tamely ramified extension, and let $c \in E^\times$ such that $E=F[c]$.  
Then the following assertions are equivalent.  
\begin{enumerate}
\item $c$ is minimal over $F$.  
\item $E=F[\sr(c)]$.  
\item Put $\ord(c)=-r$. For all morphisms of $F$-algebras $\sigma  \neq \sigma ' $ from $E$ to $\overline{F}$, we have \[\ord (\sigma ( c ) - \sigma ' (c)  ) =-r .\]
\end{enumerate}
\end{prop}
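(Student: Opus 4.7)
The plan is to prove $(2) \Leftrightarrow (3)$ directly from the standard-representative approximation, and then $(1) \Leftrightarrow (2)$ by invoking the Bushnell--Kutzko characterization of minimality. The case $c \in F$ (so $E = F$) makes all three conditions trivial, so I may assume $c \notin F$.

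For $(2) \Leftrightarrow (3)$, first I would write $c = \sr(c)(1 + y)$ with $y \in \pfra_E$ coming from Proposition \ref{propforsr}(1), and expand
\[
\sigma(c) - \sigma'(c) = \bigl(\sigma(\sr(c)) - \sigma'(\sr(c))\bigr) + \sigma(\sr(c))\sigma(y) - \sigma'(\sr(c))\sigma'(y)
\]
for $\sigma, \sigma' \in \Hom_{F}(E, \overline{F})$, noting that both $\sigma(y)$ and $\sigma'(y)$ have positive $\ord$, so that the last two terms have $\ord > -r$. For $(2) \Rightarrow (3)$: if $E = F[\sr(c)]$ and $\sigma \neq \sigma'$, then $\sigma(\sr(c)) \neq \sigma'(\sr(c))$, and Proposition \ref{propforsr}(4) gives $\ord$ of the first summand equal to $\ord(\sr(c)) = -r$, so the total equals $-r$. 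For $(3) \Rightarrow (2)$: if $F[\sr(c)] \subsetneq E$, I would choose $\sigma \neq \sigma'$ agreeing on $F[\sr(c)]$; then the first summand vanishes while the remaining terms force $\ord(\sigma(c) - \sigma'(c)) > -r$, contradicting (3). One checks separately that $\sigma(y) \neq \sigma'(y)$, otherwise $\sigma$ and $\sigma'$ would coincide on $c$ and hence on $E = F[c]$.

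For $(1) \Leftrightarrow (2)$, the key input is the Bushnell--Kutzko characterization of minimality from \cite[(1.4.15)]{BK1}: for $\beta \notin F$ with $F[\beta] = E$, $\beta$ is minimal over $F$ iff $\gcd(v_{E}(\beta), e(E/F)) = 1$ and the image in $k_E$ of the unit $\beta^{e(E/F)}\varpi_F^{-v_{E}(\beta)}$ generates $k_E / k_F$. Since $c \equiv \sr(c) \pmod{\pfra_E^{v_E(c)+1}}$, both conditions hold simultaneously for $c$ and for $\sr(c)$, so $c$ is minimal iff $\sr(c)$ is. It then suffices to show that the explicit element $\sr(c) = \varpi_E^m \zeta$ (with $m = v_E(c)$ and $\zeta$ a root of unity of order prime to $p$) is minimal iff $F[\sr(c)] = E$. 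Using $\varpi_E^{e(E/F)} z = \varpi_F$ from Lemma \ref{corost}, I compute $\sr(c)^{e(E/F)} = \varpi_F^m \xi$ for a root of unity $\xi = z^{-m}\zeta^{e(E/F)}$ of order prime to $p$. Setting $E_0 := F[\xi]$, an unramified subextension of $E/F$, the polynomial $X^{e(E/F)} - \varpi_F^m \xi \in E_0[X]$ is satisfied by $\sr(c)$, is irreducible precisely when $\gcd(m, e(E/F)) = 1$, and a degree count against $[E:F] = e(E/F) f(E/F)$ translates the Bushnell--Kutzko conditions into the equality $F[\sr(c)] = E$.

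The main obstacle is the final paragraph: importing the Bushnell--Kutzko valuation/residue criterion for minimality, verifying its invariance under the perturbation $c \mapsto \sr(c)$, and carrying out the field-theoretic bookkeeping (polynomial irreducibility over $E_0$ in the tame setting, together with the comparison of ramification index and residue degree) that converts the arithmetic criterion into the field-generation statement $F[\sr(c)] = E$.
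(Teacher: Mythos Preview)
Your approach is correct and essentially the same as the paper's: for $(2)\Leftrightarrow(3)$ the paper sets $A=\sigma(c)-\sigma'(c)$, $B=\sigma(\sr(c))-\sigma'(\sr(c))$ and shows $\ord(A-B)>-r$, which is exactly your decomposition via $c=\sr(c)(1+y)$; for $(1)\Leftrightarrow(2)$ the paper likewise invokes the Bushnell--Kutzko arithmetic criterion (coprimality of $v_E(c)$ and $e(E/F)$, plus residue generation of $k_E/k_F$) and carries out the same degree-and-field bookkeeping you outline, together with the auxiliary Lemma~\ref{reduuni}. One small correction to your sketch: the polynomial $X^{e}-\varpi_F^{m}\xi$ over $E_0=F[\xi]$ is not irreducible \emph{precisely} when $\gcd(m,e)=1$ (it can be irreducible with $\gcd>1$ when $E_0\subsetneq E^{\nr}$), so for $(2)\Rightarrow(1)$ you should argue as the paper does: first deduce $E_0=E^{\nr}$ from the degree bound $[E:E_0]\le e$, then use that the root of unity $\zeta$ lies in $E^{\nr}=E_0$ to get $\sr(c)^{e/d}\in E_0$ with $d=\gcd(m,e)$, forcing $d=1$.
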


\begin{prf} We need a Lemma.

 \begin{lem} \label{reduuni} Let $E/F$ be a finite unramified extension. Let $z \in E$ be a root of unity of order prime to $p$. Then, $z$ generates $E/F$ if and only if $z+ \mathfrak{p} _E$ generates the residual field extension $k_E /k_F$.
 \end{lem}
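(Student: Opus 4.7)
The plan is to reduce the statement to the standard lattice correspondence between intermediate subfields of an unramified extension and their residue fields, combined with Hensel's lemma applied to $m$-th roots of unity when $\gcd(m,p)=1$. Since $E/F$ is unramified, every intermediate field $L$ with $F\subset L\subset E$ is itself unramified over $F$, and $L\mapsto k_L$ is an inclusion-preserving bijection onto the subfields of $k_E/k_F$, with $[L:F]=[k_L:k_F]$; this is the structural backbone of the argument.

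The core step is to establish the following auxiliary claim: for every intermediate $L$, one has $z\in L$ if and only if $z+\pfra_E\in k_L$. The forward direction is immediate. For the converse, let $m$ denote the order of $z$; since $\gcd(m,p)=1$, the polynomial $X^m-1\in\ofra_F[X]$ is separable modulo $\pfra_F$, so Hensel's lemma applied inside $\ofra_L$ furnishes a unique $m$-th root of unity $z'\in L$ with $z'+\pfra_L=z+\pfra_E$. Viewing $z'$ as an element of $\ofra_E$ and invoking uniqueness of the Hensel lift inside the larger ring $\ofra_E$, I conclude $z'=z$, whence $z\in L$.

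Once the claim is in hand, the equivalence follows formally: $F[z]=E$ holds if and only if no proper intermediate subfield contains $z$, which by the claim and the lattice bijection is equivalent to the statement that no proper intermediate subfield of $k_E/k_F$ contains $z+\pfra_E$, i.e.\ $k_F[z+\pfra_E]=k_E$. The step I expect to need the most care is the Hensel-lifting argument, specifically the need to apply the lemma inside $\ofra_L$ to obtain $z'\in L$ and then reconcile with the lift in $\ofra_E$ by uniqueness. Beyond this, the argument is routine book-keeping about unramified extensions.
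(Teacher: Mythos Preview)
Your argument is correct, but it takes a different route from the paper. The paper argues each direction separately and elementarily: for the forward implication it cites that a generator of $E/F$ also generates $\ofra_E$ over $\ofra_F$, hence generates $k_E/k_F$; for the converse it observes that the minimal polynomial $P_z\in\ofra_F[X]$ of $z$ reduces to a polynomial of the same degree annihilating $z+\pfra_E$, which forces $[k_E:k_F]\leq\deg P_z\leq[E:F]=[k_E:k_F]$ and hence $F[z]=E$. Your approach instead packages both directions into a single lattice statement, proving the auxiliary equivalence ``$z\in L\Leftrightarrow \bar z\in k_L$'' for every intermediate $L$ via Hensel's lemma and the injectivity of reduction on prime-to-$p$ roots of unity. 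Your version is conceptually uniform and makes the Galois-theoretic structure explicit, at the cost of invoking Hensel and the intermediate-field correspondence; the paper's version is shorter and avoids any lifting, relying only on a degree count.
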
 

 \begin{prf} If $z$ generates $E$ over $F$, then $z$ generates $\mathfrak{o} _E $ over $\mathfrak{o}_F$ by \cite[7.12]{Neuk}. Thus, $z$ generates the residual field extension $k_E /k_F $.  Let us check the reverse implication.  Assume that $z+ \mathfrak{p}_E$ generates $k_E /k_F $. The field extension $E/F$ is unramified, so  $[k_E : k_F] = [E:F]$. Let $P_z \in F[X]$ be the minimal polynomial of $z$ and $d$ its degree, clearly $P_z $ is in $\mathfrak{o}_F[X]$. It is enough to show that $d=[E:F]$.  We have ${d \leq [E:F]}$. The reduction $\mod \mathfrak{p}_E$ of $P_z$ is of degree $d$ and annihilates $z+ \mathfrak{p} _E$, a generator of $k_E / k_F$, and thus $[k_E : k_F] \leq d $. So ${[k_E : k_F] \leq d \leq [E:F]}$. So $d =[E:F]$, and this concludes the proof.\end{prf}
 
 We now prove Proposition \ref{srformin}. 
Let us prove that (1) implies (2). Assume that $c $ is minimal over $F$. Let us remark that the definition of $sr(c)$ implies trivially that $F[sr(c)] \subset E$. Let $E^{\nr}$ denote the maximal unramified extension contained in $E$. To prove the opposite inclusion $ E \subset F[sr(c)]$, it is enough to show that  ${E^{\nr}\subset F[sr(c)]}$ and ${E\subset E^{\nr}[sr(c)]}$. Put $v=v_E(c)$, $ e =e(E |F)$. The valuation of  $\varpi_F^{-v}c^e$ is equal to $0$, consequently by Proposition \ref{propforsr} (2) we have  $v_E(sr(\varpi_F^{-v}c^e)-\varpi_F^{-v}c ^e)>0$, and so ${sr(\varpi_F^{-v}c^e)+\mathfrak{p}_E=\varpi_F^{-v}c ^e+\mathfrak{p}_E.}$ We have $sr(\varpi_F^{-v}c ^e)=\varpi_F^{-v}sr(c)^e$, and this is a root of unity of order prime to $p$. The definition of being minimal implies that $\varpi_F^{-v}sr(c)^e +\mathfrak{p}_E$ generates $k_E / k_F$. So $\varpi_F^{-v}sr(c)^e$ generates $E^{\nr}$ by Lemma \ref{reduuni}. So $E^{\nr}\subset F[sr(c)]$.
We have $v_E(c)=v_E(sr(c)),$ so $\mathrm{gcd}(v_E(sr(c)),e)=1.$ Let $a $ and $b$  be integers such that $av_E(sr(c))+be=1$. Thus, $v_E(sr(c)^a \varpi_F ^b)=1$ and so $E^{\nr}[sr(c)^a \varpi_F ^b]=E$ because a finite totally ramified extension is generated by an arbitrary uniformiser. So $E^{\nr}[sr(c)]=E$ and ($i$) hold. We have thus shown that ${E^{\nr}\subset F[sr(c)]}$ and ${E\subset E^{\nr}[sr(c)]}$ and so  ($i$) implies ($ii$).

Let us prove that (2) implies (1). Assume that $F[sr(c)]=E$. We start by showing that $e$ is prime to $v$. The field $E ^{\nr}$ is generated over $F$ by the roots of unity of order
 prime to $p$ contained in $E$. Let $d=\gcd (v , e)$ and $b=\frac{e}{d}$. Let $\varpi_E$ be a uniformiser in $E$ such that $\varpi _E ^{e} z = \varpi _F$ with $z$ a root of unity of order prime to $p$. The element $sr(c ) $ is in $C_E$ and so $sr(c) = \varpi _E ^{v} w$ with $w$ a root of unity of order prime to $p$ in $E$. Equalities 
${sr(c)^b = (\varpi _E ^e ) ^{\frac{v}{gcd(v , e)} }w^b= (\varpi _F z ^{-1})^{\frac{v}{gcd(v , e)} }w^b}$ show that $sr(c) ^b$ is contained in $E^{\nr}$. By hypothesis, the element $sr(c)$ generates
 $E$ over $F$ and so generates $E$ over $E^{\nr}$. Consequently, the field $E$ is generated
 by an element whose $b$-th power is in $E^{\nr}$. Therefore, the inequality  $[E:E^{\nr}]\leq b$ holds. The extension $E^{\nr}$ is the maximal unramified extension contained in $E$, so $[E:E^{\nr}]=e$. Thus, the inequality  $e \leq b \leq \frac{e}{d}$ holds. This implies $d=1$ and so $v$ is prime to $e$.
 Let us prove that $\varpi _F ^{- v } c ^e + \mathfrak{p} _E $ generates the residue field extension $k _E $ over $k _F$. Because $\varpi _F ^{- v} c ^e + \mathfrak{p} _E = \varpi _F ^{- v} sr(c)^e + \mathfrak{p} _E$, it is equivalent to show that $x+\mathfrak{p} _E$ generates $k_E$ over $k _F$ , where $x=\varpi _F ^{-v} sr (c) ^e. $ The element $sr(c)$ generates $E$ over $F$ by hypothesis; that is, $E=F[sr(c)]$.  So the inequality $[E : F[x]]\leq e$ holds, indeed $E $ is generated over $F[x]$ by the element $sr(c)$ whose $e$-th power is in $F[x]$. Because  $x$ is a root of unity of order prime to $p$, the field $F[x]$ is a subset of $E^{\nr}$, so $[E : E^{\nr}] \leq [E : F[x]] $. Consequently, the identity $e=[E : E^{\nr}] \leq [E : F[x]] \leq e $ holds. Because $F[x] \subset E^{\nr}$, the previous identity implies that $F[x]=E^{\nr}$. Thus by \ref{reduuni} the element $x +\mathfrak{p} _E$ generates $k_E$ over $k_F$. So $c$ is minimal over $F$.

 Let us prove that (2) is equivalent to (3). Let $\sigma \neq \sigma '$ be two morphisms of $F$-algebras from $E$ to $\overline{F}$. Put \begin{align*}
 &A= \sigma (c) - \sigma ' (c) \\
 & B = \sigma (sr(c)) - \sigma ' ( sr(c)).
 \end{align*}
 We have $\ord (A) \geq -r $ and $\ord (B) \geq -r$. We have \begin{align*}
 \ord (A-B)& = \ord \big( \sigma (c) - \sigma ' (c) - (\sigma (sr(c))- \sigma '(sr(c))  )    \big) \\
 &= \ord  \big(  \sigma (c) - \sigma (sr(c)) - ( \sigma ' (c) -\sigma ' (sr(c))) \big)\\
  &= \ord  \big( \sigma (c) -  sr(\sigma(c)) - ( \sigma ' (c) - sr(\sigma '(c))) \big)\\
 &>-r ~~~~
 \end{align*}
 because $ \ord \big(\sigma (c) - sr(\sigma(c))\big) >-r $ and $ \ord \big(\sigma '(c) - sr(\sigma '(c))\big) >-r$,  by definition of standard representatives.
 Let us prove (2) $\Rightarrow$ (3). If (2) holds, then using Proposition \ref{propforsr} we have $\sigma ( sr(c) )\neq \sigma ' ( sr(c))$ and $\ord ( B ) = \ord ( \sigma (sr (c )) - \sigma ' ( sr (c)) ) =-r $, because  $\sigma (sr (c ))$ and $\sigma ' ( sr (c))$ are both in $C_{\overline{E}}$. So $\ord(A) \geq -r , \ord (B) =-r , \ord ( A-B) >-r$, this implies $\ord (A)=-r$. Let us prove (3)$ \Rightarrow$ (2). We assume that (3) holds and we want to prove that $sr(c)$ generates $E/F$. It is enough to show that $\sigma (sr(c)) \neq \sigma ' (sr(c))$ for any $\sigma , \sigma '$ as before. 
 By hypothesis $\ord (A) =-r$; because of $\ord (B) \geq -r$ and $\ord (A-B) >-r$, we deduce $\ord (B) =-r$, in particular $\sigma (sr(c)) \neq \sigma ' (sr(c))$ as required.
 This finishes the proof.
\end{prf}

\begin{lem}
\label{preservemin}
Let $E/F$ be a finite, tamely ramified extension, and let $c, c' \in E^\times$ such that $c^{-1}c' \in 1+\pfra_{E}$.  
Then $c$ is minimal relative to $E/F$ if and only if $c'$ is minimal relative to $E/F$.  
\end{lem}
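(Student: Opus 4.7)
The plan is to reduce the statement to the standard-representative criterion of Proposition~\ref{srformin}, exploiting the fact that the hypothesis $c^{-1}c' \in 1+\pfra_{E}$ forces $\sr(c)=\sr(c')$.

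I would first establish $\sr(c)=\sr(c')$. Writing $c' = c(1+y)$ with $y \in \pfra_{E}$, one has $v_{E}(c')=v_{E}(c)$ and $v_{E}(c-c')=v_{E}(cy)>v_{E}(c)$. Combined with $\ord(\sr(c)-c)>\ord(c)$ from Proposition~\ref{propforsr}(2) and the ultrametric inequality, this yields $\ord(\sr(c)-c')>\ord(c)=\ord(c')$. Since $\sr(c)\in C_{E}$, the uniqueness clause of Proposition~\ref{propforsr}(2) applied to $c'$ identifies $\sr(c)=\sr(c')$.

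Interpreting ``minimal relative to $E/F$'' to mean that $F[c]=E$ and $c$ is minimal over $F$, I would then conclude via the equivalence (1)$\Leftrightarrow$(2) of Proposition~\ref{srformin}. Assuming $c$ is minimal relative to $E/F$, we have $F[\sr(c)]=E$, hence $F[\sr(c')]=E$ by the previous step. Since $\sr(c')\in F[c']\subset E$, we deduce $F[c']=E$, and applying Proposition~\ref{srformin}(2)$\Rightarrow$(1) to $c'$ gives that $c'$ is minimal over $F$. The reverse implication is symmetric, since $c'^{-1}c=(1+y)^{-1}$ lies in $1+\pfra_{E}$ because $1+\pfra_{E}$ is closed under inversion.

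The main subtlety is not difficulty but bookkeeping: one must verify that $\sr(c')$ still generates $E$ over $F$ before one can legitimately apply the minimality criterion to $c'$, which is why the intermediate assertion $F[c']=E$ is needed. Once this is in place, the lemma reduces to the defining property of the standard representative plus a single ultrametric estimate, so no essential obstacle is anticipated.
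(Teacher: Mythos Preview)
Your approach matches the paper's: both reduce to Proposition~\ref{srformin} via $\sr(c)=\sr(c')$. The only substantive difference is how you obtain $F[c']=E$. You deduce it from ``$\sr(c')\in F[c']$'', but this containment is not immediate---$\sr(c')$ is a priori only an element of $C_E$---and needs the argument that the standard representative of $c'$ computed inside the subfield $F[c']$ already lies in $C_{F[c']}\subset C_E$ (Proposition~\ref{propforsr}(3)) and in $c'(1+\pfra_{F[c']})\subset c'(1+\pfra_E)$, hence agrees with $\sr(c')$ by uniqueness. The paper instead obtains $F[c']=E$ via Proposition~\ref{propforsr}(4): from $F[\sr(c')]=E$ one gets $\ord\bigl(\tau_i(\sr(c'))-\tau_j(\sr(c'))\bigr)=\ord(c')$ for distinct $\tau_i,\tau_j\in\Hom_F(E,\bar F)$, and combining with $\ord\bigl(\tau_k(\sr(c')-c')\bigr)>\ord(c')$ forces $\ord\bigl(\tau_i(c')-\tau_j(c')\bigr)=\ord(c')$, so the conjugates of $c'$ are pairwise distinct. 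Your route is a touch shorter once the subfield compatibility of $\sr$ is supplied; the paper's route stays entirely within the explicitly stated parts of Proposition~\ref{propforsr}.
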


\begin{prf}
It suffices to show that if $c$ is minimal relative to $E/F$, then $c'$ is also minimal relative to $E/F$.  
Suppose $c$ is minimal relative to $E/F$.  
In particular, $E$ is generated by $c$ over $F$.  
Then we have $E=F[\sr(c)]$ by Proposition \ref{srformin}.  
Since $\sr(c) \in c(1+\pfra_{E}) = c'(1+\pfra_{E})$, we have $\sr(c') = \sr(c)$ by Proposition \label{propforsr} (1).  
If $E$ is also generated by $c'$ over $F$, then we can apply Proposition \ref{srformin} and $c'$ is minimal relative to $E/F$.  
Thus it is enough to show $E=F[c']$.  

We put $\Hom_{F}(E, \bar{F}) = \{ \tau_{1}, \ldots, \tau_{[E:F]} \}$.  
We have $\tau_{i} \neq \tau_{j}$ for distinct $i,j \in \{ 1, \ldots, [E:F] \}$ as $E/F$ is separable.  
Since $E=F[\sr(c)]$, if $i \neq j$ we have $\tau_{i}(\sr(c)) \neq \tau_{j}(\sr(c))$ and $\ord \left( \tau_{i}(\sr(c'))-\tau_{j}(\sr(c')) \right) = \ord(c')$ by Proposition \ref{propforsr} (4).  
On the other hand, since $\ord( \sr(c') -c' ) > \ord(c')$ by Proposition \ref{propforsr} we have
\[
	\ord \left( \tau_{i}(\sr(c')-c') \right) = \ord \left( \sr(c')-c' \right) > \ord(c').  
\]
For $i \neq j$, we obtain
\begin{eqnarray*}
	& & \ord ( \tau_{i}(c') - \tau_{j}(c') ) \\
	& = & \ord \Bigl( \bigl( \tau_{i}(\sr(c')) - \tau_{j}(\sr(c')) \bigr) - \bigl( \tau_{i}(\sr(c')-c') \bigr) + \bigl( \tau_{j}(\sr(c')-c') \bigr) \Bigr),  
\end{eqnarray*}
and then 
\[
	\ord ( \tau_{i}(c') - \tau_{j}(c') ) = \ord \left( \tau_{i}(\sr(c'))-\tau_{j}(\sr(c')) \right) = \ord(c') \in \R.  
\]
In particular, we have $\tau_{i}(c') \neq \tau_{j}(c')$.  
Since $\Hom_{F}(E, \bar{F}) = \{ \tau_{1}, \ldots, \tau_{[E:F]} \}$, the element $c'$ generates $E$ over $F$.  
\end{prf}

\subsection{Concrete description of \textbf{GE1} for $G$}
\label{defofXc}

Let $E'/E/F$ be a tamely ramified field extension in $A$.  
We put 
\[
	H=\Res_{E/F} \underline{\Aut}_{D \otimes _{F} E}(V), \, H'=\Res_{E'/F} \underline{\Aut}_{D \otimes _{F} E'}(V).  
\]
Then $(H', H, G)$ is a tame twisted Levi sequence by Corollary \ref{findTTLS}.  
And also, we have a natural isomorphism $\Lie(H') \cong \End_{D \otimes _{F} E'}(V)$.  
For $c \in \End_{D \otimes _{F} E'}(V)$, we can define $X_{c}^{*} \in \Lie^{*}(H')$ as
\[
X_{c}^{*}(z)= \Tr_{E'/F} \circ \Trd_{\End_{D \otimes_{F} E'}(V)/E'}(cz), 
\]
for $z \in \Lie(H') \cong \End_{D \otimes_{F} E'} (V)$, where $cz$ is a product of $c$ and $z$ as elements in $\End_{D \otimes _{F} E'}(V)$.  
Since $E'/F$ is separable, $\Tr_{E'/F}$ is surjective and there exists $e' \in E'$ such that $\Tr_{E'/F}(e') \neq 0$.  
Here, suppose $c \neq 0$.  
Since the map $(c, z) \mapsto \Trd_{\End_{D \otimes_{F} E'}(V)/E'}(cz)$ is a non-degenerate bilinear form on $\End_{D \otimes _{F} E'}(V)$, there exists $z \in \End_{D \otimes _{F} E'}(V)$ such that $\Trd_{\End_{D \otimes_{F} E'}(V)/E'}(cz) = e'$.  
In this case, we have $X_{c}^{*}(z) \neq 0$.  
Then, the map $c \mapsto X_{c}^{*}$ gives an isomorphism
\[
\Lie(\Res_{E'/F} \underline{\Aut}_{D \otimes _{F} E'}(V)) \cong \Lie^{*}(\Res_{E'/F} \underline{\Aut}_{D \otimes_{F} E'}(V)).  
\]
Since $\Trd_{A/F} |_{\End_{D \otimes_{F} E'}(V)} = \Tr_{E'/F} \circ \Trd_{\End_{D \otimes _{F} E'}(V)/E'}$, we also have
\[
X_{c}^{*}(z) = \Trd_{A/F}(cz).  
\]
For any $h \in H'(F)$ and $z \in \Lie(H')$, we have
\[
X_{c}^{*}(hzh^{-1})=\Trd_{A/F}(chzh^{-1})=\Trd_{A/F}(h^{-1}chz)=X_{h^{-1}ch}^{*}(z).  
\]
Then the linear form $X_{c}^{*}$ is invariant under $H'(F)$-conjugation if and only if $c = h^{-1}ch$ for any $h \in H'(F)=\Aut_{D \otimes_{F}E'}(V)$, that is, $c \in \Cent \left( \End_{D \otimes_{F} E'}(V) \right) = E'$.  Therefore, $X_{c}^{*}$ belongs to $(\underline{\Lie^{*}} (H'))^{H'} ( {F} )$.  

Let $c \in E'^{\times}$.  
We denote by $X_{c, \bar{F}}^{*}$ the image of $X_{c}^{*}$ in $(\underline{\Lie^{*}} (H')) ( \bar{F} )$.

To describe $X_{c,\bar{F}}^{*}(H_{\alpha})$ concretely, we use the notations in \S \ref{TTLS}.  

\begin{prop}
\label{concreteGE1}
Let $c \in E'^{\times}$ and $\alpha=\alpha_{(i',j',k'),(i'',j'',k'')} \in \Phi(G, T; \bar{F})$.  
Then we have $X_{c,\bar{F}}^{*}(H_{\alpha})=\sigma_{i',j'}(c)-\sigma_{i'',j''}(c)$.  
\end{prop}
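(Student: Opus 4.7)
The plan is to unwind the definition of $X_{c,\bar F}^{*}$ after base change, identify $m_c$ and $H_\alpha$ as explicit endomorphisms of $V \otimes_F \bar F$ via Proposition \ref{Levidecom}, and then compute the trace directly.

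First I would observe that the construction $X_c^{*}(z) = \Trd_{A/F}(cz)$ is compatible with base change: since reduced trace commutes with extension of scalars, the $\bar F$-linear extension $X_{c,\bar F}^{*}$ is given, for $Z \in \Lie(H') \otimes_F \bar F$, by $X_{c,\bar F}^{*}(Z) = \Trd_{A \otimes_F \bar F / \bar F}\bigl( (c \otimes 1) \cdot Z \bigr)$, where $c \otimes 1$ denotes the element $m_{c,\bar F} \in \Lie(T) \otimes \bar F \subset \Lie(H') \otimes \bar F \subset \Lie(G) \otimes \bar F = A \otimes \bar F$. Under the splitting $A \otimes_F \bar F \cong \M_N(\bar F)$ (which exists since $D$ splits over $\bar F$), this reduced trace is the ordinary matrix trace, so it suffices to compute the ordinary $\bar F$-linear trace of $m_{c,\bar F} \circ H_\alpha$ acting on the $N^{2}$-dimensional matrix algebra, equivalently (up to a factor $d$ I will track at the end) as an endomorphism of the $Nd$-dimensional $\bar F$-vector space $V \otimes_F \bar F$.

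Next I would make $H_\alpha$ and $m_{c,\bar F}$ explicit on the decomposition $V \otimes_F \bar F = \bigoplus_{(i,j,k)} V_{i,j,k}$ of Proposition \ref{Levidecom}. From the description of the coroot $\check{\alpha}_{(i',j',k'),(i'',j'',k'')}$ and its derivative $\dr\check{\alpha}_{(i',j',k'),(i'',j'',k'')}$ given just before the proposition, the element $H_\alpha \in \Lie(T \times_F \bar F) \cong \prod_{(i,j,k)} \End_{D \otimes_F \bar F_{i,j,k}}(V_{i,j,k})$ is the tuple whose $(i',j',k')$-component is $\id_{V_{i',j',k'}}$, whose $(i'',j'',k'')$-component is $-\id_{V_{i'',j'',k''}}$, and whose remaining components vanish. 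On the other hand, since $c \in E'$, Proposition \ref{Levidecom}(3) says that $m_{c,\bar F}$ acts on $V_{i,j,k}$ by the scalar $\sigma_{i,j,k}(c)$, and because $\sigma_{i,j,k}|_{E'} = \sigma_{i,j}$ we in fact have $\sigma_{i,j,k}(c) = \sigma_{i,j}(c)$. In particular $m_{c,\bar F}$ preserves the decomposition $\bigoplus V_{i,j,k}$.

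Composing, the endomorphism $m_{c,\bar F} \circ H_\alpha$ of $V \otimes_F \bar F$ acts as $\sigma_{i',j'}(c) \cdot \id$ on $V_{i',j',k'}$, as $-\sigma_{i'',j''}(c) \cdot \id$ on $V_{i'',j'',k''}$, and by zero on every other summand. Since each $V_{i,j,k}$ has $\bar F$-dimension $d$ (indeed $\dim_{\bar F}(V \otimes \bar F) = \dim_F V = md^{2} = Nd$ while the index set has $|I_1||I_2||I_3| = [L:F] = N$ elements), the ordinary $\bar F$-trace of $m_{c,\bar F} \circ H_\alpha$ on $V \otimes_F \bar F$ equals $d\bigl(\sigma_{i',j'}(c) - \sigma_{i'',j''}(c)\bigr)$.

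Finally I would convert the trace on $V \otimes \bar F$ back to the reduced trace. As a left $(A \otimes \bar F) \cong \M_N(\bar F)$-module, $V \otimes_F \bar F$ is isomorphic to $d$ copies of the simple module $\bar F^{N}$, so the ordinary trace of any $a \in A \otimes \bar F$ acting on $V \otimes \bar F$ is $d \cdot \Trd_{A \otimes \bar F/\bar F}(a)$. Dividing by $d$ gives
\[
	X_{c,\bar F}^{*}(H_\alpha) \;=\; \Trd_{A \otimes_F \bar F/\bar F}(m_{c,\bar F} \circ H_\alpha) \;=\; \sigma_{i',j'}(c) - \sigma_{i'',j''}(c),
\]
as required. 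The only subtle point is this last bookkeeping step relating the reduced trace of $A$ to the ordinary trace on $V \otimes \bar F$; everything else is a direct application of Proposition \ref{Levidecom} and the formula for $\check{\alpha}$.
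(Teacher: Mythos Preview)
Your proof is correct and follows essentially the same approach as the paper: both identify $m_{c,\bar F}$ and $H_\alpha$ explicitly on the decomposition $\bigoplus V_{i,j,k}$ via Proposition~\ref{Levidecom} and compute the trace of their product. The only difference is that the paper works directly in $\End_{D\otimes\bar F}(\bigoplus V_{i,j,k})\cong\M_N(\bar F)$ and reads off the reduced trace as the matrix trace (each $V_{i,j,k}$ contributing a single diagonal entry), whereas you compute the ordinary $\bar F$-trace on the $Nd$-dimensional space $V\otimes\bar F$ and then divide out the factor $d$; this extra bookkeeping step is unnecessary but harmless.
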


\begin{prf}
Let $z=\sum_{i}z_{i} \otimes_{F} a_{i} \in \Lie^{*} (G) \otimes_{F} \bar{F} \cong \Lie \left( G \times_{F} \bar{F} \right)$.  
Then we have 
\begin{eqnarray*}
	X_{c,\bar{F}}^{*}(z) & = & \sum_{i} \Trd_{A/F}(cz_{i}) \otimes_{F} a_{i} = \sum_{i} \Tr_{A \otimes_{F} \bar{F} / \bar{F}} (cz_{i} \otimes_{F} a_{i}) \\
	& = & \Tr_{A \otimes_{F} \bar{F}/\bar{F}} \left( (c \otimes_{F} 1) \sum_{i}z_{i} \otimes_{F} a_{i} \right) \\
	& = & \Tr_{. \left( \End_{D \otimes \bar{F}}(\bigoplus_{i,j,k}V_{i,j,k}) \right) /\bar{F}} (m_{c,\bar{F}} z),  
\end{eqnarray*}
where $\End_{D \otimes \bar{F}}(\bigoplus_{i,j,k}V_{i,j,k}) \cong \M_{|I_{1}| \times |I_{2}| \times |I_{3}|}\left( \End_{D \otimes \bar{F}} (V \otimes_{L} \bar{F}) \right) \cong \M_{[L:F]}(\bar{F})$.  
Then, to calculate $\Tr_{A \otimes \bar{F}/\bar{F}}(m_{c,\bar{F}}H_{\alpha})$ we consider the value $m_{c,\bar{F}} \circ H_{\alpha} (v_{i,j,k})$ for some $v_{i,j,k} \in V_{i,j,k} \setminus \{ 0 \}$.  
By construction of $H_{\alpha}$ and Proposition \ref{Levidecom} (iii), we obtain
\[
	m_{c,\bar{F}} \circ H_{\alpha} (v_{i,j,k}) = \begin{cases}
		v_{i',j',k'} \cdot \sigma_{i',j',k'}(c) & \left( (i,j,k) = (i',j',k') \right), \\
		v_{i'',j'',k''} \cdot (-\sigma_{i'',j'',k''}(c)) & \left( (i,j,k) = (i'',j'',k'') \right), \\
		0 & otherwise.  
		\end{cases}
\]
Then we have $X_{c}^{*}(H_{\alpha})=\Tr_{A \otimes \bar{F}/\bar{F}}(m_{c,\bar{F}}H_{\alpha})=\sigma_{i',j',k'}(c)-\sigma_{i'',j'',k''}(c)$.  
Since $c \in E'$, we have $\sigma_{i',j',k'}(c)=\sigma_{i',j'}(c)$ and $\sigma_{i'',j'',k''}(c) = \sigma_{i'',j''}(c)$, which complete the proof.  
\end{prf}

\subsection{General elements of $G$}

\begin{lem}
\label{lem_for_genelem}
Suppose $\Afra$ is a principal hereditary $\ofra_{F}$-order in $A$ with its radical $\Pfra$ and $e=e(\Afra | \ofra_{F})$.  
Therefore, we have $\Trd_{A/F}(\Pfra^{n})=\pfra_{F}^{\lceil n/e \rceil}$ for any $n \in \Z$.  
\end{lem}

\begin{prf}
Since $\Trd_{A/F}$ is invariant by $A^{\times}$-conjugation, we may assume 
\[
	\Afra = \left(
	\begin{array}{ccc}
		\M_{md/e}(\ofra_{D}) & \cdots & \M_{md/e}(\ofra_{D}) \\
		\vdots & \ddots & \vdots \\
		\M_{md/e}(\pfra_{D}) & \cdots & \M_{md/e}(\ofra_{D})
	\end{array}
	\right).  	
\]

First we show the lemma when $A$ is split.  
In this case, we have 
\[
	\Pfra^{n} = \left(
	\begin{array}{ccc}
		\M_{N/e} \left( \pfra_{F}^{\lceil n/e \rceil} \right) & & * \\
		 & \ddots & \\
		* & & \M_{N/e} \left( \pfra_{F}^{\lceil n/e \rceil} \right)
	\end{array}
	\right), 
\]
where each block in * is contained in $\M_{N/e} (F)$.  
Then $\Tr_{A/F} \left( \Pfra^{n} \right) \subset \pfra_{F}^{\lceil n/e \rceil}$.  
To obtain the converse inclusion, let $b \in \pfra_{F}^{\lceil n/e \rceil}$.  
Let $a$ be an element in $A$ with the $(1,1)$-entry $b$, and other entries 0.  
Then $a \in \Pfra^{n}$ and $\Tr_{A/F} (a) = b$.  

In general case, we take a maximal unramified extension $E/F$ in $D$.  
Then $A \otimes_{F} E$ is split, and the subring $\Afra_{E}:=\Afra \otimes_{\ofra_{F}} \ofra_{E}$ in $A \otimes_{F} E$ is a hereditary $\ofra_{E}$-order with $e(\Afra_{E}|\ofra_{E}) = e(\Afra|\ofra_{F})=e$.  
Let $\Pfra_{E}$ be the radical of $\Afra_{E}$.  
Then ${\Pfra_{E}}^{n} = \Pfra^{n} \otimes_{\ofra_{F}} \ofra_{E}$ and $\Tr_{A \otimes_{F}E/E}({\Pfra_{E}}^{n})=\pfra_{E}^{\lceil n/e \rceil}$ by the split case.  
Since $\Trd_{A/F}(A) = \Tr_{A \otimes_{F} E/E} \left(A \otimes_{F} 1 \right) = F$, we have 
\[
	\Trd_{A/F}(\Pfra^{n}) \subset \pfra_{E}^{\lceil n/e \rceil} \cap F = \pfra_{F}^{\lceil n/e \rceil}, 
\]
where the last equality follows from the assumption $E/F$ is unramified.  

To obtain the converse inclusion, let $b \in \pfra_{F}^{\lceil n/e \rceil}$.  
Since $E/F$ is unramified, we have $\Tr_{E/F} \left(\pfra_{E}^{\lceil n/e \rceil} \right) = \pfra_{F}^{\lceil n/e \rceil}$, and there exists $b' \in \pfra_{E}^{\lceil n/e \rceil}$ such that $\Tr_{E/F}(b') = b$.  
Let $a$ be an element in $A \cong \M_{m}(D)$ with the $(1,1)$-entry $b'$, and other entries 0.  
Then $a \in \Cent_{A}(E) \cong \M_{m}(E)$, and
\[
	\Trd_{A/F}(a) = \Tr_{E/F} \circ \Tr_{\Cent_{A}(E)/E}(a) = \Tr_{E/F}(b') = b.  
\]
Therefore it suffice to check $a \in \Pfra^{n}$.  
We have 
\[
	\Pfra^{n} = \left(
	\begin{array}{ccc}
		\M_{md/e} \left( \pfra_{D}^{\lceil nd/e \rceil} \right) & & * \\
		 & \ddots & \\
		** & & \M_{N/e} \left( \pfra_{D}^{\lceil nd/e \rceil} \right)
	\end{array}
	\right), 
\]
Then $a \in \Pfra^{n}$ if and only if $b' \in \pfra_{D}^{\lceil nd/e \rceil}$.  
However, $b' \in \pfra_{E}^{\lceil n/e \rceil} \subset \pfra_{D}^{\lceil n/e \rceil d} \subset \pfra_{D}^{\lceil nd/e \rceil}$, where $\lceil n/e \rceil d \geq \lceil nd/e \rceil$ since $nd/e \leq \lceil n/e \rceil d \in \Z$.  
\end{prf}

\begin{prop}
\label{genelem}
Let $c \in E'^{\times}$.  
We put $r = -\ord(c)=\ord(c^{-1})$.  
\begin{enumerate}
\item $X_{c}^{*} \in \Lie^{*}(H')_{x,-r}$ for any $x \in \Bscr^{E}(H', F) $.  
\item $X_{c}^{*}$ is $H$-generic of depth $r$ if and only if $c$ is minimal relative to $E'/E$.  
\end{enumerate}
\end{prop}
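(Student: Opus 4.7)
The plan is to verify (1) by a direct computation on the induced torus $Z(H') = \Res_{E'/F}\mathbb{G}_{m}$, and to prove (2) by invoking Corollary~\ref{GE1toGE2} to reduce $H$-genericity of $X_{c}^{*}$ to \textbf{GE1}, computing the relevant pairings via Proposition~\ref{concreteGE1}, and then recognising the resulting equalities as the minimality criterion of Proposition~\ref{srformin} applied to the tame extension $E'/E$.

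For (1), one notes that $\Lie(Z(H')) \cong E'$ as $F$-vector spaces and that, since $Z(H')$ is a torus, the Moy--Prasad filtration is the valuation filtration $\Lie(Z(H'))_{s} = \{z \in E' \mid \ord(z) \geq s\}$. The formula $X_{c}^{*}(z) = \Trd_{A/F}(cz)$ restricted to $E' \subset A$ is a positive integer multiple of $\Tr_{E'/F}(cz)$, namely $[\Cent_{A}(E'):E']^{1/2}\,\Tr_{E'/F}(cz)$. For $z$ with $\ord(z) > r = -\ord(c)$ one has $\ord(cz) > 0$, hence $cz \in \pfra_{E'}$ and $\Tr_{E'/F}(cz) \in \pfra_{F}$; this yields $X_{c}^{*}(\Lie(Z(H'))_{r+}) \subset \pfra_{F}$, so $X_{c}^{*} \in \Lie^{*}(Z(H'))_{-r}$ by the definition recalled in the notation section.

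For (2), since $G$ is an inner form of $\GL_{N}$ its root datum is of type A, so Corollary~\ref{GE1toGE2} reduces $H$-genericity to \textbf{GE1}. Using the explicit description of the roots from \S\ref{TTLS}, the set $\Phi(H, T; \bar{F}) \setminus \Phi(H', T; \bar{F})$ consists exactly of the roots $\alpha_{(i,j',k'),(i,j'',k'')}$ with $j' \neq j''$, and Proposition~\ref{concreteGE1} evaluates $X_{c,\bar{F}}^{*}(H_{\alpha}) = \sigma_{i,j'}(c) - \sigma_{i,j''}(c)$. So \textbf{GE1} at depth $r$ becomes the requirement that $\ord(\sigma_{i,j'}(c) - \sigma_{i,j''}(c)) = -r$ for every $i \in I_{1}$ and every distinct pair $j', j'' \in I_{2}$.

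The final step is to match this with minimality. For each $i$ extend $\sigma_{i}^{-1}$ to an $F$-algebra automorphism $\tau_{i}$ of $\bar{F}$ (possible since $\bar{F}$ is algebraically closed); then $\{\tau_{i} \circ \sigma_{i,j}\}_{j \in I_{2}}$ is the complete set of $E$-algebra embeddings $E' \hookrightarrow \bar{F}$, and $\tau_{i}$ preserves $\ord$. Hence the condition for fixed $i$ becomes $\ord(\tau(c) - \tau'(c)) = -r$ for all distinct $E$-embeddings $\tau, \tau' : E' \to \bar{F}$, a condition manifestly independent of $i$. The equivalence (1) $\Leftrightarrow$ (3) of Proposition~\ref{srformin}, applied with $(F, E)$ there replaced by $(E, E')$ here, now yields that this holds exactly when $c$ generates $E'$ over $E$ and is minimal in the sense of Proposition~\ref{srformin}---which is what is meant by ``$c$ is minimal relative to $E'/E$''. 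The chief subtlety is this symmetrisation over $i \in I_{1}$ via extending $\sigma_{i}^{-1}$ to $\bar{F}$; once this is in place, both directions of the equivalence are immediate from the cited results.
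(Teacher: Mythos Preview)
Your proposal is correct and follows essentially the same approach as the paper. Both arguments verify (1) by the valuation filtration on $E'\cong\Lie(Z(H'))$ and the fact that the traces send $\pfra_{E'}$ into $\pfra_F$, and both reduce (2) to \textbf{GE1} via Corollary~\ref{GE1toGE2}, evaluate the pairings via Proposition~\ref{concreteGE1}, and match the resulting condition with criterion~(3) of Proposition~\ref{srformin} for $E'/E$. The only cosmetic difference is that the paper handles the dependence on $i\in I_1$ by equipping $\bar{F}$ with the $E$-structure coming from $\sigma_i$, whereas you equivalently compose with an automorphism $\tau_i$ of $\bar{F}$ extending $\sigma_i^{-1}$ to transport everything to a single fixed $E$-structure; one small point you leave implicit (but which is immediate) is that condition~(3) forces $E'=E[c]$, which is needed before Proposition~\ref{srformin} can be invoked.
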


\begin{prf}
We show (1). 
By \cite[Lemma 2.3]{Fide21}, it is enough to show $X_{c}^{*} \in \Lie^{*}(H')_{x,-r}$ for some $x \in \mathscr{B}^{E}(H',F)$.  
Then, we may assume $[x]$ is a vertex in $\mathscr{B}^{R}(H',F)$.  
By Proposition \ref{beingvertex}, the corresponding hereditary $\ofra_{E'}$-order $\Bfra'$ in $B' := \End_{D \otimes_{F} E'}(V) \cong \M_{n_{E'}}(D_{E'})$ is maximal, where $D_{E'}$ is a central division $E'$-algebra and $n_{E'} \in \N$.   
Therefore, we may also assume $\Bfra' = \M_{n_{E'}}(\ofra_{D_{E'}})$ by taking an isomorphism $B' \cong \M_{n_{E'}}(D_{E'})$ of $D_{E'}$-modules.  

We will describe $\mathfrak{h}'(F)_{x,r+}=\Lie^{*}(H')_{x,r+}$ concretely.  
Let $\Qfra'$ be the radical of $\Bfra'$.  
By Proposition \ref{lemforfindc} (1), we have $\Lie(H')_{x,s}=\mathfrak{h}'(F)_{x,s}=\Qfra'^{\lceil se(\Bfra' | \ofra_{E'}) e(E'/F) \rceil}$ for $s \in \R$.  
Here, we have $re(\Bfra')e(E'/F)=\ord(c^{-1})e(\Bfra' | \ofra_{E'})e(E'/F)=v_{E'}(c^{-1})e(\Bfra' | \ofra_{E'})=v_{\Bfra'}(c^{-1}) \in \Z$.  
Then, for any sufficiently small $\varepsilon>0$, we have $\Qfra'^{\lceil (r+\varepsilon)e(\Bfra' | \ofra_{E'}) e(E'/F) \rceil}=(\Qfra')^{v_{\Bfra'}(c^{-1}) +\lceil \varepsilon e(\Bfra' | \ofra_{E'}) e(E'/F) \rceil}=(\Qfra')^{v_{\Bfra'}(c^{-1})+1}=c^{-1}\Qfra'$.  
Therefore, we obtain $\mathfrak{h}'(F)_{x,r+}=c^{-1}\Qfra'$.  

By the definition of $\Lie^{*}(H')_{x,-r}$, to show (1) it is enough to show that $X_{c}^{*}(\mathfrak{h}'(F)_{x,r+}) \subset \pfra_{F}$.  
Here, we have $X_{c}^{*}(\mathfrak{h}'(F)_{x,r+})=\Tr_{E'/F} \circ \Trd_{B'/E'}(c \cdot c^{-1}\Qfra')=\Tr_{E'/F} \circ \Trd_{B'/E'}(\Qfra')$.  
Since $[x]$ is a vertex, the hereditary order $\Bfra'$ is principal and we have $\Trd_{B'/E'}(\Qfra')=\pfra_{E'}$ by Lemma \ref{lem_for_genelem}.  
Moreover, $\Tr_{E'/F}(\pfra_{E'})=\pfra_{F}$ since $E'/F$ is tame.  
Therefore, we obtain $X_{c}^{*}(\mathfrak{h}'(F)_{x,-r+})=\Tr_{E'/F} \circ \Trd_{B'/E'}(\Qfra')=\Tr_{E'/F}(\pfra_{E'})=\pfra_{F}$ and complete the proof of (1).  

To show (2), first suppose $X_{c}^{*}$ is $H$-generic of depth $r$.  

We will show $E'=E[c]$.  
We fix an embedding $\sigma_{i}:E \to \bar{F}$.  
Then we have $\Hom_{E}(E', \bar{F}) = \{ \sigma_{i,j} \mid j \in I_{2} \}$.  
Since $E/F$ is separable, to show $E'=E[c]$ it suffices to show $\sigma_{i,j}(c) \neq \sigma_{i,j'}(c)$ for any distinct $j,j' \in I_{2}$.  
We fix $k \in I_{3}$ and we put $\alpha = \alpha_{(i,j,k),(i,j',k)} \in \Phi(G, T; \bar{F})$.  
Then $\alpha \in \Phi(H, T; \bar{F}) \setminus \Phi(H', T; \bar{F})$.  
Since $X_{c}^{*}$ is $H$-generic of depth $r$, we have $-r=\ord \left( X_{c,\bar{F}}^{*}(H_{\alpha}) \right)=\ord \left( \sigma_{i,j}(c)-\sigma_{i,j'}(c) \right)$, where the last equality follows from Proposition \ref{concreteGE1}.  
In particular, we have $\ord \left( \sigma_{i,j}(c)-\sigma_{i,j'}(c) \right) \in \R$.  
Then $\sigma_{i,j}(c)-\sigma_{i,j'}(c) \neq 0$, that is, $\sigma_{i,j}(c) \neq \sigma_{i,j'}(c)$.  
Therefore, we obtain $E'=E[c]$.  
Moreover, we already know $\ord \left( \sigma_{i,j}(c)-\sigma_{i,j'}(c) \right)=-r$.  
Then, by Proposition \ref{srformin}, the element $c$ in $E'$ is minimal.  

Conversely, suppose $c$ is minimal relative to $E'/E$.  
In particular, we have $E'=E[c]$.  
By Corollary \ref{GE1toGE2}, to show that $X_{c}^{*}$ is $H$-generic it suffices to check $X_{c}^{*}$ satisfies \textbf{GE1}.  
Let $\alpha = \alpha_{(i,j,k), (i',j',k')} \in \Phi(H, T; F) \setminus \Phi(H', T; F)$.  
Then we have $i=i'$ and $j \neq j'$.  
We equip $\bar{F}$ with $E$-structure via $\sigma_{i}$.  
Then we have $\Hom_{E}(E', \bar{F}) = \{ \sigma_{i,j} \mid j \in I_{2} \}$.  
Since $c$ is minimal over $E$ and $E'=E[c]$, we have $\ord \left( \sigma_{i,j}(c) - \sigma_{i,j'}(c) \right) = -r$ by Proposition \ref{srformin}.  
Therefore $X_{c}^{*}(H_{\alpha}) = \ord \left( \sigma_{i,j}(c) - \sigma_{i,j'}(c) \right) = -r$, which implies $X_{c}^{*}$ is $H$-generic of depth $r$.  
\end{prf}

\subsection{General characters of $G$}

In this subsection, we discuss smooth characters of $G$.  
Let $\chi$ be a smooth character of $G$.  
Let $\Afra$ be a principal hereditary $\ofra_F$-order. 
The goal of this subsection is to prove the following proposition.
\begin{prop}
\label{propforgench}
 
Suppose $\chi$ is trivial on $\U^{n+1}(\Afra)$, but not trivial on $\U^{n}(\Afra)$ for some $n \in \Z_{ \geq 0}$.  
Then there exists $c \in F$ such that $v_{F}(c)=-n/e(\Afra|\ofra_{F})$ and
\[
	\chi |_{\U^{\lfloor n/2 \rfloor +1}(\Afra)} (1+y) = \psi \circ \Trd_{A/F}(cy)
\]
for $y \in \Pfra^{\lfloor n/2 \rfloor +1}$.  
\end{prop}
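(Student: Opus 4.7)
The plan is to exploit that, since $G=\GL_{m}(D)$ with $D$ a central division $F$-algebra, the abelianization of $G$ is canonically identified with $F^{\times}$ via the reduced norm $\Nrd:=\Nrd_{A/F}$; this uses the vanishing of $SK_{1}(D)$ for non-archimedean local fields, so that $[G,G]=\ker(\Nrd)$. Any smooth character $\chi$ of $G$ therefore factors uniquely as $\chi=\chi_{0}\circ\Nrd$ for some smooth character $\chi_{0}$ of $F^{\times}$, and the whole statement will be recovered by transporting the classical description of characters of $F^{\times}$ (in terms of the fixed additive character $\psi$) back to $G$ through $\Nrd$.

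Write $e=e(\Afra|\ofra_{F})$. For the principal order $\Afra$ one has $\Nrd(\U^{k}(\Afra))=U_{F}^{\lceil k/e\rceil}$ for all $k\geq 1$ (and $\Nrd(\U(\Afra))=\ofra_{F}^{\times}$). Combined with the hypothesis on $\chi$, the only way that $\chi_{0}\circ\Nrd$ becomes trivial on $\U^{n+1}(\Afra)$ but not on $\U^{n}(\Afra)$ is that the function $k\mapsto\lceil k/e\rceil$ jumps between $k=n$ and $k=n+1$; this forces $n$ to be a non-negative multiple of $e$. Setting $n_{0}:=n/e\in\Z_{\geq 0}$, the character $\chi_{0}$ is then of depth exactly $n_{0}$ in $F^{\times}$, and standard duality for $F^{\times}$ (using that $\psi$ has conductor $\pfra_{F}$) produces $c_{0}\in F^{\times}$, unique modulo $\pfra_{F}^{-\lfloor n_{0}/2\rfloor}$, with $v_{F}(c_{0})=-n_{0}$ and
\[
\chi_{0}(1+t)=\psi(c_{0}t)\quad\text{for all }t\in\pfra_{F}^{\lfloor n_{0}/2\rfloor+1}.
\]

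The final step is to combine this with a polynomial expansion of the reduced norm. Splitting over $\bar{F}$ we have $A\otimes_{F}\bar{F}\cong\M_{N}(\bar{F})$ and $\Nrd$ corresponds to $\det$, so $\Nrd(1+y)=1+\Trd_{A/F}(y)+R(y)$, where $R(y)$ is a polynomial in $y$ of total degree at least $2$ (a sum of the higher elementary symmetric polynomials in the eigenvalues of $y$). For $y\in\Pfra^{\lfloor n/2\rfloor+1}$ one has $y^{k}\in\Pfra^{k(\lfloor n/2\rfloor+1)}\subset\Pfra^{n+1}$ for $k\geq 2$; using the principality of $\Afra$ to match $\Pfra^{j}$ with the entrywise filtration of $\M_{N}(\bar{F})$ in a basis adapted to the uniform chain giving $\Afra$, one deduces $R(y)\in F\cap\Pfra^{n+1}=\pfra_{F}^{n_{0}+1}$. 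Since $\chi_{0}$ is trivial on $U_{F}^{n_{0}+1}$ and $1+\Trd_{A/F}(y)\in U_{F}$, combining with $\Trd_{A/F}(y)\in\pfra_{F}^{\lfloor n_{0}/2\rfloor+1}$ (again from $\Trd_{A/F}(\Pfra^{k})\subset\pfra_{F}^{\lceil k/e\rceil}$) yields
\[
\chi(1+y)=\chi_{0}\bigl(1+\Trd_{A/F}(y)+R(y)\bigr)=\chi_{0}\bigl(1+\Trd_{A/F}(y)\bigr)=\psi\bigl(c_{0}\Trd_{A/F}(y)\bigr).
\]
Since $c_{0}\in F$ is central, $c_{0}\Trd_{A/F}(y)=\Trd_{A/F}(c_{0}y)$, and taking $c:=c_{0}$ gives the claimed formula with $v_{F}(c)=-n_{0}=-n/e(\Afra|\ofra_{F})$. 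The main obstacle is the polynomial estimate $R(y)\in\pfra_{F}^{n_{0}+1}$, which demands a clean entrywise description of $\Pfra^{j}$ after base change to $\bar{F}$; this is exactly where the hypothesis that $\Afra$ is principal enters, since the attached chain being uniform is what allows one to line up the entrywise bounds with the invariants $v_{F}$ and $e$. Everything else—the factorization through $\Nrd$, the computation of $\Nrd(\U^{k}(\Afra))$, and the duality for $F^{\times}$—is standard.
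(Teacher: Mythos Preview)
Your approach is correct and genuinely different from the paper's. The paper proceeds iteratively: after observing (as you do, via $\Nrd(\U^{k}(\Afra))=1+\pfra_F^{\lceil k/e\rceil}$) that $n\in e\Z$, it works one layer $\U^{ei}(\Afra)/\U^{ei+1}(\Afra)$ at a time, using a direct analysis of $\Kfra(\Afra)$-invariants in $\Afra/\Pfra$ (Proposition~\ref{correctbyF}) to extract a scalar $c_i\in F$, and then invoking the non-emptiness of $\mathscr{C}(c_i,0,\Afra)$ (Lemma~\ref{existchar}) to produce a global character $\theta_i$ of $G$ with $\theta_i|_{\U^{\lfloor ei/2\rfloor+1}}=\psi_{c_i}$; subtracting $\theta_i$ and repeating down to level $\lfloor n/2\rfloor+1$ yields $c=\sum_i c_i$. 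Your route via $\chi=\chi_0\circ\Nrd$ bypasses this induction entirely, reducing at once to the $\GL_1$ statement for $\chi_0$; the only new ingredient is the bound $R(y)\in\pfra_F^{n_0+1}$, which replaces the whole layer-by-layer machinery and the appeal to simple characters. The one place your write-up is thin is precisely that bound: noting $y^k\in\Pfra^{n+1}$ does not by itself control the elementary symmetric functions $e_k(y)$, and ``entrywise filtration after base change'' is doing real work that is not spelled out. A clean fix is the eigenvalue argument: for $y\in\Pfra^{j}$ one has $y^{e}\in\Pfra^{je}=\pfra_F^{j}\Afra$, so $\varpi_F^{-j}y^{e}\in\Afra$ is integral over $\ofra_F$ and every eigenvalue $\lambda$ of $y$ satisfies $\ord(\lambda)\geq j/e$; hence $e_k(y)\in F$ has $v_F(e_k(y))\geq\lceil kj/e\rceil$, and with $j=\lfloor n/2\rfloor+1$, $k\geq 2$ this gives $e_k(y)\in\pfra_F^{n_0+1}$. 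With that filled in, your argument is complete and arguably more transparent; the paper's version has the compensating virtue of staying inside the simple-character formalism used elsewhere.
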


To prove Proposition \ref{propforgench}, we need some preliminary.  
We put $e=e(\Afra|\ofra_{F})$.  
If Proposition \ref{propforgench} holds for some $\chi, n$ and $\Afra$, then it also holds for any $G$-conjugation of $\Afra$ and the same $\chi, n$ as above.  
Therefore we may assume
\[
	\Afra = \left(
	\begin{array}{ccc}
		\M_{md/e}(\ofra_{D}) & \cdots & \M_{md/e}(\ofra_{D}) \\
		\vdots & \ddots & \vdots \\
		\M_{md/e}(\pfra_{D}) & \cdots & \M_{md/e}(\ofra_{D})
	\end{array}
	\right).  	
\]

\begin{lem}
\label{exttrivpart}
Suppose $\chi$ is trivial on $\U^{e(n+1)}(\Afra)$.  
Then $\chi$ is also trivial on $\U^{en+1}(\Afra)$.  
\end{lem}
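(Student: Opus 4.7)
The plan is to reduce the lemma to a statement about the reduced norm $\Nrd_{A/F}\colon G\to F^{\times}$. Since $G=\GL_{m}(D)$ is the multiplicative group of the central simple $F$-algebra $A$, Wang's theorem gives $SK_{1}(A)=0$, so $\Nrd_{A/F}$ identifies $G/[G,G]$ with $F^{\times}$. Any smooth character $\chi$ of $G$ therefore factors as $\chi=\chi_{0}\circ\Nrd_{A/F}$ for a unique smooth character $\chi_{0}$ of $F^{\times}$.

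Next, I will invoke the standard computation
\[
\Nrd_{A/F}\bigl(\U^{k}(\Afra)\bigr) = 1+\pfra_{F}^{\lceil k/e\rceil}, \qquad k \in \Z_{\geq 1},
\]
valid for a principal hereditary $\ofra_{F}$-order $\Afra$ with $e(\Afra|\ofra_{F})=e$. The inclusion ``$\subseteq$'' follows from the expansion $\Nrd_{A/F}(1+x)\equiv 1+\Trd_{A/F}(x)$ modulo higher-order terms in $x \in \Pfra^{k}$, together with the fact that $\Trd_{A/F}(\Pfra^{k})\subseteq\pfra_{F}^{\lceil k/e\rceil}$, which can be checked directly from the explicit block form of $\Afra$ fixed above. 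The reverse inclusion is obtained by exhibiting diagonal elements of $\U^{k}(\Afra)$ supported on a single block whose reduced norms realize any prescribed element of $1+\pfra_{F}^{\lceil k/e\rceil}$.

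With this in hand the lemma is immediate. Since $\lceil(en+1)/e\rceil = n+1 = \lceil e(n+1)/e\rceil$, we have
\[
\Nrd_{A/F}\bigl(\U^{en+1}(\Afra)\bigr) = 1+\pfra_{F}^{n+1} = \Nrd_{A/F}\bigl(\U^{e(n+1)}(\Afra)\bigr).
\]
The hypothesis that $\chi=\chi_{0}\circ\Nrd_{A/F}$ is trivial on $\U^{e(n+1)}(\Afra)$ thus forces $\chi_{0}$ to be trivial on $1+\pfra_{F}^{n+1}$, and hence $\chi$ is trivial on $\U^{en+1}(\Afra)$ as well. The only step requiring genuine work is the reduced norm image computation, which is a classical but non-formal verification for principal orders; everything else is formal manipulation.
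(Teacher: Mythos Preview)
Your proof is correct and follows essentially the same approach as the paper: both reduce to showing $\Nrd_{A/F}\bigl(\U^{en+1}(\Afra)\bigr)=\Nrd_{A/F}\bigl(\U^{e(n+1)}(\Afra)\bigr)$ via the formula $\Nrd_{A/F}(1+\Pfra^{k})=1+\pfra_{F}^{\lceil k/e\rceil}$, which the paper proves as a separate lemma immediately after. One small remark: the vanishing of $SK_{1}(A)$ over a non-archimedean local field is usually attributed to Nakayama--Matsushima rather than Wang, and your sketch of the surjectivity $\Nrd_{A/F}(1+\Pfra^{k})\supseteq 1+\pfra_{F}^{\lceil k/e\rceil}$ in the non-split case needs the extra step (carried out in the paper) of passing through a maximal unramified subfield of $D$ and using surjectivity of $\Nrm_{E/F}$ on principal units.
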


\begin{prf}
Since $\chi$ factors through $\Nrd_{A/F}$, it is enough to show that
\[
\Nrd_{A/F} \left( \U^{e(n+1)}(\Afra) \right) = \Nrd_{A/F} \left( \U^{en+1}(\Afra) \right).  
\]
We can deduce it from the following lemma.  
\end{prf}

\begin{lem}
We have $\Nrd_{A/F}(1+\Pfra^{n})=1+\pfra_{F}^{\lceil n/e \rceil}$.  
\end{lem}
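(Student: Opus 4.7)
The plan is to establish the two opposite inclusions $\Nrd_{A/F}(1+\Pfra^{n}) \subseteq 1+\pfra_{F}^{\lceil n/e \rceil}$ and $\Nrd_{A/F}(1+\Pfra^{n}) \supseteq 1+\pfra_{F}^{\lceil n/e \rceil}$, where $e=e(\Afra|\ofra_{F})$.

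For the first inclusion, I would reduce to the split case by base change. Let $K/F$ be a finite unramified extension of degree $d$ that splits $D$; then $A \otimes_{F} K \cong \M_{N}(K)$, and since $K/F$ is unramified, $\tilde{\Afra}=\Afra \otimes_{\ofra_{F}} \ofra_{K}$ is a principal hereditary $\ofra_{K}$-order in $\M_{N}(K)$ of the same period $e$, with radical $\tilde{\Pfra}=\Pfra \otimes \ofra_{K}$. Since $\Nrd_{A/F}(x)$ equals the determinant of $x \otimes 1$ in $\M_{N}(K)$, it suffices to show $\det(1+y) \in 1+\pfra_{K}^{\lceil n/e \rceil}$ for every $y \in \tilde{\Pfra}^{n}$. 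Using the explicit block form of $\Afra$ given just before the lemma, each entry of $y$ at block position $(i,j)$ has $v_{F}$-valuation at least an explicit function of $n$, $i$, $j$ and $e$, so that summing along any permutation appearing in the Leibniz expansion of $\det(1+y)-1$ yields total valuation at least $\lceil n/e \rceil$. This reduces Step 1 to a direct bookkeeping computation.

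For the reverse inclusion, I would argue by successive approximation. Given $u \in 1+\pfra_{F}^{\lceil n/e \rceil}$, I build $\xi \in \U^{n}(\Afra)$ with $\Nrd_{A/F}(\xi)=u$ as a convergent product $\xi=\prod_{k \geq 0} \xi_{k}$, where $\xi_{k} \in \U^{n+k}(\Afra)$ successively adjusts $u$ modulo increasing powers of $\pfra_{F}$. The key point is surjectivity of the induced map
\[
	\U^{n+k}(\Afra)/\U^{n+k+1}(\Afra) \longrightarrow (1+\pfra_{F}^{\lceil (n+k)/e \rceil})/(1+\pfra_{F}^{\lceil (n+k)/e \rceil +1})
\]
coming from $\Nrd_{A/F}$. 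This is automatic when $e \nmid n+k$ because both target-ceilings coincide; when $n+k=\ell e$, the map coincides, under the canonical identifications $\U^{\ell e}/\U^{\ell e+1} \cong \Pfra^{\ell e}/\Pfra^{\ell e+1}$ and $(1+\pfra_{F}^{\ell})/(1+\pfra_{F}^{\ell +1}) \cong k_{F}$, with the mod-$\pfra_{F}$ reduction of $\Trd_{A/F}\colon \Pfra^{\ell e} \to \pfra_{F}^{\ell}$. This agreement is obtained from the expansion $\Nrd_{A/F}(1+y) \equiv 1 + \Trd_{A/F}(y) \pmod{\pfra_{F}^{2\ell}}$, itself deduced from the characteristic polynomial over $K$ applied to $y$. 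The required surjectivity of $\Trd_{A/F}$ on this graded piece is classical for principal hereditary orders and can be read off directly from the block form, ultimately reducing to the surjectivity of $\Trd_{D/F}\colon \ofra_{D} \to \ofra_{F}$ at the residue level.

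The main technical obstacle is the valuation bookkeeping in Step 1: tracking the interaction of the ceiling function $\lceil \cdot /e \rceil$ with the explicit block filtration of $\Pfra^{n}$, including the edge cases $e \nmid n$ where the ceiling jumps nontrivially relative to the block indices. Once this inequality is secured, Step 2 is essentially formal, and the whole lemma follows together with the completeness of $\ofra_{F}$.
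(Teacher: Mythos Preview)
Your approach is correct in outline, but it differs substantially from the paper's argument, and it is considerably more laborious than necessary.

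For the inclusion $\Nrd_{A/F}(1+\Pfra^{n})\subseteq 1+\pfra_{F}^{\lceil n/e\rceil}$, both you and the paper reduce to the split case via an unramified base change. However, where you propose to track valuations term by term in the Leibniz expansion of the determinant, the paper simply observes from the explicit block description that every element of $1+\Pfra^{n}$ is upper triangular modulo $\pfra_{F}^{\lceil n/e\rceil}$, so its determinant is congruent to $1$ modulo $\pfra_{F}^{\lceil n/e\rceil}$. This bypasses all of your ceiling bookkeeping.

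For the reverse inclusion, the contrast is sharper. You run a successive-approximation argument, building $\xi$ as an infinite product and invoking surjectivity of $\Trd_{A/F}$ on graded pieces at each stage. (Your indexing $\xi_{k}\in\U^{n+k}(\Afra)$ is slightly misaligned with the target filtration; what you actually need at the $m$-th stage is a correction from $\U^{me}(\Afra)$, but this is easily fixed.) The paper instead gives a one-shot construction: in the split case, place the element $b$ with $1+b\in 1+\pfra_{F}^{\lceil n/e\rceil}$ in the $(1,1)$-entry and zeros elsewhere; in the general case, choose a maximal unramified subfield $E\subset D$, lift $1+b$ through the surjective norm $\Nrm_{E/F}\colon 1+\pfra_{E}^{\lceil n/e\rceil}\twoheadrightarrow 1+\pfra_{F}^{\lceil n/e\rceil}$ to get $1+b'$, and again place $b'$ in the $(1,1)$-entry of $\M_{m}(D)$. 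A short check shows this lies in $1+\Pfra^{n}$ and has reduced norm $1+b$ via $\Nrd_{A/F}=\Nrm_{E/F}\circ\det_{\Cent_{A}(E)/E}$.

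The paper's argument is shorter and avoids any limiting process or graded surjectivity lemma. Your method is a standard technique and would succeed, but here it is overkill; the direct construction is both simpler and more transparent.
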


\begin{prf}
First we show the lemma when $A$ is split.  
In this case, we have 
\[
	1+\Pfra^{n} = \left(
	\begin{array}{ccc}
		1+\M_{N/e} \left( \pfra_{F}^{\lceil n/e \rceil} \right) & & * \\
		 & \ddots & \\
		** & & 1+\M_{N/e} \left( \pfra_{F}^{\lceil n/e \rceil} \right)
	\end{array}
	\right), 
\]
where each block in $**$ is equal to $\M_{N/e} \left( \pfra_{F}^{\lceil n/e \rceil} \right)$ or $\M_{N/e} \left( \pfra_{F}^{\lceil n/e \rceil +1} \right)$.  
Then any element $a$ in $1+\Pfra^{n}$ are upper triangular modulo $\pfra_{F}^{\lceil n/e \rceil}$, and $\det_{A/F}(a)$ is 1 modulo $\pfra_{F}^{\lceil n/e \rceil}$, whence $\det_{A/F} \left( 1+\Pfra^{n} \right) \subset 1 + \pfra_{F}^{\lceil n/e \rceil}$.  
To obtain the converse inclusion, let $1+b \in 1+\pfra_{F}^{\lceil n/e \rceil}$.  
Let $a$ be an element in $A$ with the $(1,1)$-entry $b$, and other entries 0.  
Then $1+a \in 1+\Pfra^{n}$ and $\det_{A/F} (1+a) = 1+b$.  

In general case, we take a maximal unramified extension $E/F$ in $D$.  
Then $A \otimes_{F} E$ is split, and the subring $\Afra_{E}:=\Afra \otimes_{\ofra_{F}} \ofra_{E}$ in $A \otimes_{F} E$ is a hereditary $\ofra_{E}$-order with $e(\Afra_{E}|\ofra_{E}) = e(\Afra|\ofra_{F})=e$.  
Let $\Pfra_{E}$ be the radical of $\Afra_{E}$.  
Then $\Pfra_{E}^{n} = \Pfra^{n} \otimes_{\ofra_{F}} \ofra_{E}$ and $\det_{A \otimes_{F}E/E}(1+\Pfra_{E}^{n})=1+\pfra_{E}^{\lceil n/e \rceil}$ by the split case.  
Since $\Nrd_{A/F}(A^{\times}) = \det_{A \otimes_{F} E/E} \left((A \otimes_{F} 1)^{\times} \right) = F^{\times}$, we have 
\[
	\Nrd_{A/F}(1+\Pfra^{n}) \subset \left( 1+\pfra_{E}^{\lceil n/e \rceil} \right) \cap F^{\times} = 1+\pfra_{F}^{\lceil n/e \rceil}, 
\]
where the last equality follows from the assumption $E/F$ is unramified.  

To obtain the converse inclusion, let $1+b \in 1+\pfra_{F}^{\lceil n/e \rceil}$.  
Since $E/F$ is unramified, we have $\Nrm_{E/F} \left(1+\pfra_{E}^{\lceil n/e \rceil} \right) = 1+\pfra_{F}^{\lceil n/e \rceil}$, and there exists $b' \in \pfra_{E}^{\lceil n/e \rceil}$ such that $\Nrm_{E/F}(1+b') = 1+b$.  
Let $a$ be an element in $A \cong \M_{m}(D)$ with the $(1,1)$-entry $b'$, and other entries 0.  
Then $1+a \in \Cent_{A}(E) \cong \M_{m}(E)$, and
\[
	\Nrd_{A/F}(1+a) = \Nrm_{E/F} \circ \det{}_{\Cent_{A}(E)/E}(1+a) = \Nrm_{E/F}(1+b') = 1+b.  
\]
Therefore it suffice to check $a \in \Pfra^{n}$.  
We have 
\[
	\Pfra^{n} = \left(
	\begin{array}{ccc}
		\M_{md/e} \left( \pfra_{D}^{\lceil nd/e \rceil} \right) & & * \\
		 & \ddots & \\
		** & & \M_{N/e} \left( \pfra_{D}^{\lceil nd/e \rceil} \right)
	\end{array}
	\right), 
\]
Then $a \in \Pfra^{n}$ if and only if $b' \in \pfra_{D}^{\lceil nd/e \rceil}$.  
However, $b' \in \pfra_{E}^{\lceil n/e \rceil} \subset \pfra_{D}^{\lceil n/e \rceil d} \subset \pfra_{D}^{\lceil nd/e \rceil}$, where $\lceil n/e \rceil d \geq \lceil nd/e \rceil$ since $nd/e \leq \lceil n/e \rceil d \in \Z$.  
\end{prf}

\begin{prop}
\label{correctbyF}
Suppose $n>0$.  
Furthermore, assume $\chi$ is trivial on $\U^{en+1}(\Afra)$, but not on $\U^{en}(\Afra)$.  
Then there exists $c \in F$ with $v_{F}(c) = -n$ such that
\[
	\chi |_{\U^{en}(\Afra)}(1+y) = \psi \circ \Trd_{A/F}(cy)
\]
for $y \in \Pfra^{en}$.  
\end{prop}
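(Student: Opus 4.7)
The plan is to reduce the problem to a character of $F^\times$ via the reduced norm, and then match it against the trace side using a Taylor expansion of $\Nrd$ in a neighbourhood of the identity.

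First, I would exploit that $G \cong \GL_m(D)$, so the commutator subgroup of $G$ equals $\ker(\Nrd_{A/F})$. Consequently there is a unique smooth character $\chi_0$ of $F^\times$ with $\chi = \chi_0 \circ \Nrd_{A/F}$. By the reduced-norm computation $\Nrd_{A/F}(1 + \Pfra^{k}) = 1 + \pfra_{F}^{\lceil k/e \rceil}$ established in the preceding lemma, we have $\Nrd_{A/F}(\U^{en+1}(\Afra)) = 1 + \pfra_{F}^{n+1}$ and $\Nrd_{A/F}(\U^{en}(\Afra)) = 1 + \pfra_{F}^{n}$. The hypothesis on $\chi$ therefore translates into: $\chi_0$ is trivial on $1 + \pfra_{F}^{n+1}$ but nontrivial on $1 + \pfra_{F}^{n}$. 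Via the standard identification $(1 + \pfra_{F}^{n})/(1 + \pfra_{F}^{n+1}) \cong \pfra_{F}^{n}/\pfra_{F}^{n+1}$ (using $n \geq 1$), the restriction $\chi_0|_{1+\pfra_{F}^{n}}$ is realised by a unique element $c \in \pfra_{F}^{-n}/\pfra_{F}^{-n+1}$; nontriviality forces $v_{F}(c) = -n$, and we can pick a representative $c \in F$.

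The remaining task is to prove the congruence
\[
\Nrd_{A/F}(1 + y) \equiv 1 + \Trd_{A/F}(y) \pmod{\pfra_{F}^{n+1}}, \qquad y \in \Pfra^{en}.
\]
For this I would use the reduced characteristic polynomial $\mathrm{Prd}_{y}(T) = T^{N} - c_{1}(y)T^{N-1} + \cdots + (-1)^{N} c_{N}(y)$, so that $\Nrd(1+y) = 1 + \sum_{k=1}^{N} c_{k}(y)$ with $c_{1} = \Trd$ and $c_{N} = \Nrd$. Two observations suffice: (i) each $c_{k}$ is homogeneous of degree $k$ on $A$, and (ii) because $\Afra$ stabilises a lattice chain, every $y \in \Afra$ has integral reduced characteristic polynomial, i.e.\ $c_{k}(\Afra) \subset \ofra_{F}$. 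Since $\Afra$ is principal with $e(\Afra|\ofra_{F}) = e$, we have $\Pfra^{en} = \varpi_{F}^{n} \Afra$, and the homogeneity of $c_{k}$ yields $c_{k}(\Pfra^{en}) = \varpi_{F}^{kn} c_{k}(\Afra) \subset \pfra_{F}^{kn} \subset \pfra_{F}^{n+1}$ for every $k \geq 2$, giving the desired congruence.

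Combining the two steps, write $\Nrd_{A/F}(1+y) = (1 + \Trd_{A/F}(y))(1 + u)$ with $u \in \pfra_{F}^{n+1}$ (possible because $1 + \Trd_{A/F}(y) \in 1 + \pfra_{F}^{n}$ is a unit and the difference $\Nrd(1+y) - (1 + \Trd(y))$ lies in $\pfra_{F}^{n+1}$). Then, since $\chi_{0}$ is trivial on $1 + \pfra_{F}^{n+1}$,
\[
\chi(1+y) = \chi_{0}(\Nrd_{A/F}(1+y)) = \chi_{0}(1 + \Trd_{A/F}(y)) = \psi(c \cdot \Trd_{A/F}(y)) = \psi \circ \Trd_{A/F}(cy),
\]
using $c \in F$ in the last step. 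The main conceptual obstacle is the Taylor expansion of $\Nrd$, but the principality of $\Afra$ (which gives $\Pfra^{en} = \varpi_{F}^{n} \Afra$) together with integrality of the reduced characteristic polynomial on $\Afra$ makes it straightforward; all other steps are bookkeeping with the lemma on $\Nrd_{A/F}(1+\Pfra^k)$ already at our disposal.
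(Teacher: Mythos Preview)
Your argument is correct, and it follows a genuinely different path from the paper.  The paper does not pass through $\Nrd_{A/F}$ at all in this step.  Instead it parametrises the characters of $\Pfra^{en}/\Pfra^{en+1}$ by elements $c_{0}\in\Pfra^{-en}/\Pfra^{-en+1}$ via $y\mapsto\psi\circ\Trd_{A/F}(c_{0}y)$, and then uses the $G$-invariance of $\chi$ (i.e.\ invariance under conjugation by $\Kfra(\Afra)$) to force the image of $c_{0}$ in $\Afra/\Pfra\cong\prod\M_{md/e}(k_{D})$ to be central and fixed under the cyclic shift coming from a generator of $\Kfra(\Afra)/\U(\Afra)$, hence to lie in $k_{F}$; a lift to $\ofra_{F}^{\times}$ then gives $c$.

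Your route is more direct: you use up front that $\chi=\chi_{0}\circ\Nrd_{A/F}$ (a fact the paper itself invokes in the proof of Lemma~\ref{exttrivpart}), locate $c$ already in $F$ from the conductor of $\chi_{0}$, and then only need the first-order expansion $\Nrd_{A/F}(1+y)\equiv 1+\Trd_{A/F}(y)\pmod{\pfra_{F}^{n+1}}$ on $\Pfra^{en}=\varpi_{F}^{n}\Afra$, which you obtain cleanly from the homogeneity and integrality of the coefficients of the reduced characteristic polynomial.  This avoids the structural analysis of $\Afra/\Pfra$ and the $\Kfra(\Afra)$-action entirely.  The paper's approach, by contrast, stays inside the order and never needs the Taylor expansion of $\Nrd$; it is closer in spirit to the ``duality'' arguments used elsewhere in Bushnell--Kutzko theory.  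Both are short, but yours isolates the essential input (the depth of $\chi_{0}$) more transparently.
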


\begin{prf}
We have $\U^{en}(\Afra)/\U^{en+1}(\Afra) \cong \Pfra^{en}/\Pfra^{en+1}$, we can regard any smooth character $\U^{en}(\Afra)/\U^{en+1}(\Afra)$ as a smooth character of $\Pfra^{en}/\Pfra^{en+1}$.  
For any smooth character $\phi$ of $\Pfra^{en}/\Pfra^{en+1}$, there exists $c_{0} \in \Pfra^{-en}$, unique up to modulo $\Pfra^{-en+1}$, such that $\phi(y) = \psi \circ \Trd_{A/F}(c_0 y)$ for any $y \in \Pfra^{en}$.  In particular, there exists $c_{0} \in \Pfra^{-en}$ such that $\chi(1+y) = \psi \circ \Trd_{A/F}(c_{0}y)$ for any $y \in \Pfra^{en}$.
Since $\chi$ is not trivial on $\U^{en}(\Afra)$, we have $c_{0} \notin \Pfra^{-en+1}$.  
By the uniqueness of $c_{0}$, it suffices to show that $c_{0} + \Pfra^{-en+1}$ contains some element $c$ in $F$ with $v_{F}(c)=-n$.  
  
Here, let $g \in \Kfra(\Afra)$ and $y \in \Pfra^{-en}$.  
Since $\chi$ is a character of $G$, we have $\chi(1+y) = \chi \left( g (1+y) g^{-1} \right)$.  
However, we have $g (1+y) g^{-1} = 1+gyg^{-1}$ and $gyg^{-1} \in \Pfra^{en}$ since $g \in \Kfra(\Afra)$.  
Then we obtain
\begin{eqnarray*}
	\chi \left( g (1+y) g^{-1} \right) & = & \chi \left( 1+gyg^{-1} \right) \\
	& = & \psi \circ \Trd_{A/F}(c_{0}gyg^{-1}) \\
	& = & \psi \circ \Trd_{A/F}(g^{-1}c_{0}gy).  
\end{eqnarray*}
Since $g^{-1}c_{0}g \in \Pfra^{-en}$, we have $c_{0} + \Pfra^{-en+1} = g^{-1}c_{0}g + \Pfra^{-en+1}$ by the uniqueness of $c_{0}$.  
We take $t \in F^{\times}$ such that $v_{F}(t) = n$.  
Then we have 
\[
	tc_{0} + \Pfra = t(c_{0} + \Pfra^{-en+1}) = tg^{-1}c_{0}g+t\Pfra^{-en+1} = g^{-1}(tc_{0})g + \Pfra
\]
for $g \in \Kfra(\Afra)$.  
If we put $c' = tc_{0}$, then $c', g^{-1}c'g \in \Afra$ and $c' + \Pfra = g^{-1}c'g + \Pfra$.  
Since $c_{0} \in \Pfra^{-en} \setminus \Pfra^{-en+1}$, we have $c' \in t \left( \Pfra^{en} \setminus \Pfra^{-en+1} \right) = \Afra \setminus \Pfra$.  
Therefore we obtain $\overline{c'} = \overline{g^{-1}c'g}$ for $g \in \Kfra(\Afra)$, where for $a \in \Afra$ we denote by $\overline{a}$ the image of $a$ in $\Afra/\Pfra$.  
By the form of $\Afra$, we have an isomorphism $\Afra/\Pfra \cong \M_{md/e}(k_{D})^{\frac{e}{d}}$ as
\begin{eqnarray*}
	\Afra/\Pfra \cong \left(
	\begin{array}{ccc}
		\M_{md/e}(k_{D}) & & \\
		 & \ddots & \\
		 & & \M_{md/e}(k_{D})
	\end{array}
	\right)
	& \ni & \left(
	\begin{array}{ccc}
		b_{1}  & \\
		 & \ddots & \\
		 & & b_{e/d}
	\end{array}
	\right) \\
	 & \mapsto & (b_{1}, \ldots, b_{e/d}) \in \prod_{i=1}^{e/d} \M_{md/e}(k_{D}).  
\end{eqnarray*}
Here, let $g \in \U(\Afra)$.  
Then $g \in \Afra$ and we have $\overline{c'} = \overline{g}^{-1} \cdot \overline{c'} \cdot \overline{g}$.  
Since $\U(\Afra) \to (\Afra/\Pfra)^{\times}$ is surjective, $\overline{c'} \in Z(\Afra/\Pfra) \cong Z \left( \prod_{i=1}^{e/d} \M_{md/e}(k_{D}) \right) = \prod_{i=1}^{e/d} k_{D}$.  
Let $(b_1, \ldots, b_{e/d})$ be the image of $\overline{c'}$ in $\prod_{i=1}^{e/d} k_{D}$.  

We take $g \in \Kfra(\Afra)$ with $v_{\Afra}(g) = -1$.  
Then $\overline{g^{-1}c'g} = (b_{2}, \ldots, b_{e/d}, \tau(b_{1}))$, where $\tau \in \Gal(k_{D}/k_{F})$ is a generator.  
Since $\overline{c'}=\overline{g^{-1}c'g}$, we have $b_{1} = b_{2} = \cdots = b_{e/d} = \tau(b_{1})$.  
Since $\tau$ is a generator of $\Gal(k_{D}/k_{F})$, the element $b_{1}$ is stabilized by $\Gal(k_{D}/k_{F})$, that is, $b_{1} \in k_{F}$.  
Therefore $\overline{c'} \in k_{F} \subset \prod_{i=1}^{e/d} k_{D}$.  
We take a lift $a$ of $b_{1}$ to $\ofra_{F}$.  
Since $\overline{c'} \neq 0$, we have $b_{1} \neq 0$ and then $a \in \ofra_{F}^{\times}$.  
Therefore $c=t^{-1}a$ satisfies the desired condition.  
\end{prf}

\begin{lem}
\label{existchar}
Let $c \in F^{\times}$ such that $v_{F}(c)=-n<0$.  
Then there exists a smooth character $\theta$ of $A^{\times}$ such that 
\[
	\theta | _{\U^{\lfloor en/2 \rfloor +1}(\Afra)}(1+y) = \psi \circ \Trd_{A/F}(cy) 
\]
for $y \in \Pfra^{en}$.  
\end{lem}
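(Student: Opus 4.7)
The plan is to realize $\theta$ as the restriction to $\U(\Afra)=\Afra^{\times}$ of a character of $G=A^{\times}$ of the form $\chi\circ\Nrd_{A/F}$ for a well-chosen smooth character $\chi$ of $F^{\times}$. Set $k:=\lfloor en/2\rfloor+1$ and $m:=\lceil k/e\rceil$. A short case check separating the parities of $e$ and $n$ shows $m=\lceil (n+1)/2\rceil$, and in particular $2m\geq n+1$. First I would define a map $\chi_{0}$ on the open subgroup $1+\pfra_{F}^{m}$ of $F^{\times}$ by
\[
\chi_{0}(1+u)=\psi(cu),\qquad u\in\pfra_{F}^{m}.
\]
For $u_{1},u_{2}\in\pfra_{F}^{m}$ the element $cu_{1}u_{2}$ lies in $\pfra_{F}^{2m-n}\subset\pfra_{F}$, so $\psi(cu_{1}u_{2})=1$ and $\chi_{0}$ is a group homomorphism. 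Moreover, for $u\in\pfra_{F}^{n+1}$ one has $cu\in\pfra_{F}$, so $\chi_{0}$ is trivial on $1+\pfra_{F}^{n+1}$. Since $F^{\times}$ is locally compact abelian, $\chi_{0}$ extends to a smooth character $\chi$ of $F^{\times}$, and I set $\theta:=\chi\circ\Nrd_{A/F}$.

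To verify the required formula on $\U^{k}(\Afra)$, I would use the expansion
\[
\Nrd_{A/F}(1+y)=1+\Trd_{A/F}(y)+r(y),
\]
valid for any $y\in A$, where $r(y)$ collects the elementary symmetric polynomials of degree $\geq 2$ in the eigenvalues of $y$ over a splitting field. For $y\in\Pfra^{k}$, the standard estimates $\Trd_{A/F}(\Pfra^{k})\subset\pfra_{F}^{m}$ and $r(y)\in\pfra_{F}^{\lceil 2k/e\rceil}\subset\pfra_{F}^{n+1}$ (the last inclusion using $2k\geq en+1$) yield $\Nrd_{A/F}(1+y)\in 1+\pfra_{F}^{m}$, and hence
\[
\theta(1+y)=\chi_{0}\bigl(1+\Trd_{A/F}(y)+r(y)\bigr)=\psi\bigl(c\Trd_{A/F}(y)\bigr)\psi(cr(y))=\psi\circ\Trd_{A/F}(cy),
\]
since $cr(y)\in\pfra_{F}$. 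This in particular gives the claimed identity for $y\in\Pfra^{en}\subset\Pfra^{k}$.

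The only non-routine step is the valuation bound $r(y)\in\pfra_{F}^{\lceil 2k/e\rceil}$ for $y\in\Pfra^{k}$, which is the main obstacle. I plan to reduce it, by extending scalars to a maximal unramified subfield $F^{\mathrm{nr}}$ of $D$, to an entry-wise computation in the standard principal hereditary order of period $e$ in $\M_{N}(\ofra_{F^{\mathrm{nr}}})$: the entries of an element of $\Pfra^{k}\otimes\ofra_{F^{\mathrm{nr}}}$ satisfy block-wise valuation constraints that propagate through the Leibniz expansion of each $j\times j$ principal minor with $j\geq 2$, giving the desired bound. Once this estimate is in place, the other ingredients---the well-definedness of $\chi_{0}$, its extension to $F^{\times}$, and the expansion of the reduced norm---are standard.
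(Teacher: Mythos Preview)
Your approach is correct and genuinely different from the paper's. The paper does not build $\chi$ by hand; instead it observes that since $c\in F$ one has a simple stratum $[\Afra,en,0,c]$ with $F[c]=F$, hence $B=\Cent_A(F)=A$, $H^1(c,\Afra)=\U^1(\Afra)$, and any $\theta\in\mathscr{C}(c,0,\Afra)$ (nonempty by Remark~\ref{existsimpch}) factors through $\Nrd_{A/F}$ on all of $H^1$ by Definition~\ref{defofsimpch}(2), so extends to a character of $A^\times$; condition~(3) of that same definition then gives $\theta|_{\U^{\lfloor en/2\rfloor+1}(\Afra)}=\psi_c$ directly. In other words, the paper invokes the simple-character machinery as a black box, whereas you construct $\chi$ on $F^\times$ explicitly and verify the formula via the expansion $\Nrd(1+y)=1+\Trd(y)+r(y)$ together with valuation estimates on $\Trd(y)$ and $r(y)$.

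Both routes are valid. The paper's argument is one line once the theory of simple characters is in place, but that theory is heavy for such an elementary claim. Your argument is more self-contained and makes transparent \emph{why} the character exists: it is literally $\chi\circ\Nrd_{A/F}$ for a character $\chi$ of $F^\times$ agreeing with $u\mapsto\psi(cu)$ on $1+\pfra_F^{\lceil(n+1)/2\rceil}$. Your ``main obstacle'' $r(y)\in\pfra_F^{\lceil 2k/e\rceil}$ is genuine but your plan handles it: after unramified base change to the split case, each term in a $j\times j$ principal minor of $y\in\Pfra^k$ is a product $\prod_l y_{i_l,i_{\sigma(l)}}$ whose valuation is at least $\sum_l\lceil(k+a_l-a_{\sigma(l)})/e\rceil\geq\lceil jk/e\rceil$ (using $\sum_l(a_l-a_{\sigma(l)})=0$), so $s_j(y)\in\pfra_L^{\lceil jk/e\rceil}\subset\pfra_L^{\lceil 2k/e\rceil}$ for $j\geq2$, and intersecting with $F$ gives the claim. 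Note also that you establish the identity on all of $\U^{k}(\Afra)$, which is the form actually used later (the ``$y\in\Pfra^{en}$'' in the lemma statement is a typo for $y\in\Pfra^{\lfloor en/2\rfloor+1}$).
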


\begin{prf}
Since $v_{\Afra}(c)=-en$, the 4-tuple $[\Afra, en, 0, c]$ is a simple stratum.  
Then we can take an element $\theta$ in $\mathscr{C}(c, 0, \Afra)$, which is nonempty by Remark \ref{existsimpch}.  
Since $\theta$ is simple, $\theta |_{\Cent_{A}(F[c])^{\times} \cap H^{1}(c, \Afra)}$ can be extended to a character of $\Cent_{A}(F[c])^{\times}$.  
However, we have $F[c]=F$ and then $\Cent_{A}(F[c]) = A$.  
Therefore, $\theta$ can be extended to a character of $A^{\times}$.  
Since $\theta$ is simple and $c \in F$ is minimal over $F$, we have 
\[
	\theta |_{\U^{\lfloor en/2 \rfloor +1}(\Afra)}(1+y) = \psi_{c}(1+y) = \psi \circ \Trd_{A/F}(cy)
\]
for $y \in \Pfra^{\lfloor en/2 \rfloor}+1$.  
\end{prf}

Let us start the proof of Proposition \ref{propforgench}.  

\begin{prf}
First, if $n=0$, then $c=1$ satisfies the condition.  
Then we may assume $n>0$.  

If $n \notin e\Z$ and $\chi$ is trivial on $\U^{n+1}(\Afra)$, then $\chi$ is also trivial in $\U^{n}(\Afra)$ by Lemma \ref{exttrivpart}, which is a contradiction.  
Then $n \in e\Z$.  
Let $i_{0}$ be the smallest integer satisfying $\lfloor n/2 \rfloor +1 \leq ei_{0}$.  
Since $n \geq 1$, we have $i_{0} \geq 1$.  
For $i = i_{0}, \ldots, n/e$, we construct $c_{i} \in F$ and a character $\theta_{i}$ of $F^{\times}$ such that $\theta_{i} |_{\U^{\lfloor ei/2 \rfloor +1}(\Afra)} = \psi_{c_{i}}$ and $\chi \cdot \left( \prod_{j=i}^{n/e} \theta_{j} \right) ^{-1}$ is trivial on $\U^{ei}(\Afra)$, by downward induction.  

Let $i=n/e$.  
Since $\chi$ is not trivial on $\U^{n}(\Afra)$, then there exists $c_{n/e} \in F$ such that $v_{F}(c_{n/e}) = -n$ and $\chi$ is equal to $\psi_{c_{n/e}}$ by Proposition \ref{correctbyF}.  
Then we take a character $\theta_{n/e}$ of $F^{\times}$ as in Lemma \ref{existchar} for $c_{i}$, and $\chi \cdot \theta_{n/e}{}^{-1}$ is trivial on $\U^{n}(\Afra)=\U^{ei}(\Afra)$.  

Let $i_{0} \leq i<n/e$, and suppose we construct $c_{j}$ and $\theta_{i}$ for $i<j \leq n/e$.  
Since $\chi \cdot \left( \prod_{j=i+1}^{n/e} \theta_{j} \right) ^{-1}$ is trivial on $\U^{e(i+1)}(\Afra)$ by induction hypothesis, it is also trivial on $\U^{ei+1}(\Afra)$ by Lemma \ref{exttrivpart}.  
If $\chi \cdot \left( \prod_{j=i+1}^{n/e} \theta_{j} \right) ^{-1}$ is also trivial on $\U^{ei}(\Afra)$, then we put $c_{i}=0$ and $\theta_{i}=1$, whence $c_{i}$ and $\theta_{i}$ satisfy the condition.  
Otherwise, there exists $c_{i} \in F$ such that $v_{F}(c_{i})=-i$ and $\chi \cdot \left( \prod_{j=i+1}^{n/e} \theta_{j} \right) ^{-1} $ is equal to $\psi_{c_{i}}$ on $\U^{ei}(\Afra)$ by Proposition \ref{correctbyF}.  
Then we take a character $\theta_{i}$ of $F^{\times}$ as Lemma \ref{existchar} for $c_{i}$, and $\chi \cdot \left( \prod_{j=i}^{n/e} \theta_{j} \right) ^{-1}$ is trivial on $\U^{ei}(\Afra)$.  

Therefore $\chi \cdot \left( \prod_{i=i_{0}}^{n/e} \theta_{i} \right) ^{-1}$ is trivial on $\U^{ei_{0}}(\Afra)$.  
By Lemma \ref{exttrivpart}, it is also trivial on $\U^{e(i_{0}-1)+1}(\Afra)$.  
Since $i_{0}$ is the smallest integer satisfying $\lfloor n/2 \rfloor +1 \leq ei_{0}$, we have $e(i_{0}-1) < \lfloor n/2 \rfloor +1$, that is, $e(i_{0}-1)+1 \leq \lfloor n/2 \rfloor +1$.  
Then $\U^{\lfloor n/2 \rfloor +1}(\Afra) \subset \U^{e(i_{0}-1)+1}(\Afra)$, whence $\chi \cdot \left( \prod_{i=i_{0}}^{n/e} \theta_{i} \right) ^{-1}$ is trivial on $\U^{\lfloor n/2 \rfloor +1}(\Afra)$.  
This implies $\chi$ is equal to $\prod_{i=i_{0}}^{n/e} \theta_{i}$ on $\U^{ \lfloor n/2 \rfloor +1}(\Afra)$.  
For $i=i_{0}, \ldots, n/e$, we have $\lfloor ei/2 \rfloor +1 \leq \lfloor e(n/e)/2 \rfloor + 1 = \lfloor n/2 \rfloor +1$.  
By construction of $\theta_{i}$, the restriction of $\theta _i$ to $\U^{.\lfloor n/2 \rfloor +1 }(\Afra) \subset \U^{\lfloor ei/2 \rfloor +1}$ is equal to $\psi_{c_{i}}$.  
Then $\chi$ is equal to $\prod_{i=i_{0}}^{n/e} \psi_{c_{i}} = \psi_{\left( \sum_{i=i_{0}}^{n/e}c_{i} \right)}$ on $\U^{.\lfloor n/2 \rfloor +1 }(\Afra)$.  
We put $c=\sum_{i=i_{0}}^{n/e}c_{i}$.  
Since $v_{F}(c_{n/e})=-n$ and $v_{F}(c_{i}) \geq -i > -n$ for $i=i_{0}, \ldots, (n/e)-1$, we have $v_{F}(c)=-n$, which completes the proof.  
\end{prf}

\section{Some lemmas on maximal simple types}
\label{Depth0}

In this section, we show some lemmas which are used when we take the ``depth-zero" part of S\'echerre's datum or Yu's datum.  

\begin{lem}
\label{lem1forfindingdepth0}
Let $\Lambda, \Lambda'$ be extensions of a maximal simple type $(J, \lambda)$ to $\tilde{J}=\tilde{J}(\lambda)$.  
Then there exists a character $\chi$ of $\tilde{J}(\lambda)/J$ such that $\Lambda' \cong \chi \otimes \Lambda$.  
\end{lem}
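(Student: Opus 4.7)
The plan is to use Schur's lemma together with the fact that $\tilde{J}/J$ is (pro-)cyclic so that any function on it satisfying the cocycle/multiplicativity condition is a character. First I would observe that, since $(J,\lambda)$ is a maximal simple type, the representation $\lambda$ is irreducible, and by hypothesis $\Lambda|_{J} \cong \lambda \cong \Lambda'|_{J}$. Applying Schur's lemma to the two irreducible $J$-representations $\Lambda|_{J}$ and $\Lambda'|_{J}$, the space $\Hom_{J}(\Lambda, \Lambda')$ is one-dimensional and any nonzero $f$ in it is a $J$-equivariant isomorphism of the underlying vector spaces.

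Next, for any $g \in \tilde{J}(\lambda)$, the operator $\Lambda'(g)\, f\, \Lambda(g)^{-1}$ still lies in $\Hom_{J}(\Lambda, \Lambda')$, because $\tilde{J}(\lambda)$ normalizes $J$ and one checks routinely that it intertwines $\Lambda|_{J}$ with $\Lambda'|_{J}$. Since $\Hom_{J}(\Lambda, \Lambda')$ is one-dimensional, there is a unique scalar $\chi(g) \in \C^{\times}$ with
\[
\Lambda'(g)\, f = \chi(g)\, f\, \Lambda(g), \qquad g \in \tilde{J}(\lambda).
\]
A direct computation from this equation shows that $\chi(g_{1}g_{2}) = \chi(g_{1})\chi(g_{2})$, so $\chi$ is a character of $\tilde{J}(\lambda)$; and for $g \in J$ the equation reduces to $\lambda(g) f = \chi(g) f \lambda(g)$, forcing $\chi|_{J} = 1$. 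Hence $\chi$ factors through a character of $\tilde{J}(\lambda)/J$.

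Finally, the defining equation rewrites as $\Lambda'(g) = \chi(g)\, f\, \Lambda(g)\, f^{-1}$, which exhibits $\Lambda'$ and $\chi \otimes \Lambda$ as isomorphic via conjugation by $f$. This yields the desired isomorphism $\Lambda' \cong \chi \otimes \Lambda$.

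I do not expect any serious obstacle here: the only point that requires mild care is the verification that $\Hom_{J}(\Lambda, \Lambda')$ is stable under the twisted $\tilde{J}(\lambda)$-action $f \mapsto \Lambda'(g) f \Lambda(g)^{-1}$, which boils down to $\tilde{J}(\lambda)$ normalizing $J$ (a standard fact about $\tilde{J}(\lambda)=I_{G}(\lambda)$, since $\tilde{J}(\lambda) \subset \hat{J}(\beta, \Afra) = \Kfra(\Bfra) J(\beta, \Afra)$ by Remark \ref{rem_for_Jhat} and $\Kfra(\Bfra)$ normalizes $J(\beta, \Afra) = J$). Everything else is pure Schur-lemma bookkeeping.
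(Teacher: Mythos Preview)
Your proof is correct and follows essentially the same approach as the paper: both use Schur's lemma to see that $\Hom_{J}(\Lambda,\Lambda')$ is one-dimensional, define $\chi$ via the natural $\tilde{J}$-action $f\mapsto \Lambda'(g)\,f\,\Lambda(g)^{-1}$ on this line, and read off $\Lambda'\cong\chi\otimes\Lambda$. The cyclicity of $\tilde{J}/J$ you mention in your plan is never actually needed (and you rightly do not use it), since multiplicativity of $\chi$ follows directly from the defining relation.
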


\begin{prf}
Since $\Lambda|_{J} = \lambda = \Lambda'|_{J}$ is irreducible, we have $\Hom_{J}(\Lambda, \Lambda') \cong \C$.  
The group $\tilde{J}$ acts on $\Hom_{J}(\Lambda, \Lambda') \cong \C$ as the character $\chi$ of $\tilde{J}
$ by
\[
	g \cdot f := \Lambda'(g) \circ f \circ \Lambda(g^{-1}) = \chi(g)f
\]
for $g \in \tilde{J}$ and $f \in \Hom_{J}(\Lambda, \Lambda')$.  
Since $f$ is a $J$-homomorphism, $\chi$ is trivial on $J$.  
We take a nonzero element $f$ in $\Hom_{J}(\Lambda, \Lambda')$.  
Then for $g \in \tilde{J}$ we have 
\[
	\Lambda'(g) \circ f = f \circ \left( \chi(g)\Lambda(g) \right) = f \circ \left( \Lambda \otimes \chi (g) \right)
\]
and an $\tilde{J}$-isomorphism $\Lambda' \cong \Lambda \otimes \chi$.  
\end{prf}

If a maximal simple type $(J, \lambda)$ is associated with a simple stratum $[\Afra, n, 0, \beta]$, we put $\hat{J} = \hat{J}(\beta, \Afra)$ as in Definition \ref{defofJhat}.  

\begin{lem}
\label{lem2forfindingdepth0}
Let $(J=\U(\Afra), \lambda)$ be a simple type of depth zero, where $\Afra$ is a maximal hereditary $\ofra_{F}$-order in $A$, and let $(\tilde{J}, \Lambda)$ be a maximal extension of $(J, \lambda)$.  
We put $\rho=\Ind_{\tilde{J}}^{\hat{J}}\Lambda$.  
\begin{enumerate}
\item $\cInd_{\hat{J}}^{G} \rho$ is irreducible and supercuspidal.  
\item $\rho$ is irreducible.  
\item $\rho$ is trivial on $\U^{1}(\Afra)$.  
\end{enumerate}
\end{lem}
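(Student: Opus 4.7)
The plan is to first exploit the explicit description of depth-zero simple types. Since $(J, \lambda)$ has depth zero, the associated simple stratum has the form $[\Afra, 0, 0, \beta]$ with $\beta \in F$, so $B = A$, $\Bfra = \Afra$, and $\hat{J} = \Kfra(\Afra)$. Moreover $\lambda = \kappa \otimes \sigma$ where $\kappa = \chi \circ \Nrd_{A/F}$ for some character $\chi$ of $F^{\times}$ trivial on $1 + \pfra_{F}$, and $\sigma$ is inflated from a cuspidal representation of $\U(\Afra)/\U^{1}(\Afra) \cong \GL_{m}(k_{D})$. Both factors are trivial on $\U^{1}(\Afra)$ (using $\Nrd_{A/F}(\U^{1}(\Afra)) \subset 1 + \pfra_{F}$ for $\kappa$), so $\lambda$ is trivial on $\U^{1}(\Afra)$, and hence so is $\Lambda$, since $\U^{1}(\Afra) \subset J \subset \tilde{J}$ and $\Lambda|_{J} = \lambda$.

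For (3), I would use that $\U^{1}(\Afra) = G(F)_{x,0+}$ is normal in $\hat{J} = G(F)_{[x]}$: for any $u \in \U^{1}(\Afra)$ and any coset representative $g$ of $\tilde{J} \backslash \hat{J}$, we have $g^{-1}ug \in \U^{1}(\Afra) \subset \ker \Lambda$, so $\rho = \Ind_{\tilde{J}}^{\hat{J}} \Lambda$ is trivial on $\U^{1}(\Afra)$. For (2), I would apply the Clifford--Mackey irreducibility criterion: $\hat{J}/J \cong \Kfra(\Afra)/\U(\Afra) \cong \Z$ is abelian, so $\tilde{J}$ (generated by $J$ and $g^{l_{0}}$ for $g \in \Kfra(\Bfra)$ with $v_{\Bfra}(g)=1$) is normal in $\hat{J}$ with cyclic quotient $\Z/l_{0}\Z$, and coset representatives can be taken as $1, g, \ldots, g^{l_{0}-1}$. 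Since $\kappa$ is a character that factors through the reduced norm, it is invariant under $G$-conjugation, so ${}^{g^{i}}\lambda \cong \kappa \otimes {}^{g^{i}}\sigma$; by the minimality of $l_{0}$ we have ${}^{g^{i}}\sigma \not\cong \sigma$ for $0 < i < l_{0}$, whence ${}^{g^{i}}\Lambda \not\cong \Lambda$ as $\tilde{J}$-representations (their restrictions to $J$ are already non-isomorphic), giving the required irreducibility.

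For (1), I would invoke transitivity of induction: since $\hat{J}/\tilde{J}$ is finite, ordinary induction and compact induction agree over this step, so
\[
\cInd_{\hat{J}}^{G} \rho \cong \cInd_{\hat{J}}^{G} \Ind_{\tilde{J}}^{\hat{J}} \Lambda \cong \cInd_{\tilde{J}}^{G} \Lambda,
\]
and the right-hand side is irreducible and supercuspidal by Th\'eor\`eme 5.2 of \cite{S3} and Corollary 5.22 of \cite{SS}, precisely because $(\tilde{J}, \Lambda)$ is a maximal extension of the maximal simple type $(J, \lambda)$. The main technical point is really just verifying the Clifford--Mackey hypothesis in (2), which reduces to the assertion ${}^{g^{i}}\sigma \not\cong \sigma$ for $0 < i < l_{0}$ built into the definition of $l_{0}$; both (1) and (3) follow immediately from normality and previously stated theorems.
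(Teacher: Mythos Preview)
Your proposal is correct. Parts (1) and (3) coincide with the paper's argument: (1) is exactly the transitivity-of-induction step, and (3) is the same normality observation (the paper states it via the Mackey decomposition $\Res_{J}^{\hat{J}}\rho \cong \bigoplus_{i=0}^{l-1} {}^{h^{i}}\lambda$, but the content is identical to your direct check that $g^{-1}ug \in \U^{1}(\Afra) \subset \ker\Lambda$).

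The only genuine difference is in (2). You prove irreducibility of $\rho$ by hand via the Clifford--Mackey criterion, verifying ${}^{g^{i}}\Lambda \not\cong \Lambda$ for $0 < i < l_{0}$ using the definition of $l_{0}$ and the $G$-invariance of $\kappa = \chi \circ \Nrd_{A/F}$. The paper instead orders the argument as $(1) \Rightarrow (2)$: once $\cInd_{\hat{J}}^{G}\rho$ is known to be irreducible, $\rho$ itself must be irreducible, since compact induction of a reducible representation is reducible. The paper's route is shorter and avoids any explicit computation with conjugates; your route is self-contained and would still work even if one had not yet established (1). Both are valid.
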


\begin{prf}
Since $(\tilde{J}, \Lambda)$ is a maximal extension of a simple type of depth zero, $\cInd_{\tilde{J}}^{G} \Lambda$ is irreducible and supercuspidal.  
However, by the transitivity of compact induction, we also have $\cInd_{\tilde{J}}^{G} \Lambda = \cInd_{\hat{J}}^{G} \cInd_{\tilde{J}}^{\hat{J}} \Lambda = \cInd_{\hat{J}}^{G} \rho$, which implies (1).  

Since $\cInd_{\hat{J}}^{G} \rho$ is irreducible, $\rho$ is also irreducible, that is, (2) holds.  

To show (3), we consider the Mackey decomposition of $\Res_{J}^{\hat{J}} \Ind_{\tilde{J}}^{\hat{J}} \Lambda$.  
We have 
\[
	\Res_{J}^{\hat{J}} \Ind_{\tilde{J}}^{\hat{J}} \Lambda = \bigoplus_{g \in J \backslash \hat{J} / \tilde{J}} \Ind_{J \cap {}^{g}\tilde{J} }^{J} \Res_{J \cap {}^{g}\tilde{J}}^{{}^{g} \tilde{J}} {}^{g} \Lambda = \bigoplus_{i=0}^{i-1} {}^{h^i} \lambda, 
\]
where $l = ( \hat{J} : \tilde{J} )$ and $h \in \hat{J}$ such that the image of $h$ in $\hat{J} / J \cong \Z$ is 1.  
Since $h\U^{1}(\Afra)h^{-1}=\U^{1}(\Afra)$, the representation ${}^{h^i} \lambda$ is trivial on $\U^{1}(\Afra)$ for $i=0, \ldots, l-1$.  
Therefore $\rho$ is also trivial on $\U^{1}(\Afra)$.  
\end{prf}

\begin{lem}
\label{inf_and_ind}
Let $[\Afra, n, 0, \beta]$ be a simple stratum with $\Bfra$ maximal.  
Let $\sigma^{0}$ be an irreducible cuspidal representation of $\U(\Bfra)/\U^{1}(\Bfra)$, and let $(\tilde{J}^{0}, \tilde{\sigma}^{0})$ be a maximal extension of $(\U(\Bfra), \sigma^{0})$ in $\Kfra(\Bfra)$.  
We put $\rho = \cInd_{\tilde{J}^{0}}^{\Kfra(\Bfra)} \tilde{\sigma}^{0}$.  
We denote by $\tilde{\sigma}$ the representation $\tilde{\sigma}^{0}$ as a representation of $\tilde{J}=\tilde{J}^{0}J^{1}(\beta, \Afra)$ via the isomorphism $\tilde{J}^{0}/\U^{1}(\Bfra) \cong \tilde{J}/J^{1}(\beta, \Afra)$.  
Then $\cInd_{\tilde{J}}^{\hat{J}(\beta, \Afra)} \tilde{\sigma}$ is the representation $\rho$ regarded as a representation of $\hat{J}=\hat{J}(\beta, \Afra)$ via $\Kfra(\Bfra)/\U^{1}(\Bfra) \cong \hat{J}(\beta, \Afra)/J^{1}(\beta, \Afra)$.  
\end{lem}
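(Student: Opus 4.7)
The plan is to reduce the equality to a statement of representations on the common quotient $\hat{J}(\beta,\Afra)/J^{1}(\beta,\Afra) \cong \Kfra(\Bfra)/\U^{1}(\Bfra)$, using the fact that compact induction is compatible with inflation through a normal subgroup on which the representation is trivial.

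First I would record the structural facts needed. The subgroup $J^{1}(\beta,\Afra)$ is normal in $J(\beta,\Afra)$, and $\Kfra(\Bfra)$ normalises $J^{1}(\beta,\Afra)$ because $\Kfra(\Bfra)$ normalises both $\U^{1}(\Bfra)$ and $\U^{\lfloor(n+1)/2\rfloor}(\Afra)$ (use Lemma~\ref{presenofHJ} together with the description of $J^{1}(\beta,\Afra)$). Therefore $J^{1}(\beta,\Afra)$ is normal in $\hat{J}(\beta,\Afra)=\Kfra(\Bfra)J(\beta,\Afra)$. Moreover $\Kfra(\Bfra)\cap J^{1}(\beta,\Afra)=\U^{1}(\Bfra)$ and $\tilde{J}^{0}\cap J^{1}(\beta,\Afra)=\U^{1}(\Bfra)$, yielding the two canonical isomorphisms $\Kfra(\Bfra)/\U^{1}(\Bfra)\cong \hat{J}/J^{1}$ and $\tilde{J}^{0}/\U^{1}(\Bfra)\cong\tilde{J}/J^{1}$, the second being the restriction of the first. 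Since $\sigma^{0}$ is regarded as a representation of $\U(\Bfra)$ via the quotient by $\U^{1}(\Bfra)$, a maximal extension $\tilde{\sigma}^{0}$ in $\Kfra(\Bfra)$ is still trivial on $\U^{1}(\Bfra)$, so $\tilde{\sigma}^{0}$ factors through $\tilde{J}^{0}/\U^{1}(\Bfra)$; equivalently, $\tilde{\sigma}$ factors through $\tilde{J}/J^{1}$.

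Next I would observe that the index $[\hat{J}:\tilde{J}]=[\Kfra(\Bfra):\tilde{J}^{0}]$ is finite (it equals the order of a stabiliser of $\sigma^{0}$ under the cyclic quotient $\Kfra(\Bfra)/\U(\Bfra)\cong\Z$, cut down to a finite factor by cuspidality of $\sigma^{0}$), so compact induction and ordinary induction from $\tilde{J}$ to $\hat{J}$ (resp.\ from $\tilde{J}^{0}$ to $\Kfra(\Bfra)$) coincide. Then the general principle applies: if $N\triangleleft G$, $N\subset H\subset G$, and $\pi$ is a representation of $H$ trivial on $N$, then $\Ind_{H}^{G}\pi$ is trivial on $N$ and is the inflation to $G$ of $\Ind_{H/N}^{G/N}\pi$. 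Applying this with $G=\hat{J}$, $H=\tilde{J}$, $N=J^{1}(\beta,\Afra)$, $\pi=\tilde{\sigma}$, the representation $\cInd_{\tilde{J}}^{\hat{J}}\tilde{\sigma}$ is the inflation to $\hat{J}$ of $\cInd_{\tilde{J}/J^{1}}^{\hat{J}/J^{1}}\tilde{\sigma}$. Applying it with $G=\Kfra(\Bfra)$, $H=\tilde{J}^{0}$, $N=\U^{1}(\Bfra)$, $\pi=\tilde{\sigma}^{0}$, the representation $\rho$, viewed through $\Kfra(\Bfra)/\U^{1}(\Bfra)$, is the inflation of $\cInd_{\tilde{J}^{0}/\U^{1}(\Bfra)}^{\Kfra(\Bfra)/\U^{1}(\Bfra)}\tilde{\sigma}^{0}$.

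Finally I would use the canonical isomorphism $\Kfra(\Bfra)/\U^{1}(\Bfra)\cong \hat{J}/J^{1}$, which identifies $\tilde{J}^{0}/\U^{1}(\Bfra)$ with $\tilde{J}/J^{1}$ and $\tilde{\sigma}^{0}$ with $\tilde{\sigma}$. Under this identification the two induced representations on the quotients coincide, so their inflations to $\hat{J}$ coincide, which is the asserted equality. The only point requiring care is to check that the isomorphism $\Kfra(\Bfra)/\U^{1}(\Bfra)\cong\hat{J}/J^{1}$ used on the $\rho$-side matches the one through which $\tilde{\sigma}$ is defined on the $\cInd$-side; this is tautological from Definition~\ref{defofJhat} and the two intersection identities above, so it is a bookkeeping check rather than a substantive obstacle. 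The main (and essentially only) content of the proof is therefore the compatibility of compact induction with inflation, together with the intersection identities $\Kfra(\Bfra)\cap J^{1}=\U^{1}(\Bfra)$ and $\tilde{J}^{0}\cap J^{1}=\U^{1}(\Bfra)$.
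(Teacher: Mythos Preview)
Your argument is correct, and it is more direct than the paper's. The paper does not invoke the inflation--induction compatibility you use; instead it argues as follows: first it checks that $\rho$ is trivial on $\U^{1}(\Bfra)$ (quoting Lemma~\ref{lem2forfindingdepth0}(3)), then compares dimensions via $(\Kfra(\Bfra):\tilde{J}^{0})=(\hat{J}:\tilde{J})$ and $\dim\tilde{\sigma}^{0}=\dim\tilde{\sigma}$, and finally, using that $\rho$ is \emph{irreducible} (Lemma~\ref{lem2forfindingdepth0}(2)), produces a nonzero $\hat{J}$-map $\rho\to\cInd_{\tilde{J}}^{\hat{J}}\tilde{\sigma}$ by a chain of Frobenius reciprocities. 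Your approach bypasses both the irreducibility of $\rho$ and the Frobenius step: once $J^{1}(\beta,\Afra)\triangleleft\hat{J}$ and the two intersection identities are in hand, the general fact that $\Ind_{H}^{G}\pi\cong\mathrm{infl}\bigl(\Ind_{H/N}^{G/N}\bar{\pi}\bigr)$ for $N\triangleleft G$, $N\subset H$, $\pi|_{N}$ trivial, gives the result by pure transport of structure. The paper's route has the virtue of reusing Lemma~\ref{lem2forfindingdepth0} (already proved for other purposes), while yours is self-contained and would work even without knowing $\rho$ irreducible. One small wording issue: your parenthetical about the index being ``the order of a stabiliser'' is garbled---$\tilde{J}^{0}$ \emph{is} the stabiliser of $\sigma^{0}$ in $\Kfra(\Bfra)$, and $[\Kfra(\Bfra):\tilde{J}^{0}]=l_{0}$ is finite because $\tilde{J}^{0}\supset\U(\Bfra)$ with $\tilde{J}^{0}/\U(\Bfra)$ a nonzero subgroup of $\Kfra(\Bfra)/\U(\Bfra)\cong\Z$---but this does not affect the validity of the argument.
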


\begin{prf}
Since $(\U(\Bfra), \sigma^{0})$ is a simple type of $B^{\times}$ of depth zero, $\rho$ is trivial on $\U^{1}(\Bfra)$ by Lemma \ref{lem2forfindingdepth0} (3).  
Then we can regard $\rho$ as a $\hat{J}(\beta, \Afra)$-representation.  

Since $\rho=\cInd_{\tilde{J}^{0}}^{\Kfra(\Bfra)} \tilde{\sigma}^{0}$, the dimension of $\rho$ is equal to $(\Kfra(\Bfra) : \tilde{J}^{0}) \dim \tilde{\sigma}^{0}$.  
On the other hand, the dimension of $\cInd_{\tilde{J}}^{\hat{J}} \tilde{\sigma}$ is equal to $(\hat{J} : \tilde{J}) \dim \tilde{\sigma}$.  
Since $\tilde{\sigma}$ is an extension of $\tilde{\sigma}^{0}$, we have $\dim \tilde{\sigma}^{0} = \dim \tilde{\sigma}$.  
Moreover, we also have $\Kfra(\Bfra)/\tilde{J}^{0} \cong \hat{J}/\tilde{J}$ and $(\Kfra(\Bfra) : \tilde{J}^{0})=(\hat{J} : \tilde{J})$ as $\hat{J}=\Kfra(\Bfra)J(\beta, \Afra)=\Kfra(\Bfra)\tilde{J}$ and $\Kfra(\Bfra) \cap \tilde{J} = \tilde{J}^{0}$.  
Since $\rho$ is irreducible by \ref{lem2forfindingdepth0} (2), it is enough to show that there exists a nonzero $\hat{J}$-homomorphism $\rho \to \cInd_{\tilde{J}}^{\hat{J}} \tilde{\sigma}$.  

First, since $\hat{J}$ is compact modulo center in $G$ and $\tilde{J}^{0}$ contains the center of $G$, for any subgroups $J' \subset J''$ between $\hat{J}$ and $\tilde{J}^{0}$ we have $\Ind_{J'}^{J''} = \cInd_{J'}^{J''}$.   
By the Frobenius reciprocity, $\Hom_{\tilde{J}} \left( \Ind_{\tilde{J}}^{\hat{J}} \tilde{\sigma}, \tilde{\sigma} \right) \neq 0$.  
Restricting these representations to $\tilde{J}^{0}$, we have $\Hom_{\tilde{J}^{0}} \left( \Ind_{\tilde{J}}^{\hat{J}} \tilde{\sigma}, \tilde{\sigma}^{0} \right) \neq 0$.  
Using the Frobenius reciprocity, we have $\Hom_{\Kfra(\Bfra)} \left( \Ind_{\tilde{J}}^{\hat{J}} \tilde{\sigma}, \Ind_{\tilde{J}^{0}}^{\Kfra(\Bfra)} \tilde{\sigma}^{0} \right) \neq 0$.  
Since $\Kfra(\Bfra)$ is compact modulo center, every $\mathfrak{K}(\mathfrak{B})$-representation of finite length with a central character is semisimple and $\Hom_{\Kfra(\Bfra)} \left( \Ind_{\tilde{J}^{0}}^{\Kfra(\Bfra)} \tilde{\sigma}^{0}, \Ind_{\tilde{J}}^{\hat{J}} \tilde{\sigma} \right) \neq 0$.  

Here, since $J^{1}(\beta, \Afra)$ is normal in $\hat{J}$ and $\tilde{\sigma}$ is trivial on $J^{1}(\beta, \Afra)$, the restriction of $\Ind_{\tilde{J}}^{\hat{J}} \tilde{\sigma}$ to $J^{1}(\beta, \Afra)$ is also trivial.  
Then, if we extend $\Ind_{\tilde{J}^{0}}^{\Kfra(\Bfra)} \tilde{\sigma}^{0} = \rho$ to $\hat{J}=\Kfra(\Bfra)J^1(\beta, \Afra)$ as trivial on $J^{1}(\beta, \Afra)$, there exists a nonzero $\hat{J}$-homomorphism $\rho \to \Ind_{\tilde{J}}^{\hat{J}} \tilde{\sigma}$.  
\end{prf}

The following lemma guarantees the existence of extensions of $\beta$-extensions for simple characters.  

\begin{lem}
\label{exist_ext_of_b-ext}
Let $[\Afra, n, 0, \beta]$ be a simple stratum of $A$ with $\Bfra$ maximal.  
Let $\theta \in \mathscr{C}(\beta, 0, \Afra)$, and let $\kappa$ be a $\beta$-extension of the Heisenberg representation $\eta_{\theta}$ of $\theta$ to $J(\beta, \Afra)$.  
\begin{enumerate}
\item There exists an extension $\hat{\kappa}$ of $\kappa$ to $\hat{J}(\beta, \Afra)$.  
\item Let $\hat{\kappa}'$ be another extension of $\eta_{\theta}$ to $\hat{J}(\beta, \Afra)$.  
Then there exists a character $\chi$ of $\hat{J}(\beta, \Afra)/J^{1}(\beta, \Afra)$ such that $\hat{\kappa}' \cong \hat{\kappa} \otimes \chi$.  
\end{enumerate}
\end{lem}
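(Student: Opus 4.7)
For (1), the key observation is that $J=J(\beta,\Afra)$ is normal in $\hat{J}=\hat{J}(\beta,\Afra)=\Kfra(\Bfra)J$, since $\Kfra(\Bfra)$ normalizes $J$ as recalled before Definition \ref{defofJhat}, and the quotient $\hat{J}/J\cong \Kfra(\Bfra)/\U(\Bfra)\cong \Z$ is free cyclic by Remark \ref{rem_for_Jhat}. Thus it suffices to verify that $\kappa$ is $\hat{J}$-invariant as a $J$-representation: once that is known, the standard trick for extending across a free cyclic quotient will produce $\hat{\kappa}$ with no cohomological obstruction.

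Concretely, pick $g_0\in \Kfra(\Bfra)$ whose image generates $\hat{J}/J$. Since $\kappa$ is a $\beta$-extension, $I_G(\kappa)=J^1 B^\times J^1$ contains $\Kfra(\Bfra)\subset B^\times$; in particular $g_0\in I_G(\kappa)$, and because $g_0$ normalizes $J$ we conclude ${}^{g_0}\kappa\cong \kappa$ as $J$-representations. Choosing $A\in\GL(V)$ intertwining these two representations, where $V$ is the representation space of $\kappa$, and setting $\hat{\kappa}(g_0^n j):=A^n\kappa(j)$ for $n\in\Z$ and $j\in J$ yields a well-defined homomorphism extending $\kappa$; the only identity to check is $A\kappa(j)A^{-1}=\kappa(g_0^{-1}jg_0)$, which holds by the choice of $A$.

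For (2), note that $J^1$ is also normal in $\hat{J}$, because $\Kfra(\Bfra)\subset\Kfra(\Afra)$ (by the maximality of $\Bfra$ and \cite[Lemme 1.6]{S2}) normalizes both $J$ and $\U^1(\Afra)$, hence their intersection $J^1=J\cap \U^1(\Afra)$. Identify the representation spaces of $\hat{\kappa}$ and $\hat{\kappa}'$ by a $J^1$-equivariant isomorphism between their common restriction $\eta_\theta$, so that both act on a single space $V$ with $\hat{\kappa}|_{J^1}=\hat{\kappa}'|_{J^1}=\eta_\theta$. For every $g\in\hat{J}$ and $j\in J^1$, the identity
\[
\hat{\kappa}'(g)\eta_\theta(j)\hat{\kappa}'(g)^{-1}=\eta_\theta(g^{-1}jg)=\hat{\kappa}(g)\eta_\theta(j)\hat{\kappa}(g)^{-1}
\]
shows that $\hat{\kappa}'(g)\hat{\kappa}(g)^{-1}$ commutes with $\eta_\theta(J^1)$, so by Schur's lemma and the irreducibility of $\eta_\theta$ it is a scalar $\chi(g)\in\C^\times$. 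The multiplicativity $\chi(gh)=\chi(g)\chi(h)$ is immediate since scalars commute with everything, and $\chi$ is trivial on $J^1$ by construction, so $\chi$ descends to a character of $\hat{J}/J^1$ with $\hat{\kappa}'\cong\hat{\kappa}\otimes\chi$. The only real content is the invariance check in (1), which hinges crucially on $I_G(\kappa)$ containing $B^\times$ and on $\hat{J}/J$ being free abelian.
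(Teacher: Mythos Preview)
Your argument is essentially identical to the paper's: for (1) both pick a generator of the free cyclic quotient $\hat{J}/J$, use $\Kfra(\Bfra)\subset B^\times\subset I_G(\kappa)$ together with the fact that $\Kfra(\Bfra)$ normalizes $J$ to obtain a $J$-intertwiner, and extend by powers of that intertwiner; for (2) both invoke Schur's lemma on the irreducible restriction $\eta_\theta$ to produce the character $\chi$.

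There is one sign slip in (1). For $\hat\kappa(g_0^n j):=A^n\kappa(j)$ to be multiplicative you need $A^m\kappa(g_0^{-m}jg_0^{m})=\kappa(j)A^m$, i.e.\ $A\kappa(j)A^{-1}=\kappa(g_0 j g_0^{-1})$, so that $A$ intertwines ${}^{g_0}\kappa\to\kappa$ (this is exactly how the paper's $f$ is chosen). With your stated identity $A\kappa(j)A^{-1}=\kappa(g_0^{-1}jg_0)$ one instead gets $A^m\kappa(g_0^{-m}jg_0^{m})A^{-m}=\kappa(g_0^{-2m}jg_0^{2m})$, and the homomorphism check fails. This is purely a convention error and is fixed by replacing $A$ with $A^{-1}$ (or equivalently swapping the direction of the intertwiner); the underlying idea is correct.
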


\begin{prf}
We fix $g \in \Kfra(\Bfra)$ with $v_{\Bfra}(g) = 1$.  
Since $\Kfra(\Bfra) \subset B^{\times} \subset I_{G}(\kappa)$ and $\Kfra(\Bfra)$ normalizes $J(\beta, \Afra)$, we can take a $J(\beta, \Afra)$-isomorphism $f:{}^{g}\kappa \to \kappa$.  
The group $\hat{J}(\beta, \Afra)/J(\beta, \Afra)$ is a cyclic group generated by the image of $g$, and then we can define $\hat{\kappa}$ as
\[
	\hat{\kappa}(g^{l}u) = f^{l} \circ \kappa(u)
\]
for $l \in \Z$ and $u \in J(\beta, \Afra)$.  
It is enough to show $\hat{\kappa}$ is a group homomorphism.  
Let $g_{1}, g_{2} \in \hat{J}(\beta, \Afra)$.  
Then there exist $l_{1}, l_{2} \in \Z$ and $u_{1}, u_{2} \in J(\beta, \Afra)$ such that $g_{i} = g^{l_{i}}u_{i}$ for $i=1,2$.  
We have $g_{1}g_{2} = g^{l_{1}+l_{2}}(g^{-l_{2}}u_{1}g^{l_{2}})u_{2}$ with $g^{-l_{2}}u_{1}g^{l_{2}} \in J(\beta, \Afra)$.  
Therefore we obtain
\begin{eqnarray*}
	\hat{\kappa}(g_{1}g_{2}) & = & f^{l_{1}+l_{2}} \circ \kappa(g^{-l_{2}}u_{1}g^{l_{2}}) \circ \kappa(u_{2}) \\
	& = & f^{l_{1}} \circ \kappa(u_{1}) \circ f^{l_{2}} \circ \kappa(u_{2}) = \hat{\kappa}(g_{1}) \circ \hat{\kappa}(g_{2}), 
\end{eqnarray*}
whence (1) holds.  

Let $\hat{\kappa}'$ be another extension of $\eta_{\theta}$ to $\hat{J}(\beta, \Afra)$.  
Then we have $\Hom_{J^{1}(\beta, \Afra)} (\hat{\kappa}, \hat{\kappa}') = \Hom_{J^{1}(\beta, \Afra)} (\eta_{\theta}, \eta_{\theta}) \cong \C$.  
The group $\hat{J}(\beta, \Afra)$ acts on $\Hom_{J^{1}(\beta, \Afra)} (\hat{\kappa}, \hat{\kappa}') \cong \C$.  
Then as in the proof of Lemma \ref{lem1forfindingdepth0} we obtain $\chi$ and (2) also holds.  
\end{prf}

The following proposition is one of the key points to construct a Yu datum from a S\'echerre datum.  

\begin{prop}
\label{findingdepth0}
Let $(J, \lambda)$ be a maximal simple type associated to a simple stratum $[\Afra, n, 0, \beta]$.  
Let $\theta \in \mathscr{C}(\beta, 0, \Afra)$ be a subrepresentation in $\lambda$, and let $\eta_{\theta}$ be the Heisenberg representation of $\theta$.  
For any extension $\Lambda$ of $\lambda$ to $\tilde{J}$ and any extension $\hat{\kappa'}$ of $\eta_{\theta}$ to $\hat{J}$, there exists an irreducible $\Kfra(\Bfra)$-representation $\rho$ such that
\begin{enumerate}
\item $\rho |_{\U(\Bfra)}$ is trivial on $\U^1(\Bfra)$ and cuspidal as a representation of $\U(\Bfra)/\U^1(\Bfra)$, 
\item $\cInd _{\Kfra(\Bfra)}^{B^{\times}} \rho$ is irreducible and supercuspidal, and
\item regarding $\rho$ as a $\hat{J}$-representation via the isomorphism $\Kfra(\Bfra)/\U^{1}(\Bfra) \cong \hat{J}/J^1$, the representation $\hat{\kappa'} \otimes \rho$ is isomorphic to $\cInd_{\tilde{J}}^{\hat{J}} \Lambda$.  
\end{enumerate}
\end{prop}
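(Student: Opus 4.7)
The plan is to transport the question to the depth-zero maximal simple type $(\U(\Bfra), \sigma)$ of $B^{\times}$ via the standard tensor factorization $\lambda \cong \kappa \otimes \sigma$, and then twist by a character to match the given $\hat{\kappa}'$. First, by Lemma \ref{exist_ext_of_b-ext}(1), I pick an extension $\hat{\kappa}$ of $\kappa$ to $\hat{J}$; applying Lemma \ref{exist_ext_of_b-ext}(2) to the two extensions $\hat{\kappa}$ and $\hat{\kappa}'$ of $\eta_{\theta}$ yields a character $\chi$ of $\hat{J}/J^{1}$ with $\hat{\kappa}' \cong \hat{\kappa} \otimes \chi$. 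Set $\tilde{\kappa} := \hat{\kappa}|_{\tilde{J}}$. Because $\eta_{\theta} = \tilde{\kappa}|_{J^{1}}$ is irreducible and $J^{1}$ is normal in $\tilde{J}$, the space $\tilde{\sigma} := \Hom_{J^{1}}(\tilde{\kappa}, \Lambda)$ carries a natural $\tilde{J}$-action trivial on $J^{1}$, restricts to $\sigma$ on $J$, and the evaluation map gives a $\tilde{J}$-equivariant isomorphism $\Lambda \cong \tilde{\kappa} \otimes \tilde{\sigma}$. The projection formula then produces
\[
\cInd_{\tilde{J}}^{\hat{J}} \Lambda \;\cong\; \hat{\kappa} \otimes \cInd_{\tilde{J}}^{\hat{J}} \tilde{\sigma}.
\]

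Next, the isomorphism $\tilde{J}/J^{1} \cong \tilde{J}^{0}/\U^{1}(\Bfra)$, where $\tilde{J}^{0} := \tilde{J} \cap \Kfra(\Bfra)$, transports $\tilde{\sigma}$ to a representation $\tilde{\sigma}^{0}$ of $\tilde{J}^{0}$, and $(\tilde{J}^{0}, \tilde{\sigma}^{0})$ is a maximal extension of the depth-zero maximal simple type $(\U(\Bfra), \sigma)$ of $B^{\times}$. Lemma \ref{inf_and_ind} then identifies $\cInd_{\tilde{J}}^{\hat{J}} \tilde{\sigma}$ with $\rho_{0} := \cInd_{\tilde{J}^{0}}^{\Kfra(\Bfra)} \tilde{\sigma}^{0}$, viewed on $\hat{J}$ through $\Kfra(\Bfra)/\U^{1}(\Bfra) \cong \hat{J}/J^{1}$. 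I then define
\[
\rho \;:=\; \chi^{-1} \otimes \rho_{0},
\]
a $\Kfra(\Bfra)$-representation, regarding $\chi^{-1}$ as a character of $\Kfra(\Bfra)$ via the same identification. Condition (3) follows at once:
\[
\hat{\kappa}' \otimes \rho \;=\; (\hat{\kappa} \otimes \chi) \otimes (\chi^{-1} \otimes \rho_{0}) \;\cong\; \hat{\kappa} \otimes \rho_{0} \;\cong\; \cInd_{\tilde{J}}^{\hat{J}} \Lambda.
\]

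For (1), Lemma \ref{lem2forfindingdepth0}(3) (applied to the depth-zero simple type on the $B^{\times}$ side) gives triviality of $\rho_{0}$ on $\U^{1}(\Bfra)$, while a Mackey decomposition along $\U(\Bfra) \subset \tilde{J}^{0} \subset \Kfra(\Bfra)$ exhibits $\rho_{0}|_{\U(\Bfra)}$ as a finite direct sum of $\Kfra(\Bfra)$-conjugates of the cuspidal $\sigma$; since $\chi^{-1}$ is trivial on $\U^{1}(\Bfra)$, both properties survive the twist. For (2), the projection formula yields $\cInd_{\Kfra(\Bfra)}^{B^{\times}} \rho \cong \cInd_{\tilde{J}^{0}}^{B^{\times}}(\chi^{-1}|_{\tilde{J}^{0}} \otimes \tilde{\sigma}^{0})$, and Lemma \ref{lem2forfindingdepth0}(1) applied to the depth-zero simple type $(\U(\Bfra), \chi^{-1}|_{\U(\Bfra)} \otimes \sigma)$ with extension $(\tilde{J}^{0}, \chi^{-1}|_{\tilde{J}^{0}} \otimes \tilde{\sigma}^{0})$ finishes the argument. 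The main technical obstacle is verifying that this twisted pair is a genuine maximal extension, i.e.\ that $I_{B^{\times}}(\chi^{-1}|_{\U(\Bfra)} \otimes \sigma) = \tilde{J}^{0}$; this reduces to checking that $\chi^{-1}|_{\U(\Bfra)}$, being a character of $\U(\Bfra)/\U^{1}(\Bfra) \cong \GL_{m_{E}}(k_{D_{E}})$ and hence factoring through $\Nrd_{B/E}$, is invariant under $B^{\times}$-conjugation and so does not alter the intertwining.
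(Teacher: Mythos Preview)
Your proof is correct and follows essentially the same route as the paper's: factor $\Lambda$ into an extension of $\eta_{\theta}$ tensored with a depth-zero piece, transport the induced depth-zero piece to $\Kfra(\Bfra)$ via Lemma~\ref{inf_and_ind}, and twist by a character to match the prescribed $\hat{\kappa}'$, invoking Lemma~\ref{lem2forfindingdepth0} for conditions (1) and (2). The one substantive difference is that the paper picks an \emph{arbitrary} extension $\tilde{\sigma}$ of $\sigma$ to $\tilde{J}$ and then corrects by a second character $\chi_{2}$ obtained from Lemma~\ref{lem1forfindingdepth0}, whereas you build $\tilde{\sigma}$ directly as $\Hom_{J^{1}}(\tilde{\kappa},\Lambda)$, which automatically satisfies $\Lambda\cong\tilde{\kappa}\otimes\tilde{\sigma}$ and thereby eliminates the need for $\chi_{2}$; this is a mild streamlining but not a different strategy. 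Your explicit Mackey argument for the cuspidality portion of (1) and your check that the twist by $\chi^{-1}$ preserves the intertwining set (because any character of $\U(\Bfra)/\U^{1}(\Bfra)$ factors through $\Nrd_{B/E}$) are points the paper leaves implicit, so in that respect your write-up is slightly more complete.
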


\begin{prf}
Let $\lambda = \kappa \otimes \sigma$ be a decomposition as in Definition \ref{defofsimpletype}.  
We take an extension $\hat{\kappa}$ of $\kappa$ to $\hat{J}$, which exists by Lemma \ref{exist_ext_of_b-ext} (1).  
Then there exists a character $\chi_{1}$ of $\hat{J}/J^{1}(\beta, \Afra)$ such that $\hat{\kappa} \cong \hat{\kappa}' \otimes \chi_{1}$ by Lemma \ref{exist_ext_of_b-ext}.  
Let $\tilde{\sigma}$ be an extension of $\sigma$ to $\tilde{J}$.  
Then the $\tilde{J}$-representations $\Lambda$ and $\hat{\kappa}' \otimes \chi_{1} \otimes \tilde{\sigma}$ are extensions of $\lambda$.  
By Lemma \ref{lem1forfindingdepth0}, there exists a character $\chi_2$ of $\tilde{J}$ such that $\Lambda \cong \hat{\kappa}' \otimes \chi_1 \otimes \tilde{\sigma} \otimes \chi_2$.  
Since $\chi_2$ is trivial on $J$ and $\hat{J}/J \cong \Z$, we can extend $\chi_2$ to $\hat{J}$.  
Let $J'$ be a subgroup in $\Kfra(\Bfra)$ corresponding to $\tilde{J}$ via the isomorphism $\Kfra(\Bfra)/\U(\Bfra) \cong \hat{J}/J$.  
Then $(J', \tilde{\sigma} \otimes \chi_1 \chi_2)$ is a maximal extension of the depth zero simple type $(\U(\Bfra), \sigma)$.  
Therefore we obtain a $\Kfra(\Bfra)$-representation $\rho = \cInd_{J'}^{\Kfra(\Bfra)} (\tilde{\sigma} \otimes \chi_1 \chi_2)$.  
Regarding $\rho$ as a $\hat{J}$-representation, $\rho$ is equal to $\cInd_{\tilde{J}}^{\hat{J}} (\tilde{\sigma} \otimes \chi_1 \chi_2)$ by Lemma \ref{inf_and_ind}.  
Then we have 
\[
	\hat{\kappa'} \otimes \rho = \hat{\kappa'} \otimes \cInd_{\tilde{J}}^{\hat{J}} (\tilde{\sigma} \otimes \chi_1 \chi_2) \cong \cInd_{\tilde{J}}^{\hat{J}} (\hat{\kappa'} \otimes \tilde{\sigma} \otimes \chi_1 \chi_2) \cong \cInd_{\tilde{J}}^{\hat{J}} \Lambda.  
\]
Therefore $\rho$ satisfies the desired conditions by Lemma \ref{lem2forfindingdepth0}.  
\end{prf}

Conversely, the following proposition is used to construct S\'echerre data from Yu data.  

\begin{prop}
\label{depth0ofYu}
Let $(x, (G^{i}), (\rbf_{i}), (\Phibf_{i}), \rho)$ be a Yu datum of $G \cong \GL_{m}(D)$.  
\begin{enumerate}
\item Then $[x]$ is a vertex in $\Bscr^{R}(G^{0}, F)$.  
\item There exists a simple type $(G^{0}(F)_{x}, \sigma)$ of depth zero and a maximal extension $(\tilde{J}, \tilde{\sigma})$ of $(G^{0}(F)_{x}, \sigma)$ such that $\rho \cong \Ind_{\tilde{J}}^{G^{0}(F)_{[x]}} \tilde{\sigma}$.  
\end{enumerate}
\end{prop}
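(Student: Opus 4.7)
The plan is to transport the entire statement inside $G^{0}(F)$, which by Corollary \ref{findTTLS} and Lemma \ref{detofTLG} takes the form $G^{0} \cong \Res_{E/F} \underline{\Aut}_{D \otimes_{F} E}(V)$ for some tamely ramified field extension $E/F$ inside $A$; in particular $G^{0}(F) = B^{\times}$ with $B = \Cent_{A}(E) \cong \M_{m_{E}}(D_{E})$. By Proposition \ref{resultofBL}, the point $x \in \Bscr^{E}(G^{0},F)$ is identified with a point of the building of $\underline{\Aut}_{D_{E}}(W)$ over $E$, and the Moy--Prasad data of $G^{0}(F)$ at $x$ are read off from the associated $\ofra_{D_{E}}$-chain in $W$.

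For (1), I would argue by contradiction. If $[x]$ is not a vertex, then by Proposition \ref{beingvertex} the associated $\ofra_{D_{E}}$-chain has period $e \geq 2$, so the finite reductive quotient $\mathsf{M}_{x} := G^{0}(F)_{x,0}/G^{0}(F)_{x,0+}$ is of the form $\prod_{i=1}^{e}\GL_{n_{i}}(k_{D_{E}})$, and it embeds as a proper Levi subgroup of $\GL_{m_{E}}(k_{D_{E}})$, the reductive quotient at a vertex containing the facet of $[x]$. Since $\rho$ is trivial on $G^{0}(F)_{x,0+}$, the restriction $\rho|_{G^{0}(F)_{x,0}}$ factors through $\mathsf{M}_{x}$; a standard Mackey and Jacquet-module computation then exhibits $\cInd_{G^{0}(F)_{[x]}}^{G^{0}(F)} \rho$ as a subquotient of a parabolic induction from a proper $F$-Levi of $G^{0}$, contradicting supercuspidality. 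Hence $[x]$ is a vertex, and by Proposition \ref{beingvertex} the associated hereditary $\ofra_{E}$-order $\Bfra \subset B$ is maximal, so $G^{0}(F)_{x} = \U(\Bfra)$ and $G^{0}(F)_{[x]} = \Kfra(\Bfra)$ by Proposition \ref{compoffiltG}(3).

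For (2), the restriction $\rho|_{\U(\Bfra)}$ is trivial on $\U^{1}(\Bfra) = G^{0}(F)_{x,0+}$, hence factors through $\U(\Bfra)/\U^{1}(\Bfra) \cong \GL_{m_{E}}(k_{D_{E}})$. Since $\U(\Bfra)$ is normal in $\Kfra(\Bfra)$, Clifford theory yields a decomposition $\rho|_{\U(\Bfra)} \cong \bigoplus_{i} \sigma_{i}$ into irreducible constituents permuted transitively by $\Kfra(\Bfra)/\U(\Bfra) \cong \Z$. Choosing $\sigma := \sigma_{1}$, supercuspidality of $\cInd \rho$ forces $\sigma$ to be a cuspidal representation of $\GL_{m_{E}}(k_{D_{E}})$, so $(\U(\Bfra), \sigma)$ is a simple type of depth zero of $B^{\times}$ in the sense of Definition \ref{defofsimpletype}. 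Setting $\tilde{J} := \Stab_{\Kfra(\Bfra)}(\sigma)$, Clifford theory produces an irreducible extension $\tilde{\sigma}$ of $\sigma$ to $\tilde{J}$ with $\rho \cong \Ind_{\tilde{J}}^{\Kfra(\Bfra)} \tilde{\sigma}$; Remark \ref{rem_for_Jhat}(2) applied inside $B^{\times}$ ensures that $\tilde{J}$ coincides with $\tilde{J}(\sigma) = I_{B^{\times}}(\sigma)$, so $(\tilde{J}, \tilde{\sigma})$ is a maximal extension of $(\U(\Bfra), \sigma)$ in the required sense.

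The main obstacle is the vertex assertion (1): one must argue cleanly that a non-vertex choice of $[x]$ is incompatible with supercuspidality of $\cInd \rho$, which requires identifying the reductive quotient at a non-maximal parahoric as a Levi of the reductive quotient at an adjacent vertex and relating this finite-field Levi to a genuine $F$-parabolic of $G^{0}$ via the Weil-restriction structure of Lemma \ref{detofTLG}. As a shortcut, one can instead quote S\'echerre's classification of irreducible supercuspidals of $B^{\times} \cong \GL_{m_{E}}(D_{E})$ recalled in Section \ref{Secherre}, which directly produces a pair $(\Bfra, \sigma)$ with $\Bfra$ maximal and $\sigma$ cuspidal realizing $\cInd_{G^{0}(F)_{[x]}}^{G^{0}(F)} \rho$; uniqueness of conjugacy classes of vertices producing a given depth-zero supercuspidal then forces $G^{0}(F)_{x} = \U(\Bfra)$ and delivers (1) and (2) simultaneously.
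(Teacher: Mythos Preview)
Your approach to Part (2) via Clifford theory is correct and in fact cleaner than the paper's. Once $[x]$ is known to be a vertex, $\U(\Bfra)$ is normal in $\Kfra(\Bfra)$ with infinite cyclic quotient, and your argument (take an irreducible constituent $\sigma$ of $\rho|_{\U(\Bfra)}$, let $\tilde{J}$ be its inertia group, extend and induce) goes through; cuspidality of $\sigma$ follows because a non-cuspidal constituent would give $\cInd\rho$ a non-zero Jacquet module, and $\tilde{J}=I_{B^{\times}}(\sigma)$ follows from the standard intertwining computation for depth-zero types (cf.\ \cite{GSZ}). The paper instead proves (2) by an explicit Mackey decomposition together with the Cartan decomposition in $\GL_{m_{E_{0}}}(D_{E_{0}})$: it shows that the $G^{0}(F)_{x,0+}$-fixed part of $\pi=\cInd\rho$ is already contained in $\Ind_{\tilde{J}}^{G^{0}(F)_{[x]}}\tilde{\sigma}$, so $\rho$ sits inside this induced representation and hence equals it by irreducibility. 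Your route avoids that computation entirely.

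For Part (1), however, your ``shortcut'' has a genuine gap. Quoting the classification tells you that $\pi$ contains a depth-zero type $(\U(\Bfra'),\sigma)$ at \emph{some} vertex $[y]$; uniqueness of the $G^{0}(F)$-conjugacy class of such vertices only says any two such $[y]$ are conjugate, not that the given point $[x]$ is one of them. Nothing you have written excludes the possibility that $[x]$ sits in the interior of a higher-dimensional facet while $\rho$ still compactly induces to $\pi$. Your earlier contradiction sketch is the right idea, but you explicitly flag it as unfinished and then replace it by the shortcut, so as written the proof of (1) is incomplete. The paper fills exactly this gap: it takes the vertex $[y]$ produced by the classification, conjugates so that $G^{0}(F)_{y}\supset G^{0}(F)_{x}$ (i.e.\ $[y]$ lies in the closure of the facet of $[x]$), and then shows via Frobenius reciprocity and Mackey that $\sigma$ has a non-zero vector fixed by $G^{0}(F)_{x,0}G^{0}(F)_{y,0+}$. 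The image of $G^{0}(F)_{x,0}$ in the reductive quotient $G^{0}(F)_{y}/G^{0}(F)_{y,0+}$ is a parabolic subgroup, so cuspidality of $\sigma$ forces this parabolic to be the whole group, i.e.\ $[x]=[y]$. This is precisely the step that converts ``some vertex works'' into ``$[x]$ is a vertex'', and it is missing from your argument.
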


\begin{prf}
In the beginning, $G^{0}$ is a tame twisted Levi subgroup in $G$ with $Z(G^{0})/Z(G)$ anisotropic.  
Then there exists a tamely ramified field extension $E_{0}/F$ in $A \cong \M_{m}(D)$ such that $G^{0}(F)$ is the multiplicative group of $\Cent_{A}(E_{0})$.  
Since $\Cent_{A}(E_{0})$ is a central simple $E_{0}$-algebra, there exists $m_{E_{0}} \in \Z_{>0}$ and a division $E_{0}$-algebra $D_{E_{0}}$ such that $\Cent_{A}(E_{0}) \cong \M_{m_{E_{0}}}(D_{E_{0}})$.  

By our assumption, $\pi := \cInd_{G^{0}(F)_{[x]}}^{G^0(F)} \rho$ is an irreducible and supercuspidal representation of depth zero.  
Then there exists $y \in \Bscr^{E}(G^{0}, F)$ and an irreducible $G^{0}(F)_{y}$-representation $\sigma$ such that $[y]$ is a vertex and $(G^{0}(F)_{y}, \sigma)$ is a $[G^0(F), \pi]_{G^0(F)}$-type.  
Since vertices in $\Bscr^{R}(G^{0}, F)$ are permuted transitively by the action of $G^{0}(F)$, we may assume $G^{0}(F)_{y} \supset G^{0}(F)_{x}$.  

We show that $\Ind_{G^{0}(F)_{x}}^{G^{0}(F)_{y}} \Res_{G^{0}(F)_{x}}^{G^{0}(F)_{[x]}} \rho$ has a non-zero $G^{0}(F)_{y,0+}$-fixed part.  
Since $G^{0}(F)_{x} \cap G^{0}(F)_{y,0+} \subset G^{0}(F)_{x,0+}$, the representation $\rho$ is trivial on $G^{0}(F)_{x} \cap G^{0}(F)_{y,0+}$.  
Then $\Ind_{G^{0}(F)_{x} \cap G^{0}(F)_{y,0+}} ^{G^{0}(F)_{y,0+}} \Res_{G^{0}(F)_{x} \cap G^{0}(F)_{y,0+}} ^{G^{0}(F)_{[x]}} \rho$ has a non-zero $G^{0}(F)_{y,0+}$-fixed part by the Frobenius reciprocity.   
However, 
\[
	\Ind_{G^{0}(F)_{x} \cap G^{0}(F)_{y,0+}} ^{G^{0}(F)_{y,0+}} \Res_{G^{0}(F)_{x} \cap G^{0}(F)_{y,0+}} ^{G^{0}(F)_{[x]}} \rho \subset \Res_{G^{0}(F)_{y,0+}}^{G^{0}(F)_{y}} \Ind_{G^{0}(F)_{x}}^{G^{0}(F)_{y}} \Res_{G^{0}(F)_{x}}^{G^{0}(F)_{[x]}} \rho
\]
by the Mackey decomposition.  
Therefore $\Ind_{G^{0}(F)_{x}}^{G^{0}(F)_{y}} \Res_{G^{0}(F)_{x}}^{G^{0}(F)_{[x]}} \rho$ has a non-zero $G^{0}(F)_{y,0+}$-fixed part.  

Since $\Ind_{G^{0}(F)_{x}}^{G^{0}(F)_{y}} \Res_{G^{0}(F)_{x}}^{G^{0}(F)_{[x]}} \rho \subset \Res_{G^{0}(F)_{y}}^{G^0} \cInd_{G^{0}(F)_{[x]}}^{G^0} \rho = \Res_{G^{0}(F)_{y}}^{G^0} \pi$ by the Mackey decomposition, we also may assume $\sigma \subset \Ind_{G^{0}(F)_{x}}^{G^{0}(F)_{y}} \Res_{G^{0}(F)_{x}}^{G^{0}(F)_{[x]}} \rho$ by $G^{0}(F)_{[x]}$-conjugation if necessary by \cite[Theorem 5.5(ii)]{GSZ}.  
By the Frobenius reciprocity, $\Res_{G^{0}(F)_{x}}^{G^{0}(F)_{y}} \sigma$ is a subrepresentation of $\Res_{G^{0}(F)_{x}}^{G^{0}(F)_{[x]}} \rho$, which is trivial on $G^{0}(F)_{x,0+}$.  
Therefore, $\sigma$ has a non-zero $G^{0}(F)_{x,0}G^{0}(F)_{y,0+}$-fixed part.  
Since the image of $G^{0}(F)_{x,0}$ in $G^{0}(F)_{y}/G^{0}(F)_{y,0+}$ is a parabolic subgroup of $G^{0}(F)_{y}/G^{0}(F)_{y,0+}$ and $\sigma$ is cuspidal when we regard $\sigma$ as a $G^{0}(F)_{y}/G^{0}(F)_{y,0+}$-representation, we have $G^{0}(F)_{x,0}G^{0}(F)_{y,0+} = G^{0}(F)_{y}$, which implies $[x]=[y]$, that is, (1) holds.  

To show (2), let $(\tilde{J}, \tilde{\sigma})$ be the unique extension of $(G^{0}(F)_{x}, \sigma)$ such that $\pi \cong \cInd_{\tilde{J}}^{G} \tilde{\sigma}$.  

We show the $G^{0}(F)_{x,0+}$-fixed part in $\pi$ is contained in $\Ind_{\tilde{J}}^{G^{0}(F)_{[x]}} \tilde{\sigma}$.  
By the Mackey decomposition, we have 
\[
	\pi \cong \bigoplus_{g \in G^{0}(F)_{x,0+} \backslash G / \tilde{J} } \Ind_{G^{0}(F)_{x,0+} \cap {}^{g}\tilde{J}}^{G^{0}(F)_{x,0+}} \Res_{G^{0}(F)_{x,0+} \cap {}^{g}\tilde{J}}^{{}^{g}\tilde{J}} {}^{g} \tilde{\sigma}.  
\]
We put $\tau(g)=\Ind_{G^{0}(F)_{x,0+} \cap {}^{g}\tilde{J}}^{G^{0}(F)_{x,0+}} \Res_{G^{0}(F)_{x,0+} \cap {}^{g}\tilde{J}}^{{}^{g}\tilde{J}} {}^{g} \tilde{\sigma}$.  
Suppose $\tau(g)$ has a non-zero $G^{0}(F)_{x,0+}$-fixed part.  
Then $\Hom_{G^{0}(F)_{x,0+} \cap {}^{g}\tilde{J}}(\mathbf{1}, {}^{g}\tilde{\sigma}) \neq 0$ by the Frobenius reciprocity.  
Here, since $[x]$ is a vertex, $G^{0}(F)_{x}$ is a maximal compact open subgroup.  
Therefore we may assume $G^{0}(F)_{x}=\GL_{m_{E_{0}}}(\ofra_{D_{E_{0}}})$ by $G^{0}(F)$-conjugation if necessary.  
Then there exist $k,k' \in G^{0}(F)_{x}$ and a diagonal matrix $g'$ such that the $(i,i)$-coefficient of $g'$ is $\varpi_{D_{E_{0}}}^{a_i}$ with $a_1 \geq a_2 \geq \cdots \geq a_{m}$, and such that $g=kg'k'$.  
Since $G^{0}(F)_{x,0+}$ is normal in $G^{0}(F)_{x}$ and $G^{0}(F)_{x} \subset \tilde{J}$, the condition $\Hom_{G^{0}(F)_{x,0+} \cap {}^{g}\tilde{J}}(\mathbf{1}, {}^{g}\tilde{\sigma}) \neq 0$ holds if and only if $\Hom_{{}^{{(g')}^{-1}}G^{0}(F)_{x,0+} \cap \tilde{J}}(\mathbf{1}, \tilde{\sigma}) \neq 0$.  
Therefore $\sigma$ has a non-zero $G^{0}(F)_{x,0+}\left( {}^{{(g')}^{-1}}G^{0}(F)_{x,0+} \cap \tilde{J} \right)$-fixed part.  
If $a_i > a_{i+1}$ for some $i$, the image of ${}^{{(g')}^{-1}}G^{0}(F)_{x,0+} \cap \tilde{J}$ in $G^{0}(F)_{x}/G^{0}(F)_{x,0+}$ is a proper parabolic subgroup, which is a contradiction since $\sigma$ is cuspidal.  
Then $g' \in D_{E_{0}}^{\times} \subset G^{0}(F)_{[x]}$ and $g=kg'k' \in G^{0}(F)_{[x]}$.  
Therefore the $G^{0}_{x,0+}$-fixed part in $\pi$ is contained in $\bigoplus_{g \in G^{0}(F)_{x,0+} \backslash G^{0}(F)_{[x]} / \tilde{J} } \tau(g) = \cInd_{\tilde{J}}^{G^{0}(F)_{[x]}} \tilde{\sigma}$.  

Then we have $\rho \subset \Ind_{\tilde{J}}^{G^{0}(F)_{[x]}} \tilde{\sigma}$.  
Since these representations are irreducible, we obtain $\rho = \Ind_{\tilde{J}}^{G^{0}(F)_{[x]}} \tilde{\sigma}$.  
\end{prf}

\section{Factorization of tame simple characters}

\label{Factorization}
Let $[\Afra, n, 0, \beta]$ be a tame simple stratum of $A$.  
If $n=0$, suppose $\beta \in \ofra_{F}^{\times}$.  
By Proposition \ref{appfortame}, there exists a defining sequence $\left( [\Afra, n, r_i, \beta_i] \right)_{i=0, 1, \ldots, s}$ of $[\Afra, n, 0, \beta]$ such that
\begin{enumerate}
\item $F[\beta_{i}] \supsetneq F[\beta_{i+1}]$, 
\item $\beta_{i}-\beta_{i+1}$ is minimal over $F[\beta_{i+1}]$ and
\item $v_{\Afra}(\beta_{i}-\beta_{i+1})=k_{0}(\beta_{i}, \Afra)=-r_{i+1}$
\end{enumerate}
for $i=0, 1, \ldots, s-1$.  

We put $E_i=F[\beta_i]$.  
Let $B_{i}$ be the centralizer of $E_i$ in $A$.  
Let $c_{i}=\beta_{i}-\beta_{i+1}$ for $i=0, \ldots, s-1$ and let $c_{s}=\beta_{s}$.  

\begin{prop}
\label{factchar}
Let $0 \leq t < -k_{0}(\beta, \Afra)$.  
Let $\theta \in \mathscr{C}(\beta, t, \Afra)$.  
Then for $i=0, 1, \ldots, s$ there exists a smooth character $\phi_{i}$ of $E_{i}^{\times}$ such that we have $\theta = \prod_{i=0}^{s} \theta^{i}$, where the characters $\theta^{i}$ of $H^{t+1}(\beta, \Afra)$ are defined as in the following:  
\begin{enumerate}
\item $\theta^{i} | _{B_{i}^{\times} \cap H^{t+1}(\beta, \Afra)} = \phi_i \circ \Nrd_{B_i/E_i}$, and
\item $\theta^{i} | _{H^{t_{i}+1}(\beta, \Afra)} = \psi_{c_i}$, where $t_{i}=\max \left\{ t, \lfloor -v_{\Afra}(c_{i})/2 \rfloor \right\}$.  
\end{enumerate}
\end{prop}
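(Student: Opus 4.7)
The plan is to construct each $\theta^i$ directly as a character of $H^{t+1}(\beta,\Afra)$ by prescribing it on the two pieces of a decomposition of that group, then to verify the product identity $\theta=\prod_{i=0}^s\theta^i$ by induction on the length $s$ of the defining sequence. The structural backbone is the decomposition
\[
H^{t+1}(\beta,\Afra)=\bigl(B_i^{\times}\cap H^{t+1}(\beta,\Afra)\bigr)\cdot H^{t_i+1}(\beta,\Afra),
\]
obtained for each $i$ by intersecting the product formula of Lemma \ref{presenofHJ} (1) with $\U^{t+1}(\Afra)$. Conditions (1) and (2) prescribe $\theta^i$ on each factor, and consistency on the intersection $B_i^{\times}\cap H^{t_i+1}(\beta,\Afra)$ requires the identity $\phi_i\circ\Nrd_{B_i/E_i}=\psi_{c_i}$ there. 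I would ensure this by first defining $\phi_i$ on a sufficiently deep filtration $1+\pfra_{E_i}^{m_i}$ by $\phi_i(1+z):=\psi(\Tr_{E_i/F}(c_i z))$, and then extending freely to $E_i^{\times}$ via $E_i^{\times}\cong\varpi_{E_i}^{\Z}\times\ofra_{E_i}^{\times}$; the compatibility check uses the transitivity $\Trd_{A/F}|_{B_i}=\Tr_{E_i/F}\circ\Trd_{B_i/E_i}$, the centrality $c_i\in E_i\subset Z(B_i)$, and the linearization $\Nrd_{B_i/E_i}(1+y)\equiv 1+\Trd_{B_i/E_i}(y)$ at sufficient depth. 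Multiplicativity of $\theta^i$ then follows from standard pro-$p$ commutator estimates together with multiplicativity of the reduced norm.

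For the product identity $\theta=\prod_{i=0}^s\theta^i$, I would induct on $s$. In the base case $s=0$, $\beta$ is minimal over $F$ with $c_0=\beta$, so one simply puts $\theta^0:=\theta$; condition (2) is furnished by Definition \ref{defofsimpch} (3) since $H^{t_0+1}(\beta,\Afra)\subset H^{\lfloor n/2\rfloor+1}(\beta,\Afra)$, while condition (1) is the content of Definition \ref{defofsimpch} (2) after extending the resulting partial character on a subgroup of $\ofra_{E_0}^{\times}$ freely to a character $\phi_0$ of $E_0^{\times}$. For $s\geq 1$, Definition \ref{defofsimpch} (4) supplies $\theta'\in\mathscr{C}(\beta_1,t_0,\Afra)$ (note $t'=t_0$) with $\theta|_{H^{t_0+1}(\beta,\Afra)}=\psi_{c_0}\theta'$. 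The truncated defining sequence $([\Afra,n,r_i,\beta_i])_{i=1}^s$ of $[\Afra,n,r_1,\beta_1]$ has length $s-1$ and inherits our niceness hypotheses, so the induction yields characters $\phi_1,\ldots,\phi_s$ and a factorization $\theta'=\prod_{i=1}^s(\theta')^i$ on $H^{t_0+1}(\beta_1,\Afra)=H^{t_0+1}(\beta,\Afra)$ (the last equality from the level-comparison lemma of \S\ref{Secherre} above $\lfloor r_1/2\rfloor$). Using $E_0\supset E_1\supset\cdots\supset E_s$ and thus $B_0\subset B_1\subset\cdots\subset B_s$, one then identifies $\theta^i|_{H^{t_0+1}(\beta,\Afra)}=(\theta')^i$ for $i\geq 1$ (the uniqueness coming from conditions (1) and (2) restricted to the intersection, after checking compatibility of the $\phi_i$ across the two inductive constructions). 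Hence $\prod_{i=0}^s\theta^i$ and $\theta=\psi_{c_0}\theta'$ agree on $H^{t_0+1}(\beta,\Afra)$, and the remaining freedom in $\phi_0$ on $\ofra_{E_0}^{\times}$ is pinned down so that the agreement extends to the $B_0^{\times}$-factor of the decomposition, an extension guaranteed by Definition \ref{defofsimpch} (2).

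The main obstacle will be establishing the product decomposition of $H^{t+1}(\beta,\Afra)$ in its truncated form (Lemma \ref{presenofHJ} is stated only at $t=0$), and carrying out the pro-$p$ bookkeeping that shows the prescriptions for $\theta^i$ yield genuine multiplicative characters whose $\phi_i$'s can be chosen coherently with the inductive hypothesis. Modulo these technical points, the argument parallels Howe's original factorization in the split tame case.
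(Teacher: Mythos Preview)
Your inductive scheme is the paper's: reduce via Definition~\ref{defofsimpch}(4) to $\theta'\in\mathscr{C}(\beta_1,t',\Afra)$ with $t'=t_0$, factor $\theta'=\prod_{i=1}^s\theta'^i$ inductively on $H^{t'+1}(\beta_1,\Afra)=H^{t'+1}(\beta,\Afra)$, extend each $\theta'^i$ to $H^{t+1}(\beta,\Afra)$ using the inductively supplied $\phi_i$ and the product decomposition, and treat $i=0$ last. The paper streamlines your execution in two places. First, it does not construct $\theta^0$ independently and then ``pin down $\phi_0$'' to force agreement; it simply \emph{defines} $\theta^0:=\theta\cdot\prod_{i=1}^s(\theta^i)^{-1}$ and verifies (1) and (2) for this quotient directly---(1) via the transitivity $\Nrd_{B_i/E_i}|_{B_0^\times}=\Nrm_{E_0/E_i}\circ\Nrd_{B_0/E_0}$, and (2) from $\theta|_{H^{t'+1}}=\psi_{c_0}\theta'$---which eliminates the coherence check between your first-paragraph ``free extension of $\phi_i$'' and the inductive $\phi_i$. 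Second, the multiplicativity of the extended $\theta^i$ does not rest on generic pro-$p$ commutator estimates but on the \emph{exact} identity $\psi_{c_i}(b^{-1}hb)=\psi_{c_i}(h)$ for $b\in B_i^\times\cap H^{t+1}(\beta,\Afra)$ and $h\in H^{t_i+1}(\beta,\Afra)$: since $c_i\in E_i=Z(B_i)$ commutes with $b$, one has $\Trd_{A/F}\bigl(c_i b^{-1}(h-1)b\bigr)=\Trd_{A/F}\bigl(b^{-1}c_i(h-1)b\bigr)=\Trd_{A/F}\bigl(c_i(h-1)\bigr)$, and together with the normality of $H^{t_i+1}(\beta,\Afra)=H^{t_i+1}(\beta_i,\Afra)$ under $B_i^\times\cap\Kfra(\Afra)$ this yields the homomorphism property cleanly.
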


\begin{prf}
We show this proposition by induction on the length $s$ of the defining sequence.  

First, suppose that $s=0$, that is, $\beta$ is minimal over $F$.  
We have $\theta = \theta^{0}$.  
Then it is enough to show that $\theta$ satisfies (1) and (2).  
Since $\theta$ is simple, $\theta | B_{0}^{\times} \cap H^{t+1}(\beta, \Afra)$ factors through $\Nrd_{B_{0}/E_{0}}$.  
Then there exists a character $\phi_{0}$ of $E_{0}^{\times}$ such that $\theta = \phi_{0} \circ \Nrd_{B_{0}/E_{0}}$, whence (1) holds.  
We have $v_{\Afra}(c_0)=v_{\Afra}(\beta)=-n$ and $t_{0}=\max \{ t, \lfloor -v_{\Afra}(c_{0})/2 \rfloor \} \geq \lfloor n/2 \rfloor$.  
Then we have $H^{t_{0}+1}(\beta, \Afra) \subset H^{\lfloor n/2 \rfloor +1}(\beta, \Afra)$.  
Since $\theta$ is simple, we have $\theta|_{H^{t_{0}+1}}(\beta, \Afra)=\psi_{\beta}=\psi_{c_{0}}$, whence (2) also holds.  

Next, suppose that $s>0$, that is, $\beta$ is not minimal over $F$.  
We put $t'=\max \{ t, \lfloor -k_{0}(\beta, \Afra)/2 \rfloor \}$.  
Since $k_{0}(\beta, \Afra)=v_{\Afra}(c_{0})$, we have $t' = \max \{ t, \lfloor -v_{\Afra}(c_{0})/2 \rfloor \}=t_{0}$.  
Since $\theta$ is simple, there exists $\theta' \in \mathscr{C}(\beta, t', \Afra)$ such that $\theta |_{H^{t'+1}(\beta, \Afra)} =  \psi_{c_0}\theta'$.  
By induction hypothesis, for $i=1, \ldots, s$, there exist a smooth character $\phi_{i}$ of $E_{i}^{\times}$ and a smooth character $\theta'^{i}$ of $H^{t'+1}(\beta_1, \Afra)=H^{t'+1}(\beta, \Afra)$ such that $\theta'^{i}|_{B_{i}^{\times} \cap H^{t'+1}(\beta, \Afra)}=\phi_{i} \circ \Nrd_{B_{i}/E_{i}}$ and $\theta'^{i}|_{H^{t'_{i}+1}(\beta, \Afra)}=\psi_{c_i}$, where $t'_{i}= \max \{ t', \lfloor -v_{\Afra}(c_{i})/2 \rfloor \}$.  
Here we have $r_1 < \ldots < r_{s} < n$, whence we obtain $-v_{\Afra}(c_{0}) < \ldots < -v_{\Afra}(c_{s-1}) < -v_{\Afra}(c_{s})$.  
Since $t'=\max \{ t, \lfloor -v_{\Afra}(c_{0})/2 \rfloor \}$ and $-v_{\Afra}(c_{0}) < -v_{\Afra}(c_{i})$, we have $t'_{i} = t_{i}$.  
We want to extend $\theta'^{i}$ to a character $\theta^{i}$ of $H^{t+1}(\beta, \Afra)$ as $\theta^{i} |_{B_{i}^{\times} \cap H^{t+1}(\beta, \Afra)} = \phi_{i} \circ \Nrd_{B_{i}/E_{i}}$.  
Suppose we obtain $\theta^{i}$ in such a way.  
Then $\theta^{i}$ satisfies (1) in Proposition by construction of $\theta^{i}$, and (2) in Proposition since $H^{t_{i}+1}(\beta, \Afra)=H^{t'_{i}+1}(\beta, \Afra)$ and $\theta^{i}|_{H^{t_{i}+1}(\beta, \Afra)}=\theta'^{i}|_{H^{t_{i}+1}(\beta, \Afra)}=\psi_{c_{i}}$.  

We have $B_{i}^{\times} \cap H^{t+1}(\beta, \Afra) \cap H^{t'+1}(\beta, \Afra) = B_{i}^{\times} \cap H^{t'+1}(\beta, \Afra)$, whence restrictions of $\theta'^{i}$ and $\phi_{i} \circ \Nrd_{B_{i}/E_{i}}$ to $B_{i}^{\times} \cap H^{t+1}(\beta, \Afra) \cap H^{t_{i}+1}(\beta, \Afra)$ are equal.  
Let $b_{1}, b_{2} \in B_{i}^{\times} \cap H^{t+1}(\beta, \Afra)$ and $h'_{1}, h'_{2} \in H^{t'+1}(\beta, \Afra)$ with $b_{1}h'_{1} = b_{2}h'_{2}$.  
Then $b_{1}^{-1}b_{2} = h'_{1}(h'_{2})^{-1} \in B_{i}^{\times} \cap H^{t+1}(\beta, \Afra) \cap H^{t_{i}+1}(\beta, \Afra)$ and $\phi_{i} \circ \Nrd_{B_{i}/E_{i}}(b_{1}^{-1}b_{2}) = \theta'^{i}(h'_{1}(h'_{2})^{-1})$.  
Therefore we also have 
\[
	\phi_{i} \circ \Nrd_{B_{i}/E_{i}}(b_{1})\theta'^{i}(h'_{1}) = \phi_{i} \circ \Nrd_{B_{i}/E_{i}}(b_{2}) \theta'^{i}(h'_{2}).  
\]
Then $\theta^{i}$ is well-defined as a map from $H^{t+1}(\beta, \Afra)$ to $\C^{\times}$.  

We show $\psi_{c_{i}}(b^{-1}hb)=\psi_{c_{i}}(h)$ for $b \in B_{i}^{\times} \cap H^{t+1}(\beta, \Afra)$ and $h \in H^{t_{i}+1}(\beta, \Afra)$.  
By definition of $\psi_{c_{i}}$, we have
\[
	\psi_{c_{i}}(b^{-1}hb) = \Trd_{A/F}(c_{i}(b^{-1}hb-1)).  
\]
Since $c_{i} \in E_{i}$ and $b \in B_{i} = \Cent_{A}(E_{i})$, we have 
\[
	c_{i}(b^{-1}hb-1)=c_{i}b^{-1}(h-1)b=b^{-1}c_{i}(h-1)b.  
\]
Therefore, we obtain
\[
	\psi_{c_{i}}(b^{-1}hb) = \Trd_{A/F}(b^{-1}c_{i}(h-1)b) = \Trd_{A/F}(c_{i}(h-1)) = \psi_{c_{i}}(h).  
\]

To show $\theta^{i}$ is a character, let $h_{1}, h_{2} \in H^{t+1}(\beta, \Afra)$.  
Then there exist $b_{1}, b_{2} \in B_{i}^{\times} \cap H^{t+1}(\beta, \Afra)$ and $h'_{1}, h'_{2} \in H^{t_{i}+1}(\beta, \Afra)$ such that $h_{1}=b_{1}h'_{1}$ and $h_{2}=b_{2}h'_{2}$.  
Therefore we have 
\begin{eqnarray*}
	\theta^{i}(h_{1}h_{2}) & = & \theta^{i}(b_{1}h'_{1} b_{2}h'_{2}) \\
	& = & \theta^{i}\left( (b_{1}b_{2}) (b_{2}^{-1}h'_{1}b_{2}h'_{2} \right) \\
	& = & (\phi_{i} \circ \Nrd_{B_{i}/E_{i}}) (b_{1}) (\phi_{i} \circ \Nrd_{B_{i}/E_{i}})(b_{2}) \psi_{c_{i}}(b_{2}^{-1}h'_{1}b_{2}) \psi_{c_{i}}(h'_{2}) \\
	& = &  (\phi_{i} \circ \Nrd_{B_{i}/E_{i}}) (b_{1}) \psi_{c_{i}}(h'_{1}) (\phi_{i} \circ \Nrd_{B_{i}/E_{i}})(b_{2}) \psi_{c_{i}}(h'_{2}) \\
	& = & \theta^{i}(b_{1}h'_{1}) \theta^{i}(b_{2}h'_{2}) = \theta^{i}(h_{1}) \theta^{i}(h_{2}).  
\end{eqnarray*}

We put $\theta^{0} = \theta \prod_{i=1}^{s}(\theta^{i})^{-1}$.  
To complete the proof, it is enough to show $\theta^{0}$ satisfies (1) and (2).  

To see (1), we show the restrictions of $\theta$ and $\theta^{i}$ ($i=1, \ldots, s$) to $B_{0}^{\times} \cap H^{t+1}(\beta, \Afra)$ factor through $\Nrd_{B_{0}/E_{0}}$.  
Since $\theta$ is simple, $\theta | _{B_{0}^{\times} \cap H^{t+1}(\beta, \Afra)}$ factors through $\Nrd_{B_{0}/E_{0}}$.  
We already have $\theta | _{B_{i}^{\times} \cap H^{t+1}(\beta, \Afra)} = \phi_{i} \circ \Nrd_{B_{i}/E_{i}}$.  
Since $B_{0}^{\times} \subset B_{i}^{\times}$, we have $\theta_{B_{0}^{\times} \cap H^{t+1}(\beta, \Afra)} = \phi_{i} \circ (\Nrd_{B_{i}/E_{i}} | _{B_{0}^{\times}})$.  
However, the equation $\Nrd_{B_{i}/E_{i}} | _{B_{0}^{\times}} = \mathrm{N}_{E_{0}/E_{i}} \circ \Nrd_{B_{0}/E_{0}}$ holds.  
Then $\theta^{i} |_{B_{0}^{\times} \cap H^{t+1}(\beta, \Afra)}$ factors through $\Nrd_{B_{0}/E_{0}}$.  
Therefore $\theta^{0} |_{B_{0}^{\times} \cap H^{t+1}(\beta, \Afra)}$ also factors through $\Nrd_{B_{0}/E_{0}}$, and there exists a character $\phi_{0}$ of $E_{0}^{\times}$ such that $\theta^{0} |_{B_{0}^{\times} \cap H^{t+1}(\beta, \Afra)} = \phi_{0} \circ \Nrd_{B_{0}/E_{0}}$.  

By restricting $\theta^{0} = \theta \prod_{i=1}^{s} (\theta^{i})^{-1}$ to $H^{t_{0}+1}(\beta, \Afra)=H^{t'+1}(\beta, \Afra)$, we have
\begin{eqnarray*}
	\theta^{0}|_{H^{t_{0}+1}(\beta, \Afra)} & = & (\theta|_{H^{t'+1}(\beta, \Afra)} \prod_{i=1}^{s} (\theta^{i}|_{H^{t'+1}(\beta, \Afra)})^{-1} \\
	& = & \psi_{c_{0}} \theta' \prod_{i=1}^{s} (\theta'^{i})^{-1} = \psi_{c_{0}} \theta' \theta'^{-1} = \psi_{c_{0}}.  
\end{eqnarray*}
Therefore, (2) also holds and complete the proof.  
\end{prf}

\section{Construction of a Yu datum from a S\'echerre datum}
\label{SStoYu}

Let $[\Afra, n, 0, \beta]$ be a tame simple stratum, $(J(\beta, \Afra), \lambda)$ be a maximal simple type with $[\Afra, n, 0, \beta]$, and let $(\tilde{J}(\lambda), \Lambda)$ be a maximal extension of $(J(\beta, \Afra), \lambda)$.  
We construct a Yu datum from the data of $[\Afra, n, 0, \beta]$ and $\lambda$.  

We put $\Pfra = \Pfra(\Afra)$.  
Let $([\Afra ,n, r_{i}, \beta_{i}])_{i=0}^{s}$, $E_{i}$, $B_{i}$ and $c_{i}$ be as in \S \ref{Factorization}.  
For $i=0, 1, \ldots, s$, we put $G^{i}=\Res_{E_{i}/F} \underline{\Aut}_{D \otimes E_{i}}(V)$ and $\rbf_{i}=-\ord(c_{i})$.  
If $\beta_s \in F$, we put $d=s$.  
If $\beta_s \notin F$, we put $d=s+1$, $G^{d}=G$ and $\rbf_{d}=\rbf_{s}$.  
Then $(G^{0}, \ldots, G^{d})$ is a tame twisted Levi sequence by Corollary \ref{findTTLS}.  
We also put $\rbf_{-1}=0$.  
For $i=-1, 0, 1, \ldots, d$, we put $\mathbf{s}_{i}=\rbf_{i}/2$.  

\begin{prop}
\label{compofTLS}
We fix a $G^{i-1}(F)$-equivalent and affine embedding 
\[
	\iota_{i}:\Bscr^{E}(G^{i-1}, F) \hookrightarrow \Bscr^{E}(G^{i}, F)
\]
for $i=1, \ldots, d$ and we put $\tilde{\iota}_{i}=\iota_{i} \circ \cdots \circ \iota_{1}$.  
We also put $\tilde{\iota}_{0}=\id_{\Bscr^{E}(G^{0}, F)}$.  
\begin{enumerate}
\item There exists $x \in \Bscr^{E}(G^{0}, F)$ such that $[x]$ is a vertex and
\begin{enumerate}
\item $G^{0}(F)_{[x]} = \Kfra(\Bfra_{0})$, 
\item $G^{0}(F)_{x} = B_{0}^{\times} \cap \U(\Afra) = \U(\Bfra_{0})$, 
\item $G^{0}(F)_{x,0+} = B_{0}^{\times} \cap \U^{1}(\Afra)$, 
\item $\gfra^{0}(F)_{x} = B_{0} \cap \Afra = \Bfra_{0}$, and
\item $\gfra^{0}(F)_{x,0+} = B_{0} \cap \Pfra$.  
\end{enumerate}
\item For $i=1, \ldots, d$, we have
\begin{enumerate}
\item $G^{i}(F)_{\tilde{\iota}_{i}(x), \mathbf{s}_{i-1} } = B_{i}^{\times} \cap \U^{\lfloor (-v_{\Afra}(c_{i-1})+1)/2 \rfloor}(\Afra)$, 
\item $G^{i}(F)_{\tilde{\iota}_{i}(x), \mathbf{s}_{i-1}+ } = B_{i}^{\times} \cap \U^{\lfloor -v_{\Afra}(c_{i-1})/2 \rfloor +1}(\Afra)$, 
\item $G^{i}(F)_{\tilde{\iota}_{i}(x), \rbf_{i-1} } = B_{i}^{\times} \cap \U^{-v_{\Afra}(c_{i-1})}(\Afra)$, 
\item $G^{i}(F)_{\tilde{\iota}_{i}(x), \rbf_{i-1}+ } = B_{i}^{\times} \cap \U^{-v_{\Afra}(c_{i-1})+1}(\Afra)$, 
\item $\gfra^{i}(F)_{\tilde{\iota}_{i}(x), \mathbf{s}_{i-1} } = B_{i} \cap \Pfra^{\lfloor (-v_{\Afra}(c_{i-1})+1)/2 \rfloor}$, 
\item $\gfra^{i}(F)_{\tilde{\iota}_{i}(x), \mathbf{s}_{i-1}+ } = B_{i} \cap \Pfra^{\lfloor -v_{\Afra}(c_{i-1})/2 \rfloor +1}$, 
\item $\gfra^{i}(F)_{\tilde{\iota}_{i}(x), \rbf_{i-1} } = B_{i} \cap \Pfra^{-v_{\Afra}(c_{i-1})}$, and
\item $\gfra^{i}(F)_{\tilde{\iota}_{i}(x), \rbf_{i-1}+ } = B_{i} \cap \Pfra^{-v_{\Afra}(c_{i-1})+1}$.  
\end{enumerate}
\item For $i=0, \ldots, s$, we have
\begin{enumerate}
\item $G^{i}(F)_{\tilde{\iota}_{i}(x), \mathbf{s}_{i}+ } = B_{i}^{\times} \cap \U^{ \lfloor -v_{\Afra}(c_{i})/2 \rfloor +1}(\Afra)$, 
\item $G^{i}(F)_{\tilde{\iota}_{i}(x), \rbf_{i} }  = B_{i}^{\times} \cap \U^{-v_{\Afra}(c_{i})}(\Afra)$, 
\item $G^{i}(F)_{\tilde{\iota}_{i}(x), \rbf_{i}+ } = B_{i}^{\times} \cap \U^{-v_{\Afra}(c_{i})+1} (\Afra)$, 
\item $\gfra^{i}(F)_{\tilde{\iota}_{i}(x), \rbf_{i} } = B_{i} \cap \Pfra^{-v_{\Afra}(c_{i})}(\Afra)$, and
\item $\gfra^{i}(F)_{\tilde{\iota}_{i}(x), \rbf_{i}+ } = B_{i} \cap \Pfra^{-v_{\Afra}(c_{i})+1}(\Afra)$.  
\end{enumerate}
\end{enumerate}
\end{prop}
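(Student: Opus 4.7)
The plan is to construct $x$ directly from the maximal hereditary $\ofra_{E_0}$-order $\Bfra_0=\Afra\cap B_0$ and then to read every identity of (2) and (3) off Proposition~\ref{compoffiltC} after establishing a numerical identity relating $v_{\Afra}$ and $\ord$. First I would choose $x\in\Bscr^E(G^0,F)$ whose associated hereditary $\ofra_{E_0}$-order in $B_0$ (via $\Bscr^E(G^0,F)\cong\Latt^1(W)$) is $\Bfra_0$. Since $(J,\lambda)$ is a maximal simple type, $\Bfra_0$ is maximal by Definition~\ref{defofsimpletype}(1), so Proposition~\ref{beingvertex} yields that $[x]$ is a vertex of $\Bscr^R(G^0,F)$. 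Because $\Afra$ is $E_0$-pure and $\Afra\cap B_0=\Bfra_0$, Proposition~\ref{beingprincipal}(2) applied with $H'=G^0$, $H=G$ identifies $\Afra$ with the hereditary $\ofra_F$-order in $A$ corresponding to the image of $x$ in $\Bscr^E(G,F)$. The five equalities in (1) are then immediate: (a)--(b) are the lattice-function description of the stabilizers $\Kfra(\Bfra_0)$ and $\U(\Bfra_0)=B_0^{\times}\cap\U(\Afra)$; (c)--(e) follow from $G^0(F)_{x,0+}=\U^1(\Bfra_0)$, $\gfra^0(F)_x=\Bfra_0$ and $\gfra^0(F)_{x,0+}=\mathrm{rad}(\Bfra_0)$, combined with $\Bfra_0=\Afra\cap B_0$ and $\mathrm{rad}(\Bfra_0)=B_0\cap\Pfra$.

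The key numerical input for (2) and (3) is that for every $c\in E_i^{\times}$ one has
\[
v_\Afra(c)=e(\Afra|\ofra_F)\cdot\ord(c).
\]
Indeed, since $\Afra$ is $E_0$-pure and $E_i\subset E_0$, one has $E_i^{\times}\subset\Kfra(\Afra)$, and the argument of Lemma~\ref{valval} carries over verbatim to the present (non-split) setting because it uses only the identity $c^{e(E_i|F)}\Afra=\varpi_F^{v_{E_i}(c)}\Afra$; this gives $v_\Afra(c)\,e(E_i|F)=e(\Afra|\ofra_F)\,v_{E_i}(c)$. Combining with $v_{E_i}(c)=e(E_i/F[c])\,v_{F[c]}(c)$ and the definition of $\ord$ yields the formula. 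Applying it to $c=c_{i-1}\in E_{i-1}\subset E_0$ and to $c=c_i\in E_i\subset E_0$ gives $v_\Afra(c_{i-1})=e(\Afra|\ofra_F)\rbf_{i-1}$ and $v_\Afra(c_i)=e(\Afra|\ofra_F)\rbf_i$.

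With this in hand, I would apply Proposition~\ref{compoffiltC} to the tame twisted Levi pair $(H',H)=(G^0,G^i)$ (which is tame and satisfies the required anisotropy of $Z(H')/Z(G)$ by Corollary~\ref{findTTLS}), using the vertex $[x]$ constructed in the first paragraph. Substituting $n=-v_\Afra(c_{i-1})$ into parts (1)(a)--(d) and (2)(a)--(d) of Proposition~\ref{compoffiltC} produces all eight identities of (2) of the present proposition, since $n/e(\Afra|\ofra_F)=\rbf_{i-1}$ and $n/(2e(\Afra|\ofra_F))=\mathbf{s}_{i-1}$; substituting $n=-v_\Afra(c_i)$ into parts (1)(a),(1)(b),(2)(a),(2)(b),(2)(d) of the same proposition produces the five identities of (3). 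The only subtlety is that Proposition~\ref{compoffiltC} is stated for a specific affine embedding $\iota_{H/H'}:\Bscr^E(G^0,F)\to\Bscr^E(G^i,F)$, whereas the present statement uses an arbitrary $G^{i-1}(F)$-equivariant affine embedding $\tilde{\iota}_i$; this is harmless because any two such embeddings differ by a translation in $X^*(G^i)\otimes\R$, and the Moy--Prasad filtration at a point of $\Bscr^E(G^i,F)$ depends only on its image in $\Bscr^R(G^i,F)$. The genuinely technical step is thus the valuation identity of the second paragraph; everything else reduces to the comparison machinery already developed in \S\ref{BroussousLemaire}.
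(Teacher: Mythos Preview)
Your proposal is correct and follows essentially the same route as the paper's proof: construct $x$ from the maximal order $\Bfra_0$, invoke Proposition~\ref{beingvertex} for the vertex property, use Proposition~\ref{beingprincipal}(2) to identify $\Afra$ as the hereditary order attached to the image of $x$, and then read off all identities from Proposition~\ref{compoffiltC} via the valuation relation $v_\Afra(c)=e(\Afra|\ofra_F)\ord(c)$. One minor slip: since $\rbf_i=-\ord(c_i)$ you should have $v_\Afra(c_{i-1})=-e(\Afra|\ofra_F)\rbf_{i-1}$ (not $+$), though your subsequent substitution $n=-v_\Afra(c_{i-1})$ gives the right answer $n/e(\Afra|\ofra_F)=\rbf_{i-1}$ anyway; and your remark on the embedding ambiguity is a nice point the paper leaves implicit.
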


\begin{prf}
We find $x \in \Bscr^{E}(G^{0}, F)$.  
Since $B$ is a central simple $E_{0}$-algebra, there exists a division $E_{0}$-algebra $D_{E_{0}}$ and a right $D_{E_{0}}$-module $W_{0}$ such that $B \cong \End_{D_{E_{0}}}(W_{0})$.  
Since $\Bfra_{0}$ is a maximal hereditary $\ofra_{E_{0}}$-order in $B_{0}$, there exists an $\ofra_{D_{E_{0}}}$-chain $(\Lcal_{i})_{i \in \Z}$ in $W_{0}$ of period 1 such that $\Bfra_{0}$ is the hereditary $\ofra_{E_{0}}$-order associated with $(\Lcal_{i})_{i \in \Z}$.  
Let $x \in \Bscr^{E}(G^{0}, F) \cong \Bscr^{E}(\underline{\Aut}_{D_{E_{0}}}(W_{0}), E_{0})$ be an element which corresponds to a lattice function constructed from $(\Lcal_{i})_{i \in \Z}$.  
Then by Proposition \ref{beingvertex} $[x]$ is a vertex in $\Bscr^{E}(G^{0}, F)$.  
Therefore by Proposition \ref{compoffiltG} (3) we have (1)-(a).  

To show the remainder assertion, we show $\Afra$ is the hereditary $\ofra_{F}$-order in $A$ associated with $\tilde{\iota}_{d}(x)$.  
Since $[\Afra, n, 0, \beta]$ is a stratum, $\Afra$ is $E=F[\beta]$-pure.  
Moreover, we have $\Afra \cap B_{0} = \Bfra_{0}$ by definition of $\Bfra_{0}$.  
Therefore by Proposition \ref{beingprincipal} (2) $\Afra$ is associated with $\tilde{\iota}_{d}(x)$.  
Since $v_{\Afra}(c_{i}) \in \Z_{\geq 0}$ and $v_{\Afra}(c_{i}) = \ord(c_{i})e(\Afra|\ofra_{F})$, the remaining assertions follow from Proposition \ref{compoffiltC}.  
\end{prf}

In the following, we regard $\Bscr^{E}(G^{0}, F), \ldots, \Bscr^{E}(G^{d-1}, F)$ are subsets in $\Bscr^{E}(G, F)$ via $\tilde{\iota}_{1}, \ldots, \tilde{\iota}_{d}$.  

\begin{prop}
\label{compofHJ}
\begin{enumerate}
\item $H^{1}(\beta, \Afra)=K_{+}^{d}$, 
\item $J(\beta, \Afra)={}^{\circ}K^{d}$, 
\item $\hat{J}(\beta, \Afra)=K^{d}$.  
\end{enumerate}
\end{prop}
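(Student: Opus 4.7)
The plan is to match Lemma~\ref{presenofHJ}'s concrete presentation of $H^{1}(\beta,\Afra)$, $J(\beta,\Afra)$, and $\hat{J}(\beta,\Afra)$ factor-by-factor with Definition~\ref{defofKi}'s presentation of $K_{+}^{d}$, ${}^{\circ}K^{d}$, and $K^{d}$, using Proposition~\ref{compofTLS} as the dictionary.

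First I would record the numerical translation between the two sets of indices. By construction of the defining sequence (see \S\ref{Factorization}), we have $v_{\Afra}(c_{i-1})=-r_{i}$ for $i=1,\ldots,s$ together with $v_{\Afra}(c_{s})=-n$, and hence $\rbf_{i-1}=r_{i}/e(\Afra|\ofra_{F})$ for $i=1,\ldots,s$ and $\rbf_{d}=n/e(\Afra|\ofra_{F})$ when $d=s+1$ (the case $\beta_{s}\notin F$). With $\mathbf{s}_{i}=\rbf_{i}/2$, Proposition~\ref{compofTLS}(1)(b), (2)(a)--(b), and (3)(a) then identify $G^{0}(F)_{x,0}=\U(\Bfra_{0})$, $G^{0}(F)_{x,0+}=B_{0}^{\times}\cap\U^{1}(\Afra)$, and for $1\le i\le d$,
\[
  G^{i}(F)_{x,\mathbf{s}_{i-1}+}=B_{i}^{\times}\cap\U^{\lfloor r_{i}/2\rfloor+1}(\Afra),\qquad
  G^{i}(F)_{x,\mathbf{s}_{i-1}}=B_{i}^{\times}\cap\U^{\lfloor(r_{i}+1)/2\rfloor}(\Afra),
\]
where for $i=d$ in the $\beta_{s}\notin F$ case the ``$r_{d}$'' slot is $n$. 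Proposition~\ref{compofTLS}(1)(a) gives $G^{0}(F)_{[x]}=\Kfra(\Bfra_{0})=\Kfra(\Bfra)$.

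Next I would carry out the comparison separately in the two cases, since $d$ depends on whether $\beta_{s}\in F$. If $\beta_{s}\notin F$ (so $d=s+1$), then the extra factor for $i=d$ is a full filtration subgroup of $G$: by Proposition~\ref{compoffiltS}(2)(d), $G^{d}(F)_{x,\mathbf{s}_{d-1}+}=\U^{\lfloor n/2\rfloor+1}(\Afra)$ and by (2)(c), $G^{d}(F)_{x,\mathbf{s}_{d-1}}=\U^{\lfloor(n+1)/2\rfloor}(\Afra)$. Substituting into Definition~\ref{defofKi} and comparing with Lemma~\ref{presenofHJ} yields $K_{+}^{d}=H^{1}(\beta,\Afra)$ and ${}^{\circ}K^{d}=J(\beta,\Afra)$ on the nose. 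If instead $\beta_{s}\in F$ (so $d=s$), then $B_{s}=A$, and the last factor of each product in Definition~\ref{defofKi} becomes $\U^{\lfloor r_{s}/2\rfloor+1}(\Afra)$ or $\U^{\lfloor(r_{s}+1)/2\rfloor}(\Afra)$. The trailing $\U^{\lfloor n/2\rfloor+1}(\Afra)$ (resp.\ $\U^{\lfloor(n+1)/2\rfloor}(\Afra)$) appearing in Lemma~\ref{presenofHJ} is contained in the $i=s$ factor because $r_{s}\le n$, so it is absorbed; again the two presentations match.

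This gives (1) and (2). Statement (3) is then immediate: by Definition~\ref{defofKi}, $K^{d}=G^{0}(F)_{[x]}\cdot{}^{\circ}K^{d}=\Kfra(\Bfra)\,J(\beta,\Afra)$, which equals $\hat{J}(\beta,\Afra)$ by Definition~\ref{defofJhat}. I do not anticipate a serious obstacle; the only point requiring care is keeping the two index conventions ($(r_{i})_{i=0}^{s}$ on the S\'echerre side versus $(\rbf_{i})_{i=-1}^{d}$ on the Yu side) aligned and handling the last factor correctly in the two cases $\beta_{s}\in F$ and $\beta_{s}\notin F$.
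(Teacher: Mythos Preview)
Your proposal is correct and follows essentially the same approach as the paper: both match the factor presentations of Lemma~\ref{presenofHJ} and Definition~\ref{defofKi} term-by-term via Proposition~\ref{compofTLS}, split into the cases $d=s+1$ and $d=s$, and absorb the trailing $\U^{\lfloor n/2\rfloor+1}(\Afra)$ (resp.\ $\U^{\lfloor(n+1)/2\rfloor}(\Afra)$) factor in the $d=s$ case using $r_{s}<n$. The paper phrases the absorption step as $\U^{\lfloor n/2\rfloor+1}(\Afra)=G^{s}(F)_{x,\mathbf{s}_{s}+}\subset G^{s}(F)_{x,\mathbf{s}_{s-1}+}$, while you use $B_{s}=A$ directly; these are equivalent.
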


\begin{prf}
We show (1).  
We have $r_{i}=-v_{\Afra}(c_{i-1})=-\ord(c_{i-1})e(\Afra|\ofra_{F})=-e(\Afra|\ofra_{F}) \rbf_{i-1}$ for $i=1, \ldots, s$ and $n = -v_{\Afra}(c_{s}) = -e(\Afra|\ofra_{F}) \rbf_{s}$.  
We have $G^{0}(F)_{x,0+} = B_{0}^{\times} \cap \U^{1}(\Afra)$ by Proposition \ref{compofTLS} (1)-(c).  
For $i=1, \ldots, s$ we have $B_{i}^{\times} \cap \U^{\lfloor r_{i}/2 \rfloor +1}(\Afra) = B_{i}^{\times} \cap \U^{\lfloor -v_{\Afra}(c_{i-1})/2 \rfloor +1}(\Afra) = G^{i}(F)_{x, \mathbf{s}_{i-1}+}$ by Proposition \ref{compofTLS} (2)-(b).  
We also have $B_{s}^{\times} \cap \U^{\lfloor n/2 \rfloor +1}(\Afra) = G^{s}(F)_{x, \mathbf{s}_{s}+}$.  
If $d=s+1$, by comparing Lemma \ref{presenofHJ} (1) and Definition \ref{defofKi} (1) of $K_{+}^{d}$ we have $H^{1}(\beta, \Afra) = K_+^{d}$.  
If $d=s$, we have $H^{1}(\beta, \Afra) = K^{d} \U^{\lfloor n/2 \rfloor +1}(\Afra)$ and it suffices to show $K^{d} \supset \U^{\lfloor n/2 \rfloor +1}(\Afra)$.  
However, since $\mathbf{s}_{s-1} < \mathbf{s}_{s}$ we have 
\[
	\U^{\lfloor n/2 \rfloor +1}(\Afra) = G^{s}(F)_{x, \mathbf{s}_{s}+} \subset G^{s}(F)_{x, \mathbf{s}_{s-1}+} \subset K^{d}.  
\]

(2) is similarly shown as (1), using Proposition \ref{compofTLS} (1)-(b), (2)-(a), Lemma \ref{presenofHJ} (2) and Definition \ref{defofKi} (2) instead of Proposition \ref{compofTLS} (1)-(c), (2)-(b), Lemma \ref{presenofHJ} (1) and Definition \ref{defofKi} (1), respectively.  
Since $J(\beta, \Afra) = {}^{\circ}K^{d}$ and $\Kfra(\Bfra_{0}) = G^{0}(F)_{[x]}$ by Proposition \ref{compofTLS} (1)-(a), we obtain 
\[
	\hat{J}(\beta, \Afra) = \Kfra(\Bfra)J(\beta, \Afra) = G^{0}(F)_{[x]} {}^{\circ}K^{d} = K^{d}, 
\]
whence (3) holds.  
\end{prf}

Let $\theta \in \mathscr{C}(\beta, 0, \Afra)$ be the unique character of $H^{1}(\beta, \Afra)$ in $\lambda$.  
Then we can take characters $\phi_{i}$ of $E_{i}^{\times}$ for $i=0, 1, \ldots, s$ and define characters $\theta^{i}$ as in Proposition \ref{factchar}.  
We put $\Phibf_{i} = \phi_{i} \circ \Nrd_{B_{i}/E_{i}}$.  
If $d=s+1$, we put $\Phibf_{d}=1$.  

\begin{prop}
For $i=0, 1, \ldots, d-1$, the character $\Phibf_{i}$ is $G^{i+1}$-generic relative to $x$ of depth $\rbf_{i}$.  
If $s=d$, then $\Phibf_{d}$ is of depth $\rbf_{d}$.  
\end{prop}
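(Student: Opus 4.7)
The plan is to take $X^{*}_{c_{i}} \in \Lie^{*}(Z(G^{i}))$ (defined in \S\ref{defofXc}) with $c_{i} = \beta_{i} - \beta_{i+1}$ as the generic element realizing $\Phibf_{i}$. First, I would verify the genericity: the element $c_{i}$ generates $E_{i}$ over $E_{i+1}$ (since $\beta_{i} = c_{i} + \beta_{i+1}$) and is minimal over $E_{i+1}$ by property (2) of our defining sequence from \S\ref{Factorization}. Applying Proposition \ref{genelem} to the tower $E_{i}/E_{i+1}/F$ then yields that $X^{*}_{c_{i}}$ is $G^{i+1}$-generic of depth $-\ord(c_{i}) = \rbf_{i}$.

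Next, I would translate the Moy--Prasad filtration into the hereditary-order language using Proposition \ref{compofTLS}(3), so that $G^{i}(F)_{x, \rbf_{i}} = B_{i}^{\times} \cap \U^{-v_{\Afra}(c_{i})}(\Afra)$ and $G^{i}(F)_{x, \rbf_{i}+} = B_{i}^{\times} \cap \U^{-v_{\Afra}(c_{i})+1}(\Afra)$, with the analogous identifications on the Lie algebra side. The factorization from Proposition \ref{factchar} (applied with $t = 0$) produces a character $\theta^{i}$ of $H^{1}(\beta, \Afra)$ that agrees with $\Phibf_{i}$ on $B_{i}^{\times} \cap H^{1}(\beta, \Afra)$ by (1), and with $\psi_{c_{i}}$ on $H^{t_{i}+1}(\beta, \Afra)$ by (2), where $t_{i} = \lfloor -v_{\Afra}(c_{i})/2 \rfloor$. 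Using the explicit decomposition of $H^{1}(\beta, \Afra)$ in Lemma \ref{presenofHJ}(1) together with the trivial inequality $-v_{\Afra}(c_{i}) \geq t_{i} + 1$, I would verify the inclusion $G^{i}(F)_{x, \rbf_{i}} \subset B_{i}^{\times} \cap H^{t_{i}+1}(\beta, \Afra)$; both descriptions of $\theta^{i}$ then apply on $G^{i}(F)_{x, \rbf_{i}}$, yielding the key identity $\Phibf_{i} = \psi_{c_{i}}$ there.

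From the identity $\Phibf_{i}(1+y) = \psi_{c_{i}}(1+y) = \psi(\Trd_{A/F}(c_{i}y)) = \psi(X^{*}_{c_{i}}(y))$ and the standard non-degeneracy of the pairing $(c, y) \mapsto \Trd_{A/F}(cy)$ on the hereditary order, I would deduce the three conditions defining $G^{i+1}$-genericity of depth $\rbf_{i}$: triviality of $\Phibf_{i}$ on $G^{i}(F)_{x, \rbf_{i}+}$ (as $c_{i} \cdot \Pfra^{-v_{\Afra}(c_{i})+1} \subset \Pfra$, hence its $\Trd$ lies in $\pfra_{F}$), non-triviality on $G^{i}(F)_{x, \rbf_{i}}$, and the realization of $\Phibf_{i}$ by $X^{*}_{c_{i}}$. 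For the final assertion, the identical computation with $i = s$, $c_{s} = \beta_{s}$, and $v_{\Afra}(\beta_{s}) = -n$ gives $\Phibf_{d} = \psi_{\beta_{s}}$ on $G(F)_{x, \rbf_{d}}$, which yields the depth-$\rbf_{d}$ assertion (no genericity is claimed in this case).

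The main obstacle is the inclusion $G^{i}(F)_{x, \rbf_{i}} \subset B_{i}^{\times} \cap H^{t_{i}+1}(\beta, \Afra)$ required in the second step. It demands a careful comparison between the subgroup $B_{i}^{\times} \cap \U^{-v_{\Afra}(c_{i})}(\Afra)$ and the various factors in the decomposition of Lemma \ref{presenofHJ}(1), controlling the $\U^{\bullet}(\Afra)$-level of each factor through the monotonicity $r_{i+1} \geq \lfloor r_{j}/2 \rfloor + 1$ for $j \leq i$, so that both parts of Proposition \ref{factchar} can be invoked simultaneously on the same compact open subgroup.
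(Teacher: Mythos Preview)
Your proposal is correct and follows essentially the same strategy as the paper: identify $X_{c_i}^{*}$ as the generic element via Proposition~\ref{genelem}, then establish $\Phibf_i = \psi_{c_i}$ on a suitable filtration subgroup by using both descriptions of $\theta^i$ from Proposition~\ref{factchar} simultaneously, and read off triviality, non-triviality, and realization from that identity.

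The only notable tactical difference is in how the key inclusion is verified. You propose to work at the level $G^{i}(F)_{x,\rbf_i}$ and check $G^{i}(F)_{x,\rbf_i} \subset B_i^\times \cap H^{t_i+1}(\beta,\Afra)$ directly from the hereditary-order presentation of $H^1(\beta,\Afra)$ in Lemma~\ref{presenofHJ}(1), tracking the levels $r_j$. The paper instead works at the slightly larger level $G^{i}(F)_{x,\mathbf{s}_i+}$ and uses the already-established identification $H^1(\beta,\Afra) = K_+^d$ (Proposition~\ref{compofHJ}), so that both inclusions $G^{i}(F)_{x,\mathbf{s}_i+} \subset K_+^d \cap G^i(F)$ and $G^{i}(F)_{x,\mathbf{s}_i+} \subset K_+^d \cap G(F)_{x,\mathbf{s}_i+} = H^{t_i+1}(\beta,\Afra)$ become transparent in Moy--Prasad terms. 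Both routes work; the paper's is somewhat cleaner because it reuses Proposition~\ref{compofHJ} rather than reopening the index comparisons. For the non-triviality step, the paper computes $\Phibf_i(G^i(F)_{x,\rbf_i}) = \psi \circ \Tr_{E_i/F}(\ofra_{E_i})$ explicitly and uses tame ramification of $E_i/F$; your appeal to non-degeneracy amounts to the same thing but should be made explicit.
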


\begin{prf}
First we show that the restriction of $\Phibf_{i}$ to $G^{i}(F)_{x, \mathbf{s}_{i}+}$ is equal to $\psi_{c_{i}}$ for $i=0, \ldots, s$.  
We have 
\begin{eqnarray*}
	B_{i}^{\times} \cap H^{1}(\beta, \Afra) & = & G^{i}(F) \cap K_{+}^{d} \\
	& = & G^{0}(F)_{x, 0+} G^{1}(F)_{x,\mathbf{s}_{0}+} \cdots G^{i}(F)_{x, \mathbf{s}_{i-1}+}, 
\end{eqnarray*}
and $G^{i}(F)_{x, \mathbf{s}_{i}+} \subset B_{i}^{\times} \cap H^{1}(\beta, \Afra)$, as we have $s_{i} > s_{i-1}$ and then
\[
	G^{i}(F)_{x, \mathbf{s}_{i}+} \subset G^{i}(F)_{x, \mathbf{s}_{i-1}+} \subset G^{0}(F)_{x, 0+} G^{1}(F)_{x,\mathbf{s}_{0}+} \cdots G^{i}(F)_{x, \mathbf{s}_{i-1}+}.  
\]
To show $G^{i}(F)_{x,\mathbf{s}_{i}+} \subset H^{t_{i}+1}(\beta, \Afra)$, where $t_{i} = \max \{ 0, \lfloor -v_{\Afra}(c_{i})/2 \rfloor \} = \lfloor -v_{\Afra}(c_{i})/2 \rfloor$, we consider two cases.  
If $i<d$, we have 
\begin{eqnarray*}
	H^{t_{i}+1}(\beta, \Afra) & = & H^{1}(\beta, \Afra) \cap \U^{t_{i}+1}(\Afra) \\
	& = & K_{+}^{d} \cap G(F)_{x, \mathbf{s}_{i}+ } \\
	& = & G^{i+1}(F)_{x, \mathbf{s}_{i}+} \cdots G^{d}(F)_{x, \mathbf{s}_{d-1}+}, 
\end{eqnarray*}
and $G^{i}(F)_{x, \mathbf{s}_{i}+} \subset H^{t_{i}+1}(\beta, \Afra)$ since 
\[
	G^{i}(F)_{x, \mathbf{s}_{i}+} \subset G^{i+1}(F)_{x, \mathbf{s}_{i}+} \subset G^{i+1}(F)_{x, \mathbf{s}_{i}+} \cdots G^{d}(F)_{x, \mathbf{s}_{d-1}+}.  
\]
Otherwise, that is, if $i=s=d$, we also have
\[
	H^{t_{d}+1}(\beta, \Afra) = K_{+}^{d} \cap G(F)_{x, \mathbf{s}_{d}+ } = G^{d}(F)_{x, \mathbf{s}_{d}+}.  
\]
Therefore $G^{i}(F)_{x, \mathbf{s}_{i}+} \subset \left( B_{i}^{\times} \cap H^{1}(\beta, \Afra) \right) \cap H^{t_{i}+1}(\beta, \Afra)$, and we obtain 
\[
	\Phibf_{i} |_{G^{i}(F)_{x, \mathbf{s}_{i}+}} = \theta^{i} |_{G^{i}(F)_{x, \mathbf{s}_{i}+}} \\ = \psi_{c_{i}} |_{G^{i}(F)_{x, \mathbf{s}_{i}+}}.  
\]
In particular, $\Phibf_{i}$ is trivial on 
\[
\U^{-v_{\Afra}(c_{i})+1}(\Afra) \cap G^{i}(F)_{x, \mathbf{s}_{i}+} = G(F)_{x, \rbf_{i}+} \cap G^{i}(F)_{x, \mathbf{s}_{i}+} = G^{i}(F)_{x, \rbf_{i}+}.  
\]
Note that all $c_i$ have negative valuation.
Next, we show $\Phibf_{i}$ is not trivial on $G^{i}(F)_{x,\rbf_{i}}$.  
We have $G^{i}(F)_{x,\rbf_{i}} =\U(\Bfra_{i}) \cap \U^{-v_{\Afra}(c_{i})}(\Afra) = B_{i} \cap \left( 1+\Pfra^{-v_{\Afra}(c_{i})} \right) = 1 + \left( B_{i} \cap \Pfra^{-v_{\Afra}(c_{i})} \right)$.  
Then 
\begin{eqnarray*}
	\Phibf_{i}(G^{i}(F)_{x,\rbf_{i}}) & = & \psi_{c_{i}} \Bigl( 1 + \bigl( B_{i} \cap \Pfra^{-v_{\Afra}(c_{i})} \bigr) \Bigr) = \psi \circ \Trd_{A/F} \Bigl( c_{i} \bigl( B_{i} \cap \Pfra^{-v_{\Afra}(c_{i})} \bigr) \Bigr) \\
	& = & \psi \circ \Trd_{A/F} \left( B_{i} \cap \Afra \right) = \psi \circ \Tr_{E_{i}/F} \circ \Trd_{B_{i}/E_{i}} (\Bfra_{i}).  
\end{eqnarray*}
Since $\Bfra_{i}$ is a hereditary $\ofra_{E_{i}}$-order in $B_{i}$, we have $\Trd_{B_{i}/E_{i}} (\Bfra_{i}) = \ofra_{E_{i}}$.  
Moreover, since $E_{i}/F$ is tamely ramified, $\Tr_{E_{i}/F} (\ofra_{E_{i}}) = \ofra_{F}$.  
Therefore $\Phibf_{i}$ is not trivial on $G^{i}(F)_{x,\rbf_{i}}$, as $\psi$ is not trivial on $\ofra_{F}$.  
In particular, we completed the proof when $i=s=d$ and we may assume $i<d$ in the following.  

Finally, let $X_{c_{i}}^{*} \in (\underline{\Lie^{*}}(G^{i}))^{G^i}(F)$ as \S \ref{defofXc}.  
Since $c_{i}$ is minimal relative to $E_{i}/E_{i+1}$, the element $X_{c_{i}}^{*}$ is $G^{i+1}$-generic of depth $\rbf_{i}$ by Proposition \ref{genelem}.  
Then, to complete the proof it suffices to show that $\Phibf_{i} |_{G^{i}(F)_{x,\rbf_{i}:\rbf_{i}+}}$ is realized by $X_{c_{i}}^{*}$.  
The isomorphism $G^{i}(F)_{x,\rbf_{i}:\rbf_{i}+} \cong \gfra^{i}(F)_{x,\rbf_{i}:\rbf_{i}+}$ is induced from $1+y \mapsto y$.  
Therefore, when we regard $\psi \circ X_{c_{i}}^{*}$ as a character of $G^{i}(F)_{x,\rbf_{i}:\rbf_{i}+}$, for $1+y \in G^{i}(F)_{x, \rbf_{i}}$ we have
\[
	\left( \psi \circ X_{c_{i}}^{*} \right) (1+y) = \psi \circ X_{c_{i}}^{*}(y) = \psi \circ \Trd_{A/F}(c_{i}y) = \psi_{c_{i}}(1+y) = \Phibf_{i}(1+y).  
\]
\end{prf}

Then we have a 4-tuple $\left( x, (G^{i}), (\rbf_{i}), (\Phibf_{i}) \right)$.  
As in \S \ref{Yuconst}, we can define characters $\hat{\Phibf}_{i}$ of $K_{+}^{d}$.  

\begin{prop}
\label{phiistheta1}
For $i=0, 1, \ldots, s$, we have $\hat{\Phibf}_{i}=\theta^{i}$.  
\end{prop}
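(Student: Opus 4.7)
The plan is to verify the equality $\hat{\Phibf}_{i} = \theta^{i}$ on the two subgroups $K_{+}^{d} \cap G^{i}(F)$ and $K_{+}^{d} \cap G(F)_{x, \mathbf{s}_{i}+}$, which together generate $K_{+}^{d} = H^{1}(\beta, \Afra)$. Yu's definition of $\hat{\Phibf}_{i}$ is tailored to exactly this decomposition, and the factorization provided by Proposition \ref{factchar} is compatible with it.

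First I would handle the ``Levi'' piece $K_{+}^{d} \cap G^{i}(F)$. Under the identification $G^{i}(F) = B_{i}^{\times}$ together with the description of $H^{1}(\beta, \Afra)$ recalled in Proposition \ref{compofHJ}, this subgroup equals $B_{i}^{\times} \cap H^{1}(\beta, \Afra)$. By definition $\hat{\Phibf}_{i}$ restricts there to $\Phibf_{i} = \phi_{i} \circ \Nrd_{B_{i}/E_{i}}$, and Proposition \ref{factchar} (1) with $t=0$ gives precisely $\theta^{i}|_{B_{i}^{\times} \cap H^{1}(\beta, \Afra)} = \phi_{i} \circ \Nrd_{B_{i}/E_{i}}$. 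In the degenerate case $i = d = s$ (when $\beta_{s} \in F$, so $G^{d}=G$ and $\mathfrak{n}^{d}=0$), this subgroup already exhausts $K_{+}^{d}$ and the computation is complete for that index.

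For $i < d$, I would then treat the ``unipotent'' piece $K_{+}^{d} \cap G(F)_{x, \mathbf{s}_{i}+} = G^{i+1}(F)_{x, \mathbf{s}_{i}+} \cdots G^{d}(F)_{x, \mathbf{s}_{d-1}+}$. The same argument already used to prove $\Phibf_{i}|_{G^{i}(F)_{x,\mathbf{s}_{i}+}} = \psi_{c_{i}}$ shows that this subgroup is contained in $H^{t_{i}+1}(\beta, \Afra)$ with $t_{i} = \lfloor -v_{\Afra}(c_{i})/2 \rfloor$, so by Proposition \ref{factchar} (2) the restriction of $\theta^{i}$ there coincides with $\psi_{c_{i}}$. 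On the other hand, writing $y = y_{i} + n_{i}$ with $y_{i} \in \gfra^{i}(F)_{x, \mathbf{s}_{i}+}$ and $n_{i} \in \mathfrak{n}^{i}(F)_{x, \mathbf{s}_{i}+}$ for $1+y$ in this subgroup, the definition of $\tilde{\pi}_{i}$ combined with the triviality of $\Phibf_{i}$ on $G^{i}(F)_{x, \rbf_{i}+}$ yields
\[
\hat{\Phibf}_{i}(1+y) = \Phibf_{i}(1 + y_{i}) = \psi_{c_{i}}(1 + y_{i}) = \psi\bigl( \Trd_{A/F}(c_{i} y_{i}) \bigr).
\]
The required equality therefore reduces to $\Trd_{A/F}(c_{i} n_{i}) \in \pfra_{F}$, and in fact I expect $\Trd_{A/F}(c_{i} n_{i}) = 0$.

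The main point, and the step calling on the structural work of \S \ref{TTLS}, is this last vanishing. After base change to $\bar{F}$, the isomorphism $E_{i} \otimes_{F} \bar{F} \cong \prod_{\sigma} \bar{F}_{\sigma}$ induces a decomposition $V \otimes_{F} \bar{F} = \bigoplus_{\sigma} V_{\sigma}$, and by Proposition \ref{Levidecom} the subspace $\gfra^{i}(F) \otimes \bar{F}$ corresponds to the block-diagonal endomorphisms while its $Z(G^{i})^{\circ}$-complement $\mathfrak{n}^{i}(F) \otimes \bar{F}$ corresponds to the strictly off-diagonal ones. Since $c_{i} \in E_{i}$ acts on each $V_{\sigma}$ as the scalar $\sigma(c_{i})$, the product $c_{i} n_{i}$ remains strictly off-diagonal, so its reduced trace vanishes. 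No serious obstacle is anticipated; the only care required is bookkeeping of the indexing conventions and of the edge cases $i = d = s$ versus $i = s < d$.
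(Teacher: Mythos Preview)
Your proposal is correct and follows essentially the same structure as the paper's proof: check $\hat{\Phibf}_{i} = \theta^{i}$ separately on $K_{+}^{d} \cap G^{i}(F) = B_{i}^{\times} \cap H^{1}(\beta, \Afra)$ and on $K_{+}^{d} \cap G(F)_{x,\mathbf{s}_{i}+} = H^{t_{i}+1}(\beta, \Afra)$, then reduce the second comparison to the vanishing $\Trd_{A/F}(c_{i}\,\mathfrak{n}^{i}(F)) = 0$.

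The one substantive difference is how you justify that vanishing. You base-change to $\bar{F}$ and invoke the explicit block decomposition of Proposition~\ref{Levidecom}, observing that $c_{i}$ acts by scalars on each $V_{\sigma}$ so that $c_{i}\,\mathfrak{n}^{i}$ remains strictly off-diagonal. The paper instead argues directly over $F$ via the $Z(G^{i})(F)$-module structure: multiplication by $c_{i} \in E_{i}$ is $Z(G^{i})(F)$-equivariant, hence preserves $\mathfrak{n}^{i}(F)$; and $\Trd_{A/F}$, being a $Z(G^{i})(F)$-map into the trivial module $F$, must kill $\mathfrak{n}^{i}(F)$ since the latter contains no trivial summand. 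Both arguments are short and valid; the paper's avoids base change and the explicit matrix model, while yours makes the mechanism concretely visible.
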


\begin{prf}
Recall the definition of $\hat{\Phibf}_{i}$.  The character $\hat{\Phibf}_{i}$ is defined as 
\begin{eqnarray*}
\hat{\Phibf}_{i}|_{K_{+}^{d} \cap G^{i}(F)} (g) & = & \Phibf_{i}(g), \\
\hat{\Phibf}_{i}|_{K_{+}^{d} \cap G(F)_{x, \mathbf{s}_{i}+ } } (1+y) & = & \Phibf_{i} (1+\pi_{i}(y) ).  
\end{eqnarray*}

Since $\left( K_{+}^{d} \cap G^{i}(F) \right) \left( K_{+}^{d} \cap G(F)_{x, \mathbf{s}_{i}+ } \right) =K^{d}_{+}$, it is enough to show that $\hat{\Phibf}_{i}$ is equal to $\theta^{i}$ on $K_{+}^{d} \cap G^{i}(F)$ and $K_{+}^{d} \cap G(F)_{x, \mathbf{s}_{i}+ }$.  

We have that $K^{d}_{+} \cap G^{i}(F) = B_{i}^{\times} \cap H^{1}(\beta, \Afra)$ and $K^{d}_{+} \cap G(F)_{r, \mathbf{s}_{i}+}=H^{t_{i}+1}(\beta, \Afra)$, where $t_{i} = \lfloor -v_{\Afra}(c_{i})/2 \rfloor$.  

If $g \in B_{i}^{\times} \cap H^{1}(\beta, \Afra)$, then $\hat{\Phibf}_{i}(g)=\Phibf_{i}(g)=\phi_{i} \circ \Nrd_{B_{i}/E_{i}}(g) = \theta^{i}(g)$.  

Suppose $1+y \in H^{t_{i}+1}(\beta, \Afra)$.  
Then $\pi_{i}(y) \in \mathfrak{g}^{i}(F)_{x, \mathbf{s}_{i}+}=B \cap \Pfra_{i}^{t_{i}+1}$ and $1+\pi_{i}(y) \in B_{i}^{\times} \cap H^{t_{i}+1}(\beta, \Afra)$.  
Therefore we have $\hat{\Phibf}_{i}(1+y)=\Phibf_{i}(1+\pi_{i}(y))=\theta^{i}(1+\pi_{i}(y))=\psi \circ \Trd_{A/F}(c_{i} \pi_{i}(y))$.  

Here, we show if $n \in \mathfrak{n}^{i}(F)$, then $\Trd_{A/F}(c_{i}n)=0$.  
Since $c_{i}$ is in the center of $B_{i}^{\times}$, the linear automorphism $z \mapsto c_{i}z$ of $A$ is also a $Z(G^{i})(F)$-automorphism.  
Then $c_{i}\gfra^{i}(F)$ is a trivial $Z(G^{i})(F)$-representation and $c_{i}\mathfrak{n}^{i}(F) \cong \mathfrak{n}^{i}(F)$ is a $Z(G^{i})(F)$-representation which does not contain any trivial subquotient.
Therefore we have $c_{i}\gfra^{i}(F) \subset \gfra^{i}(F)$ and $c_{i} \mathfrak{n}^{i}(F) \subset \mathfrak{n}^{i}(F)$.  
On the other hand, $\Trd_{A/F}$ is a $Z(G^{i})(F)$-homomorphism from $\gfra(F)$ to the trivial representation $F$.  
Since $\mathfrak{n}^{i}(F)$ does not have any trivial quotient, $\Trd_{A/F}(\mathfrak{n}^{i}(F))=0$.  
In particular, $\Trd_{A/F}(c_{i}n)=0$ as $c_{i}n \in c_{i} \mathfrak{n}^{i}(F) \subset \mathfrak{n}^{i}(F)$.  

Since $\pi_{i}: \gfra^{i}(F) \oplus \mathfrak{n}^{i}(F) \to \gfra^{i}(F)$ is the projection, $y-\pi_{i}(y) \in \mathfrak{n}^{i}(F)$.  
Therefore we have $\Trd_{A/F}( c_{i}y)=\Trd_{A/F} \left( c_{i} \left( y-\pi_{i}(y) \right) \right) + \Trd_{A/F} \left( c_{i}\pi_{i}(y) \right) = \Trd_{A/F} \left( c_{i}\pi_{i}(y) \right)$ and
\[
	\hat{\Phibf}_{i}(1+y) = \psi \circ \Trd_{A/F}(c_{i} \pi_{i}(y)) = \psi \circ \Trd_{A/F}(c_{i}y) = \psi_{c_i}(1+y) = \theta^{i}(1+y).  
\]
\end{prf}

\begin{prop}
\label{betaext}
The representation $\kappa_{0} \otimes \cdots \otimes \kappa_{d}$ is an extension of $\eta_\theta$ to $K^{d}$ (cf. Definition \ref{infinf} for the definition of $\kappa _i$).  
\end{prop}

\begin{prf}
We put $\hat{\kappa}'=\kappa_{0} \otimes \cdots \otimes \kappa_{d}$.  
By \cite[Lemma 3.27]{HM}, $\kappa_{i} |_{K_{+}^{d}}$ contains $\hat{\Phibf}_{i}$ for $i=0, \ldots, d$.  
If $d=s+1$, then $\hat{\Phibf}_{d}=\Phibf_{d}=1$ and 
\[
	\prod_{i=0}^{d} \hat{\Phibf}_{i} = \prod_{i=0}^{s} \hat{\Phibf}_{i} = \prod_{i=0}^{s} \theta^{i} = \theta
\]
by Proposition \ref{factchar} and Proposition \ref{phiistheta1}.  
Then $\hat{\kappa}'$ contains $\theta$ as a $K_{+}^{d}$-representation.  
Since $\eta_{\theta}$ is the unique irreducible $J^{1}(\beta, \Afra)$-representation which contains $\theta$, the $J^{1}(\beta, \Afra)$-representation $\hat{\kappa}'$ contains $\eta_{\theta}$.  

Then it suffices to show that the dimension of $\hat{\kappa}$ is equal to the dimension of $\eta_{\theta}$.  
The dimension of $\eta_{\theta}$ is $(J^{1}(\beta, \Afra) : H^{1}(\beta, \Afra))^{1/2}$.  
On the other hand, for $i=0, \ldots, d-1$ the dimension of $\kappa_{i}$ is $(J^{i+1} : J_{+}^{i+1})^{1/2}$ (cf. Proposition \ref{dimkap}), and the dimension of $\kappa_{d}$ is 1.  
Then the dimension of $\hat{\kappa}'$ is $\prod_{i=1}^{d} (J^{i} : J_{+}^{i})^{1/2}$, and it suffices to show that $(J^{1}(\beta, \Afra) : H^{1}(\beta, \Afra)) = \prod_{i=1}^{d} (J^{i} : J_{+}^{i})$.  
Here, $H^{1}(\beta, \Afra) = K_{+}^{d} = K_{+}^{0}J_{+}^{1} \cdots J_{+}^{d} = G^{0}(F)_{x,0+}J_{+}^{1} \cdots J_{+}^{d}$.  
Since $G^{i}(F)_{x,\mathbf{s}_{i}}J^{i+1}=G^{i+1}(F)_{x,\mathbf{s}_{i}}$ for $i=0, \ldots, d-1$, we also have 
\begin{eqnarray*}
	G^{0}(F)_{x,0+}J^{1} \cdots J^{d} & = & G^{0}(F)_{x,0+}G^{0}(F)_{x,\mathbf{s}_{0}}J^{1} \cdots J^{d} \\
	& = & G^{0}(F)_{x,0+}G^{1}(F)_{x,\mathbf{s}_{0}} J^{2} \cdots J^{d} = \cdots \\
	& = & G^{0}(F)_{x,0+}G^{1}(F)_{x, \mathbf{s}_{0}} \cdots G^{d}(F)_{x, \mathbf{s}_{d-1}} \\
	& = & G(F)_{x,0+} \cap K^{d}=\U^{1}(\Afra) \cap J(\beta, \Afra) = J^{1}(\beta, \Afra).  
\end{eqnarray*}
Since $G^{0}(F)_{x,0+} \cap (J^{1} \cdots J^{d}) = G^{0}(F)_{x,\rbf_{0}} \subset J_{+}^{1}$ (e.g. using \cite[Lemma 13.2]{Yu}), we have $\left( G^{0}(F)_{x,0+} J_{+}^{1} \cdots J_{+}^{d} \right) \cap ( J^{1} \cdots J^{d} ) = J_{+}^{1} \cdots J_{+}^{d}$, and
\begin{eqnarray*}
	J^{1}(\beta, \Afra)/H^{1}(\beta, \Afra) & = & \left( G^{0}(F)_{x,0+}J^{1} \cdots J^{d} \right) / \left( G^{0}(F)_{x,0+} J_{+}^{1} \cdots J_{+}^{d} \right) \\
	& \cong & ( J^{1} \cdots J^{d} ) / ( J_{+}^{1} \cdots J_{+}^{d} ).  
\end{eqnarray*}
Then it is enough to show $\left( ( J^{1} \cdots J^{d} ) : ( J_{+}^{1} \cdots J_{+}^{d} ) \right) = \prod_{i=1}^{d} (J^{i} : J_{+}^{i})$.  Let us prove this by induction on $d$. If $d=1$, this is trivial. Let us assume that this is true for $d-1$. It is now enough to show that $[J^d : J^d_+ ] = \frac{[J^1 \cdots J^d : J^1 _+ \cdots J^d _+ ]} {[J^1 \cdots J^{d-1} : J^1 _+ \cdots J^{d-1} _+ ]}$.
 The following fact will be useful.

\textit{Fact: Let $G' \subset G$ be groups and let $H $ be a normal subgroup of $G$. Let $\iota $ be the injective morphism of group $ G' / ( G' \cap H ) \hookrightarrow  G/ H $. As $G$-set, $G/HG'$ and $ (G/H) /\iota (G' / (G' \cap H )) $ are isomorphic.}
 \begin{sloppypar}Because $J^1 _+ \cdots J^d _+$ is a normal subgroup of $ J^1 \cdots J^d$, we can apply the previous fact to $G= J^1 \cdots J^d$, $G' = J^1 \cdots J^{d-1}$ , $H= J^1 _+ \cdots J^d _+$. Using the fact that  $H \cap G' = J^1 _+ \cdots J^{d-1} _+$, we deduce that, as $J^1 \cdots J^d $-sets, $J^1 \cdots J^d / J^1 \cdots J^{d-1} J^d_+$ and $ (J^1 \cdots J^d / J^1 _+ \cdots J^d _+ ) / \iota (J^1 \cdots J^{d-1} / J^1 _+ \cdots J^{d-1} _+ )$ are isomorphic. Let $X$ be this $J^1 \cdots J^d$-set. 
 The set $X$ is a fortiori a $J^d$-set. The group $J^d$ acts transitively on $X=J^1 \cdots J^d / J^1 \cdots J^{d-1} J^d_+$, and the stabiliser of $ (J^1 \cdots J^{d-1} J^d_+) \in J^1 \cdots J^d / J^1 \cdots J^{d-1} J^d_+$ is  $J^1 \cdots J^{d-1} J^d_+ \cap J^d $. The group $J^1 \cdots J^{d-1} J^d_+ \cap J^d $ is equal to $J^d_+$. Consequently, \end{sloppypar} \begin{center}$[J^d : J^d_+ ] = \#(X) = \frac{[J^1 \cdots J^d : J^1 _+ \cdots J^d _+ ]} {[J^1 \cdots J^{d-1} : J^1 _+ \cdots J^{d-1} _+ ]},$\end{center} as required.
Therefore we obtain $\hat{\kappa}' |_{J^{1}(\beta, \Afra)} = \eta_{\theta}$.  
\end{prf}

\begin{thm}
\label{Main1}
Let $(J, \lambda)$ be a maximal simple type associated to a tame simple stratum $[\Afra, n, 0, \beta]$.  
Let $(\tilde{J}, \Lambda)$ be a maximal extension of $(J, \lambda)$.  
Then there exists a Yu datum $\left( x, (G^{i})_{i=0}^{d}, (\rbf_{i})_{i=0}^{d}, (\Phibf_{i})_{i=0}^{d}, \rho \right)$ such that
\begin{enumerate}
\item $\hat{J}(\beta, \Afra)=K^{d}$, and
\item $\rho_{d} \left( x, (G^{i}), (\rbf_{i}), (\Phibf_{i}), \rho \right) \cong \cInd_{\tilde{J}}^{\hat{J}(\beta, \Afra)} \Lambda$.  
\end{enumerate}
\end{thm}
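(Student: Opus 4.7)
The construction of a Yu datum from the given S\'echerre datum is essentially already completed by the preparatory Propositions \ref{compofTLS}--\ref{betaext}; what remains is to assemble the pieces, manufacture the depth-zero representation $\rho$ via Proposition \ref{findingdepth0}, and verify the target isomorphism. Concretely, I start from a ``nice'' defining sequence $([\Afra, n, r_i, \beta_i])_{i=0}^{s}$ supplied by Proposition \ref{appfortame} and form $E_i = F[\beta_i]$, $B_i = \Cent_{A}(E_i)$, $c_i = \beta_i - \beta_{i+1}$ (with $c_s = \beta_s$), $G^i = \Res_{E_i/F}\underline{\Aut}_{D \otimes E_i}(V)$ and $\rbf_i = -\ord(c_i)$; when $\beta_s \notin F$, one adjoins $G^d = G$ with $\rbf_d = \rbf_s$ so that the tower terminates at $G$. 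A vertex $x \in \Bscr^{E}(G^0, F)$ is produced by Proposition \ref{compofTLS} so that $G^0(F)_{[x]} = \Kfra(\Bfra_0)$ and all Moy--Prasad filtrations of the $G^i$ at $x$ match the hereditary-order filtrations on the $B_i^{\times}$. Corollary \ref{findTTLS}, applied to each consecutive pair along the field tower $E_0 \supsetneq \cdots \supsetneq E_s$, makes $(G^0, \ldots, G^d)$ into a tame twisted Levi sequence with each $Z(G^i)/Z(G)$ anisotropic. Condition $(1)$ of the theorem, $\hat{J}(\beta, \Afra) = K^d$, is then immediate from Proposition \ref{compofHJ}.

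For the characters, let $\theta \in \mathscr{C}(\beta, 0, \Afra)$ be the unique simple character contained in $\lambda$, factor $\theta = \prod_{i=0}^{s} \theta^i$ via Proposition \ref{factchar}, and set $\Phibf_i = \phi_i \circ \Nrd_{B_i/E_i}$ on $G^i(F) = B_i^{\times}$ (together with $\Phibf_d = 1$ when $d = s+1$). The proposition immediately preceding Proposition \ref{phiistheta1} verifies that each $\Phibf_i$ (for $i<d$) is $G^{i+1}$-generic of depth $\rbf_i$, leveraging Proposition \ref{genelem} and the fact that $c_i$ is minimal relative to $E_i/E_{i+1}$ by our choice of defining sequence. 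Propositions \ref{phiistheta1} and \ref{betaext} together show that $\hat{\kappa}' := \kappa_0 \otimes \cdots \otimes \kappa_d$ is an extension of the Heisenberg representation $\eta_\theta$ to $K^d = \hat{J}(\beta, \Afra)$.

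To finish, I apply Proposition \ref{findingdepth0} with this $\hat{\kappa}'$ as input: it produces an irreducible $\Kfra(\Bfra_0) = G^0(F)_{[x]}$-representation $\rho$ that is trivial on $\U^1(\Bfra_0) = G^0(F)_{x,0+}$, cuspidal modulo that subgroup, with $\cInd_{G^0(F)_{[x]}}^{G^0(F)} \rho$ irreducible and supercuspidal, and satisfying $\hat{\kappa}' \otimes \rho \cong \cInd_{\tilde{J}}^{\hat{J}} \Lambda$. Thus $\Psi := (x, (G^i), (\rbf_i), (\Phibf_i), \rho)$ is a genuine Yu datum. The presentation $\rho_d(\Psi) \cong \kappa_{-1} \otimes \kappa_0 \otimes \cdots \otimes \kappa_d$ with $\kappa_{-1}$ the inflation of $\rho$ to $K^d$ then yields $\rho_d(\Psi) \cong \rho \otimes \hat{\kappa}' \cong \cInd_{\tilde{J}}^{K^d} \Lambda$, which is condition $(2)$. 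The main obstacle lies not in this final assembly step but in the preliminary verification that the factor characters $\Phibf_i$ satisfy Yu's genericity axiom; this is precisely where tameness of the stratum is essential, since it forces the differences $c_i = \beta_i - \beta_{i+1}$ arising from the defining sequence to be minimal over $E_{i+1}$ and hence to yield $G^{i+1}$-generic elements $X_{c_i}^{*}$ in the sense of Proposition \ref{genelem}.
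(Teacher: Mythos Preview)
Your proposal is correct and follows essentially the same approach as the paper's own proof: both assemble the 4-tuple $(x, (G^i), (\rbf_i), (\Phibf_i))$ from the preparatory Propositions \ref{compofTLS}--\ref{betaext}, then invoke Proposition \ref{findingdepth0} with $\hat{\kappa}' = \kappa_0 \otimes \cdots \otimes \kappa_d$ to produce the depth-zero representation $\rho$ and conclude via $\rho_d \cong \kappa_{-1} \otimes \hat{\kappa}'$. Your summary is in fact slightly more explicit than the paper's proof in spelling out which auxiliary results feed into each step.
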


\begin{prf}
In the above argument, we can take a 4-tuple $\left( x, (G^{i}), (\rbf_{i}), (\Phibf_{i}) \right)$ from a S\'echerre datum.  
Therefore it is enough to show that we can take an irreducible $G^{0}(F)_{[x]}$-representation $\rho$ such that the Yu datum $\left( x, (G^{i}), (\rbf_{i}), (\Phibf_{i}), \rho \right)$ satisfies the desired conditions.  

Let $\eta$ be the unique $J^{1}(\beta, \Afra)$-subrepresentation in $\lambda|_{J^{1}(\beta, \Afra)}$.  
Then $\kappa_{0} \otimes \cdots \otimes \kappa_{d}$ is an extension of $\eta$ to $K^{d}=\hat{J}(\beta, \Afra)$ by Proposition \ref{betaext}.  
Therefore there exists an irreducible $\Kfra(\Bfra)$-representation $\rho$ such that $\rho$ is trivial on $\U^{1}(\Bfra)$ but not trivial on $\U(\Bfra)$, the representation $\cInd_{\Kfra(\Bfra)}^{B^{\times}}\rho$ is irreducible and supercuspidal, and $\cInd_{\tilde{J}}^{\hat{J}(\beta, \Afra)} \Lambda \cong \rho \otimes \kappa_{0} \otimes \cdots \otimes \kappa_{d}$ by Proposition \ref{findingdepth0}.  
Since we have equalities of groups $B^{\times}=G^{0}(F)$, $\Kfra(\Bfra)=G^{0}(F)_{[x]}$, $\U(\Bfra)=G^{0}(F)_{x}$ and $\U^{1}(\Bfra) = G^{0}(F)_{x+}$, then the 5-tuple $\left( x, (G^{i}), (\rbf_{i}), (\Phibf_{i}), \rho \right)$ is a Yu datum satisfying the condition in the theorem.  
\end{prf}

\section{Construction of a S\'echerre datum from a Yu datum}
\label{YutoSS}

Let $(x, (G^{i})_{i=0}^{d}, (\rbf_{i})_{i=0}^{d}, (\Phibf_{i})_{i=0}^{d}, \rho)$ be a Yu datum.  

First, since $G^{i}$ are tame twisted Levi subgroups in $G$ with $Z(G^{i})/Z(G)$ anisotropic, there exist tamely ramified field extensions $E_{i}/F$ in $A$ such that 
\[
	G^{i} \cong \Res_{E_{i}/F} \underline{\Aut}_{D \otimes_{F} E_{i}} (V)
\]
by Lemma \ref{detofTLG}.  
Since $G^{0} \subsetneq \ldots \subsetneq G^{d}$, we can choose $E_{0} \supsetneq \ldots \supsetneq E_{d} = F$.  
We put $B_{i} = \Cent_{A}(E_{i})$.  

Since $\cInd_{G^{0}(F)_{[x]}}^{G^{0}(F)} \rho$ is supercuspidal, $[x]$ is a vertex in $\Bscr^{E}(G^{0},F)$ by Proposition \ref{depth0ofYu}.  
Let $\Bfra_{0}$ be the hereditary $\ofra_{E_{0}}$-order in $B_{0}$ associated with $x$.  
Then the hereditary $\ofra_{F}$-order $\Afra$ associated with $x \in \Bscr^{E}(G,F)$ is $E_{0}$-pure and principal, and $\Afra \cap B_{0} = \Bfra_{0}$ by Proposition \ref{beingprincipal}.  
We also put $\Pfra = \Pfra(\Afra)$.  

To obtain a simple stratum, we need an element $\beta \in E_{0}$.  
We will take $\beta$ by using information from characters $(\Phibf_{i})_{i}$.  
For $c_{i} \in E_{i} = \Lie(Z(G^{i}))$, let $X_{c_{i}}^{*} \in \Lie^{*} (Z(G^{i}))$ be as in \S \ref{defofXc}.  
We put $s=\sup \{ i \mid \Phibf_i \neq 1 \}$.  

\begin{prop}
\label{takec}
Suppose $s \geq 0$.  
\begin{enumerate}
\item For $i=0, \ldots, d$, the hereditary $\ofra_{E_{i}}$-order in $B_{i}$ associated with $x \in \Bscr^{E}(G^{i}, F)$ is equal to $\Bfra_{i} = B_{i} \cap \Afra$.  
\item There exists $c_{i} \in \Lie(Z(G^{i}))_{-\rbf_{i}}$ such that $\Phibf_{i} | _{G^{i}(F)_{x,\rbf_{i}/2+:\rbf_{i}+}}$ is realized by $X_{c_i}^{*}$ for $i=0, \ldots, d-1$.  
\item If $s=d$, then there also exists $c_{s} \in \Lie(Z(G))_{-\rbf_{s}}$ such that $\Phibf_{s} | _{G(F)_{x,\rbf_{s}/2+:\rbf_{s}+}}$ is realized by $X_{c_s}^{*}$.  
\item For $i=0, \ldots, s$, we have $\rbf_{i} = -\ord(c_{i})$.  
\item For $i=0, \ldots, d-1$, the element $c_{i}$ is minimal relative to $E_{i}/E_{i+1}$.  
In particular, we have $E_{i} = E_{i+1}[c_{i}]$.  
\end{enumerate}
\end{prop}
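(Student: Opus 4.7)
The five assertions admit essentially independent proofs indexed by $i$, so I would organize the argument by assertion rather than by index. Assertion (1) is an immediate application of Proposition \ref{beingprincipal}: since $[x]$ is a vertex of $\Bscr^{R}(G^{0}, F)$ and $G^{i}$ corresponds to the intermediate tower $F \subset E_{i} \subset E_{0}$, Proposition \ref{beingprincipal} (2) (with $H=G^{i}$, $H'=G^{0}$) identifies $\Bfra_{i} = B_{i} \cap \Afra$ with the hereditary $\ofra_{E_{i}}$-order associated with $x \in \Bscr^{E}(G^{i}, F)$, and Proposition \ref{beingprincipal} (1) shows $\Bfra_{i}$ is principal.

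For parts (2), (3) and (4), I would apply Proposition \ref{propforgench} to $\Phibf_{i}$ viewed as a smooth character of $G^{i}(F) = B_{i}^{\times}$, with the principal hereditary $\ofra_{E_{i}}$-order $\Bfra_{i}$ furnished by (1). The proof of Proposition \ref{propforgench} is formal enough to transport verbatim from $(F, A, \Afra, \psi)$ to $(E_{i}, B_{i}, \Bfra_{i}, \psi_{E_{i}})$ for any additive character $\psi_{E_{i}}$ of conductor $\pfra_{E_{i}}$. By Lemma \ref{lemforfindc} (3), with $n := \rbf_{i} e(\Bfra_{i} | \ofra_{E_{i}}) e(E_{i}/F)$ we have $G^{i}(F)_{x, \rbf_{i}} = \U^{n}(\Bfra_{i})$, $G^{i}(F)_{x, \rbf_{i}+} = \U^{n+1}(\Bfra_{i})$ and $G^{i}(F)_{x, \rbf_{i}/2+} = \U^{\lfloor n/2 \rfloor + 1}(\Bfra_{i})$. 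The resulting $c_{i} \in E_{i}$ satisfies $v_{E_{i}}(c_{i}) = -\rbf_{i} e(E_{i}/F)$, i.e.\ $\ord(c_{i}) = -\rbf_{i}$ (proving (4)), together with $\Phibf_{i}(1+y) = \psi_{E_{i}}(c_{i} \Trd_{B_{i}/E_{i}}(y))$ on $G^{i}(F)_{x, \rbf_{i}/2+}$. Replacing $\psi_{E_{i}}$ by $\psi \circ \Tr_{E_{i}/F}$ at the cost of modifying $c_{i}$ by an $\ofra_{E_{i}}^{\times}$-multiple (the correction factor arising from the different $\mathfrak{d}_{E_{i}/F}$) converts this into $\psi(\Trd_{A/F}(c_{i}y)) = \psi(X_{c_{i}}^{*}(y))$, which is exactly the assertion that $X_{c_{i}}^{*}$ realizes $\Phibf_{i}|_{G^{i}(F)_{x, \rbf_{i}/2+ : \rbf_{i}+}}$; the same argument with $i = d$ and $B_{d} = A$ handles (3).

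Part (5) follows by a uniqueness argument. The Yu datum hypothesis supplies a $G^{i+1}$-generic element $Y^{*} = X_{c_{i}'}^{*}$ with $c_{i}' \in E_{i}$ realizing $\Phibf_{i}$ on the coarser quotient $G^{i}(F)_{x, \rbf_{i} : \rbf_{i}+}$, and the $X_{c_{i}}^{*}$ produced above restricts to the same character on this quotient. The non-degenerate pairing $\Lie^{*}(G^{i})_{x, -\rbf_{i}} / \Lie^{*}(G^{i})_{x, -\rbf_{i}+} \cong \mathrm{Char}(\gfra^{i}(F)_{x, \rbf_{i} : \rbf_{i}+})$ then forces $c_{i} \equiv c_{i}' \pmod{\Lie^{*}(Z(G^{i}))_{-\rbf_{i}+}}$, and since condition \textbf{GE1} depends only on this class, $X_{c_{i}}^{*}$ is itself $G^{i+1}$-generic of depth $\rbf_{i}$. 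Applying Proposition \ref{genelem} (2) shows $c_{i}$ is minimal relative to $E_{i}/E_{i+1}$, as required.

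The main obstacle I anticipate is the bookkeeping in part (2) surrounding the two additive characters. Applying Proposition \ref{propforgench} over $E_{i}$ naturally uses a character of $E_{i}$ of conductor $\pfra_{E_{i}}$, but the Yu framework pairs with $\psi$ via $\Trd_{A/F} = \Tr_{E_{i}/F} \circ \Trd_{B_{i}/E_{i}}$, and the conductor of $\psi \circ \Tr_{E_{i}/F}$ is $\mathfrak{d}_{E_{i}/F}^{-1} = \pfra_{E_{i}}^{1-e(E_{i}/F)}$ because $E_{i}/F$ is tamely ramified. The discrepancy is a unit in $\ofra_{E_{i}}$, so it can be absorbed into $c_{i}$ without altering $\ord(c_{i})$ or the minimality property extracted in (5); still, this tracking must be done explicitly to justify that the single element $c_{i} \in E_{i}$ simultaneously satisfies (2), (4) and (5).
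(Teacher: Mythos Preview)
Your approach is essentially the same as the paper's: apply Proposition \ref{beingprincipal} for (1), apply Proposition \ref{propforgench} over $E_i$ together with Lemma \ref{lemforfindc} for (2)--(4), and compare $c_i$ with the Yu-datum-supplied generic element for (5).

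Your anticipated obstacle, however, is based on a miscalculation. Since $\psi$ has conductor $\pfra_F$ (not $\ofra_F$), the conductor of $\psi \circ \Tr_{E_i/F}$ is $\pfra_F \cdot \mathfrak{d}_{E_i/F}^{-1} = \pfra_{E_i}^{e(E_i/F)} \cdot \pfra_{E_i}^{1-e(E_i/F)} = \pfra_{E_i}$, not $\mathfrak{d}_{E_i/F}^{-1}$. (Concretely, $\Tr_{E_i/F}(\ofra_{E_i}) = \ofra_F$ and $\Tr_{E_i/F}(\pfra_{E_i}) = \pfra_F$ for tamely ramified extensions.) So you can, as the paper does, apply Proposition \ref{propforgench} directly with $\psi_{E_i} = \psi \circ \Tr_{E_i/F}$; no unit correction is needed, and the bookkeeping you worried about disappears.

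For (5) there is a small stylistic difference: you argue that \textbf{GE1} depends only on the class of $c_i$ modulo $\Lie(Z(G^i))_{(-\rbf_i)+}$ and then invoke Proposition \ref{genelem} (2). The paper instead shows $c_i^{-1}c_i' \in 1+\pfra_{E_i}$, concludes $c_i'$ is minimal via Proposition \ref{genelem}, and then transfers minimality to $c_i$ by Lemma \ref{preservemin}. Both routes are valid; the paper's avoids checking that \textbf{GE1} is stable under the relevant congruence, at the cost of citing one extra lemma.
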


\begin{prf}
We show (1).  
First, we have $\Bfra_{i} \cap B_{0} = \Afra \cap B_{i} \cap B_{0} = \Afra \cap B_{0} = \Bfra_{0}$.  
Moreover, for $g \in E_{i}^{\times}$ we also have 
\[
	g \Bfra_{i} g^{-1} = g (\Afra \cap B_{i}) g^{-1} = g \Afra g^{-1} \cap g B_{i} g^{-1} = \Afra \cap B_{i} = \Bfra_{i}, 
\]
as $\Afra$ is $E_{0}$-pure and $E_{0} \subset B_{i}$.  
Therefore (1) holds by Proposition \ref{beingprincipal} (2).  

Next, we show (2), and (3) is similarly shown.  
Since $\Phibf_{i}$ is trivial on $G^{i}(F)_{x, \rbf_{i}+}$ but not on $G^{i}(F)_{x, \rbf_{i}}$, we have $G^{i}(F)_{x, \rbf_{i}} \neq G^{i}(F)_{x, \rbf_{i}+}$ in particular.  
Then $n_{i} = \rbf_{i}e(\Bfra_{i}|\ofra_{E_{i}})e(E_{i}/F)$ is a non-negative integer and we have $G^{i}(F)_{x, \rbf_{i}} = \U^{n}(\Bfra)$ and $G^{i}(F)_{x, \rbf_{i}+} = \U^{n+1}(\Bfra)$, by Lemma \ref{lemforfindc} (3).  
On the other hand, a character $\psi \circ \Tr_{E_{i}/F}$ of $E_{i}$ is with conductor $\pfra_{E_{i}}$ since $E_{i}/F$ is tamely ramified.  
Therefore, we can apply Proposition \ref{propforgench} for $\Bfra_{i}, n$ and $\psi \circ \Tr_{E_{i}/F}$ as $\Bfra_{i}$ is principal by (1) and Proposition \ref{beingprincipal} (1).  
Thus there exists $c_{i} \in E_{i}$ such that 
\[
	\Phibf_{i} (1+y) = (\psi \circ \Tr_{E_{i}/F}) \circ \Trd_{B_{i}/E_{i}}(c_{i}y) = \psi \circ (\Tr_{E_{i}/F} \circ \Trd_{B_{i}/E_{i}}) (c_{i}y) = \psi \circ X_{c_{i}}^{*}(y)
\]
for $1+y \in \U^{\lfloor n_{i}/2 \rfloor +1}(\Bfra_{i}) = G^{i}(F)_{x, \mathbf{r}_{i}/2+}$.  
Then (2) holds.  

We have $v_{E_{i}}(c_{i})=-n_{i}/e(\Bfra_{i}|\ofra_{E_{i}})=-\rbf_{i}e(E_{i}/F)$ by Proposition \ref{propforgench}, and
\[
\ord(c_{i})=v_{E_{i}}(c_{i})/e(E_{i}/F)=-\rbf_{i}, 
\]
whence (4) holds.  

To show (5), let $c'_{i} \in E_{i}^{\times}$ such that $X_{c'_{i}}^{*}$ is $G^{i+1}$-generic of depth $\rbf_{i}$ and  the restriction of $\Phibf_i $ to  $G^{i}(F)_{x, \rbf_{i}:\rbf_{i}+}$ is realized by $X^*_{c'_{i}}$.  
In particular, we have
\[
	(\psi \circ \Tr_{E_{i}/F}) \circ \Trd_{B_{i}/E_{i}}(c_{i}y) = \Phibf_{i}(1+y) = \psi \circ X_{c'_{i}}(y) = (\psi \circ \Tr_{E_{i}/F}) \circ \Trd_{B_{i}/E_{i}}(c'_{i}y)
\]
for $y \in \Qfra_{i}^{n_{i}}$, where $\Qfra_{i}$ is the radical of $\Bfra_{i}$.  
Then we have $c_{i} - c'_{i} \in \Qfra_{i}^{-n_{i}+1} \cap E_{i} \subset c_{i}(\Qfra_{i} \cap E_{i}) = c_{i}\pfra_{E_{i}}$ and $c_{i}^{-1}c'_{i} \in 1+\pfra_{E_{i}}$.  
Thus $(c'_{i})^{-1}c_{i} \in 1+\pfra_{E_{i}}$.  
On the other hand, $c'_{i}$ is minimal relative to $E_{i}/E_{i+1}$ by Proposition \ref{genelem}.  
Therefore, by Lemma \ref{preservemin} $c_{i}$ is also minimal relative to $E_{i}/E_{i+1}$.  
\end{prf}

Therefore if $s \geq 0$, we can take $c_i$ for $i=0,1, \ldots, s$.  
We put $\beta_{i} = \sum_{j=i}^{s} c_{j}$ for $i=0,1, \ldots, s$, $\beta = \beta_{0}$ and $n = -v_{\Afra}(\beta)$.  
Since 
\[
v_{\Afra}(c_i) = -e(\Afra|\ofra_{F}) \ord(c_{i}) = -e(\Afra|\ofra_{F})\rbf_{i} < -e(\Afra|\ofra_{F})\rbf_{j} = -e(\Afra|\ofra_{F}) \ord(c_{j}) = v_{\Afra}(c_j)
\]
for $i,j=0, 1, \ldots, s$ with $i>j$, we have $n=-v_{\Afra}(\beta_{i})$ for any $i=0, 1, \ldots, s$.  
We also put $r_{i} = -v_{\Afra}(c_{i-1})$ for $i=1, \ldots, s$ and $r_{0}=0$.  

\begin{prop}
Suppose $s \geq 0$.  
\begin{enumerate}
\item $E_{i}=F[\beta_{i}]$ for $i=0, 1, \ldots, s$.  
	In particular, $[\Afra, n, 0, \beta]$ is a simple stratum.  
\item $\left( [\Afra, n, r_{i}, \beta_{i}] \right) _{i=0}^{s}$ is a defining sequence of $[\Afra, n, 0, \beta]$.  
\end{enumerate}
\end{prop}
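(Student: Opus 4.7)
The plan is to establish both claims simultaneously by downward induction on $i$, descending from $i = s$ to $i = 0$. At each stage I aim to prove (a) $F[\beta_i] = E_i$, (b) the stratum $[\Afra, n, r_i, \beta_i]$ is tame simple, and (c) $k_0(\beta_i, \Afra) = -r_{i+1}$ whenever $i < s$. The equivalence of $[\Afra, n, r_i, \beta_i]$ with $[\Afra, n, r_i, \beta_{i-1}]$ required by the defining-sequence property then follows from $\beta_{i-1} - \beta_i = c_{i-1}$ together with $v_\Afra(c_{i-1}) = -r_i$.

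For the base case $i = s$, the genericity hypothesis in the Yu datum forces $\Phibf_i \neq 1$ for every $i = 0, \ldots, d-1$, so $s \in \{d-1, d\}$. If $s = d$ then $\beta_s = c_s \in \Lie(Z(G))_{-\rbf_s} = F$, so $F[\beta_s] = F = E_s$ and $\beta_s$ is trivially minimal over $F$. If $s = d-1$ then $\beta_s = c_{d-1}$ is minimal over $E_d = F$ by Proposition \ref{takec} (5), and the same proposition gives $F[\beta_s] = F[c_{d-1}] = E_{d-1} = E_s$. In either subcase, $\Afra$ is $E_s$-pure (inherited from $E_0$-purity) and $v_\Afra(\beta_s) = -n$, so $[\Afra, n, r_s, \beta_s]$ is pure; since $\beta_s$ is minimal over $F$ one has $k_0(\beta_s, \Afra) \in \{-n, -\infty\}$, and $r_s < n$ then gives simplicity.

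For the inductive step, assume (a)--(c) hold at index $i+1$ with $0 \leq i < s$. I wish to invoke Proposition \ref{Mforcons} with $\beta = \beta_{i+1}$, $r = r_{i+1}$, and $b = c_i$; the split-algebra restriction stated there is removed by the same $(W, E_{i+1})$-decomposition argument already used in the proof of Proposition \ref{Mforapp}. The hypothesis $c_i \notin F[\beta_{i+1}] = E_{i+1}$ follows from $c_i \in E_i \setminus E_{i+1}$, a consequence of minimality via Proposition \ref{takec} (5). For the auxiliary stratum $[\Bfra_{\beta_{i+1}}, r_{i+1}, r_{i+1}-1, c_i]$ to be simple, I note that $c_i \in E_i \subset B_{i+1}$ commutes with $E_{i+1}$ and normalizes $\Afra$, hence normalizes $\Bfra_{\beta_{i+1}}$, giving $F[c_i]$-purity; the valuation identity $v_{\Bfra_{\beta_{i+1}}}(c_i) = v_\Afra(c_i) = -r_{i+1}$ follows from the transitivity $e(\Afra|\ofra_F) = e(\Bfra_{\beta_{i+1}}|\ofra_{E_{i+1}}) e(E_{i+1}/F)$ combined with Lemma \ref{valval}; finally the bound $r_{i+1} - 1 < -k_0(c_i, \Bfra_{\beta_{i+1}})$ follows from minimality of $c_i$ over $E_{i+1}$, which through Lemma \ref{compofk0} and Proposition \ref{relofk0} applied to $\Bfra_{\beta_{i+1}}$ as an $E_i$-pure order reduces to the defining relation $k_{E_{i+1}}(c_i) = -v_{E_{i+1}}(c_i)$.

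With these verifications in hand, Proposition \ref{Mforcons} yields $F[\beta_i] = F[\beta_{i+1}, c_i] = E_{i+1}[c_i] = E_i$, establishing (a), and $k_0(\beta_i, \Afra) = -r_{i+1}$, establishing (c). Purity and simplicity of $[\Afra, n, r_i, \beta_i]$ then follow because $v_\Afra(\beta_i) = -n$ and $r_i < r_{i+1} = -k_0(\beta_i, \Afra)$, giving (b). Once the induction terminates at $i = 0$, claim (1) is immediate from (a), and claim (2) is the collation of (b), (c), the equivalence statement, and the minimality of $\beta_s$ established in the base case. The main obstacle throughout is the careful bookkeeping of valuations and $k_0$-invariants required to certify simplicity of the auxiliary stratum $[\Bfra_{\beta_{i+1}}, r_{i+1}, r_{i+1}-1, c_i]$; once that is cleared, the remainder is a formal unwinding of definitions together with the inner-form version of \cite[2.2.3]{BK1}.
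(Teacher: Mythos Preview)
Your inductive scaffolding matches the paper's, and the verifications you carry out for the auxiliary stratum $[\Bfra_{\beta_{i+1}}, r_{i+1}, r_{i+1}-1, c_i]$ (purity, the valuation identity $v_{\Bfra_{\beta_{i+1}}}(c_i)=v_\Afra(c_i)=-r_{i+1}$, and simplicity via minimality of $c_i$) are all correct. The gap is in your invocation of Proposition~\ref{Mforcons}: that result is stated and proved only under the hypothesis $A\cong\M_N(F)$, and you apply it directly in the possibly non-split algebra $A=\End_D(V)$. Your proposed fix---that the split hypothesis ``is removed by the same $(W,E_{i+1})$-decomposition argument already used in the proof of Proposition~\ref{Mforapp}''---does not stand as written, because Proposition~\ref{Mforapp} carries the same split hypothesis, and the decomposition \cite[1.2.8]{BK1} underlying it is formulated for $\End_F(V)$, not for $\End_D(V)$.

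The paper circumvents this by a two-stage argument that you should adopt. First one proves the full statement in the split algebra $A(E_0)=\End_F(E_0)$ with its canonical order $\Afra(E_0)$: since all the elements $\beta_i,c_i$ lie in $E_0$, they may be viewed there, and Proposition~\ref{Mforcons} applies verbatim at each inductive step. This already yields the field-theoretic identity $E_i=F[\beta_i]$ of part~(1), which is independent of the ambient algebra. Second, one transfers the defining-sequence property to the original $\Afra$ in $A$ using only the scaling relations $v_\Afra(c)=e(\Afra|\ofra_F)e(E_0/F)^{-1}v_{\Afra(E_0)}(c)$ and Lemma~\ref{compofk0} for $k_0$, so that $r_i=e(\Afra|\ofra_F)e(E_0/F)^{-1}r_i'$ and $k_0(\beta_{i-1},\Afra)=-r_i$ follow from their $\Afra(E_0)$ counterparts; Proposition~\ref{BHforapp} then certifies that each $[\Afra,n,r_i,\beta_i]$ is simple and equivalent to $[\Afra,n,r_i,\beta_{i-1}]$. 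Nothing in this transfer step requires a non-split analogue of Proposition~\ref{Mforcons}.
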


\begin{prf}
First, suppose $\Afra = \Afra(E_0)$.  
We will show this proposition by downward induction on $i$.  

If $i=s$, then $\beta_{s}=c_{s}$ is minimal over $F$.  
Therefore for any $r' \in \{ 0, 1, \ldots, n-1 \}$, the stratum $[\Afra, n, r', \beta_{s}]$ is simple.  
The equation $E_{s}=F[\beta_{s}]$ trivially holds.  
If $s=0$, then $\left( [\Afra, n, r_{i}, \beta_{i}] \right) _{i=0}^{0}$ is a defining sequence of $[\Afra, n, 0, \beta]$ and this proposition holds.  
If $s>0$, we have $r_{s} = -v_{\Afra}(c_{s-1}) < -v_{\Afra}(c_{s})$. We prove by downward induction on $i_0$ that $\left( [\Afra, n, r_{j+i_{0}}, \beta_{j+i_{0}}] \right) _{j=0}^{s-i_{0}}$ is a defining sequence of a simple stratum $[\Afra, n, r_{i_{0}}, \beta_{i_{0}}]$. 
For $i_0=s$, the stratum $[\Afra, n, r_{s}, \beta_{s}]$ is simple and $\left( [\Afra, n, r_{i+s}, \beta_{i+s}] \right)_{i=0}^{0}$ is a defining sequence of $[\Afra, n, r_{s}, \beta_{s}]$.  

Let $i_{0} \in \{ 0, 1, \ldots, s-1 \} $ and suppose that $E_{i}=F[\beta_{i}]$ and that $\left( [\Afra, n, r_{j+i}, \beta_{j+i}] \right)_{j=0}^{s-i}$ is a defining sequence of a simple stratum $[\Afra, n, r_{i}, \beta_{i}]$ for any integer $i$ with $i_0 < i  \leq s$.  
The element $c_{i_{0}}$ is minimal over $E_{i_{0}+1}$.  
Since $r_{i_{0}+1}=-v_{\Afra}(c_{i_0})$, a 4-tuple $[\Bfra_{\beta_{i_0}+1}, r_{i_{0}+1}, r_{i_{0}+1}-1, c_{i_0}]$ is a simple stratum, where $\Bfra_{\beta_{i_{0}+1}} = \Afra \cap \Cent_{A(E_0)}(\beta_{i_{0}+1})$.  
Moreover, $c_{i_0} \notin E_{i_{0}+1}=F[\beta_{i_{0}+1}]$.  
Therefore, by Proposition \ref{Mforcons}, we have $F[\beta_{i_0}] = F[\beta_{i_{0}+1}, c_{i_0}] = E_{i_{0}+1}[c_{i_{0}}]$ and $[\Afra, n, r_{i_{0}+1}, \beta_{i_0}]$ is a pure stratum with $k_{0}(\beta_{i_0}, \Afra)=-r_{i_{0}+1}$, where $F[\beta_{i_{0}+1}, c_{i_0}] = E_{i_{0}+1}[c_{i_{0}}]$ follows from our induction hypothesis.  
If $i_0>0$, we have $r_{i_0}=-v_{\Afra}(c_{i_{0}-1}) < -v_{\Afra}(c_{i_{0}})=r_{i_{0}+1}$ and $[\Afra, n, r_{i_{0}}, \beta_{i_{0}}]$ is a simple stratum.  
Since $\left( [\Afra, n, r_{j+i_{0}+1}, \beta_{j+i_{0}+1}] \right)_{j=0}^{s-i_{0}-1}$ is a defining sequence of a simple stratum $[\Afra, n, r_{i_{0}+1}, \beta_{i_{0}+1}]$ by our induction hypothesis, $\left( [\Afra, n, r_{j+i_{0}}, \beta_{j+i_{0}}] \right) _{j=0}^{s-i_{0}}$ is also a defining sequence of a simple stratum $[\Afra, n, r_{i_{0}}, \beta_{i_{0}}]$.  
If $i_{0}=0$, then $[\Afra, n, 0, \beta]$ is simple and we can show $\left( [\Afra, n, r_{i}, \beta_{i}] \right) _{i=0}^{s}$ is also a defining sequence of a simple stratum $[\Afra, n, 0, \beta]$ in the same way as above.  
Then the proposition for $\Afra=\Afra(E_{0})$ case holds.  

We will show the proposition in general case.  
Since $\beta_{i} \in E_{i} \subset E_{0}$ for $i=0, \ldots, s$, we can regard $\beta_{i}$ as in $A(E_0)$.  
Then (1) follows from the proposition for $\Afra=\Afra(E_0)$ case.  
Moreover, if we put $n'=-v_{\Afra(E_0)}(\beta)$, $r'_{0}=0$ and $r'_{i}=-v_{\Afra(E_0)}(c_{i-1})$ for $i=1, \ldots, s$, then $\left( [\Afra(E_0), n', r'_{i}, \beta_{i}] \right) _{i=0}^{s}$ is a defining sequence of a simple type $[\Afra(E_0), n', 0, \beta]$ by the proposition for $\Afra=\Afra(E_0)$ case.  
Since for $c \in E_0$ we have $v_{\Afra}(c)=e(\Afra|\ofra_{F})e(E_{0}/F)^{-1}v_{\Afra(E_0)}(c)$, we also have
\[
	n=-v_{\Afra}(\beta)=-e(\Afra|\ofra_{F})e(E_{0}/F)^{-1}v_{\Afra(E_0)}(\beta)=e(\Afra|\ofra_{F})e(E_{0}/F)^{-1}n'
\]
and
\[
r_{i}=-v_{\Afra}(c_{i-1})=-e(\Afra|\ofra_{F})e(E_{0}/F)^{-1}v_{\Afra(E_0)}(c_{i-1})=e(\Afra|\ofra_{F})e(E_{0}/F)^{-1}r'_{i}
\]
for $i=1, \ldots, s$.  
Since $\left( [\Afra(E_0), n', r'_{i}, \beta_{i}] \right) _{i=0}^{s}$ is a defining sequence of a simple type $[\Afra(E_0), n', 0, \beta]$, we have $r'_{i}=-k_{0}(\beta_{i-1}, \Afra(E_0))$ for $i=1, \ldots, s$.  
We also have $k_{0}(c, \Afra)=e(\Afra|\ofra_{F})e(E_{0}/F)^{-1}k_{0}(c, \Afra(E_0))$ by Lemma \ref{compofk0}, whence
\[
r_{i}=e(\Afra|\ofra_{F})e(E_{0}/F)^{-1}r'_{i}=-e(\Afra|\ofra_{F})e(E_{0}/F)^{-1}k_{0}(\beta_{i-1}, \Afra(E_0))=-k_{0}(\beta_{i-1}, \Afra)
\]
for $i=1, \ldots, s$.  Then by Proposition \ref{BHforapp} strata $[\Afra, n, r_{i}, \beta_{i}]$ are simple and equivalent to $[\Afra, n, r_{i}, \beta_{i-1}]$ for $i=1, \ldots, s$.  
Therefore (2) holds.  
\end{prf}

Then we have a simple stratum $[\Afra, n, 0, \beta]$ with a defining sequence $([\Afra, n, r_{i}, \beta_{i}])_{i=0}^{s}$ if $s \geq 0$.  
If $s = - \infty$, we take a simple stratum $[\Afra, 0, 0, \beta]$ with $\Afra$ maximal and $c_{0} = \beta_{0} = \beta \in \ofra_{F}^{\times}$, and then we can define subgroups $H^{1}(\beta, \Afra)$ and $J(\beta, \Afra)$ in $G$ for any case.  
Moreover, since $\Bfra_{0}$ is maximal, we also can define $\hat{J}(\beta, \Afra) = \Kfra(\Bfra_{0})J(\beta, \Afra)$.  

\begin{prop}
\begin{enumerate}
\item We have
\begin{enumerate}
\item $G^{0}(F)_{[x]} = \Kfra(\Bfra_{0})$, 
\item $G^{0}(F)_{x} = B_{0}^{\times} \cap \U(\Afra) = \U(\Bfra_{0})$, 
\item $G^{0}(F)_{x,0+} = B_{0}^{\times} \cap \U^{1}(\Afra)$, 
\item $\gfra^{0}(F)_{x} = B_{0} \cap \Afra = \Bfra_{0}$, and
\item $\gfra^{0}(F)_{x,0+} = B_{0} \cap \Pfra$.  
\end{enumerate}
\item For $i=1, \ldots, d$, we have
\begin{enumerate}
\item $G^{i}(F)_{x, \mathbf{s}_{i-1} } = B_{i}^{\times} \cap \U^{\lfloor (-v_{\Afra}(c_{i-1})+1)/2 \rfloor}(\Afra)$, 
\item $G^{i}(F)_{x, \mathbf{s}_{i-1}+ } = B_{i}^{\times} \cap \U^{\lfloor -v_{\Afra}(c_{i-1})/2 \rfloor +1}(\Afra)$, 
\item $G^{i}(F)_{x, \rbf_{i-1} } = B_{i}^{\times} \cap \U^{-v_{\Afra}(c_{i-1})}(\Afra)$, 
\item $G^{i}(F)_{x, \rbf_{i-1}+ } = B_{i}^{\times} \cap \U^{-v_{\Afra}(c_{i-1})+1}(\Afra)$, 
\item $\gfra^{i}(F)_{x, \mathbf{s}_{i-1} } = B_{i} \cap \Pfra^{\lfloor (-v_{\Afra}(c_{i-1})+1)/2 \rfloor}$, 
\item $\gfra^{i}(F)_{x, \mathbf{s}_{i-1}+ } = B_{i} \cap \Pfra^{\lfloor -v_{\Afra}(c_{i-1})/2 \rfloor +1}$, 
\item $\gfra^{i}(F)_{x, \rbf_{i-1} } = B_{i} \cap \Pfra^{-v_{\Afra}(c_{i-1})}$, and
\item $\gfra^{i}(F)_{x, \rbf_{i-1}+ } = B_{i} \cap \Pfra^{-v_{\Afra}(c_{i-1})+1}$.  
\end{enumerate}
\item For $i=0, \ldots, s$, we have
\begin{enumerate}
\item $G^{i}(F)_{x, \mathbf{s}_{i}+ } = B_{i}^{\times} \cap \U^{\lfloor -v_{\Afra}(c_{i})/2 \rfloor +1}(\Afra)$, 
\item $G^{i}(F)_{x, \rbf_{i} }  = B_{i}^{\times} \cap \U^{-v_{\Afra}(c_{i})}(\Afra)$,
\item $G^{i}(F)_{x, \rbf_{i}+ } = B_{i}^{\times} \cap \U^{-v_{\Afra}(c_{i})+1} (\Afra)$, 
\item $\gfra^{i}(F)_{x, \rbf_{i} } = B \cap \Pfra^{-v_{\Afra}(c_{i})}$, and
\item $\gfra^{i}(F)_{x, \rbf_{i}+ } = B \cap \Pfra^{-v_{\Afra}(c_{i})}$.  
\end{enumerate}
\end{enumerate}
\end{prop}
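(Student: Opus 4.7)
The approach I would take is to reduce this proposition to applications of the filtration comparison results of Section \ref{BroussousLemaire}, in complete parallel with the proof of Proposition \ref{compofTLS}; only the roles of the given and constructed data are interchanged.

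First, I would observe that the geometric setup needed is already in place: by Proposition \ref{beingprincipal}(2), the hereditary $\ofra_F$-order $\Afra$ associated with $x \in \Bscr^E(G, F)$ is principal and $E_0$-pure, and more generally $E_i$-pure for every $i$ since $E_i \subset E_0$. By Proposition \ref{takec}(1), the intersection $\Bfra_i = B_i \cap \Afra$ is the hereditary $\ofra_{E_i}$-order in $B_i$ associated with the image of $x$ in $\Bscr^E(G^i, F)$. Assertion (1)(a) then follows from Proposition \ref{compoffiltG}(3), since $\Bfra_0$ corresponds to the uniform $\ofra_{D_{E_0}}$-chain picked out by the vertex $[x]$.

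For the remaining assertions of (1), (2), and (3), I would apply Proposition \ref{compoffiltC} to the pair $(G^0, G^i) \hookrightarrow G$ for each relevant $i$. This immediately identifies $B_i \cap \Pfra^n$ and $B_i^\times \cap \U^n(\Afra)$ with the corresponding Moy--Prasad lattices and subgroups on $\gfra^i(F)$ and $G^i(F)$, indexed by the rescaled depth $n/e(\Afra|\ofra_F)$, with the appropriate $+$ and half-floor variants. The final step is to match the exponents: the identity $v_{\Afra}(c_j) = -e(\Afra|\ofra_F)\ord(c_j) = e(\Afra|\ofra_F)\rbf_j$, obtained from Lemma \ref{valval} together with Proposition \ref{takec}(4), converts the filtration depths $\rbf_{i-1}$, $\mathbf{s}_{i-1} = \rbf_{i-1}/2$, $\rbf_i$, $\mathbf{s}_i = \rbf_i/2$ into the integer exponents $-v_{\Afra}(c_{i-1})$, $\lfloor -v_{\Afra}(c_{i-1})/2 \rfloor + 1$, and so on, appearing in the statement.

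I do not expect any substantive obstacle here: once the dictionary between points of the building, associated hereditary orders, and Moy--Prasad depths is fixed, the entire proposition is a bookkeeping exercise. The only point requiring slight care is the systematic use of Proposition \ref{compoffiltS} to handle the floor/ceiling arithmetic distinguishing $\U^{\lfloor (n+1)/2 \rfloor}(\Afra)$ and $\U^{\lfloor n/2 \rfloor + 1}(\Afra)$, and the corresponding distinction between $G^i(F)_{x, r}$ and $G^i(F)_{x, r+}$; but this is precisely what that proposition provides.
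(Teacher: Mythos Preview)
Your proposal is correct and matches the paper's own proof, which simply reads ``Similar to the proof of Proposition \ref{compofTLS}''; you have merely unpacked what that entails. One small slip: the relation should read $v_{\Afra}(c_j) = e(\Afra|\ofra_F)\,\ord(c_j) = -e(\Afra|\ofra_F)\,\rbf_j$ (no extra minus sign in the first equality), so that $-v_{\Afra}(c_j)/e(\Afra|\ofra_F) = \rbf_j$ as required.
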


\begin{prf}
Similar to the proof of Proposition \ref{compofTLS}.  
\end{prf}

\begin{prop}
\begin{enumerate}
\item $K^{d}_{+} = H^{1}(\beta, \Afra)$, 
\item ${}^{\circ}K^{d} = J(\beta, \Afra)$, and
\item $K^{d} = \hat{J}(\beta, \Afra)$.  
\end{enumerate}
\end{prop}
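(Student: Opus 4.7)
The plan is to mirror the proof of Proposition \ref{compofHJ}, using the preceding proposition as the analog, in this direction, of Proposition \ref{compofTLS}. The key dictionary between the Yu depths and the Sécherre valuations is provided by Proposition \ref{takec}(4): since $\rbf_i = -\ord(c_i)$ for $i = 0, \ldots, s$, one has $v_\Afra(c_i) = \rbf_i \, e(\Afra|\ofra_F)$, and therefore $r_i = -v_\Afra(c_{i-1}) = \rbf_{i-1} \, e(\Afra|\ofra_F)$ and $n = \rbf_s \, e(\Afra|\ofra_F)$. Under this dictionary, the subgroups $G^i(F)_{x, \mathbf{s}_{i-1}+} = B_i^\times \cap \U^{\lfloor -v_\Afra(c_{i-1})/2 \rfloor + 1}(\Afra)$ appearing in part (2)(b) of the preceding proposition become exactly the factors $B_i^\times \cap \U^{\lfloor r_i/2 \rfloor + 1}(\Afra)$ of Lemma \ref{presenofHJ}(1), and analogously for (2)(a) versus Lemma \ref{presenofHJ}(2).

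For statement (1), the plan is to expand $K^d_+$ using Definition \ref{defofKi}(1) and $H^1(\beta, \Afra)$ using Lemma \ref{presenofHJ}(1), then match each factor term-for-term via the preceding proposition. The one subtlety is the distinction between $d = s+1$ (where $\Phibf_d = 1$ and $\rbf_{d-1} = \rbf_d$, so the Lemma \ref{presenofHJ}(1) decomposition has exactly the same number of factors as $K^d_+$) and $d = s$ (where the extra trailing factor $\U^{\lfloor n/2 \rfloor + 1}(\Afra)$ of Lemma \ref{presenofHJ}(1) appears); in the second case, exactly as in the proof of Proposition \ref{compofHJ}, this extra factor equals $G^d(F)_{x, \mathbf{s}_d +}$ and is absorbed into $G^d(F)_{x, \mathbf{s}_{d-1}+}$ since $\mathbf{s}_{d-1} < \mathbf{s}_d$. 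Statement (2) is then handled identically, using Lemma \ref{presenofHJ}(2) and parts (1)(b), (2)(a), (3)(a) of the preceding proposition.

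For statement (3), once (2) is in hand the identification
\[
\hat{J}(\beta, \Afra) = \Kfra(\Bfra_0) \, J(\beta, \Afra) = G^0(F)_{[x]} \cdot {}^{\circ}K^d = K^d
\]
is immediate from part (1)(a) of the preceding proposition together with Definition \ref{defofKi}(3). The degenerate case $s = -\infty$ must be treated separately: here $\beta \in \ofra_F^\times$ and $[\Afra, 0, 0, \beta]$ is a depth-zero simple stratum with $\Afra = \Bfra_0$ maximal, so $H^1(\beta, \Afra) = \U^1(\Afra)$, $J(\beta, \Afra) = \U(\Afra)$ and $\hat{J}(\beta, \Afra) = \Kfra(\Afra)$, and the three equalities reduce directly to parts (1)(c), (1)(b), (1)(a) of the preceding proposition, since the product decompositions of $K^d_+, {}^{\circ}K^d, K^d$ collapse to their $G^0$-factors. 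I do not anticipate any real obstacle beyond index bookkeeping: the substantive comparison between hereditary-order filtrations and Moy--Prasad filtrations was already carried out in the preceding proposition (and ultimately in \S \ref{BroussousLemaire}), so this final step is purely a matching of two explicit product decompositions.
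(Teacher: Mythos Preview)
Your proposal is correct and follows exactly the paper's approach: the paper's proof reads simply ``Similar to the proof of Proposition \ref{compofHJ},'' and you have spelled out precisely that argument, with the added care of treating the degenerate case $s=-\infty$ explicitly. One harmless slip: you write $v_{\Afra}(c_i)=\rbf_i\,e(\Afra|\ofra_F)$, but since $\rbf_i=-\ord(c_i)$ this should be $v_{\Afra}(c_i)=-\rbf_i\,e(\Afra|\ofra_F)$; your subsequent formulas $r_i=\rbf_{i-1}\,e(\Afra|\ofra_F)$ and $n=\rbf_s\,e(\Afra|\ofra_F)$ are nonetheless stated correctly.
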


\begin{prf}
Similar to the proof of Proposition \ref{compofHJ}.  
\end{prf}

Next, we construct a simple character in $\mathscr{C}(\beta, 0, \Afra)$ from $(\Phibf_{i})_{i}$.  

\begin{lem}
\label{lemforbeingsimp}
Suppose $s \geq 0$.  
For $i=0, 1, \ldots, s$, the following assertions hold.  
\begin{enumerate}
\item $\hat{\Phibf}_{i} |_{B_{i}^{\times} \cap H^{1}(\beta, \Afra)}$ factors through $\Nrd_{B_{i}/E_{i}}$.  
\item $\hat{\Phibf}_{i} |_{H^{t_{i}+1}(\beta, \Afra)} = \psi_{c_{i}}$, where $t_{i} = \lfloor -v_{\Afra}(c_{i})/2 \rfloor$.  
\item $H^{t_{i}+1}(\beta, \Afra) = H^{t_{i}+1}(\beta_{i}, \Afra)$ is normalized by $B_{i}^{\times} \cap \Kfra(\Afra)$.  
\item For any $g \in B_{i}^{\times} \cap \Kfra(\Afra)$ and $h \in H^{1}(\beta, \Afra) \cap {}^{g}H^{1}(\beta, \Afra)$, we have $\hat{\Phibf}_{i}(g^{-1}hg) = \hat{\Phibf}_{i}(h)$.  
\end{enumerate}
\end{lem}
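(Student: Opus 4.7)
My plan is to verify the four assertions in turn, using the concrete presentation of $H^{1}(\beta, \Afra)$ in Lemma \ref{presenofHJ}, the defining formula for $\hat{\Phibf}_{i}$ in \S\ref{Yuconst}, and the filtration comparisons of \S\ref{BroussousLemaire}. For (1), I would observe that $B_{i}^{\times} = G^{i}(F)$ and $B_{i}^{\times} \cap H^{1}(\beta, \Afra) = G^{i}(F) \cap K_{+}^{d} = K_{+}^{i}$, so the definition of $\hat{\Phibf}_{i}$ reduces to $\Phibf_{i}$ on this subgroup; writing $B_{i} \cong \M_{m_{i}}(D_{E_{i}})$ for a central division $E_{i}$-algebra $D_{E_{i}}$, the Dieudonn\'e--Platonov theorem identifies $[G^{i}(F), G^{i}(F)]$ with $\ker(\Nrd_{B_{i}/E_{i}})$, forcing $\Phibf_{i}$ to factor as $\phi_{i} \circ \Nrd_{B_{i}/E_{i}}$ for some smooth character $\phi_{i}$ of $E_{i}^{\times}$.

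For (2), I would follow the computation of Proposition \ref{phiistheta1}. The filtration comparison gives $\U^{t_{i}+1}(\Afra) = G(F)_{x, \mathbf{s}_{i}+}$, so $H^{t_{i}+1}(\beta, \Afra) = K_{+}^{d} \cap G(F)_{x, \mathbf{s}_{i}+}$, and the definition of $\hat{\Phibf}_{i}$ yields $\hat{\Phibf}_{i}(1+y) = \Phibf_{i}(1 + \pi_{i}(y)) = \psi \circ \Trd_{A/F}(c_{i} \pi_{i}(y))$ by Proposition \ref{takec}(2); since $c_{i} \in Z(\gfra^{i})$ preserves the $Z(G^{i})$-equivariant decomposition $\gfra = \gfra^{i} \oplus \mathfrak{n}^{i}$ and $\Trd_{A/F}$ vanishes on $\mathfrak{n}^{i}(F)$ (which contains no trivial $Z(G^{i})$-subquotient), the projection $\pi_{i}$ can be dropped to conclude $\hat{\Phibf}_{i} = \psi_{c_{i}}$ on $H^{t_{i}+1}(\beta, \Afra)$. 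For (3), I would iterate the equality $H^{t+1}(\beta_{j-1}, \Afra) = H^{t+1}(\beta_{j}, \Afra)$ of \cite[\S 3.3]{S1}, valid whenever $t \geq \lfloor r_{j}/2 \rfloor$, from $j = 1$ up to $j = i$; since $r_{j} \leq r_{i+1}$ for $j \leq i$ (with $r_{s+1}$ interpreted as $n$ when $i = s$), one has $\lfloor r_{j}/2 \rfloor \leq t_{i}$, and successive application delivers $H^{t_{i}+1}(\beta, \Afra) = H^{t_{i}+1}(\beta_{i}, \Afra)$. Normalization by $B_{i}^{\times} \cap \Kfra(\Afra)$ is then the analogous assertion of \cite[\S 3.3]{S1} applied to the simple stratum $[\Afra, n, r_{i}, \beta_{i}]$, whose associated centralizer algebra is $B_{i}$.

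The core of the argument is (4). Regrouping the factors of Lemma \ref{presenofHJ}(1) and using the inclusion $B_{j} \subset B_{i}$ for $j \leq i$ (dual to $E_{j} \supset E_{i}$), every $h \in H^{1}(\beta, \Afra)$ admits a factorization $h = bu$ with $b \in B_{i}^{\times} \cap H^{1}(\beta, \Afra)$ (the product of the first $i+1$ factors) and $u \in H^{t_{i}+1}(\beta, \Afra)$ (the product of the remaining factors, each lying in $\U^{t_{i}+1}(\Afra) \cap H^{1}(\beta, \Afra)$). Given $g \in B_{i}^{\times} \cap \Kfra(\Afra)$ and $h \in H^{1} \cap {}^{g}H^{1}$, part (3) gives $g^{-1}ug \in H^{t_{i}+1}$, whereupon $g^{-1}bg = (g^{-1}hg)(g^{-1}ug)^{-1}$ automatically lies in $B_{i}^{\times} \cap H^{1}$. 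Splitting both $\hat{\Phibf}_{i}(h)$ and $\hat{\Phibf}_{i}(g^{-1}hg)$ via (1) and (2) into a $\phi_{i} \circ \Nrd_{B_{i}/E_{i}}$-factor and a $\psi_{c_{i}}$-factor, the former is conjugation invariant because the reduced norm takes values in an abelian group, and the latter because $c_{i} \in E_{i}$ commutes with $g \in B_{i}^{\times}$ and the reduced trace is cyclic. The subtle point I expect to be the main obstacle is that $B_{i}^{\times} \cap \Kfra(\Afra)$ does not in general normalize $H^{1}(\beta, \Afra)$ as a whole (since $g$ need not stabilize $E_{0}$, hence need not normalize $B_{0}$), so (4) cannot be deduced from a global normalization principle; the hypothesis $h \in H^{1} \cap {}^{g}H^{1}$ is precisely the weakening needed to keep the two-term factorization under control simultaneously on $h$ and on $g^{-1}hg$.
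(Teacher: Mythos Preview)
Your proposal is correct and follows essentially the same line as the paper's proof for parts (1), (2), and (4): the same identifications $B_{i}^{\times}\cap H^{1}=G^{i}(F)\cap K_{+}^{d}$ and $H^{t_{i}+1}=K_{+}^{d}\cap G(F)_{x,\mathbf{s}_{i}+}$, the same use of $\Trd_{A/F}\bigl(c_{i}\,\nfra^{i}(F)\bigr)=0$ to drop $\pi_{i}$, and the same factorization $h=bu$ with separate handling of the $\Nrd$- and $\psi_{c_{i}}$-pieces.

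The only point of divergence is (3). The paper argues normalization directly on the Yu side: writing $H^{t_{i}+1}(\beta,\Afra)=G^{i+1}(F)_{x,\mathbf{s}_{i}+}\cdots G^{d}(F)_{x,\mathbf{s}_{d-1}+}$ (resp.\ $=G^{d}(F)_{x,\mathbf{s}_{d}+}$ when $i=d=s$), it checks that $g\in B_{i}^{\times}\cap\Kfra(\Afra)$ normalizes each factor via $g\bigl(B_{j}^{\times}\cap\U^{t_{j-1}+1}(\Afra)\bigr)g^{-1}=B_{j}^{\times}\cap\U^{t_{j-1}+1}(\Afra)$. You instead stay on the S\'echerre side, iterating $H^{t+1}(\beta_{j-1},\Afra)=H^{t+1}(\beta_{j},\Afra)$ for $t\geq\lfloor r_{j}/2\rfloor$ to reach $H^{t_{i}+1}(\beta,\Afra)=H^{t_{i}+1}(\beta_{i},\Afra)$, and then invoke the standard normalization of $H^{m}(\beta_{i},\Afra)$ by $\Kfra(\Afra)\cap B_{i}^{\times}$ from \cite[\S3.3]{S1}. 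Both routes are valid; yours has the advantage of also establishing the equality $H^{t_{i}+1}(\beta,\Afra)=H^{t_{i}+1}(\beta_{i},\Afra)$ asserted in the statement, which the paper's proof leaves implicit.
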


\begin{prf}
We have $B_{i}^{\times} \cap H^{1}(\beta, \Afra) = G^{i}(F) \cap K^{d}$.  
By construction of $\hat{\Phibf}_{i}$ we have $\hat{\Phibf}_{i}|_{B_{i}^{\times} \cap H^{1}(\beta, \Afra)} = \hat{\Phibf}_{i}|_{G^{i}(F) \cap K^{d}} = \Phibf_{i}$.  
The map $\Phibf_{i}$ is a character of $G^{i}(F)$, and then $\Phibf_{i}$ factors through $\Nrd_{B_{i}/E_{i}}$ and (1) holds.  

We also have $H^{t_{i}+1}(\beta, \Afra) = K^{d} \cap G(F)_{x, \mathbf{s}_{i}+}$.  
Since $\Phibf_{i}|_{G^{i}(F)_{x,\mathbf{s}_{i}+:\rbf_{i}+}}$ is realized by $X_{c_{i}}^{*}$ by Proposition \ref{takec} (2) or (3), we have 
\[
	\Phibf_{i}(1+y) = \psi \circ \Tr_{E_{i}/F} \circ \Trd_{B_{i}/E_{i}} (c_{i}y) = \psi \circ \Trd_{A/F}(c_{i}y)
\]
for $y \in B_{i} \cap \Pfra^{t_{i}+1}=\gfra^{i}(F)_{x, \mathbf{s}_{i}+}$.  
We recall that $\pi_{i}:\gfra(F) = \gfra^{i}(F) \oplus \mathfrak{n}^{i}(F) \to \gfra^{i}(F)$ is the projection and
\[
	\hat{\Phibf}_{i}(1+y) = \Phibf_{i}(1+\pi_{i}(y)) = \psi \circ \Trd_{A/F}(c_{i}\pi_{i}(y))
\]
for $1+y \in K^{d} \cap G(F)_{x, \mathbf{s}_{i}+} = H^{t_{i}+1}(\beta, \Afra)$.  
However, we also can show $\Trd_{A/F}(c_{i}\pi_{i}(y)) = \Trd_{A/F}(c_{i}y)$ as in the proof of Proposition \ref{phiistheta1}.  
In conclusion, for $1+y \in H^{t_{i}+1}(\beta, \Afra)$ we obtain $\hat{\Phibf}_{i}(1+y) = \psi \circ \Trd_{A/F}(c_{i}y) = \psi_{c_{i}}(y)$ and (2) holds.  

Let $g \in B_{i}^{\times} \cap \Kfra(\Afra)$.  
We check that $g$ normalizes $H^{t_{i}+1}(\beta, \Afra)$.  
We consider two cases.  
First, suppose $i < d$.  
Then we have $H^{t_{i}+1}(\beta, \Afra) = G^{i+1}(F)_{x, \mathbf{s}_{i}+} \cdots G^{d}(F)_{x, \mathbf{s}_{d-1}+}$.  
Thus it suffices to show $g$ normalizes $G^{j}(F)_{x, \mathbf{s}_{j-1}+}$ for $j=i+1, \ldots, d$.  
However, we have
\[
	gG^{j}(F)_{x, \mathbf{s}_{j-1}+}g^{-1} = g \left( B_{j}^{\times} \cap \U^{t_{j-1}+1}(\Afra) \right) g^{-1} = ( gB_{j}^{\times} g^{-1} ) \cap (g \U^{t_{j-1}+1}(\Afra) g^{-1}).  
\]
Since $g \in B_{i}^{\times} \subset B_{j}^{\times}$ we have $gB_{j}^{\times} g^{-1} = B_{j}^{\times}$.  
Moreover, we also have $g \U^{t_{j-1}+1}(\Afra) g^{-1} = \U^{t_{j-1}+1}(\Afra)$ as $g \in \Kfra(\Afra)$.  
Therefore we obtain $gG^{j}(F)_{x, \mathbf{s}_{j}+}g^{-1} = B_{j}^{\times} \cap \U^{t_{j-1}+1}(\Afra) = G^{i}(F)_{x, \mathbf{s}_{j}+}$.  
Next, suppose $i = d = s$.  
Then we have $H^{t_{s}+1}(\beta, \Afra) = G^{d}(F)_{x, \mathbf{s}_{s}+} = \U^{t_{s}+1}(\Afra)$.  
Since $g \in \Kfra(\Afra)$, we obtain 
\[
	g H^{t_{s}+1}(\beta, \Afra) g^{-1} = g \U^{t_{s}+1}(\Afra) g^{-1} = \U^{t_{s}+1}(\Afra) = H^{t_{s}+1}(\beta, \Afra).  
\]
Therefore we obtain (3).  

Here, let $g$ be as above and $h \in H^{1}(\beta, \Afra)$.  
Since 
\[
H^{1}(\beta, \Afra) = \left(B_{i}^{\times} \cap H^{1}(\beta, \Afra) \right) H^{t_{i}+1}(\beta, \Afra), 
\]
we have $h=bh'$ for some $b \in B_{i}^{\times} \cap H^{1}(\beta, \Afra)$ and $h' \in H^{t_{i}+1}(\beta, \Afra)$.  
By the above argument, we have $h' \in H^{t_{i}+1}(\beta, \Afra) = g H^{t_{i}+1}(\beta, \Afra) g^{-1}$ and $h'$ is an element in $H^{1}(\beta, \Afra) \cap gH^{1}(\beta, \Afra) g^{-1}$.  
Then, $h \in H^{1}(\beta, \Afra) \cap g H^{1}(\beta, \Afra) g^{-1}$ if and only if $b \in H^{1}(\beta, \Afra) \cap g H^{1}(\beta, \Afra) g^{-1}$.  
Suppose $h \in H^{1}(\beta, \Afra) \cap g H^{1}(\beta, \Afra) g^{-1}$.  
Therefore we obtain
\begin{eqnarray*}
	\hat{\Phibf}_{i}(g^{-1}hg) & = & \hat{\Phibf}_{i} \left( (g^{-1}bg)(g^{-1}h'g) \right) 	\\
	& = & \hat{\Phibf}_{i}(g^{-1}bg) \hat{\Phibf}_{i}(g^{-1}h'g) = \Phibf_{i}(g^{-1}bg) \psi_{c_{i}}(g^{-1}h'g).  
\end{eqnarray*}
Here, since $\Phibf_{i}$ is a character of $G^{i}(F)=B_{i}^{\times}$ and $g \in B_{i}^{\times}$, we have $\Phibf_{i}(g^{-1}bg) = \Phibf_{i}(b)$.  
Moreover, since $c_{i}$ is an element in $E_{i}$, which is the center of $B_{i}$, we also have
\begin{eqnarray*}
	\psi_{c_{i}}(g^{-1}h'g) & = & \psi \circ \Trd_{A/F}(c_{i}g^{-1}h'g) = \psi \circ \Trd_{A/F}(g^{-1}c_{i}h'g) \\
	& = & \psi \circ \Trd_{A/F}(c_{i}h') = \psi_{c_{i}}(h').  
\end{eqnarray*}
Therefore we obtain $\hat{\Phibf}_{i}(g^{-1}hg) = \Phibf_{i}(b)\psi_{c_{i}}(h') = \hat{\Phibf}_{i}(bh') = \hat{\Phibf}_{i}(h)$, which implies (4).  
\end{prf}

\begin{prop}
We have $\prod_{i=0}^{d} \hat{\Phibf}_{i} \in \mathscr{C}(\beta, 0, \Afra)$.  
\end{prop}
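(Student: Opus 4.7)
The plan is to verify the four defining conditions of $\mathscr{C}(\beta, 0, \Afra)$ in Definition \ref{defofsimpch} for $\theta := \prod_{i=0}^{d} \hat{\Phibf}_{i}$ by downward induction on the index $j$, showing at each stage that the suitably restricted partial product $\prod_{i=j}^{d} \hat{\Phibf}_{i}$ lies in $\mathscr{C}(\beta_{j}, t^{(j)}, \Afra)$, where $t^{(0)} = 0$ and $t^{(j)} = \lfloor r_{j}/2 \rfloor = t_{j-1}$ for $j \geq 1$. The case $j=0$ will give the proposition. When $s = -\infty$ the product is trivial and $\mathscr{C}(\beta, 0, \Afra) = \{1\}$, so that case is immediate; I therefore assume $s \geq 0$.

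Conditions (1) and (2) of the definition hold at every inductive stage by one uniform argument. Lemma \ref{lemforbeingsimp} (3), (4) show each $\hat{\Phibf}_{i}$ is normalized by $\Kfra(\Afra) \cap B_{i}^{\times}$, and Lemma \ref{lemforbeingsimp} (1) shows it factors through $\Nrd_{B_{i}/E_{i}}$. Since $E_{0} \supsetneq \cdots \supsetneq E_{d}$ yields $B_{j}^{\times} \subset B_{i}^{\times}$ for $i \geq j$, both properties transfer to the partial product restricted to $B_{j}^{\times} \cap H^{t^{(j)}+1}(\beta_{j}, \Afra)$, using the norm-composition identity $\Nrd_{B_{i}/E_{i}}|_{B_{j}^{\times}} = \Nrm_{E_{j}/E_{i}} \circ \Nrd_{B_{j}/E_{j}}$ for the factorization property.

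The heart of the proof is the inductive step for condition (3) or (4). In the base case $j = s$, the element $\beta_{s}$ is minimal over $F$, only $\hat{\Phibf}_{s}$ contributes (all subsequent factors are trivial by definition of $s$), and Lemma \ref{lemforbeingsimp} (2) at $i = s$ identifies the restriction with $\psi_{c_{s}} = \psi_{\beta_{s}}$, establishing condition (3) on $H^{\lfloor n/2 \rfloor +1}(\beta_{s}, \Afra) = H^{t_{s}+1}(\beta_{s}, \Afra)$. In the inductive step $j < s$, the element $\beta_{j}$ is not minimal; the inductive hypothesis yields a character $\theta'' \in \mathscr{C}(\beta_{j+1}, t_{j}, \Afra)$, and Lemma \ref{lemforbeingsimp} (2) at index $j$ identifies the restriction of $\hat{\Phibf}_{j}$ to the appropriate filtration piece with $\psi_{c_{j}} = \psi_{\beta_{j} - \beta_{j+1}}$. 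Multiplying, one obtains condition (4) with the parameter $t' = t_{j} = t^{(j+1)}$ and the auxiliary character $\theta'=\theta''$.

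The main obstacle is the careful tracking of filtration subgroups: to apply the inductive hypothesis one must pass between $H^{t_{j}+1}(\beta_{j}, \Afra)$ and $H^{t_{j}+1}(\beta_{j+1}, \Afra)$. The identification $t_{j} = \lfloor -k_{0}(\beta_{j}, \Afra)/2 \rfloor$, arising from $r_{j+1} = -v_{\Afra}(c_{j}) = -k_{0}(\beta_{j}, \Afra)$, is exactly the hypothesis needed to invoke the structural lemma of \S \ref{Secherre} giving $H^{t_{j}+1}(\beta_{j}, \Afra) = H^{t_{j}+1}(\beta_{j+1}, \Afra)$. Once this identification is matched at each stage and the values $t^{(j)}$ are checked against the conventions in Definition \ref{defofsimpch}, the propagation of the defining conditions of $\mathscr{C}$ reduces to the pointwise computations already encoded in Lemma \ref{lemforbeingsimp}.
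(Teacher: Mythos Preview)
Your proposal is correct and follows essentially the same approach as the paper: downward induction on the index $j$, establishing at each stage that the partial product $\prod_{i=j}^{s}\hat{\Phibf}_{i}$ (restricted appropriately) lies in $\mathscr{C}(\beta_{j}, \lfloor r_{j}/2 \rfloor, \Afra)$, with the base case handled by minimality of $\beta_{s}$ and the inductive step by Lemma \ref{lemforbeingsimp}. Your explicit mention of the identification $H^{t_{j}+1}(\beta_{j},\Afra)=H^{t_{j}+1}(\beta_{j+1},\Afra)$ and of the indexing $t^{(j)}=t_{j-1}$ makes the bookkeeping slightly clearer than in the paper's own write-up.
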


\begin{prf}
If $s=-\infty$, then $\Phibf_{d}=1$ and $\hat{\Phibf}_{d}=1$, and then $\prod_{i=0}^{d} \hat{\Phibf}_{i} = 1 \in \mathscr{C}(\beta, 0, \Afra)$.  
Therefore we assume $s \in \Z$.  
If $d=s+1$, then $\Phibf_{d}=1$ and $\hat{\Phibf}_{i} = 1$ and we have $\prod_{j=i}^{d} \hat{\Phibf}_{j} = \prod_{j=i}^{s} \hat{\Phibf}_{j}$ for $i=0, \ldots, s$.  
Thus we show $\bar{\theta}_{i} := \prod_{j=i}^{s} \hat{\Phibf}_{j}|_{H^{t_{j}+1}(\beta, \Afra)} \in \mathscr{C}(\beta_{i}, \lfloor r_{i}/2 \rfloor, \Afra)$ by downward induction on $i=0, \ldots, s$.  

First, suppose $i=s$.  
Since $\beta_{s} = c_{s}$ is minimal over $F$, we need to check (1), (2) and (3) in Definition \ref{defofsimpch}.  
(2) is already shown as Lemma \ref{lemforbeingsimp} (1).  
Since $-v_{\Afra}(c_{s}) = -v_{\Afra}(\beta_{s}) = n$, we have $H^{t_{s}+1}(\beta, \Afra) = \U^{\lfloor n/2 \rfloor +1}(\Afra)$ and (3) is also shown as Lemma \ref{lemforbeingsimp} (2).  
Let $g \in B_{i}^{\times} \cap \Kfra(\Afra)$ and $h \in H^{t_{s}+1}(\beta, \Afra)$.  
Then $g^{-1}hg \in H^{t_{s}+1}(\beta, \Afra)$ by Lemma \ref{lemforbeingsimp} (3), and $\hat{\Phibf}_{i}(g^{-1}hg) = \hat{\Phibf}_{i}(h)$ by Lemma \ref{lemforbeingsimp} (4), which implies (1).  
Therefore $\hat{\Phibf}_{s} \in \mathscr{C}(\beta_{s}, t_{s}, \Afra)$.  

Next, suppose $0<i<s$.  
Since $k_{0}(\beta_{i-1}, \Afra) = v_{\Afra}(c_{i-1}) = -r_{i} > -n = v_{\Afra}(\beta_{i-1})$, the element $\beta_{i-1}$ is not minimal over $F$, and then we need to check (1), (2) and (4) in Definition \ref{defofsimpch}.  

To show (1), let $g \in B_{i-1}^{\times} \cap \Kfra(\Afra)$ and $h \in H^{t_{i-1}+1}(\beta, \Afra)$.  
Then $g^{-1}hg \in H^{t_{i-1}+1}(\beta, \Afra)$ by Lemma \ref{lemforbeingsimp} (3).  
For $j=i-1, \ldots, s$, we have $g \in B_{i-1}^{\times} \cap \Kfra(\Afra) \subset B_{j}^{\times} \cap \Kfra(\Afra)$.  
Therefore by Lemma \ref{lemforbeingsimp} (4) we have $\hat{\Phibf}_{j}(g^{-1}hg) = \hat{\Phibf}_{j}(h)$ and $\bar{\theta}_{i-1}(g^{-1}hg) = \prod_{j=i-1}^{s}\hat{\Phibf}_{j}(g^{-1}hg) = \prod_{j=i-1}^{s} \hat{\Phibf}_{j}(h) = \bar{\theta}_{i-1}(h)$, whence (1) holds.  

For $j=i-1, \ldots, s$, the restriction of $\hat{\Phibf}_{j}$ to $B_{j}^{\times} \cap H^{t_{i-1}+1}(\beta, \Afra)$ factors through $\Nrd_{B_{j}/E_{j}}$.  
Since $\Nrd_{B_{j}/E_{j}} |_{B_{i-1}^{\times}} = \Nrm_{E_{i-1}/E_{j}} \circ \Nrd_{B_{i-1}/E_{i-1}}$, the restriction of $\hat{\Phibf}_{j}$ to $B_{i-1}^{\times} \cap H^{t_{i-1}+1}(\beta, \Afra)$ factors through $\Nrd_{B_{i-1}/E_{i-1}}$.  
Then the character $\bar{\theta}_{i-1}=\prod_{j=i-1}^{s} \hat{\Phibf}_{j}|_{B_{i-1}^{\times} \cap H^{t_{i-1}+1}(\beta, \Afra)}$ also factors through $\Nrd_{B_{i-1}/E_{i-1}}$ and (2) holds.  

We show (4).  
We put $r'_{i-1}=0$ and $r'_{j}=r_{i}$ for $j=i, \ldots, s$.  
Then the sequence $([\Afra, n, r'_{(i-1)+i'}, \beta_{(i-1)+i'}])_{i'=0}^{s-i+1}$ is a defining sequence of $[\Afra, n, 0, \beta_{i-1}]$.  
Since $-k_{0}(\beta_{i-1}, \Afra) = r_{i}$, we have $\max \{ \lfloor r_{i-1}/2 \rfloor , \lfloor - k_{0}(\beta_{i-1}, \Afra)/2 \rfloor \} = \lfloor r_{i}/2 \rfloor = t_{i-1}$.  
Then $\bar{\theta}_{i-1}|_{H^{t_{i-1}+1}(\beta, \Afra)} = \bar{\theta}_{i} \hat{\Phibf}_{i-1}|_{H^{t_{i-1}+1}(\beta, \Afra)}$.  
The character $\bar{\theta}_{i}$ is an element in $\mathscr{C}(\beta_{i}, \lfloor r_{i}/2 \rfloor, \Afra)$ by induction hypothesis.  
On the other hand, $\hat{\Phibf}_{i-1}|_{H^{t_{i-1}+1}(\beta, \Afra)} = \psi_{c_{i-1}}$ by Lemma \ref{lemforbeingsimp} (2).  
Therefore (4) is shown and we complete the proof.  
\end{prf}

We put $\theta = \prod_{i=0}^{d} \hat{\Phi}_{i}$, and let $\eta_{\theta}$ be the Heisenberg representation of $\theta$.  

\begin{prop}
\label{betaext'}
$\kappa_{0} \otimes \cdots \otimes \kappa_{d}$ is an extension of $\eta_{\theta}$ to $K^{d}$.  
\end{prop}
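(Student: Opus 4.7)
The plan is to reproduce, in this converse direction, the argument of Proposition \ref{betaext}, all of whose ingredients are now available: we have identified $K^d = \hat{J}(\beta,\Afra)$, $K^d_+ = H^1(\beta,\Afra)$, and have just shown that $\theta = \prod_{i=0}^{d}\hat{\Phibf}_i \in \mathscr{C}(\beta,0,\Afra)$.

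First, I would establish that $\hat{\kappa}' := \kappa_0 \otimes \cdots \otimes \kappa_d$, restricted to $K^d_+ = H^1(\beta,\Afra)$, contains the character $\theta$. By \cite[Lemma 3.27]{HM} each $\kappa_i|_{K^d_+}$ contains $\hat{\Phibf}_i$ (with $\kappa_d$ being just $\Phibf_d|_{K^d}$, which is trivial when $s=-\infty$ or $d=s+1$ and equals $\hat{\Phibf}_d$ otherwise), so the tensor product contains $\prod_{i=0}^{d}\hat{\Phibf}_i = \theta$. Since $\eta_\theta$ is characterised as the unique irreducible $J^1(\beta,\Afra)$-representation containing $\theta$, the restriction $\hat{\kappa}'|_{J^1(\beta,\Afra)}$ contains $\eta_\theta$.

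Second, I would verify that $\dim \hat{\kappa}' = \dim \eta_\theta$, which together with the containment forces $\hat{\kappa}'|_{J^1(\beta,\Afra)} \cong \eta_\theta$. We know $\dim \eta_\theta = (J^1(\beta,\Afra) : H^1(\beta,\Afra))^{1/2}$, while $\dim \kappa_i = (J^{i+1} : J^{i+1}_+)^{1/2}$ for $0 \le i \le d-1$ and $\dim \kappa_d = 1$, so the needed equality reduces to
\[
(J^1(\beta,\Afra) : H^1(\beta,\Afra)) = \prod_{i=1}^{d}(J^i : J^i_+).
\]
Using the Moy--Prasad comparison just proved together with the identities $G^i(F)_{x,\mathbf{s}_{i-1}} J^i = G^{i+1}(F)_{x,\mathbf{s}_{i-1}}$ (no, this one is not exactly correct; but the analogous telescoping in the proof of Proposition \ref{betaext} is), one obtains
\[
J^1(\beta,\Afra) = G^0(F)_{x,0+}\, J^1 \cdots J^d, \qquad H^1(\beta,\Afra) = G^0(F)_{x,0+}\, J^1_+ \cdots J^d_+,
\]
and since $G^0(F)_{x,0+} \cap (J^1 \cdots J^d) \subset G^0(F)_{x,\rbf_0} \subset J^1_+$, the quotient is isomorphic to $(J^1 \cdots J^d)/(J^1_+ \cdots J^d_+)$. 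Finally the induction on $d$ carried out in the proof of Proposition \ref{betaext}, via the orbit-stabiliser identity for the normal subgroup $J^1_+ \cdots J^d_+$ inside $J^1 \cdots J^d$, yields $[J^1 \cdots J^d : J^1_+ \cdots J^d_+] = \prod_{i=1}^{d}[J^i : J^i_+]$.

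There is no genuine obstacle here: every input is either already recorded in the excerpt (the comparison of Moy--Prasad filtrations with hereditary-order filtrations from Section \ref{BroussousLemaire}, the descriptions of $H^1, J^1, K^d, K^d_+, J^i, J^i_+$, and the Hakim--Murnaghan lemma) or was established in the immediately preceding proposition identifying $\theta$ as a simple character. The only mild point to check is that $\kappa_d$ contributes trivially to both sides in all cases ($\dim \kappa_d = 1$ and $\hat{\Phibf}_d = \Phibf_d|_{K^d}$), so the two exceptional cases $s = -\infty$ and $d = s+1$, together with $d = s$, all fit into the same computation. This makes the proof a direct transcription of that of Proposition \ref{betaext}, which we may simply invoke.
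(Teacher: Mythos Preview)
Your proposal is correct and follows exactly the approach indicated by the paper, which simply writes ``Similar to the proof of Proposition \ref{betaext}.'' You have faithfully reproduced that argument in this converse setting, and your brief self-correction about the telescoping identity is apt (the correct version is $G^{i}(F)_{x,\mathbf{s}_{i}}J^{i+1}=G^{i+1}(F)_{x,\mathbf{s}_{i}}$, as in the proof of Proposition \ref{betaext}).
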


\begin{prf}
Similar to the proof of Proposition \ref{betaext}.  
\end{prf}

\begin{thm}
\label{Main2}
Let $\left(x, (G^{i})_{i=0}^{d}, (\rbf_{i})_{i=0}^{d}, (\Phibf_{i})_{i=0}^{d}, \rho \right)$ be a Yu datum.  
Then there exists a maximal, tame simple type $(J, \lambda)$ associated with $[\Afra, n, 0, \beta]$ and a maximal extension $(\tilde{J}, \Lambda)$ of $(J, \lambda)$ such that
\begin{enumerate}
\item $\hat{J} :=\hat{J}(\beta, \Afra) = K^{d}$, and
\item $\rho_{d} = \cInd_{\tilde{J}}^{\hat{J}} \Lambda$.  
\end{enumerate}
\end{thm}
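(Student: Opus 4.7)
Most of the structural work has already been carried out in the preceding propositions. From the Yu datum we have produced a tame simple stratum $[\Afra, n, 0, \beta]$ (with $\beta \in \ofra_{F}^{\times}$ and $d=0$ in the boundary case $s = -\infty$), together with the identifications $K_{+}^{d} = H^{1}(\beta, \Afra)$, ${}^{\circ}K^{d} = J(\beta, \Afra)$, and $K^{d} = \hat{J}(\beta, \Afra)$. The last equality is exactly assertion (1). Moreover, the simple character $\theta := \prod_{i=0}^{d} \hat{\Phibf}_{i}$ lies in $\mathscr{C}(\beta, 0, \Afra)$, and by Proposition \ref{betaext'} the tensor product $\hat{\kappa}' := \kappa_{0} \otimes \cdots \otimes \kappa_{d}$ is an extension of the Heisenberg representation $\eta_{\theta}$ to $K^{d} = \hat{J}$; in particular, $\kappa := \hat{\kappa}'|_{J}$ is a $\beta$-extension of $\eta_{\theta}$.

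To supply the depth-zero ingredient, apply Proposition \ref{depth0ofYu} to extract from $\rho$ a depth-zero maximal simple type $(G^{0}(F)_{x}, \sigma)$ of $G^{0}(F)$ together with a maximal extension $(\tilde{J}^{0}, \tilde{\sigma}^{0})$ inside $\Kfra(\Bfra_{0}) = G^{0}(F)_{[x]}$ satisfying $\rho \cong \Ind_{\tilde{J}^{0}}^{G^{0}(F)_{[x]}} \tilde{\sigma}^{0}$. Define
\[
\tilde{J} := \tilde{J}^{0} J^{1}(\beta, \Afra) \subset \hat{J},
\]
and let $\tilde{\sigma}$ (resp.\ the inflation of $\sigma$) be the unique representation of $\tilde{J}$ (resp.\ $J$) trivial on $J^{1}(\beta, \Afra)$ coinciding with $\tilde{\sigma}^{0}$ (resp.\ $\sigma$) through the isomorphism $\tilde{J}^{0}/\U^{1}(\Bfra_{0}) \cong \tilde{J}/J^{1}(\beta, \Afra)$. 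Put
\[
\lambda := \kappa \otimes \sigma, \qquad \Lambda := (\hat{\kappa}'|_{\tilde{J}}) \otimes \tilde{\sigma}.
\]
Then $(J, \lambda)$ is a maximal tame simple type in the sense of Definition \ref{defofsimpletype}, and $\Lambda$ is an extension of $\lambda$ to $\tilde{J}$.

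Assertion (2) is then obtained by the projection formula together with Lemma \ref{inf_and_ind}. Indeed,
\[
\cInd_{\tilde{J}}^{\hat{J}} \Lambda \cong \hat{\kappa}' \otimes \cInd_{\tilde{J}}^{\hat{J}} \tilde{\sigma},
\]
and Lemma \ref{inf_and_ind} identifies $\cInd_{\tilde{J}}^{\hat{J}} \tilde{\sigma}$ with the representation $\Ind_{\tilde{J}^{0}}^{\Kfra(\Bfra_{0})} \tilde{\sigma}^{0} \cong \rho$, regarded as a representation of $\hat{J}$ trivial on $J^{1}(\beta, \Afra)$ via $\Kfra(\Bfra_{0})/\U^{1}(\Bfra_{0}) \cong \hat{J}/J^{1}(\beta, \Afra)$. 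This is exactly the restriction of $\kappa_{-1} = \inf_{K^{0}}^{K^{d}} \rho$ to $K^{d}$: by construction $\kappa_{-1}$ is the iterated inflation of $\rho$ across the tower $K^{0} \subset K^{1} \subset \cdots \subset K^{d}$, which descends to the same quotient $\hat{J}/J^{1}(\beta, \Afra)$ because $\kappa_{-1}$ is trivial on $G^{0}(F)_{x,0+} J^{1} \cdots J^{d} = J^{1}(\beta, \Afra)$. Therefore
\[
\cInd_{\tilde{J}}^{\hat{J}} \Lambda \cong \hat{\kappa}' \otimes \kappa_{-1} \cong \kappa_{-1} \otimes \kappa_{0} \otimes \cdots \otimes \kappa_{d} \cong \rho_{d},
\]
establishing (2). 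The maximality of $(\tilde{J}, \Lambda)$ then follows from the fact that $\cInd_{\tilde{J}}^{G} \Lambda \cong \cInd_{\hat{J}}^{G} \rho_{d} \cong \cInd_{K^{d}}^{G^{d}(F)} \rho_{d}$ is an irreducible supercuspidal representation of $G$.

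The main obstacle is pinning down the comparison between the two descriptions of $\rho$: on the Yu side it is inflated from $K^{0}$ to $K^{d}$ through the Moy--Prasad quotients, while on the S\'echerre side it is reassembled as a compact induction of a depth-zero type from $\tilde{J}$ to $\hat{J}$. The agreement of these two pictures on $\hat{J}$ is precisely the content of Lemma \ref{inf_and_ind} combined with the vanishing of $\kappa_{-1}$ on $J^{1}(\beta, \Afra)$. A minor technicality is the uniform treatment of the boundary case $s = -\infty$ (equivalently $d = 0$), where $\beta \in \ofra_{F}^{\times}$, $\Afra$ is maximal, $\hat{\kappa}'$ is trivial, and the theorem reduces directly to Proposition \ref{depth0ofYu}.
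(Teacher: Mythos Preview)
Your overall strategy coincides with the paper's: extract the stratum, identify $K^{d}=\hat{J}$, recognise $\theta=\prod_i\hat{\Phibf}_i$ as a simple character, use Proposition~\ref{betaext'} for $\hat{\kappa}'=\kappa_0\otimes\cdots\otimes\kappa_d$, pull the depth-zero piece out via Proposition~\ref{depth0ofYu}, and finish with the projection formula plus Lemma~\ref{inf_and_ind}. The identification of $\kappa_{-1}$ with $\cInd_{\tilde J}^{\hat J}\tilde\sigma$ and the final chain of isomorphisms are exactly as in the paper.

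There is one genuine gap. You assert that $\kappa:=\hat{\kappa}'|_{J}$ is a $\beta$-extension of $\eta_\theta$, and then invoke Definition~\ref{defofsimpletype} directly. Proposition~\ref{betaext'} only says $\hat{\kappa}'$ extends $\eta_\theta$; the defining condition $I_G(\kappa)=J^{1}B^{\times}J^{1}$ for a $\beta$-extension is not automatic and you do not verify it. In fact $\hat{\kappa}'|_{J}$ need not be a $\beta$-extension: by Lemma~\ref{exist_ext_of_b-ext}(2) one has $\hat{\kappa}'\cong\hat{\kappa}\otimes\chi$ for some genuine $\beta$-extension $\kappa$ (extended to $\hat\kappa$) and some character $\chi$ of $\hat{J}/J^{1}$, and $\chi|_{J}$, viewed as a character of $\U(\Bfra)/\U^{1}(\Bfra)\cong\GL_{m_E}(k_{D_E})$, need not factor through $\Nrd_{B/E}$, which is the form required for twists preserving the $\beta$-extension property.

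The paper fixes this by keeping a bona fide $\beta$-extension $\kappa$ and absorbing the discrepancy character $\chi$ into the depth-zero part: since $\sigma\otimes\chi|_{J}$ is still cuspidal, the pair $(J,\kappa\otimes(\sigma\otimes\chi))$ is a simple type, and $\Lambda:=\tilde\sigma\otimes\Res_{\tilde J}^{\hat J}(\chi\otimes\hat\kappa)$ is the required extension. Your $\lambda$ is the same representation, so your conclusion is correct; but to make the argument complete you must either prove the intertwining condition for $\hat{\kappa}'|_{J}$ or, more simply, introduce $\chi$ as the paper does and shift it to the cuspidal factor.
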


\begin{prf}
We can construct a tame simple stratum $[\Afra, n, 0, \beta]$ and a simple character $\theta \in \mathscr{C}(\beta, \Afra)$ as above.  
We take a $\beta$-extension $\kappa$ of $\eta_{\theta}$ and an extension $\hat{\kappa}$ of $\kappa$ to $\hat{J}$ by Lemma \ref{exist_ext_of_b-ext} (1).  
On the other hand, let $\kappa_{i}$ be the representation of $K^{d}$ as in Section 3 for $i=-1, 0, \ldots, d$.  
By Proposition \ref{betaext'}, the representation $\hat{\kappa}'=\kappa_{0} \otimes \cdots \otimes \kappa_{d}$ is an extension of a $\beta$-extension ${}^{\circ} \lambda$ of $\eta_\theta$ to $K^{d}$.  
Then by Lemma \ref{exist_ext_of_b-ext} (2), there exists a character $\chi$ of $\hat{J}/J^{1}(\beta, \Afra)$ such that $\hat{\kappa}' \cong \hat{\kappa} \otimes \chi$.  
The representation $\kappa_{-1}$ is the extension of $\rho$ to $K^{d}$, trivial on $K^{d} \cap G(F)_{x, 0+} = J^{1}(\beta, \Afra)$.  

We construct ``depth-zero part" $\sigma$ of a simple type from $\rho$.  
By Lemma \ref{depth0ofYu}, there exists a depth-zero simple type $(G^{0}(F)_{x}, \sigma^{0})$ of $G^{0}(F)$ and a maximal extension $(\tilde{J}^{0}, \tilde{\sigma}^{0})$ such that $\rho \cong \Ind_{\tilde{J}^{0}}^{G^{0}(F)_{[x]}} \tilde{\sigma}^{0}$.  
We put $\tilde{J} = \tilde{J}^{0}J=\tilde{J}^{0}J^{1}(\beta, \Afra)$.  
Since $J^{1}(\beta, \Afra) \cap G^{0}(F) = G^{0}(F)_{x, 0+}$, we have $\tilde{J}^{0}/G^{0}(F)_{x,0+} \cong \tilde{J}/J^{1}(\beta, \Afra)$ and we can extend $\tilde{\sigma}^{0}$ to $\tilde{J}$ as $\tilde{\sigma}$, which is trivial on $J^{1}(\beta, \Afra)$. We put $\sigma = \Res_{J}^{\tilde{J}} \tilde{\sigma}$.  
The representation $\sigma$ is an extension of $\sigma^{0}$ to $J$, trivial on $J^{1}(\beta, \Afra)$.  
Since $(G^{0}(F)_{x}, \sigma^{0})$ is a maximal simple type of depth zero and $\chi$ is a character of $\hat{J}$ trivial on $J^{1}(\beta, \Afra)$, the $J(\beta, \Afra)/J^{1}(\beta, \Afra)$-representation $\sigma \otimes \chi$ is cuspidal, and then $(J, \sigma \otimes \chi \otimes \kappa)$ is a simple type.  
By construction of $\tilde{J}$ and $\tilde{\sigma}$, the pair $(\tilde{J}, \tilde{\sigma} \otimes \Res_{\tilde{J}}^{\hat{J}}(\chi \otimes \hat{\kappa}))$ is a maximal extension of $(J, \sigma \otimes \chi \otimes \kappa)$.  
We put $\Lambda = \tilde{\sigma} \otimes \Res_{\tilde{J}}^{\hat{J}}( \chi \otimes \hat{\kappa})$.  

The representation $\kappa_{-1}$ is the extension of $\rho$ as $\kappa_{-1}$ is trivial on $K_{+}^{0}J^{1} \cdots J^{d} = J^{1}(\beta, \Afra)$, that is, the representation $\kappa_{-1}$ is $\rho$ regarded as a representation of $K^{d}=\hat{J}$ via $K^{0}/K_{+}^{0} = \Kfra(\Bfra)/\U^{1}(\Bfra) \cong K^{d}/(K_{+}^{0}J^{1} \cdots J^{d}) = \hat{J}/J^{1}(\beta, \Afra)$.  
Then we have $\kappa_{-1} \cong \cInd_{\tilde{J}}^{\hat{J}} \tilde{\sigma}$ by Lemma \ref{inf_and_ind} and
\[
	\cInd_{\tilde{J}}^{\hat{J}} \Lambda \cong (\cInd_{\tilde{J}}^{\hat{J}} \tilde{\sigma}) \otimes \chi \otimes \hat{\kappa} \cong \kappa_{-1} \otimes \kappa_{0} \otimes \cdots \otimes \kappa_{d} = \rho_{d}, 
\]
which finishes the proof.  
\end{prf}

\begin{cor}
\label{tame_and_esstame}
The set of essentially tame supercuspidal representations of $G$ is equal to the set of tame supercuspidal representations of $G$.  
\end{cor}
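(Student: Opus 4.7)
The plan is to derive this corollary directly from Theorems \ref{Main1} and \ref{Main2}, using only the transitivity of compact induction and the characterization of essentially tame supercuspidals as those containing a tame simple type (Remark \ref{esstame}).

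First I would handle the forward inclusion. Let $\pi$ be an essentially tame supercuspidal representation of $G$. By Remark \ref{esstame}(2), $\pi$ contains a tame simple type $(J,\lambda)$; let $(\tilde{J},\Lambda)$ be a maximal extension with $\pi \cong \cInd_{\tilde{J}}^{G}\Lambda$. Theorem \ref{Main1} provides a Yu datum $\Psi = (x,(G^{i}),(\rbf_{i}),(\Phibf_{i}),\rho)$ with $\tilde{J} \subset K^{d}(\Psi)$ and $\rho_{d}(\Psi) \cong \cInd_{\tilde{J}}^{K^{d}(\Psi)}\Lambda$. Applying transitivity of compact induction yields
\[
\cInd_{K^{d}(\Psi)}^{G}\rho_{d}(\Psi) \;\cong\; \cInd_{K^{d}(\Psi)}^{G}\cInd_{\tilde{J}}^{K^{d}(\Psi)}\Lambda \;\cong\; \cInd_{\tilde{J}}^{G}\Lambda \;\cong\; \pi,
\]
which exhibits $\pi$ as the tame supercuspidal attached to $\Psi$ in Yu's construction.

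For the reverse inclusion, I would argue symmetrically using Theorem \ref{Main2}. Starting from a Yu datum $\Psi$, Theorem \ref{Main2} supplies a tame simple type $(J,\lambda)$ together with a maximal extension $(\tilde{J},\Lambda)$ such that $\tilde{J} \subset K^{d}(\Psi)$ and $\rho_{d}(\Psi) \cong \cInd_{\tilde{J}}^{K^{d}(\Psi)}\Lambda$. Transitivity of compact induction then gives
\[
\cInd_{K^{d}(\Psi)}^{G}\rho_{d}(\Psi) \;\cong\; \cInd_{\tilde{J}}^{G}\Lambda,
\]
and the right-hand side is a supercuspidal representation of $G$ containing the tame simple type $(J,\lambda)$, hence essentially tame by Remark \ref{esstame}.

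There is no serious obstacle: all the work is in the main theorems, and the corollary is essentially a repackaging of their conclusions via transitivity of $\cInd$. The only subtlety to be careful about is that ``tame supercuspidal in Yu's sense'' should be interpreted as $\cInd_{K^{d}(\Psi)}^{G}\rho_{d}(\Psi)$ ranging over all Yu data of $G$, which matches how the class is used in Theorems \ref{Main1} and \ref{Main2}; once this is fixed, the two displays above give the two inclusions and finish the proof.
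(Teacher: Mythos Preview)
Your proof is correct and follows essentially the same approach as the paper's: both derive the corollary from Theorems \ref{Main1} and \ref{Main2} together with transitivity of compact induction and Remark \ref{esstame}. The paper's version is more compressed, phrasing the equivalence in terms of $\pi$ containing $\rho_d(\Psi)$ versus containing a tame simple type, but your more explicit two-inclusion argument is the same underlying reasoning.
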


\begin{prf}
Let $\pi$ be an irreducible supercuspidal representation of $G$.  
Since $\cInd_{K^{d}(\Psi)}^{G} \rho^{d}(\Psi)$ is irreducible for any Yu's datum $\Psi$, $\pi$ is tame supercuspidal if and only if $\pi \supset \rho^{d}(\Psi)$ for some $\Psi$.  
However, by Theorem \ref{Main1} and \ref{Main2} it holds if and only if $\pi$ contains some compact induction of a maximal extension $(\tilde{J}, \Lambda)$ of a tame, maximal simple type, which is equivalent to $\pi$ is essentially tame by \ref{esstame}.  
\end{prf}

\begin{rem}
In the condition of Theorem \ref{Main1} or \ref{Main2}, suppose $G=\GL_{N}(F)$.  
Then we have $\tilde{J} = \tilde{J}(\lambda) = \hat{J}(\beta, \Afra)$ by Remark \ref{rem_for_Jhat} (3).  
Therefore $\tilde{J} = K^{d}(\Psi)$ and $\cInd_{\tilde{J}}^{K^{d}(\Psi)} \Lambda = \Lambda$, which leads to Theorem \ref{main_for_split_case}.  
\end{rem}
\section{Wild case} \label{secwild}

Let $[\Afra, n, 0, \beta]$ be a Bushnell-Kutzko simple stratum. For the purpose of the paper, we assumed that $F[\beta ]/F$ is tamely ramified. 
\begin{rem}(cf. also \cite{FGT})
\begin{enumerate}
\item If we remove the assumption that $F[\beta]/F$ is tame in our fixed simple stratum $[\Afra, n, 0, \beta]$, then the sequence of fields $E_{0}, \ldots, E_{s}$ attached to a defining sequence can not be chosen decreasing for $\subset$ in general. 
\label{omit_tame_assump}
It always decreases for $[ \bullet : F]$.  
\item In a certain sense, we have explained that Bushnell--Kutzko and S\'echerre's constructions are compatible with Yu's construction as they essentially are the same on their common domain of definition.  Does there exist a construction generalizing both of them in a single formalism?
\item If one tries to obtain generalization of these approaches, one has to remove (among other things) the axiom of inclusions in the twisted Levi sequence by $(\ref{omit_tame_assump})$ of this remark and by definition of $\overrightarrow{G}$.  
This implies that one can not expect a factorable construction $\rho_{d}=\otimes \kappa^{i}$ as Yu's one.  
\end{enumerate}
\end{rem}

\bigbreak\bigbreak
\noindent Arnaud Mayeux\par
\noindent Einstein Institute of Mathematics, The Hebrew University of Jerusalem, Givat Ram. Jerusalem,
9190401, Israel \par
\noindent E-mail address: \texttt{arnaud.mayeux@mail.huji.ac.il}

\bigbreak\bigbreak
\noindent Yuki Yamamoto\par
\noindent National Institute of Technology, Niihama College, 
7-1 Yakumo-cho, Niihama City, 
Ehime, 792-8580, Japan\par
\noindent E-mail address: \texttt{y.yamamoto@niihama-nct.ac.jp}

\end{document}